\crefname{equation}{}{}
\newcommand{\M}{\ensuremath{\mathbf{M}}\xspace}
\newcommand{\m}{\ensuremath{\mathbf{m}}\xspace}
\renewcommand{\H}{\ensuremath{\mathbf{H}}\xspace}
\newcommand{\g}{\ensuremath{\mathbf{g}}\xspace}
\newcommand{\Real}{\ensuremath{\mathbb{R}}\xspace}
\newcommand{\x}{\ensuremath{\mathbf{x}}\xspace}
\newcommand{\w}{\ensuremath{\mathbf{w}}\xspace}
\renewcommand{\u}{\ensuremath{\mathbf{u}}\xspace}
\newcommand{\f}{\ensuremath{\mathbf{f}}\xspace}
\renewcommand{\S}{\ensuremath{\mathbf{S}}\xspace}
\newcommand{\B}{\ensuremath{\mathbf{B}}\xspace}
\newcommand{\E}{\ensuremath{\mathbf{E}}\xspace}
\newcommand{\e}{\ensuremath{\mathbf{e}}\xspace}
\newcommand{\F}{\ensuremath{\mathbf{F}}\xspace}
\newcommand{\T}{\ensuremath{\mathbf{T}}\xspace}
\renewcommand{\v}{\ensuremath{\mathbf{v}}\xspace}
\newcommand{\V}{\ensuremath{\mathbf{V}}\xspace}
\newcommand{\Z}{\ensuremath{\mathbf{Z}}\xspace}
\newcommand{\A}{\ensuremath{\mathbf{A}}\xspace}
\newcommand{\I}{\ensuremath{\mathbf{I}}\xspace}
\newcommand{\J}{\ensuremath{\mathbf{J}}\xspace}
\newcommand{\C}{\ensuremath{\mathbf{C}}\xspace}
\newcommand{\D}{\ensuremath{\mathbf{D}}\xspace}
\newcommand{\R}{\ensuremath{\mathbf{R}}\xspace}
\newcommand{\X}{\ensuremath{\mathbf{X}}\xspace}
\newcommand{\Y}{\ensuremath{\mathbf{Y}}\xspace}
\renewcommand{\c}{\ensuremath{\mathbf{c}}\xspace}
\newcommand{\Znumbers}{\ensuremath{\mathbb{Z}}\xspace}
\newcommand{\LL}{\ensuremath{\mathscr{L}}}
\renewcommand{\L}{\ensuremath{\mathcal{L}}}
\newcommand{\PP}{\ensuremath{\mathcal{P}}}
\newcommand{\QQ}{\ensuremath{\mathcal{Q}}}
\renewcommand{\AA}{\ensuremath{\mathcal{A}}}
\newcommand{\bnabla}{\boldsymbol\nabla}
\def\longversion{1}
\begin{document}

\title{Homogenization of the Landau-Lifshitz equation}

 \author{Lena Leitenmaier \thanks{ Department of Mathematics, KTH,
     Royal Institute of Technology,
     Lindstedsv\"agen 25, 100 44 Stockholm, Sweden (lenalei@kth.se).}
          \and Olof Runborg \thanks{Department of Mathematics, KTH, Royal Institute of Technology (olofr@kth.se).}}

 \pagestyle{myheadings} \markboth{Homogenization of the
   Landau-Lifshitz equation}{L. Leitenmaier and
   O. Runborg}

\maketitle

\begin{abstract}
  In this paper, we consider homogenization of the Landau-Lifshitz
  equation with a highly oscillatory material coefficient with
  period $\varepsilon$ modeling a ferromagnetic composite. We derive
  equations for the homogenized solution to the problem and the
  corresponding correctors and obtain estimates for the difference
  between the exact and homogenized solution as well as corrected
  approximations to the solution. Convergence rates in $\varepsilon$
  over times $O(\varepsilon^\sigma)$ with $0\leq \sigma\leq 2$ are
  given in the Sobolev norm $H^q$, where $q$ is limited by the
  regularity of the solution to the detailed Landau-Lifshitz
  equation and the homogenized equation.  The rates depend on $q$,
  $\sigma$ and the the number of correctors.
\end{abstract}

\begin{keywords} Homogenization; Micromagnetics; Magnetization Dynamics;
  Multiscale
\end{keywords}

 \begin{AMS} 35B27; 65M15; 82D40
\end{AMS}

\section{Introduction}

The governing equation in micromagnetics is the Landau-Lifshitz equation
\cite{LandauLifshitz35, Gilbert55,Brown63,Aharoni96},
\begin{equation}\label{eq:problem}
  \partial_t \m^\varepsilon = - \m^\varepsilon \times \H(\m^\varepsilon) - \alpha \m^\varepsilon \times \m^\varepsilon \times \H(\m^\varepsilon)\,,
\end{equation}
where $\m^\varepsilon$ is the magnetization vector,
$\H(\m^\varepsilon)$ the so-called effective field and
$\alpha$ a positive damping constant. The first term on
the right-hand side here is a precession term, while the second one
is damping, with the damping parameter $\alpha$ determining the
strength of the effect.
The Landau-Lifshitz equation is important for describing
magnetic materials and processes in applications
like recording devices, discrete storage media,
and magnetic sensors.

In this paper we consider a simplified
version of the 
Landau-Lifshitz equation, where we assume
that $\H(\m^\varepsilon)$ only consists of the exchange interaction
contribution, which in many cases is the term dominating the effective
field,
\[\H(\m^\varepsilon) = \bnabla \cdot (a^\varepsilon(x) \bnabla \m^\varepsilon)\,.\]
We assume that $a^\varepsilon(x) = a(x/\varepsilon)$ is a smooth,
periodic, highly oscillatory material coefficient. This could for instance
be seen as a simple model for a magnetic multilayer
\cite{Hartmann00}, a ferromagnetic composite, consisting of thin layers
of two different materials with different interaction behavior, with
$a^\varepsilon$ indicating the current material. The size of two of
the layers then corresponds to $ \varepsilon$. The most straight
forward coefficient describing such a setup would have rather low
regularity. However, to make the problem more suitable for
mathematical treatment we suppose that $a\in C^\infty$.

Numerical simulations of the Landau-Lifschitz equation
are of considerable interest in applications, \cite{Prohl01,Garcia07}.
For the case when the material changes rapidly, as
above with $\varepsilon\ll 1$, the computational cost of simulations
becomes very high, since the $\varepsilon$-scales must be well resolved
by the numerical approximation. For such problems, multiscale
methods like the heterogeneous multiscale methods (HMM)
\cite{hmm4} and equation free methods \cite{kevrekidis2003}
become more efficient.
These are
inspired by homogenization theory \cite{Bensoussan_Lions_Papanicolaou_1,Cioranescu_Donato_1}.
In the framework of HMM, one combines the approximation of
a coarse macroscale model,
similar to a homogenized equation, with
simulations of  the original detailed equation
\cref{eq:problem}.
The simulations of \cref{eq:problem} are, however, restricted to
small boxes in
space and short time intervals, which reduces the computational cost.
The motivation behind our choice of focus here is to do error analysis
of HMM methods for magnetization dynamics. Such analysis relies
on homogenization theory, and the behavior of
solutions to \cref{eq:problem} over short times, as $\varepsilon\to 0$.
See \cite{arjmand1,arjmand2} for examples of
HMM methods in the context of magnetization dynamics.

There are several articles dealing with the homogenization of
(\ref{eq:problem}) and related problems. In particular, a similar
problem was considered in \cite{hamdache} and recently in
\cite{alouges2019stochastic}, where the authors use two-scale
convergence techniques to analyze \cref{eq:problem} with a
stochastic material coefficient $a^\varepsilon$, which can be seen
as a model for so-called spring magnets, a special type of
ferromagnetic composites.  The corresponding stationary problem was
studied in \cite{AlougesDiFratta15}.
Furthermore, in \cite{highcontrast}, a high contrast
composite medium is considered using two-scale convergence.
In \cite{multilayer} homogenization for ferromagnetic
multilayers in the presence of surface energies is studied, using 
a material coefficient
to describe the magnetic field associated with the exchange energy.
In all of these papers, the authors show convergence for
weak solutions and do not focus on convergence rates in $\varepsilon$,
which is of prime importance for HMM error analysis.
In contrast, our goal is to study how classical solutions to
\cref{eq:problem} can be approximated by the homogenized solution
and associated correction terms.  We note that
while existence of weak solutions to \cref{eq:problem} is shown in
\cite{weakexistence},
existence of classical solutions is only known for short times
and/or for small initial data gradients, see for example
\cite{strong_sol3,strong_sol2,strong_sol4,melcher, weakstrong}. In particular,
in \cite{strong_sol3,strong_sol2}, the authors prove
local existence and global existence given that the gradient of the
initial data is sufficiently small.  In \cite{strong_sol4},
existence of arbitrarily regular solutions with respect to space and
time up to an arbitrary final time is shown on bounded 3D domains,
assuming that the initial data is small enough and has high enough
regularity.  Although these works do not consider exactly the same
Landau-Lifshitz problem as us --- they do not include a varying
material coefficient $a^\varepsilon(x)$ and use slightly different
norms --- we will in this paper assume existence of regular
solutions to \cref{eq:problem} and the corresponding homogenized
equation and focus on convergence rates.
\if\longversion1
Moreover, in
  \Cref{appendix1} and \Cref{appendix2} we generalize the energy
  estimates in \cite{melcher} to the problem considered here.
\fi

In the main result of this paper we analyze the difference between
the solution $\m^\varepsilon$ of \cref{eq:problem} and the
homogenized solution with arbitrary many correction terms. We
provide rates for the convergence in terms of $\varepsilon$ in
Sobolev norms for dimensions $n =1,2,3$.  The rates that we obtain
depend on the length of the time interval considered, and are
centered on short times of length $O(\varepsilon^\sigma)$ with
$0\leq \sigma<2$. These short times are of main relevance for HMM
analysis. Note that the temporal oscillation period in
$\m^\varepsilon$ is of order $\varepsilon^2$ meaning that the times
considered are still relatively long, and include an infinite number
of oscillations in time as $\varepsilon\to 0$.  The approach we use
to achieve this is based on asymptotic multiscale expansions,
together with careful estimates of the corrector terms, inspired by
\cite{assyr}, which used a similar strategy to derive estimates for
the wave equation over long time. Unlike that paper and the ones
mentioned earlier, we include a fast time variable
$\tau=t/\varepsilon^2$ in the multiscale expansions to capture the
precise behavior of the initial transient of the solution.  Our main
assumption, besides existence and regularity
of all solutions, is an $L^\infty$ bound on
$\bnabla\m^\varepsilon$, uniformly in $\varepsilon$.
We note that such a uniform bound is easy to check in $L^2$, and
that it also true
in $L^\infty$ for the homogenized solution with correction terms.

This paper is organized as follows: in the next section we introduce
the notation used in this paper as well as some useful identities.
Section 3 contains the main result of the paper and outlines the
steps that are required to obtain it.
 In Section 4, we motivate our choice of
homogenized equation corresponding to \cref{eq:problem} as well as the form of
the related correctors. We obtain linear partial differential
equations describing the evolution of these correctors. In Section
5, we then show several properties of Bochner-Sobolev
norms that simplify dealing with the multiscale character of the
problem. Section 6 is devoted to a stability estimate for the error
introduced when approximating the solution $\m^\varepsilon$ to
\cref{eq:problem} by the solution to a perturbed version of the
original problem. We then
derive specific bounds for the correctors and the corresponding
approximation to $\m^\varepsilon$ in Section 7.

\section{Preliminaries}

Throughout this paper, the problems are set on
a domain $\Omega \subset \Real^n$,
with $n=1,2,3$ and periodic boundary conditions.
Moreover, for the fast variations we define also
$Y$ as the $n$-dimensional
unit cell, $Y = [0, 1]^n$.

In this section, we introduce notation for working with vector
functions $\v(x, t): \Omega \times \Real \mapsto \Real^3$ and their
gradients. We moreover introduce suitable norms
\if\longversion1
and useful identities
\fi
for working with multiscale problems and matrix valued functions. 


\subsection{Basic notation and differential operators.}
Let $\m:\Omega \times \Real \mapsto \mathbb{S}^2 \subset \Real^3$ denote the magnetization
vector, which is a function of time $t$ and space $x\in\Real^n$.
The components of $\m$ will be called $m^{(j)}$, hence
$\m=[m^{(1)},m^{(2)},m^{(3)}]^T$.  In accordance with standard
notation in the area we denote its Jacobian matrix by $\bnabla
\m$. We consider this as an element in $\Real^{3\times n}$, such that
\if\longversion1
\[
\bnabla \m :=
\begin{bmatrix}
\left(\nabla m^{(1)}\right)^T\\
\left(\nabla m^{(2)}\right)^T\\
\left(\nabla m^{(3)}\right)^T
\end{bmatrix}.
\]
\else
\[
\bnabla \m :=
\begin{bmatrix}
\left(\nabla m^{(1)}\right)^T &
\left(\nabla m^{(2)}\right)^T &
\left(\nabla m^{(3)}\right)^T
\end{bmatrix}^T.
\]
\fi

Suppose that $\A:\Real^n\mapsto \Real^{n\times n}$ gives a symmetric positive definite matrix, uniformly in $x$. Then we define $L$
for a function $u: \Real\times \Real^n \to \Real$ and the corresponding vector-operator $\L$ according to
\[
L u := \nabla \cdot (\A(x) \nabla u), \qquad \L \m :=
  \begin{bmatrix}
    L m^{(1)} &
    L m^{(2)} &    L m^{(3)}
  \end{bmatrix}^T.
\]
In general, all linear operators returning scalars are to be applied
element-wise to vector-valued functions if not explicitly stated otherwise.
As a convention, the cross product and scalar product between a vector-valued function
$\v \in \Real^3$ and $\B$ are done column-wise, and the divergence-operator is applied row-wise.

Moreover, consider $\B, \C \in \Real^{3 \times n}$
with elements $b_{ij}$ and $c_{ij}$ where $i$ and $j$ denote row and column, then we define
\begin{align*}
  \B : \C := \sum_{i=1}^3 \sum_{j=1}^n b_{ij} c_{ij}\,.
\end{align*}
Finally, note that the operator ${\mathcal L}\m$ could also
be defined as ${\mathcal L}\m = \bnabla\cdot (\bnabla\m \A)$ using the
notation introduced above. This is equivalent to the component-wise
definition.


\subsection{Function spaces and norms.}\label{sec:norms}
In the following, we denote by $C(I)$ the space of continuous
functions on an interval $I$ and by $C^\infty(\Omega)$ the space of smooth functions on $\Omega$.
By $H^q(\Omega)$ we denote the standard {periodic} Sobolev
spaces on $\Omega$, with norm $\|\cdot\|_{H^q}$,
$$
 \|v\|^2_{H^{q}} = \sum_{|\beta| \le q} \int_\Omega  |\partial_x^\beta v(x)|^2 dx\,.
$$
Moreover, by $H^{q,p}(\Omega;Y)$ we denote the
{periodic} Bochner--Sobolev
spaces on $\Omega\times Y$, with norm $\|\cdot\|_{H^{q,p}}$,
defined as
\begin{align*}
  \|v\|^2_{H^{q, p}} = \sum_{|\beta| \le q}
  \int_\Omega ||\partial^\beta_x v(x,\cdot)||^2_{H^p(Y)}dx=
  \sum_{|\beta| \le q, |\gamma| \le p} \int_\Omega \int_Y |\partial_x^\beta \partial_y^\gamma v(x,y)|^2 dy dx\,
\end{align*}
and we define the multiscale-norm
\[\| v\|_{H_\varepsilon^q} := \sum_{j=0}^q \varepsilon^j
  \|v\|_{H^j}\,,\] where we assume $0 < \varepsilon \le 1$.  All
previous norm definitions are analogous for vector valued functions.
Furthermore, let $|\cdot|$ denote the norm on $\Real^{3 \times n}$,
for a matrix-valued function $\B \in \Real^{3 \times n}$,
\if\longversion1
\begin{align*}
  |\B|^2 := \B: \B,
\end{align*}
\fi
and the corresponding $L^2$-norm on $\Omega \mapsto\Real^{3\times n}$ is given by
\begin{align*}
  \|\B\|_{L^2}^2 = \int_\Omega |\B|^2 dx = \int_\Omega \B : \B dx\,.
\end{align*}

\if\longversion1
\subsection{Useful identities.}
In the following, we will make frequent use of the standard triple product identities,
stating that it holds for three vector-valued functions $\u, \v, \w \in \Real^3$  that
\begin{align}
  \label{eq:first_triple_product_identity}
  \u \cdot(\v \times \w) &= \v \cdot(\w \times \u), \\
  \label{eq:second_triple_product_identity}
  \u \times \v \times \w &= (\u \cdot \w) \v - (\u \cdot \v) \w\,.
\end{align}
From \cref{eq:first_triple_product_identity} it follows directly that
\begin{align}\label{eq:first_tpi_gradient}
  \bnabla \u : (\v \times \bnabla \u) = \sum_{j=1}^n \partial_{x_j} \u \cdot (\v \times \partial_{x_j} \u) =
\sum_{j=1}^n \v \cdot (\partial_{x_j} \u \times \partial_{x_j} \u) = \boldsymbol 0\,.
\end{align}
In accordance to integration by parts for scalar functions, it holds
given a periodic matrix-valued function $\B \in H^1(\Omega;\Real^{3 \times n})$, a
periodic vector-valued $\v \in H^1(\Omega;\Real^{3})$ and  a periodic real valued function $f \in H^1(\Omega;\Real)$ that
\begin{align*}
  \int_\Omega \v \cdot (\bnabla \cdot \B) dx = - \int_\Omega \B : \bnabla \v dx,
\qquad \text{and} \qquad
  \int_\Omega f \bnabla \cdot \B dx = - \int_\Omega \B \bnabla f dx.
\end{align*}
Moreover, integration by parts implies that
\begin{align}\label{eq:int_m_lm_0}
  \int_\Omega \v \times \L \v dx  &=  0 \, \qquad \text{and} \qquad
  \int_\Omega \u \cdot \L \v dx = - \int_\Omega a \bnabla \u : \bnabla \v dx\,.
\end{align}
\fi
\section{Main results}\label{sec:main}

Assume that $\m^\varepsilon$ is a classical solution to the
Landau-Lifshitz equation on a domain $\Omega \subset \Real^n$,
$n = 1, 2, 3$ with periodic boundary conditions,
\begin{subequations} \label{eq:main_prob}
\begin{align}
  \partial_t \m^\varepsilon(x, t) &= - \m^\varepsilon(x,t) \times \L \m^\varepsilon(x, t) - \alpha
\m^\varepsilon(x,t) \times \m^\varepsilon(x,t) \times \L \m^\varepsilon(x, t) \,, \\
  \m^\varepsilon(x, 0) &= \m_\mathrm{init}(x)\,,
\end{align}
\end{subequations}
 where
$\L \m^\varepsilon := \bnabla \cdot (a^\varepsilon \bnabla \m)$ and $a^\varepsilon(x) := a(x/\varepsilon)$ is a
highly oscillatory, scalar material coefficient.
Moreover, let $\m_0$ satisfy the homogenized equation corresponding to \cref{eq:main_prob} on $\Omega$, which is derived in \Cref{sec:homogenization},
\begin{subequations} \label{eq:main_hom}
\begin{align}
  \partial_t \m_0(x, t) &= - \m_0(x,t) \times \bar \L \m_0(x, t) - \alpha
\m_0(x,t) \times \m_0(x,t) \times \bar \L \m_0(x, t) \,, \\
  \m_0(x, 0) &=  \m_\mathrm{init}(x)\,,
\end{align}
\end{subequations}
where
$\bar\L \m_0 := \bnabla \cdot (\bnabla \m_0 \A^H)$ and
$\A^H \in \Real^{n \times n}$ is the constant homogenized
coefficient matrix.
 Let furthermore
 $\tilde \m_J^\varepsilon$ be a corrected approximation to
 $\m^\varepsilon$, defined as
\begin{align}\label{eq:main_m_tilde_expanded}
  \tilde \m_J^\varepsilon(x, t) = \m_0(x, t) + \sum_{j=1}^J \varepsilon^j \m_j(x, x/\varepsilon, t, t/\varepsilon^2)\,,
\end{align}
where $\m_j$ are higher order correctors obtained by solving linear equations as given in \cref{eq:higher_order_pde}.
Our main goal in this paper then is to investigate the difference
in terms of $\varepsilon$
between $\m^\varepsilon$ and $\m_0$
as well as between $\m^\varepsilon$ and $\tilde \m^\varepsilon_J$.
We assume that the homogenized solution $\m_0$ exists up to time $T$.
For $\m^\varepsilon$ and
the error estimates we mainly consider shorter time intervals
$t\in[0,T^\varepsilon]$, where
\begin{equation}\label{eq:Tepsdef}
   T^\varepsilon:=\varepsilon^\sigma T,\qquad 0\leq \sigma\leq 2.
\end{equation}
We make the following precise assumptions. 
\begin{itemize}
\item [(A1)] The material coefficient function $a(x)$ is in
  $C^\infty(\Omega)$ and such that
  $a_\mathrm{min}\leq a(x)\leq a_\mathrm{max}$ for constants
  $a_\mathrm{min}, a_\mathrm{max} > 0$.
\item [(A2)] The initial data $\m_\mathrm{init}(x)$ satisfies $| \m_\mathrm{init}(x)| \equiv 1$, constant in space.
 Note that the Landau-Lifshiz equation is norm preserving,
\begin{equation}
  \label{eq:norm_pres}
  \tfrac12
  \partial_t |\m|^2 =
\m\cdot \partial_t \m=
\m\cdot\left(\m\times {\mathcal L}\m - \alpha \m\times
  \m\times {\mathcal L}\m\right)=0.
\end{equation}
Hence, this assumption implies that $|\m^\varepsilon(x, t)| \equiv 1$ and
$|\m_0(x, t)| \equiv 1$ for all time.
\item [(A3)] The damping coefficient $\alpha$ and the oscillation period
$\varepsilon$ are small, $0 < \alpha \le 1$ and $0 < \varepsilon < 1$.
\item [(A4)] The solution $\m^\varepsilon$ is such that
  \[\m^\varepsilon \in C^1([0, T^\varepsilon]; H^{s+1}(\Omega)), \qquad \text{for some } s \ge 1,\]
  and there is a constant $M$ independent of $\varepsilon$ such that
  \[\|\bnabla \m^\varepsilon(\cdot, t)\|_{L^\infty} \le M\,, \qquad 0 \le t \le T^\varepsilon\,.\]
\item [(A5)] The homogenized solution $\m_0$ is such that, for some $r\geq 5$,
  \begin{equation}
    \label{eq:m0regularity}
    \partial_t^k \m_0 \in C([0, T]; H^{r-2k}(\Omega)), \qquad 0 \le 2k \le r\,,
  \end{equation}
  which implies that
  \[\|\bnabla \m_0(\cdot, t)\|_{L^\infty} \le C\,, \qquad 0 \le t \le T\,.\]
\end{itemize}

We then obtain the following result.
\begin{theorem}\label{thm:main}
  Assume that $\m^\varepsilon$ is a classical solution to
  \cref{eq:main_prob}, $\m_0$ is a classical solution to
  \cref{eq:main_hom} and that the assumptions (A1)-(A5) are
  satisfied. Let $\tilde \m_J^\varepsilon$ be the corrected
  approximation to $\m^\varepsilon$ as given by
  \cref{eq:main_m_tilde_expanded} and consider the
   final time $T^\varepsilon$
  in \cref{eq:Tepsdef} with $\sigma$ satsifying
  \begin{equation}
  \label{eq:Teps}
\begin{cases}
   0 \le \sigma \le 2, & J \le 2, \\
   1 - \frac{1}{J-2} \le \sigma \le 2, & J \ge 3.
 \end{cases}
\end{equation}
Moreover, let $q_J = \min(s, r - 3 - \max(2, J))$.
Then we have results for three different cases:
 \begin{itemize}
  \item Fixed time, $\sigma=0$:
  \begin{align}\label{eq:main_l2_m0}
  \|\m^\varepsilon(\cdot, t) - \tilde\m^\varepsilon_J(\cdot, t)\|_{L^2} \le C \varepsilon\,,
  \qquad
  \|\m^\varepsilon(\cdot, t) - \tilde\m^\varepsilon_J(\cdot, t)\|_{H^1} \le C\,,
  \end{align}
  for $0 \le t \le T$ and $0 \le J \le 2$,
  provided $r\geq 6$ for the $H^1$ case.

 \item Short time, $0 < \sigma \le 1$:
  \begin{align}\label{eq:main_mJ_1}
  \|\m^\varepsilon(\cdot, t) - \tilde \m_J^\varepsilon(\cdot, t)\|_{H^q} \le C
    \begin{cases}
    \varepsilon^{1+ \sigma/2-q} , & J = 1, \\
    \varepsilon^{2- (1 - \sigma )(J-1)-\sigma/2-q} , & J \ge 2,
  \end{cases}
  \end{align}
  for $0 \le t \le T^\varepsilon$,
  provided $q \le q_J$.

 \item Very short time, $1 < \sigma \le 2$:
\begin{align}\label{eq:main_mJ}
  \|\m^\varepsilon(\cdot, t) - \tilde \m_J^\varepsilon(\cdot, t)\|_{H^q} \le C
    \varepsilon^{2+ (\sigma - 1)(J-1) - \rho(\sigma, q, r, J)-q},
\end{align}
for $0 \le t \le T^\varepsilon$,
provided $q \le q_J$ and $J\geq 1$. Here $\rho(\sigma, q, r, J) := \max(0,~ \tfrac12\sigma - (\sigma-1)(r - 3 - J - q))$.
\end{itemize}
In all cases, the constant $C$ is independent of $\varepsilon$ and $t$ but
depends on $M$ and $T$.
\end{theorem}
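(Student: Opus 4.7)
The plan is to proceed by the classical multiscale asymptotic expansion recipe adapted to the Landau-Lifshitz setting: (i) postulate the ansatz $\tilde\m_J^\varepsilon$, (ii) show that it solves a perturbed version of \cref{eq:main_prob} with a small residual, (iii) invoke a stability estimate to control $\m^\varepsilon - \tilde\m_J^\varepsilon$ in terms of this residual, and (iv) bound the residual in the norms demanded by the stability estimate. The four bulleted sections listed in the introduction correspond exactly to the ingredients (i)--(iv), so the proof of \Cref{thm:main} will be a matter of assembling them.

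Concretely, I would first treat $y=x/\varepsilon$ and $\tau=t/\varepsilon^2$ as independent fast variables, so that $\bnabla \mapsto \bnabla_x+\varepsilon^{-1}\bnabla_y$ and $\partial_t\mapsto \partial_t+\varepsilon^{-2}\partial_\tau$, and substitute $\tilde\m_J^\varepsilon$ into \cref{eq:main_prob}. Matching powers of $\varepsilon$ gives the standard cell problem whose solvability condition produces $\A^H$ and the homogenized equation \cref{eq:main_hom}, together with the linear PDEs \cref{eq:higher_order_pde} for the higher correctors $\m_j$; the fast-time variable $\tau$ is essential to absorb the initial transient so that the ansatz matches the initial datum at $t=0$. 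All of this is supplied by Section 4, and the pointwise and Sobolev bounds on $\m_j$ and its derivatives are supplied by Section 7; I would take these as given.

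Plugging $\tilde\m_J^\varepsilon$ back into \cref{eq:main_prob} then produces a residual of the form $\partial_t \tilde\m_J^\varepsilon = -\tilde\m_J^\varepsilon\times\L\tilde\m_J^\varepsilon - \alpha \tilde\m_J^\varepsilon\times\tilde\m_J^\varepsilon\times\L\tilde\m_J^\varepsilon + \varepsilon^{J-1}\R_J^\varepsilon$, where $\R_J^\varepsilon$ collects the unmatched high-order expansion terms together with defect terms coming from the fact that $\tilde\m_J^\varepsilon$ does not have unit length exactly (so the triple cross products do not telescope). Because $\R_J^\varepsilon$ depends on the correctors through the composition $\m_j(x,x/\varepsilon,t,t/\varepsilon^2)$, each $x$-derivative costs a factor $\varepsilon^{-1}$; this is precisely what the multiscale norm $\|\cdot\|_{H^q_\varepsilon}$ of \Cref{sec:norms} tracks. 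I would bound $\R_J^\varepsilon$ first on $\Omega\times Y$ in $H^{q,p}$ using the corrector estimates, then translate to a sharp $H^q(\Omega)$ bound via the inequalities in Section 5, paying a controlled loss in $\varepsilon^{-q}$. Feeding this into the stability result of Section 6 and integrating over $[0,T^\varepsilon]$ yields the three families of rates in \cref{eq:main_l2_m0,eq:main_mJ_1,eq:main_mJ}.

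The main technical obstacle will be step (iv) in the two short-time regimes $0<\sigma\leq 1$ and $1<\sigma\leq 2$. Here one must exploit three competing effects simultaneously: the nominal gain of $\varepsilon^{J-1}$ coming from the expansion order, the derivative loss $\varepsilon^{-q}$ from the fast scales, and the extra gain of $\varepsilon^\sigma$ from integrating over the short interval. For $\sigma>1$ one moreover needs the fact that the $\tau$-dependent transient corrector concentrates near $t=0$, which gives the additional factor $\varepsilon^{(\sigma-1)(J-1)}$ and explains the exponent in \cref{eq:main_mJ}; the penalty $\rho(\sigma,q,r,J)$ appears when the regularity $r$ of $\m_0$ is too small to absorb all the $\partial_\tau$-derivatives that have to be traded for $\partial_t$-derivatives of $\m_0$. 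Crucially, one must ensure that the stability constant stays independent of $\varepsilon$ rather than picking up a Gronwall factor of size $e^{C t/\varepsilon^2}$ --- this forces the stability estimate of Section 6 to be linear in the residual over the physical time $t$, not the fast time $\tau$, and is where the $L^\infty$ gradient bound (A4) and the approximate unit-length property of $\tilde\m_J^\varepsilon$ enter decisively.
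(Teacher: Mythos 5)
Your plan (ansatz, cell problems, residual, stability estimate, bound residual) reproduces what the paper calls the \emph{preliminary} estimate, but it does not reach the rates stated in \Cref{thm:main}. The stability theorem \Cref{thm:error_norm} combined with the residual bounds \cref{eq:main_etabound} and \cref{eq:main_gradmbound} gives only
\[
\|\e_J(\cdot,t)\|_{H^q}\le C\,\varepsilon^{2+(\sigma-1)(J-1)-\sigma/2-q},\qquad J\ge 2,
\]
and crucially nothing at all for $J=0,1$, because the residual bound $\|\boldsymbol\eta_J^\varepsilon\|\lesssim\varepsilon^{1+(\sigma-1)(J-2)}$ requires $J\ge 2$ (for $J=1$ the residual is not small enough to close the argument). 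Your proposal never addresses this, so the $J\le 1$ cases in all three bullets, including the basic $\|\m^\varepsilon-\m_0\|_{L^2}\le C\varepsilon$ statement, are simply not obtained. Moreover, even for $J\ge 2$ in the very short time regime, the preliminary exponent carries a $-\sigma/2$ that is absent in \cref{eq:main_mJ} when $\rho=0$, so your route cannot give the claimed rate. (A minor point: the Gr\"onwall factor in \cref{eq:main_ebound} gives a gain of $\sqrt{t}\sim\varepsilon^{\sigma/2}$, not $\varepsilon^\sigma$ as you wrote.)

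The missing idea is the paper's bootstrap/comparison step: for any admissible $J'>J$ (with $J'\ge 2$), write
\[
\|\e_J\|_{H^q}\le \|\e_{J'}\|_{H^q}+\sum_{j=J+1}^{J'}\varepsilon^j\|\m_j(\cdot,\cdot/\varepsilon,t,t/\varepsilon^2)\|_{H^q},
\]
bound the first term by the preliminary estimate at order $J'$, and the remaining terms directly via the corrector estimates \cref{eq:corrector-bound1} and \Cref{lemma:multiscaleest}. Because the preliminary exponent \emph{improves} as $J'$ grows while the regularity requirement $q\le r-3-J'$ tightens, one optimizes over $J'$: taking $J'=2$ handles $\sigma=0$ with $J\le 1$ and the short-time $J=1$ case, and taking $J'=r-3-q$ (or as large as regularity permits) in the very short time regime both removes the $\sigma/2$ loss when $r$ is large and produces exactly the penalty $\rho(\sigma,q,r,J)=\max\bigl(0,\tfrac12\sigma-(\sigma-1)(r-3-J-q)\bigr)$ when it is not. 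Your attribution of the $\rho$ penalty to trading $\partial_\tau$ for $\partial_t$ derivatives of $\m_0$ is not what happens; $\rho$ is the residue of the failed optimization over $J'$ forced by the constraint $q\le r-3-J'$. Without this comparison step the proof does not close.
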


For fixed final times of order $\mathcal{O}(1)$
this theorem  shows the expected strong $L^2$ convergence rate of
$\varepsilon$ for $\m_0$
and also for the higher order approximations
$\tilde\m^\varepsilon_1=\m_0+\varepsilon\m_1$
and $\tilde\m^\varepsilon_2=\m_0+\varepsilon\m_1+\varepsilon^2\m_2$. Moreover, the errors with
$\m_0$, $\tilde\m^\varepsilon_1$ and $\tilde\m^\varepsilon_2$
have bounded $H^1$-norms, suggesting weak $H^1$ convergence for
these three approximations.

For the short and very short time cases where $\sigma>0$ we note
that since the temporal oscillation period in the problem is of
order $\varepsilon^2$, as is shown in \Cref{sec:homogenization},
final times with $0<\sigma <2$ are still relatively long, and
include an infinite number of oscillations in time as
$\varepsilon\to 0$.

The second bullet in the theorem shows that for times from
$\mathcal{O}(\varepsilon)$ and up to
$\mathcal{O}(1)$,$0 < \sigma \le 1$, one gets strong convergence of
the $L^2$ and $H^1$-norms when considering the corrected
approximation $\tilde\m^\varepsilon_1$,
\[\|\m^\varepsilon-\tilde\m^\varepsilon_1\|_{L^2} \le C \varepsilon^{1 + \sigma/2}, \qquad \|\m^\varepsilon-\tilde\m^\varepsilon_1\|_{H^1} \le \varepsilon^{\sigma/2}.\]
However, one does not get better
approximations by including more correctors.

For final times shorter than $\mathcal{O}(\varepsilon)$, on the
other hand, one gets better approximations by including
more correctors, as \cref{eq:main_mJ} shows. This is especially
relevant since these are the times that are most interesting in the
context of HMM. For these short times, the regularity of $\m_0$ determines which
converge rate one obtains. In particular, if
\[r \ge J + 3 + q + \left\lceil \frac{\sigma}{2 (\sigma - 1)}
  \right\rceil,\] the penalty term $\rho$ in \cref{eq:main_mJ}
becomes zero and one obtains the optimal estimate for short times. The
longer the time considered, which means the closer $\sigma$ is to one,
the higher is the required regularity.
In particular, if
$\partial_t^k \m_0 \in C([0, T]; H^\infty(\Omega))$, $k \ge 0$, we
get
\[\|\m^\varepsilon(\cdot, t) - \tilde \m_{J}^\varepsilon\|_{H^q} \le C \varepsilon^{2 + (\sigma - 1) (J-1) -q}, \qquad \sigma > 1, ~J > 0.\]
This entails for example the following bounds for $\sigma = 3/2$ and $\sigma = 2$,
\[\sigma = 3/2: ~\|\m^\varepsilon - \tilde \m_{J}^\varepsilon\|_{H^q} \le C \varepsilon^{0.5 J + 1.5 - q}, \qquad \qquad \sigma = 2: ~\|\m^\varepsilon - \tilde \m_{J}^\varepsilon\|_{H^q} \le C \varepsilon^{J + 1 - q}\,.\]
Choosing $J$ high enough, we can obtain any convergence rate we want
for these errors.

Note that the first corrected approximation is of the form
$$
   \tilde{\m}_1(x,t) =
   \m_0(x,t)+\varepsilon
   \nabla \m_0(x,t) \boldsymbol\chi(x/\varepsilon)
   +\varepsilon \v(x,x/\varepsilon,t,t/\varepsilon^2).
$$
The part $\nabla\m_0 \boldsymbol\chi$ is familiar from
homogenization of elliptic operators
with $\boldsymbol\chi$ being the solution of the cell problem
\cref{eq:cell_problem}.
The second
part $\v$ is special for \cref{eq:main_prob}.
It satisfies the linear PDE \cref{eq:v_total} and oscillates both in
time and space, with the time variations decaying
exponentially. 
See \Cref{sec:homogenization}.

\subsection{Proof of Theorem 3.1}

We begin with a preliminary estimate,
based on \Cref{thm:error_norm}, which we subsequently improve to obtain the results in \Cref{thm:main}.
In \Cref{thm:m_pert} we show that the approximation
$\tilde \m_J^\varepsilon$, \cref{eq:main_m_tilde_expanded},
satisfies a perturbed version of \cref{eq:main_prob},
\begin{subequations} \label{eq:main_m_tilde}
\begin{align}
  \partial_t \tilde \m_J^\varepsilon(x, t) &= - \tilde \m_J^\varepsilon(x, t) \times \L \tilde \m_J^\varepsilon(x, t) - \alpha
\tilde \m_J^\varepsilon(x, t) \times \tilde \m_J^\varepsilon(x, t) \times \L \tilde \m_J^\varepsilon(x, t) + \boldsymbol \eta^\varepsilon_J\,,\\
\tilde \m_J^\varepsilon(x, 0) &= \m_\mathrm{init}(x)\,,
\end{align}
\end{subequations}
and that
the norm of the residual $\boldsymbol \eta^\varepsilon_J$ can be bounded as
\begin{equation}\label{eq:main_etabound}
  \|\boldsymbol \eta^\varepsilon_J(\cdot, t)\|_{H^{q}_\varepsilon} \le  C\varepsilon^{1+(\sigma-1)(J-2)},\qquad
  0\leq t\leq T^\varepsilon,
\end{equation}
if we include at least two correctors in the expansion, $J\geq 2$,
and if $0 \le q \le r - 2 -J$. Furthermore,
using \cref{eq:m_tilde_length_alt} after
\Cref{lemma:m_tilde_length},
we show that
\begin{equation}\label{eq:main_gradmbound}
 \|\bnabla |\tilde{\m}_J^\varepsilon(\cdot,  t)|^2\|_{H^q_\varepsilon}  \le C \varepsilon^{2 + (\sigma-1)(J-2)},  \qquad 0\leq t\leq T^\varepsilon,
\end{equation}
under the same conditions. 
This last estimate can be seen as a measure for how rapidly the length of
$\tilde \m^\varepsilon_J$ changes.
\Cref{thm:error_norm} now says that
 the error $\e_J := \m^\varepsilon - \tilde \m_J^\varepsilon$
satisfies
\begin{equation}\label{eq:main_ebound}
    \|\e_J(\cdot, t)\|^2_{H^q} \le C  \frac{t}{\varepsilon^{2q}} \sup_{0 \le s \le t} \left(\|\nabla|\tilde \m_J^\varepsilon(\cdot, s)|^2 \|_{H^{q}_\varepsilon}^2 +  \|\boldsymbol \eta_J^\varepsilon(\cdot, s)\|_{H^q_\varepsilon}^2\right),
\quad 0 \le t \le T^\varepsilon
\,,
\end{equation}
when $q \leq s$ and
\[
  \|\tilde{\m}^\varepsilon_J(\cdot,  t)\|_{W^{k,\infty}} \le  C\varepsilon^{\min(0,1-k)},\qquad 0\leq k \leq q+1,
\]
uniformly for  $t\in [0, T^\varepsilon]$. The latter estimates
are true by \Cref{thm:mJnorms} when $0 \le q \le r-3-J$. Therefore,
combining \cref{eq:main_etabound},  \cref{eq:main_gradmbound}, \cref{eq:main_ebound},
and \cref{eq:Teps}
we get
\begin{equation}
  \label{eq:eJeps_bound}
  \|\e_J(\cdot, t)\|_{H^q} \le C \varepsilon^{2+(\sigma - 1)(J-1)-\sigma/2-q} \,, \quad 0 \le t \le T^\varepsilon\,,
\end{equation}
as long as $0\leq q\leq \min(s,r-3-J)$ and $J\geq 2$.
This completes the preliminary estimate. 

To improve the estimate, we consider the difference between
$\tilde{\m}^\varepsilon_{J}$ and higher order corrections $\tilde{\m}^\varepsilon_{J'}$ with $J'> J$ and $J'\geq 2$.
We write, using
\Cref{lemma:multiscaleest},
\begin{align}\label{eq:mj_error}
  \|\e_{J}\|_{H^q} &\leq
    \|\m^\varepsilon - \tilde\m_{J'}^\varepsilon\|
    +\|\tilde\m_{J'}^\varepsilon- \tilde\m_{J}^\varepsilon\|_{H^q}\leq
    \|\e_{J'}\|_{H^q}+
\sum_{j=J+1}^{J'} \varepsilon^j   \left\|\m_j(\cdot, \cdot/\varepsilon, t, t/\varepsilon^2)\right\|_{H^q}
 \nonumber  \\
  &\le \|\e_{J'}\|_{H^q}  + C \sum_{j=J+1}^{J'} \varepsilon^{j-q} \|\m_j(\cdot, \cdot, t, t/\varepsilon^2)\|_{H^{q, q+2}}.
\end{align}
We then need to use
\Cref{thm:mjestimate}, where it is shown that the norms of the first two
correctors,
$\m_1$ and $\m_2$, are uniformly bounded in
$\tau$, while higher order correctors grow algebraically.  In
particular, it holds for all $p \ge 0$ and $j \le r $ that
\begin{align}\label{eq:corrector-bound1}
  \|\m_j(\cdot, \cdot, t, \tau)\|_{H^{r-j,p}} \le C ( 1 + \tau^{\max(0, j-2)}) \le C \varepsilon^{(\sigma-2) \max(0, j-2)},
\end{align}
for $0 \le t \le T^\varepsilon$ and
$0 \le \tau \le \varepsilon^{-2} T^\varepsilon$.
Entering \cref{eq:eJeps_bound} and \cref{eq:corrector-bound1} in
\cref{eq:mj_error} then shows that
\begin{equation}
  \label{eq:inter_bound}
  \|\e_{J}\|_{H^q}
  \leq
  C\varepsilon^{
  2+(\sigma - 1)(J'-1)-\sigma/2-q}+
  C\sum_{j=J+1}^{J'} \varepsilon^{j+(\sigma-2) \max(0, j-2)-q},
\end{equation}
when $q\leq \min(s,r-3-J')$.

We are now ready to show the final estimates as given in
\Cref{thm:main}. For the first case, where $\sigma=0$, we take
$0\leq J< J'=2$ and $0\leq q\leq 1$. Then \cref{eq:inter_bound}
gives us
$$
  \|\e_{J}^\varepsilon\|_{H^q}
  \leq
  C\varepsilon^{1-q}+
  C\sum_{j=J+1}^{2} \varepsilon^{j-q}\leq C \varepsilon^{1-q},
$$
when $q\leq \min(s,r-5)$, which is automatically satisfied for $q=0$
by (A4) and (A5) but requires $r\geq 6$ for $q=1$.  The result for
$J=2$ follows directly from \Cref{eq:eJeps_bound}.

For the second case, where $0<\sigma\leq 1$, we cannot improve
the preliminary estimate \cref{eq:eJeps_bound} using \cref{eq:inter_bound}
when $J\geq 2$. However, for
$J=1$ and $J'=2$, \cref{eq:inter_bound} gives
$$
  \|\e_{1}\|_{H^q}
  \leq
  C\varepsilon^{1+ \sigma/2-q}+
  C\varepsilon^{2-q}\leq \varepsilon^{1+ \sigma/2-q}.
  $$
  This is valid as long as $q\leq \min(s,r-3-\max(2,J))=q_J$.

Finally, for the third case in \Cref{thm:main}, where $1<\sigma\leq 2$, we
only consider \cref{eq:inter_bound} with $J\geq 1$. Then
$j+(\sigma-2)(j-2)=2+(\sigma-1)(j-2)$ and we get
\begin{align*}
  \|\e_{J}\|_{H^q}
  &\leq
  C\varepsilon^{2+(\sigma - 1)(J'-1)-\sigma/2-q}+
  C\sum_{j=J+1}^{J'} \varepsilon^{j+(\sigma-2)(j-2)-q}\\
  &\leq
  C\varepsilon^{2+(\sigma-1)(J'-1)-\sigma/2-q}+
  C\varepsilon^{2+(\sigma-1)(J -1)-q}
  \\
  &\leq
  C \varepsilon^{2+\min\bigl[(\sigma-1)(J'-1)-\sigma/2,\ (\sigma-1)(J-1)\bigr]-q}\\
  &=
  C \varepsilon^{2+(\sigma-1)(J-1)-\max\bigl[\sigma/2-(\sigma-1)(J'-J),\ 0\bigr]-q},
\end{align*}
where the possible choices of $J'$ are limited by the restrictions
$q\leq \min(s,r-3-J')$, $J'\geq 2$ and $J'>J$.  When
$q=r-3-\max(2,J)$ we can therefore not choose $J'$ such that we get
an improvement. Hence \cref{eq:main_mJ} is the same as the
preliminary estimate \cref{eq:eJeps_bound} in that case.  It thus
only remains to prove the case $q< r-3-\max(2,J)$.  We are then
allowed to take $J'=r-3-q>\max(2,J)$ and get
\begin{align*}
  \|\e_{J}\|_{H^q}\leq
  C \varepsilon^{2+(\sigma-1)(J-1)-\max(\sigma/2-(\sigma-1)(r-3-q-J),0)-q}.
\end{align*}
The theorem is proved.

\section{Homogenization}\label{sec:homogenization}

In this section we derive differential equations for the homogenized
solution $\m_0$ to \cref{eq:main_prob} and the corresponding
correction terms.  We aim to motivate our choice of equations but do
not include any proofs in this section.  Precise energy estimates
will be done in \Cref{sec:application}.

\subsection{Multiscale expansion.}
We consider the Landau-Lifshitz   \cref{eq:main_prob}
and assume
that we are looking for an asymptotic solution to \cref{eq:main_prob} of the form
\[
\m^\varepsilon(x, t) = \m\left(x, x/\varepsilon, t, t/\varepsilon^2;\varepsilon\right)
\]
for a suitable function $\m(x, y, t, \tau)$. Numerical experiments
suggest that this is the form that is required for our problem. One
example for this is shown in \cref{fig:num_ex} where one can clearly
observe oscillations in space on an $\varepsilon$-scale and
oscillations in time on an $\varepsilon^2$ scale when taking the
difference between $\m^\varepsilon$ satisfying \cref{eq:main_prob}
and the suggested $\m_0$.

\begin{figure}[htb]
  \centering
  \includegraphics[width=\textwidth]{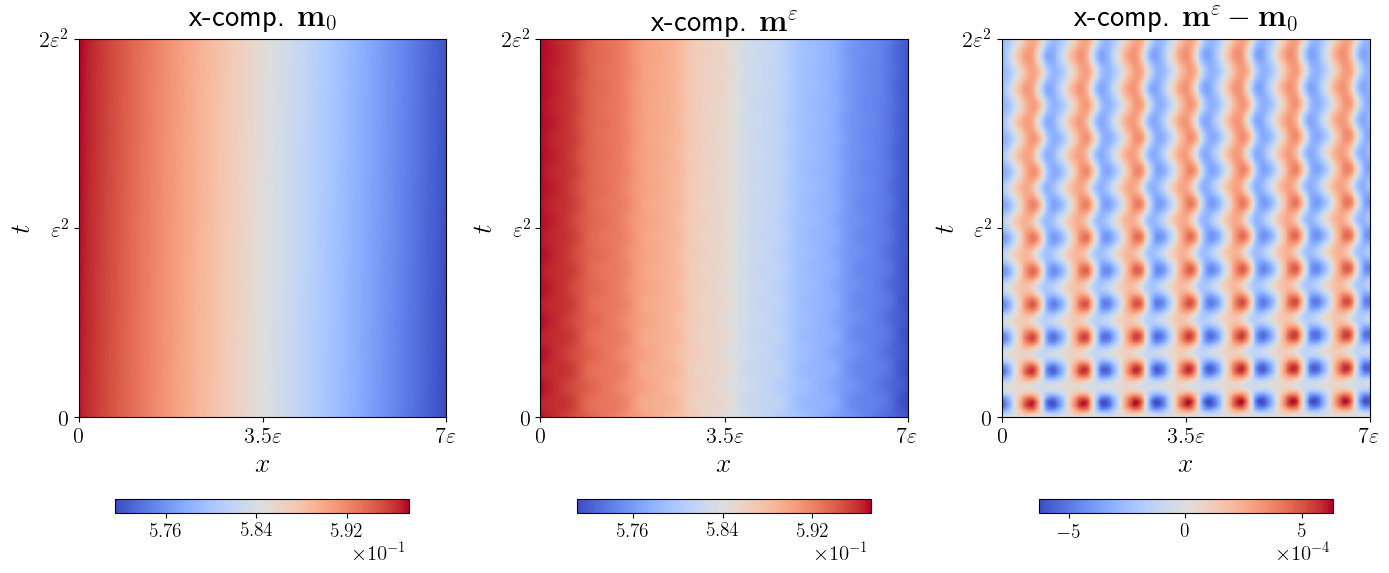}
  \caption{Numerical example: $x$-component of the solution
    $\m^\varepsilon$ to \cref{eq:main_prob} in 1D and the
    corresponding $\m_0$ according to $\cref{eq:main_hom}$ when
    choosing $a^\varepsilon(x) = 1 + 0.5\sin(2\pi x/\varepsilon)$,
    $\varepsilon=1/70$, $\alpha = 0.02$  and
    initial data
    $\m_\mathrm{init}(x) = \m_{nn}(x) / |\m_{nn}(x)|$
    where  $\m_{nn}(x) = 0.5 + [\exp(-0.1\cos(2\pi(x-0.2))),
    \exp(-0.2\cos(2\pi x)),
    \exp(-0.1\cos(2\pi(x-0.8)))]^T$ on a subset
    $[0, 7\varepsilon]$ of the domain
    $\Omega = [0, 1]$ and a short time interval
    $0 \le t \le 2\varepsilon^2$. \label{fig:num_ex}}
\end{figure}
Taking derivatives of $\m^\varepsilon(x, t)$, one obtains
\begin{align*}
  \nabla \m^\varepsilon(x, t) &= \nabla_x \m(x, y, t, \tau;\varepsilon) + \frac{1}{\varepsilon} \nabla_y \m(x, y, t, \tau;\varepsilon)\,, \\
\partial_t \m^\varepsilon(x, t) &= \partial_t \m(x, y, t,
  \tau;\varepsilon) + \frac{1}{\varepsilon^2} \partial_\tau \m(x, y,
  t, \tau;\varepsilon)\,,
\end{align*}
where $y := \frac{x}{\varepsilon}$ is
the fast variable in space and $\tau := \frac{t}{\varepsilon^2}$ the
fast variable in time.  The differential operator $\L$ can
accordingly be rewritten in the form
\begin{align*}
  \L = \L_0 + \frac{1}{\varepsilon} \L_1 + \frac{1}{\varepsilon^2} \L_2,
\end{align*}
where $\L_0, \L_1$ and $\L_2$ are the vector-operators corresponding to the scalar operators
\begin{align*}\label{eq:operators}
  L_0 &:= \nabla_x \cdot(a(y) \nabla_x)\,, 
\qquad
  L_1 := \nabla_x \cdot(a(y) \nabla_y) + \nabla_y \cdot (a(y) \nabla_x)\,, 
\qquad
  L_2 := \nabla_y \cdot(a(y) \nabla_y) \,.
\end{align*}
We are looking for an asymptotic expansion for $\m$,
\begin{equation}\label{eq:asymptotic_expansion}
  \m(x,y,t,\tau;\varepsilon) = \m_0(x, t) + \sum_{j=1}^\infty \varepsilon^j \m_j(x, y, t, \tau),
\end{equation}
where we assume that $\m_0 = \m_0(x, t)$ only depends on the slow
variables, $x$ and $t$, and that the correctors $\m_j$, $j = 1, 2, \ldots$ are 1-periodic in $y$.

Before we consider an expanded version of the differential equation
\cref{eq:main_prob}, we start by introducing suitable notation that
will help us to keep track of terms of the same structure throughout the rest of this paper.
First, we let $\m_{-1}(x, t) := 0$ and define
\begin{align}\label{eq:V_notation}
  \V_j :=
  \L_2 \m_{j} + \Z_{j-1},\quad j\geq 1,
  \quad\text{and}
  \quad
\Z_j :=
\begin{cases}
 \L_1 \m_{0}, & j=0, \\
 \L_0 \m_{j-1} + \L_1 \m_{j}, & j\geq 1.
  \end{cases}
\end{align}
Furthermore, let for $j \ge 1$,
\begin{align}\label{eq:T_notation}
  \T_{j} := \sum_{k=1}^j \m_{j-k} \times \V_{k} =  \m_0 \times \V_j + \mathbf{R}_{j-1} \,,
\quad 
\mathbf{R}_j :=
  \begin{cases}
 0, & j=0, \\
  \sum_{k=1}^{j} \m_{j+1-k} \times \V_{k}, & j\geq 1,
  \end{cases}
\end{align}
and finally
\begin{align}\label{eq:S_notation}
  \mathbf{S}_j :=
  \begin{cases}
 0, & j=0, \\
  \sum_{k=1}^{j} \m_{j+1-k} \times \T_{k}, & j\geq 1.
  \end{cases}
\end{align}
Note that in all of these quantities, $j$ indicates the highest index of all
$\m_j$ that are part of the quantity.

Consider now the expanded version of $\L \m^\varepsilon$, which becomes
\begin{align}\label{eq:Lm_expansion}
  \L \m^\varepsilon = \frac{1}{\varepsilon^2} \L_2 \m_0 + \frac{1}{\varepsilon} ( \L_1 \m_0 + \L_2 \m_1)
+ \sum_{j=0}^\infty \varepsilon^j (\L_0 \m_j + \L_1 \m_{j+1} + \L_2 \m_{j+2})
=: \sum_{j=1}^\infty \varepsilon^{j-2} \V_j\,,
\end{align}
entailing that the precession term in \cref{eq:main_prob} expands to
\begin{align*}\label{eq:scale_der}
  \m^\varepsilon(x, t) \times \L \m^\varepsilon(x, t) &= \sum_{j=0}^\infty \varepsilon^j \m_j \times
\sum_{k=1}^\infty \varepsilon^{k-2} \V_{k} = \sum_{j=1}^\infty \varepsilon^{j-2} \sum_{k=1}^{j} \m_{j-k} \times  \V_{k} = \sum_{j=1}^\infty \varepsilon^{j-2} \T_j,
\end{align*}
and the damping term takes the form
\begin{align*}
  \m^\varepsilon \times \m^\varepsilon \times \L \m^\varepsilon
  &=  \sum_{\ell=0}^\infty \varepsilon^\ell \m_\ell \times \sum_{j=1}^\infty \varepsilon^{j-2} \T_j =  \sum_{\ell=0}^\infty \varepsilon^{\ell-2}  \sum_{j=1}^\ell \m_{\ell-j} \times  \T_{j}\,.
\end{align*}
For the time derivative of $\m^\varepsilon$, it moreover holds that
\[\partial_t \m^\varepsilon = \sum_{j=0}^\infty \varepsilon^j \partial_t \m_j + \varepsilon^{j-2} \partial_\tau \m_j
\,.\]
We  can then formally
rewrite the differential equation \cref{eq:main_prob} as
\begin{align*}
  \sum_{j=1}^\infty \varepsilon^{j-2} (\partial_t \m_{j-2} + \partial_\tau \m_j) =
- \sum_{j=1}^\infty \varepsilon^{j-2} \T_j - \alpha  \sum_{j=0}^\infty \varepsilon^{j-2}  \sum_{k=1}^j \m_{j-k} \times  \T_{k},
\end{align*}
which implies that at scale $\varepsilon^{j-2}$ and for $j\geq 1$, it holds that
\begin{align}\label{eq:higher_order_m_eq}
  \partial_t \m_{j-2}+\partial_\tau \m_{j}
  &= - \T_{j} - \alpha  \sum_{k=1}^{j} \m_{j-k} \times  \T_{k} \,. 
\end{align}
Note that as $\m_0(x, t)$ is independent of $y$ and $\tau$, both $\partial_\tau \m_0(x, t) = 0$ and $\L_2 \m_0(x, t) = 0$.
Based on \cref{eq:higher_order_m_eq}, it is now possible to
show that all the correctors $\m_j$,
$j \ge 1$, satisfy linear
differential equations of a similar structure as the one for $\m_0$.
Since it holds that
\begin{align*}
  \T_j &= \m_0 \times \V_j + \R_{j-1}
=\m_0 \times \L_2\m_j+\m_0 \times \Z_{j-1} + \R_{j-1}, \\
\sum_{k=1}^{j} \m_{j-k} \times  \T_{k}
&=\m_0\times\m_0 \times \L_2\m_j+\m_0\times\m_0 \times \Z_{j-1} + \m_0\times\R_{j-1}+\S_{j-1},
\end{align*}
where $\mathbf{R}_{j-1}$, $\mathbf{S}_{j-1}$ and $\mathbf{Z}_{j-1}$
only contain lower order $\m_k$ with $k \leq j-1$, it follows that
$\m_j$, with $j \ge 1$, satisfies the linear differential equation
\begin{align} \label{eq:higher_order_pde}
  \partial_\tau \m_{j} = - \m_0 \times \L_2 \m_j - \alpha \m_0 \times \m_0 \times \L_2 \m_j + \F_{j-1}
= \LL \m_j + \F_{j}\,,
\end{align}
where the linear operator $\LL$ is defined
such that
 \begin{align} \label{eq:main_ll}
   \LL \m_j := - \m_0 \times \L_2 \m_j - \alpha \m_0 \times \m_0 \times \L_2 \m_j, 
 \end{align}
and
all terms involving only $\m_k$ with $k < j$ are contained in $\F_{j}$,
defined according to
\begin{align}\label{eq:forces}
  \F_j :=  -\mathbf{R}_{j-1}-
  \m_0\times\Z_{j-1}-
   \alpha \left(\m_0 \times \mathbf{R}_{j-1}+  \m_0\times\m_0\times\Z_{j-1}
   +\S_{j-1}\right)
    -  \partial_t \m_{j-2},
\end{align}
for $j \ge 1$.

\subsection{Derivation homogenized equation.}\label{sec:homogeq_deriv}
In order to derive a homogenized equation corresponding to
\cref{eq:main_prob}, we now take a closer look at the differential
equations for $\m_1$ and $\m_2$.
As by definition $\R_0=\S_0=\m_{-1} := 0$,
 \begin{align}\label{eq:forces1}
   \F_1 &=
   -
  \m_0\times\Z_{0}-
   \alpha \m_0\times\m_0\times\Z_{0},
 \end{align}
where $\Z_0 = \L_1 \m_0$, it holds according to
\cref{eq:higher_order_pde} at
scale $\varepsilon^{-1}$ that
\begin{equation}
  \label{eq:m1_eq}
  \partial_\tau \m_1 = - \m_0 \times \V_1 - \alpha \m_0 \times \m_0  \times \V_1\,,
\end{equation}
since $\V_1=\L_2 \m_1 + \L_1 \m_0$.
To find a solution for this equation, we assume that $\m_1$ takes the form
\begin{equation}
  \label{eq:m1}
  \m_1(x, y, t, \tau) = \nabla_x \m_0 \boldsymbol\chi(y) + \v(x, y, t, \tau)\,,
\end{equation}
where $\boldsymbol\chi(y)$ is the solution to the {cell problem}
\begin{equation}
  \label{eq:cell_problem}
  \nabla_y \cdot (a(y) \nabla_y \boldsymbol\chi(y)) = - \nabla_y a(y) \,.
\end{equation}
Note that \cref{eq:cell_problem} only determines $\boldsymbol\chi$
up to a constant. In accordance with standard practice in the
literature
\cite{Bensoussan_Lions_Papanicolaou_1,Cioranescu_Donato_1}, we
assume in the following that this constant is chosen such that
$\boldsymbol\chi(y)$ has zero average.  As, by the definition of
$\boldsymbol\chi(y)$ and the assumption \cref{eq:m1},
\begin{align}\label{eq:L2_m1_L1_m0_L2_v}
  \V_1=\L_2 \m_1 + \L_1 \m_0 = \L_2 \v + \L_2 (\nabla_x \m_0 \boldsymbol\chi ) + \L_1 \m_0 = \L_2 \v\,,
\end{align}
it follows from \cref{eq:m1_eq} that
\begin{equation}
  \label{eq:v_total}
  \partial_\tau \v = - \m_0 \times \L_2 \v - \alpha \m_0 \times \m_0 \times \L_2 \v\,
  = \LL \v\,.
\end{equation}
This is a linear differential equation with the same structure as
\cref{eq:higher_order_pde}, but with forcing $\F = 0$.  At the
initial time, $\tau = 0$, we set $\m_1(x, y, t, 0) = 0$ and hence
have $\v(\tau = 0, y) = - \bnabla_x \m_0 \boldsymbol\chi(y)$.  Note that $\m_1$
is biggest term in $\m^\varepsilon - \m_0$ and therefore determines
the right figure in \cref{fig:num_ex}: there we can observe
oscillations around zero on a scale of approximately $\varepsilon$ smaller then
the variations in the homogenized solution.  For short times, we clearly
observe oscillations in both time and space while the oscillations
in time reduce as $t$ increases. This indicates that the $\v$-part
of $\m_1$ gets damped away with time, while
$\bnabla_x \m_0 \boldsymbol\chi(y)$, which does not depend on
$t/\varepsilon^2$ but oscillates in space, is preserved. This matches
with the results for $\v$ and $\m_1$ in
\Cref{sec:correction_estimates}.

On the $\varepsilon^0$-scale, we have
\begin{equation}\label{eq:eps0_total}
   \partial_\tau \m_2 =\LL \m_2 + \F_2,
\end{equation}
where the expression for $\F_2$ given by \cref{eq:forces} becomes
\begin{align}\label{eq:F2}
\F_2 =& - \R_1 -\m_0 \times \Z_1
- \alpha [\m_0 \times \R_1+\m_0 \times \m_0 \times \Z_1
+  \S_1]-\partial_t\m_0\,,
\end{align}
and the relation \cref{eq:L2_m1_L1_m0_L2_v} gives the simplification
\begin{equation}
  \label{eq:R1_S1}
  \R_1 =\m_1 \times \L_2 \v, \qquad \S_1 = \m_1 \times\m_0\times \L_2 \v.
\end{equation}
To obtain a homogenized equation, \cref{eq:eps0_total} is averaged over one period $Y$ in $y$.
Then all terms which are derivatives with respect
to $y$ of $y$-periodic terms cancel, and
since  $\m_0$ does not depend on $y$ we get
\begin{equation}\label{eq:m2averaged1}
   \partial_\tau \int_Y \m_2dy =\int_Y \F_2dy
   =
   -\partial_t\m_0 -\m_0 \times \int_Y \Z_1 dy
- \alpha \m_0 \times \m_0 \times \int_Y\Z_1 dy-\E_1,
\end{equation}
where
\begin{equation}
  \label{eq:E1}
 \E_1 :=\int_Y \R_1 +\alpha [\m_0 \times \R_1+  \S_1]dy.
\end{equation}
Furthermore,
\begin{align*}
 \int_Y \Z_1 dy
&= \int_Y \L_0\m_0 + \L_1\m_1 dy =
\int_Y   \bnabla_x \cdot a(y) \bnabla_x  \m_0+\bnabla_x \cdot \left( a(y) \bnabla_y
(\nabla_x \m_0 \boldsymbol\chi + \v)\right) dy\nonumber \\
&= \int_Y \bnabla_x \cdot \left(a(y) \bnabla_x \m_0 (\I+\bnabla_y \boldsymbol\chi\right) dy
+\int_Y \L_1\v dy.
\end{align*}
We therefore define the constant homogenized material coefficient matrix $\A^H \in \Real^{n \times n}$ as
\begin{align*}
  \A^H := \int_Y a(y) \left(\I +  \bnabla_y \boldsymbol\chi \right) dy\,
\end{align*}
and let $\bar L u := \nabla_x \cdot (\A^H \nabla_x u )$ for any
scalar function $u: \Real^n \times \Real \mapsto \Real$, with the
corresponding vector-operator being denoted $\bar \L$.  Plugging
this into \cref{eq:m2averaged1}, we get
\begin{align}\label{eq:AF2}
  \partial_\tau \int \m_2 dy =&  -\partial_t \m_0 -\m_0 \times \bar \L \m_0 - \alpha \m_0 \times \m_0 \times \bar \L \m_0 - \E_1 -\E_2,
\end{align}
where
\begin{equation}
  \label{eq:E2}
 \E_2 :=
 \m_0 \times \int_Y \L_1\v dy + \alpha \m_0 \times \m_0 \times \int_Y \L_1\v dy.
\end{equation}
As we will see in \Cref{sec:correction_estimates}, $\v$ oscillates and decays exponentially in $\tau$,
which means that so do $\R_1$ and $\S_1$ by \cref{eq:R1_S1}.
Therefore, if we average over a fixed interval in the fast time variable, the contributions of $\E_1$ and $\E_2$ will become negligible
as the interval size increases, while $\m_0$ is unaffected.
We therefore define $\m_0$ such that it satisfies
\begin{align}\label{eq:m0_eq}
  \partial_t \m_0 =&  -\m_0 \times \bar \L \m_0 - \alpha \m_0 \times \m_0 \times \bar \L \m_0\,.
\end{align}
In contrast to the differential equations for $\m_j$, $j \ge 1$, this is a nonlinear
differential equation with a matrix-valued coefficient in the operator $\bar \L$.

\section{Sobolev norm estimates}\label{sec:utility}

The proofs in the following sections rely frequently on properties
of the considered Bochner-Sobolev and multiscale norms as well as
several bilinear Sobolev estimates. In this section, we therefore
prove lemmas providing the required properties, making it possible
to keep the subsequent sections mostly focused on specific estimates
for the solution to the Landau-Lifshitz equation \cref{eq:main_prob}, corresponding
homogenized solution and correctors.

If not stated otherwise, the estimates in this section apply to
functions in arbitrary dimensions, not necessarily on $\Omega$ as
considered previously. All the lemmas that are stated for scalar
functions analogously apply to vector valued functions, with either
scalar or cross products instead of products of scalar functions.
Throughout this section, we suppose $0 < \varepsilon < 1$ in accordance with (A3).

In several of the subsequent estimates we use the Sobolev
inequality which states that when $f\in H^2(D)$ and
$D \subseteq \Real^n$, for dimension $n \le 3$, then
\begin{align}
  \label{eq:sob}
\sup_{x\in D} |f(x)| \le C \|f\|_{H^2(D)}\,.
\end{align}


\subsection{Multiscale norms.}
In the present paper,
a function $u(x,y)$
in
the Bochner-Sobolev space
$H^{q,p}(\Omega;Y)$ is often used to describe multiscale
phenomena, where
the $x$- and $y$-variables represent the slow and fast scales respectively.
For such functions we have the following lemma.
\begin{lemma}\label{lemma:multiscaleest}
Suppose $f^\varepsilon(x) := u(x,x/\varepsilon)$ and $n \le 3$. Then
\begin{align}
   \|f^\varepsilon\|_{H^q} \leq \frac{C}{\varepsilon^{q}}\|u\|_{H^{q, q+2}}\,,\qquad
   \|f^\varepsilon\|_{W^{q,\infty}} \leq \frac{C}{\varepsilon^{q}}\|u\|_{H^{q+2, q+2}}\,,
\end{align}
whenever the norms are bounded.
\end{lemma}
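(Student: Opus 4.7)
The plan is to differentiate $f^\varepsilon(x)=u(x,x/\varepsilon)$ using the chain rule and then control each resulting term pointwise via Sobolev embeddings in the fast variable $y$. For any multi-index $\beta$ with $|\beta|\le q$, a repeated application of the chain rule gives
\begin{equation*}
  \partial_x^\beta f^\varepsilon(x) \;=\; \sum_{\beta'+\gamma=\beta} c_{\beta',\gamma}\,\varepsilon^{-|\gamma|}\,\bigl(\partial_x^{\beta'}\partial_y^\gamma u\bigr)(x,x/\varepsilon),
\end{equation*}
where the constants $c_{\beta',\gamma}$ are the binomial coefficients. The worst $\varepsilon$-power is $\varepsilon^{-|\beta|}\le \varepsilon^{-q}$, which is exactly the prefactor claimed in the lemma; it remains to bound the evaluated mixed derivatives in the appropriate norm.

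For the first estimate, I would square $\partial_x^\beta f^\varepsilon$ and integrate over $x\in\Omega$. The essential point is to decouple the evaluation $y=x/\varepsilon$ from the $x$-integration. Since $n\le 3$, the Sobolev embedding $H^2(Y)\hookrightarrow L^\infty(Y)$ applied slicewise in $y$ gives, for any fixed $x$ and any smooth periodic $g(x,\cdot)$,
\begin{equation*}
  \bigl|g(x,x/\varepsilon)\bigr|^2 \;\le\; \|g(x,\cdot)\|_{L^\infty(Y)}^2 \;\le\; C\,\|g(x,\cdot)\|_{H^2(Y)}^2.
\end{equation*}
Integrating in $x$ yields $\int_\Omega |g(x,x/\varepsilon)|^2\,dx \le C\|g\|_{H^{0,2}}^2$. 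Applied to $g=\partial_x^{\beta'}\partial_y^\gamma u$ this produces the bound $C\|u\|_{H^{|\beta'|,|\gamma|+2}}^2 \le C\|u\|_{H^{q,q+2}}^2$, since $|\beta'|\le q$ and $|\gamma|\le q$. Summing over all $\beta'+\gamma=\beta$ and all $|\beta|\le q$ and accounting for the $\varepsilon^{-2|\gamma|}$ factors (absorbed into $\varepsilon^{-2q}$) gives $\|f^\varepsilon\|_{H^q}^2 \le C\varepsilon^{-2q}\|u\|_{H^{q,q+2}}^2$, which is the first claim.

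For the $W^{q,\infty}$ estimate, I need to replace the $L^2(\Omega)$-integration above by a pointwise bound in $x$ as well. This is obtained by applying the Sobolev embedding twice: first in $y$ as above, and then in $x$ using $H^2(\Omega)\hookrightarrow L^\infty(\Omega)$ (again valid for $n\le 3$). Concretely,
\begin{equation*}
  \sup_{x,y}|g(x,y)| \;\le\; \sup_x C\,\|g(x,\cdot)\|_{H^2(Y)} \;\le\; C\,\bigl\|\|g(\cdot,\cdot)\|_{H^2(Y)}\bigr\|_{H^2(\Omega)} \;\le\; C\,\|g\|_{H^{2,2}}.
\end{equation*}
Thus $|(\partial_x^{\beta'}\partial_y^\gamma u)(x,x/\varepsilon)| \le C\|u\|_{H^{|\beta'|+2,|\gamma|+2}} \le C\|u\|_{H^{q+2,q+2}}$, and the chain rule expansion together with $|\gamma|\le q$ yields $\|f^\varepsilon\|_{W^{q,\infty}} \le C\varepsilon^{-q}\|u\|_{H^{q+2,q+2}}$.

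There is no real obstacle here: the only delicate point is ensuring that the Sobolev embedding is applied in the right variable to decouple $x$ from $x/\varepsilon$, and checking that the worst $\varepsilon$-power coming from the chain rule matches $\varepsilon^{-q}$. The dimension restriction $n\le 3$ is what permits the two-derivative Sobolev embedding, which explains the $q+2$ appearing in the norm on the right.
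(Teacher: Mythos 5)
Your proof is correct and follows essentially the same route as the paper: expand $\partial_x^\beta f^\varepsilon$ by the chain rule, pull out the worst factor $\varepsilon^{-q}$, and use the Sobolev embedding $H^2\hookrightarrow L^\infty$ (valid for $n\le3$) to decouple the evaluation at $y=x/\varepsilon$ — once in $y$ for the $H^q$ bound, and twice (in $y$ then in $x$) for the $W^{q,\infty}$ bound. The paper presents the same chain of inequalities more compactly; your version merely spells out the vector-valued Sobolev embedding $H^2(\Omega;H^2(Y))\hookrightarrow L^\infty(\Omega;H^2(Y))$ behind the bound $\sup_{x,y}|g|\le C\|g\|_{H^{2,2}}$.
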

\begin{proof}
  Using \cref{eq:sob} and the definition of the norms, we find that
\begin{align*}
   \|f^\varepsilon\|^2_{H^q}
   &\le
   \sum_{\substack{|\alpha| \le q \\ \gamma \le \alpha}}
\binom{\alpha}{\gamma}
  \int \varepsilon^{-2|\gamma|}|\partial_y^\gamma\partial_x^{\alpha-\gamma} u(x,x/\varepsilon)|^2 dx
  \\
&\le C\varepsilon^{-2q}
   \sum_{\substack{|\alpha| \le q \\ \gamma \le \alpha}}
\|\partial_y^\gamma\partial_x^{\alpha-\gamma} u\|^2_{H^{0, 2}}
\leq C\varepsilon^{-2q}
  \|u\|^2_{H^{q, q+2}},
\end{align*}
and accordingly,
  \begin{align*}
   \|f^\varepsilon\|^2_{W^{k,\infty}}
   &\le
 C\varepsilon^{-2q}
   \sum_{\substack{|\alpha| \le q \\ \gamma \le \alpha}}
\sup_{x,y} |\partial_y^\gamma\partial_x^{\alpha-\gamma} u(x,y)|^2
\\&\leq
 C\varepsilon^{-2q}
   \sum_{\substack{|\alpha| \le q \\ \gamma \le \alpha}}
\|\partial_y^\gamma\partial_x^{\alpha-\gamma} u\|^2_{H^{2, 2}}
\leq C\varepsilon^{-2q}
\|u\|^2_{H^{q+2, q+2}},
\end{align*}
which shows the lemma.
\end{proof}

The weighted multiscale norm $\|\cdot\|_{H^q_\varepsilon}$ has the following properties that we will use:
\begin{lemma}\label{lemma:H_eps}
  Consider $f \in H^q(\Omega)$ such that
  for $0 \le j \le q$ and some constant $c \in \Real$,
    \[\|f\|_{H^j} \le C_j \varepsilon^{c-j}\,,\]
    then it follows that
    \begin{equation}\label{eq:H_eps_bound}
      \|f\|_{H^{q}_\varepsilon} \le C  \varepsilon^c,
    \end{equation}
    where the constants $C, C_j$ are independent of $\varepsilon$. Moreover, given a multi-index $\beta$, it holds for
    $0 \le q \le r - |\beta|$ that
    \begin{equation}\label{eq:H_eps_grad}
     \|\partial^\beta f\|_{H^q_\varepsilon} \le  \varepsilon^{-|\beta|}  \|f\|_{H^{q + |\beta|}_\varepsilon}.
    \end{equation}
\end{lemma}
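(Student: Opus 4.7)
The statement is essentially book-keeping: both bounds unfold directly from the definition $\|v\|_{H^q_\varepsilon} = \sum_{j=0}^q \varepsilon^j \|v\|_{H^j}$, so the plan is just to substitute and relabel indices.

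For the first estimate \cref{eq:H_eps_bound}, I would expand the norm and use the hypothesis term-by-term:
\begin{equation*}
\|f\|_{H^q_\varepsilon} = \sum_{j=0}^q \varepsilon^j \|f\|_{H^j} \le \sum_{j=0}^q \varepsilon^j \, C_j \varepsilon^{c-j} = \varepsilon^c \sum_{j=0}^q C_j,
\end{equation*}
and absorb the $\varepsilon$-independent sum into a new constant $C$. No subtlety is involved; one only has to observe that the weight $\varepsilon^j$ exactly cancels the $\varepsilon^{-j}$ in the pointwise assumption, leaving the common factor $\varepsilon^c$.

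For the derivative estimate \cref{eq:H_eps_grad}, I would again start from the definition and use the obvious bound $\|\partial^\beta f\|_{H^j} \le \|f\|_{H^{j+|\beta|}}$, then shift the summation index by $|\beta|$:
\begin{equation*}
\|\partial^\beta f\|_{H^q_\varepsilon} = \sum_{j=0}^q \varepsilon^j \|\partial^\beta f\|_{H^j}
\le \sum_{j=0}^q \varepsilon^j \|f\|_{H^{j+|\beta|}}
= \varepsilon^{-|\beta|} \sum_{k=|\beta|}^{q+|\beta|} \varepsilon^k \|f\|_{H^k}.
\end{equation*}
The remaining sum is dominated by the full sum $\sum_{k=0}^{q+|\beta|} \varepsilon^k \|f\|_{H^k} = \|f\|_{H^{q+|\beta|}_\varepsilon}$, which yields the desired inequality.

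There is no real obstacle here, since both claims are manipulations of a finite weighted sum; the only things to double-check are the range $0 \le q \le r - |\beta|$ (which just ensures $f\in H^{q+|\beta|}$ so that the right-hand side is finite) and the standing assumption $0<\varepsilon\le 1$ (needed so that dropping the lower-index terms in the last sum is a legitimate upper bound). I would state the proof in two short paragraphs following exactly the two displays above.
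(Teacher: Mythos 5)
Your proof is correct and is essentially the same as the paper's: both parts are proved by unfolding the definition of $\|\cdot\|_{H^q_\varepsilon}$, cancelling the $\varepsilon$-weights against the hypothesis for the first bound, and shifting the summation index by $|\beta|$ and enlarging the sum for the second (the paper multiplies both sides by $\varepsilon^{|\beta|}$ rather than factoring out $\varepsilon^{-|\beta|}$, which is the same manipulation).
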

\begin{proof}
  The first claim, \cref{eq:H_eps_bound} holds, since by the
  definition of $\|\cdot\|_{H^q_\varepsilon}$ and the given
  assumption,
\begin{align*}
  \|g\|_{H^{q}_\varepsilon} = \sum_{j=0}^{q} \varepsilon^j \|g\|_{H^j}
\le  \varepsilon^c \sum_{j=0}^{q} C_j
\le C \varepsilon^c.
\end{align*}
Similarly, we find that
\[\varepsilon^{|\beta|} \|\partial^\beta f\|_{H^{q}_\varepsilon} = \sum_{j=0}^q \varepsilon^{j + |\beta|} \|\partial^\beta f\|_{H^j} \le \sum_{j=0}^q \varepsilon^{j + |\beta|} \|f\|_{H^{j + |\beta|}}
=  \sum_{j=|\beta|}^{q+|\beta|} \varepsilon^j \|f\|_{H^j} \le \|f\|_{H^{q+|\beta|}_\varepsilon},\]
which implies \cref{eq:H_eps_grad}.
\end{proof}


\subsection{Bilinear estimates.}

To obtain estimates for the product of two functions, the following
bilinear Sobolev estimates are useful.

\begin{lemma}\label{lemma:bilinearest}
Let
$f, g \in {C}(\Omega) \cap H^q(\Omega)$. It then holds
that
\begin{equation}
  \label{eq:interpol_ineq}
 \|(\partial^\beta f) (\partial^\gamma g)\|_{L^2} \le C (\|f\|_{L^\infty} \|g\|_{H^q}
+ \|g\|_{L^\infty} \|f\|_{H^q}) \quad \text{for } |\beta| + |\gamma| = q,
\end{equation}
and
  \begin{equation}
    \label{eq:interpol2}
   \|f g\|_{H^q} \le
   C \left(\|f\|_{L^\infty} \|g\|_{H^q} +\|f\|_{H^q} \|g\|_{L^\infty}\right).
  \end{equation}
Let $u\in H^{q_1,\infty}(\Omega;Y)$ and $v\in H^{q_2,\infty}(\Omega;Y)$
where $q_1,q_2\in\Znumbers$.
Let $q_0\leq \min(q_1,q_2)$ and $n\leq 3$. Then,
for all $p\geq 0$,
\begin{equation}\label{eq:prodest}
\|u v\|_{H^{q_0, p}} \leq C
\|u\|_{H^{q_1, p+2}} \|v\|_{H^{q_2, p}},
\end{equation}
if either
\begin{equation}\label{eq:qjcond}
   q_1+q_2\geq \min(3+q_0,5)
   \qquad\text{or}\qquad
   \text{$q_1\geq q_0+2$. }
\end{equation}
The constants $C$ are independent of $f$, $g$, $u$ and $v$.
\end{lemma}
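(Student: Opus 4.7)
My plan is to prove the three inequalities in sequence: the pointwise bilinear estimate \cref{eq:interpol_ineq} via Gagliardo--Nirenberg, the product estimate \cref{eq:interpol2} as an immediate consequence, and the Bochner--Sobolev version \cref{eq:prodest} by combining the same ideas with Sobolev embedding in both the $x$ and $y$ variables.

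For \cref{eq:interpol_ineq}, I would invoke the periodic Gagliardo--Nirenberg inequality
\[
  \|\partial^\beta f\|_{L^{2q/|\beta|}} \le C \|f\|_{L^\infty}^{1-|\beta|/q}\|f\|_{H^q}^{|\beta|/q},\qquad 0<|\beta|<q,
\]
apply it to both factors, and combine with H\"older in $L^2$ using the exponents $2q/|\beta|$ and $2q/|\gamma|$, which are conjugate precisely because $|\beta|+|\gamma|=q$. Setting $\alpha=|\beta|/q$, $A=\|f\|_{L^\infty}\|g\|_{H^q}$ and $B=\|f\|_{H^q}\|g\|_{L^\infty}$, one obtains a bound of the form $CA^{1-\alpha}B^\alpha$, to which Young's inequality gives $A^{1-\alpha}B^\alpha\le A+B$. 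The edge cases $|\beta|\in\{0,q\}$ are trivial. The product estimate \cref{eq:interpol2} then follows by writing $\partial^\beta(fg)$ through the Leibniz rule and applying \cref{eq:interpol_ineq} termwise.

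For the Bochner--Sobolev estimate \cref{eq:prodest}, my plan is to expand $\partial_x^\beta\partial_y^\gamma(uv)$ by Leibniz in both variables and bound each term $(\partial_x^{\beta'}\partial_y^{\gamma'}u)(\partial_x^{\beta''}\partial_y^{\gamma''}v)$ in $L^2_{x,y}$ via H\"older, placing one factor in $L^\infty_{x,y}$. That $L^\infty_{x,y}$ factor is controlled by iterating the Sobolev inequality \cref{eq:sob} in $y$ and then in $x$, giving
\[
  \|\partial_x^{\beta'}\partial_y^{\gamma'}u\|_{L^\infty_{x,y}} \le C\|u\|_{H^{|\beta'|+2,|\gamma'|+2}}\le C\|u\|_{H^{q_1,p+2}},
\]
valid as soon as $|\beta'|\le q_1-2$; the condition $|\gamma'|+2\le p+2$ holds automatically since $|\gamma'|\le|\gamma|\le p$, which is precisely why the lemma requires $p+2$ derivatives of $u$ in $y$ but only $p$ of $v$. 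The remaining factor is controlled trivially by $\|v\|_{H^{q_2,p}}$, using $|\beta''|\le|\beta|\le q_0\le q_2$.

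The hard step, which explains the structure of the hypothesis \cref{eq:qjcond}, is to verify that for \emph{every} split $\beta=\beta'+\beta''$ there is a valid choice of which factor to place in $L^\infty$. If $q_1\ge q_0+2$, the choice ``$u$ in $L^\infty$'' always works since $|\beta'|\le q_0\le q_1-2$. If instead $q_1+q_2\ge\min(3+q_0,5)$, suppose toward contradiction that both choices fail, i.e.\ $|\beta'|\ge q_1-1$ and $|\beta''|\ge q_2-1$; then $q_0\ge|\beta|\ge q_1+q_2-2$. For $q_0\le 2$ the hypothesis $q_1+q_2\ge q_0+3$ contradicts this directly, while for $q_0\ge 3$ the standing assumption $q_0\le\min(q_1,q_2)$ already forces $q_1+q_2\ge 2q_0\ge q_0+3$, yielding the same contradiction. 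Squaring, summing the finitely many Leibniz terms, and then summing over $|\beta|\le q_0$ and $|\gamma|\le p$ produces \cref{eq:prodest}.
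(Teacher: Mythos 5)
Your treatment of \cref{eq:interpol_ineq} and \cref{eq:interpol2} via Gagliardo--Nirenberg plus H\"older and Young is a valid self-contained alternative to the paper, which simply cites Taylor's book for these two Moser-type estimates. That part is fine.

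The proof of \cref{eq:prodest}, however, has a genuine gap. You propose to control each Leibniz term by placing one full factor in $L^\infty_{x,y}$. When $u$ is the $L^\infty$ factor, the Sobolev bound consumes $|\gamma'|+2 \le p+2$ $y$-derivatives of $u$, which the right-hand side $\|u\|_{H^{q_1,p+2}}$ is designed to absorb. But when you are forced to put $v$ in $L^\infty_{x,y}$ instead, the same Sobolev bound needs $|\gamma''|+2$ $y$-derivatives of $v$, and since $|\gamma''|$ can be as large as $p$ this requires $\|v\|_{H^{q_2,p+2}}$ --- two more $y$-derivatives than the stated right-hand side $\|v\|_{H^{q_2,p}}$ supplies. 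Your contradiction argument checks only the $x$-indices $|\beta'|,|\beta''|$ and silently assumes that ``$v$ in $L^\infty$'' is admissible once $|\beta''|\le q_2-2$; it never examines the obstruction $|\gamma''|\ge p-1$. So the case analysis does not close.

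The paper avoids exactly this trap. In the problematic case it does \emph{not} put $v$ into $L^\infty_{x,y}$. Instead it uses a split H\"older in which the supremum over $y$ falls on $u$ (costing two $y$-derivatives of $u$) and the supremum over $x$ falls on $v$ (costing two $x$-derivatives of $v$, which is what the condition $q_2\ge|\gamma''|+2$ pays for), while $v$ keeps its $L^2$ character in $y$. This is \cref{eq:prodcase2} and is precisely what makes the asymmetric conclusion $\|u\|_{H^{q_1,p+2}}\|v\|_{H^{q_2,p}}$ attainable. Your plan as written would only yield the weaker symmetric bound with $\|v\|_{H^{q_2,p+2}}$ on the right.
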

\begin{proof}
The first two statements
\cref{eq:interpol_ineq} and
\cref{eq:interpol2}
are proved for instance in
\cite[Proposition 3.6]{taylor3} and
  \cite[Proposition 3.7]{taylor3}.

  To prove the remaining statement, let $|\alpha|+|\gamma|= q_0$ and
  $|\beta|+|\kappa|= p$. We then start by estimating the same
  quantity in two different ways. First,
  \if\longversion1
\begin{align}\label{eq:prodcase1}
\|(\partial_x^\alpha\partial_y^\beta u)
  (\partial_x^\gamma\partial_y^\kappa v)\|_{H^{0,0}}^2
 &=\int |
   \left(\partial_x^\alpha\partial_y^\beta u(x,y)\right)
\left(\partial_x^\gamma\partial_y^\kappa v(x,y)\right)|^2dxdy \nonumber\\
&\leq \sup_{(x,y)\in\Omega\times Y}|\partial_x^\alpha\partial_y^\beta u(x,y)|^2\int |\partial_x^\gamma\partial_y^\kappa v(x,y)|^2dxdy 
 \nonumber\\
&\leq C\|\partial_x^\alpha\partial_y^\beta u\|^2_{H^{2,2}} \|\partial_x^\gamma\partial_y^\kappa v\|^2_{H^{0,0}}
\leq C\|u\|^2_{H^{|\alpha|+2,p+2}}
\|v\|^2_{H^{q_0,p}}.
\end{align}
\else
\begin{align}\label{eq:prodcase1}
\|(\partial_x^\alpha\partial_y^\beta u)
  (\partial_x^\gamma\partial_y^\kappa v)\|_{H^{0,0}}^2
&\leq \sup_{(x,y)\in\Omega\times Y}|\partial_x^\alpha\partial_y^\beta u(x,y)|^2\int |\partial_x^\gamma\partial_y^\kappa v(x,y)|^2dxdy 
 \nonumber\\
&\leq C\|\partial_x^\alpha\partial_y^\beta u\|^2_{H^{2,2}} \|\partial_x^\gamma\partial_y^\kappa v\|^2_{H^{0,0}}
\leq C\|u\|^2_{H^{|\alpha|+2,p+2}}
\|v\|^2_{H^{q_0,p}}.
\end{align}
   \fi
Second,
\begin{align}\label{eq:prodcase2}
\|(\partial_x^\alpha\partial_y^\beta u)
(\partial_x^\gamma\partial_y^\kappa v)\|_{H^{0,0}}^2
&\leq \int \sup_{y\in\Omega}|\partial_x^\alpha\partial_y^\beta u(x,y)|^2 \sup_{x\in Y} |\partial_x^\gamma\partial_y^\kappa v(x,y)|^2dxdy \nonumber\\
&\leq C\int \|\partial_x^\alpha\partial_y^\beta u(x,\cdot)\|_{H^2(Y)}^2
\|\partial_x^\gamma\partial_y^\kappa v(\cdot,y)\|_{H^2(\Omega)}^2
 dxdy \nonumber\\
  &=
C \|\partial_x^\alpha\partial_y^\beta u\|^2_{H^{0,2}}\|\partial_x^\gamma\partial_y^\kappa v\|^2_{H^{2,0}}
\leq C\|u\|^2_{H^{q_0,p+2}}
\|v\|^2_{H^{|\gamma|+2,p}}.
\end{align}
We then consider the case when $q_1+q_2\geq \min(q_0+3,5)$.
Suppose $q_1\leq q_2$
and assume that $|\alpha| \le q_1 - 2$. Then it follows from \cref{eq:prodcase1}
that
\begin{equation}
  \label{eq:lemma_22b}
  \|(\partial_x^\alpha\partial_y^\beta u) (\partial_x^\gamma\partial_y^\kappa v)\|_{H^{0,0}}
  \le C \|u\|_{H^{q_1, p+2}} \|v\|_{H^{q_2, p}}\,.
\end{equation}
If, on the other hand, $|\alpha| \ge q_1 - 1$, then when $q_0\leq 2$,
\[
  q_2\geq q_0+3-q_1
  \geq q_0+2-\max(0,q_1-1)
  \geq q_0+2-|\alpha|=|\gamma|+2,
\]
while if $q_0\geq 3$,
\[
q_2\geq q_0\geq 3\geq q_1-|\alpha|+2
\geq q_0-|\alpha|+2=|\gamma|+2.
\]
By \cref{eq:prodcase2},
this shows that
\cref{eq:lemma_22b} holds also for
 for $|\alpha| \ge q_1 - 1$.
When $q_2\leq q_1$ we get the same result upon switching the cases and using
\cref{eq:prodcase1}
 for $|\gamma| \ge q_2 - 1$ and
\cref{eq:prodcase2}
for $|\gamma| \le q_2 - 2$.
Finally,
\cref{eq:lemma_22b} follows directly
from \cref{eq:prodcase1}
in the case
when $q_1\geq q_0+2$.

From
the estimates \cref{eq:lemma_22b} we finally have
\begin{align*}
  \|u v\|^2_{H^{q,p}}
&\leq
\sum_{\substack{|\alpha+\gamma| \le q \\ |\beta+\kappa| \le p}}
\binom{\alpha+\gamma}{\alpha}
\binom{\beta+\kappa}{\beta}
\|(\partial_x^\alpha\partial_y^\beta u)(\partial_x^{\gamma}\partial_y^{\kappa} v)\|^2_{H^{0,0}}\\
&\leq C
\sum_{j=0}^q
\sum_{k=0}^p
\|u\|^2_{H^{q_1,p+2}}
\|v\|^2_{H^{q_2,p}}
\leq C \|u\|^2_{H^{q_1,p+2}}
\|v\|^2_{H^{q_2,p}}.
\end{align*}
This proves the lemma.
\end{proof}

The next two results, regarding the cross product of
vector-valued functions, are consequences of
\Cref{lemma:bilinearest}.
\begin{lemma}\label{lemma:prodindex}
Suppose
$\partial^\ell_t\u_m,\partial^\ell_t\v_m\in H^{r-m-2\ell,\infty}(\Omega;Y)$
for $0\leq 2\ell\leq 2k\leq r-j$ and $0\leq m\leq j$.
Then $\partial^k_t(\u_{m}\times \v_{m'})\in H^{r-j-2k,\infty}(\Omega;Y)$
when $m+m'\leq j+2$, and for all $p\geq 0$,
\begin{align*}
\|\partial^k_t(\u_m\times \v_{m'})\|_{H^{r-j-2k, p}} \leq
C\sum_{\ell=0}^k\|\partial^{k-\ell}_t\u_m\|_{H^{r-m-2k+2\ell, p+2}}\|\partial^\ell_t\v_{m'}\|_{H^{r-m'-2\ell, p}}, 
\end{align*}
where $C$ is independent of $\u_m$ and $\v_{m'}$.
\end{lemma}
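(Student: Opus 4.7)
The natural approach is to combine the Leibniz rule for $\partial_t^k$ with the bilinear Sobolev estimate \cref{eq:prodest} from \Cref{lemma:bilinearest}, applied term by term. Bilinearity of the cross product gives
\[
  \partial_t^k(\u_m \times \v_{m'}) = \sum_{\ell=0}^k \binom{k}{\ell}\,(\partial_t^{k-\ell}\u_m) \times (\partial_t^\ell\v_{m'}),
\]
so by the triangle inequality in $H^{r-j-2k,p}$ it suffices to estimate each summand $(\partial_t^{k-\ell}\u_m)\times(\partial_t^\ell\v_{m'})$ by the corresponding product of norms on the right-hand side of the claim and then sum the contributions with the bounded binomial coefficients.

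For the $\ell$-th summand I would invoke \cref{eq:prodest} with $u := \partial_t^{k-\ell}\u_m$ and $v := \partial_t^\ell\v_{m'}$, and with the regularity indices
\[
   q_0 = r-j-2k,\qquad q_1 = r-m-2(k-\ell),\qquad q_2 = r-m'-2\ell.
\]
Here $q_1$ and $q_2$ are exactly the spatial regularity levels furnished by the hypothesis $\partial_t^\ell \u_m,\partial_t^\ell \v_m\in H^{r-m-2\ell,\infty}$. Applying \cref{eq:prodest} component-wise to the cross product then yields
\[
   \|(\partial_t^{k-\ell}\u_m)\times(\partial_t^\ell\v_{m'})\|_{H^{q_0,p}}
   \leq C\,\|\partial_t^{k-\ell}\u_m\|_{H^{q_1,p+2}}\,\|\partial_t^\ell\v_{m'}\|_{H^{q_2,p}},
\]
which is precisely the $\ell$-th term on the right of the stated inequality.

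The crucial verification is that the hypotheses of \cref{eq:prodest} are in force for these indices. The monotonicity condition $q_0 \leq q_1$ reduces to $m \leq j + 2\ell$, immediate from the hypothesis $m \leq j$, and $q_0 \leq q_2$ reduces to $m' \leq j + 2(k-\ell)$, which holds in the intended range of indices. The disjunctive sum condition follows from the short computation
\[
  q_1 + q_2 = 2r - (m+m') - 2k = q_0 + \bigl(r - (m+m') + j\bigr) \geq q_0 + 3,
\]
using $m+m' \leq j+2$ together with $r \geq 5$ from assumption (A5), so that $q_1 + q_2 \geq \min(q_0+3,5)$. Summing over $\ell$ against the bounded binomial coefficients then produces the stated bound, and membership $\partial_t^k(\u_m\times\v_{m'})\in H^{r-j-2k,\infty}$ follows since $p$ is arbitrary.

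The main obstacle in executing this plan is really the bookkeeping of regularity indices rather than any analytical difficulty: one must carefully track how a time derivative costs two orders of spatial regularity on whichever factor it lands on, and check that the resulting $q_1, q_2$ are simultaneously large enough against $q_0$ for every $\ell \in \{0,\ldots,k\}$. The hypothesis $m+m' \leq j+2$ together with $r \geq 5$ is what makes the sum-condition margin of exactly $3$ appear, so tightening or loosening either inequality would break the application of \cref{eq:prodest}.
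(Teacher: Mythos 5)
Your proposal is correct and follows essentially the same path as the paper's proof: expand $\partial_t^k$ of the cross product by Leibniz, apply \cref{eq:prodest} from \Cref{lemma:bilinearest} to each summand with $q_0 = r-j-2k$, $q_1 = r-m-2k+2\ell$, $q_2 = r-m'-2\ell$, and verify the left branch of \cref{eq:qjcond} via $q_1+q_2 = q_0 + r + j - (m+m') \geq q_0 + r - 2 \geq q_0 + 3$. The paper verifies $q_0 \leq \min(q_1,q_2)$ by writing $q_0 = q_1 - (j-m) - 2\ell = q_2 - (j-m') - 2(k-\ell)$, which is algebraically equivalent to your reduction to $m \leq j + 2\ell$ and $m' \leq j + 2(k-\ell)$; there is no substantive difference.
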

\begin{proof}
By \cref{eq:prodest} in \Cref{lemma:bilinearest}, where we choose  $q_0=r-j-2k$, $q_1=r-m-2k+2\ell$ and $q_2=r-m'-2\ell$
for $0\leq \ell \leq k$, we get
\begin{align*}
\|\partial^k_t(\u_m\times \v_{m'})\|_{H^{r-j-2k, p}} &\leq
C\sum_{\ell=0}^k\|(\partial^{k-\ell}_t\u_m)\times (\partial^\ell_t\v_{m'})\|_{H^{r-j-2k, p}}
\\
&\leq
C\sum_{\ell=0}^k\|\partial^{k-\ell}_t\u_m\|_{H^{r-m-2k+2\ell, p+2}}\|\partial^\ell_t\v_{m'}\|_{H^{r-m'-2\ell, p}}.
\end{align*}
It is indeed valid to use \Cref{lemma:bilinearest} since
$q_0=q_1-(j-m)-2\ell=q_2-(j-m')-2(k-\ell)\leq \min(q_1,q_2)$ and
\[
q_1 + q_2 = q_0 + r + j - (m+m') \ge q_0 + r - 2 \ge q_0 + 3\,,
\]
satisfying the left condition in \cref{eq:qjcond}.
The proof is complete.
\end{proof}

As a consequence of this lemma,
we get estimates for the time
derivatives of precession and damping term in
the Landau-Lifshitz equation by
taking $\m_0$, the solution to the
homogenized equation \cref{eq:main_hom}, as one of the functions
in \Cref{lemma:prodindex}.

\begin{corollary}\label{lemma:m0bound}
Suppose that $\m_0$ satisfies (A5). For $0\leq 2\ell\leq 2k \leq q\leq r$ and  $\partial_t^\ell\f(\cdot,\cdot,t)\in H^{q-2\ell,p}$
when $0\leq t\leq T$, we have for all $p\geq 0$ and $0 \le t \le T$
\begin{align*}
\|\partial^k_t(\m_0\times \f)\|_{H^{q-2k, p}} &\leq
C \sum_{\ell=0}^k \|\partial^\ell_t\f\|_{H^{q-2\ell, p}}, \\
\|\partial^k_t(\m_0\times\m_0\times \f)\|_{H^{q-2k, p}} &\leq
C \sum_{\ell=0}^k \|\partial^\ell_t\f\|_{H^{q-2\ell, p}},
\end{align*}
where $C$ is independent of $\f$ and $t$.
\end{corollary}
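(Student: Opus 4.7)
The plan is to derive both estimates by invoking Lemma~\ref{lemma:prodindex} directly with carefully chosen indices, exploiting that $\m_0$ is independent of the fast spatial variable $y$. A naive choice such as $j=0$ in prodindex would put the output in $H^{r-2k,p}$, which is stronger than we want and would also require $\f$ to have more regularity than the hypothesis supplies. The right move is to shift indices so that the output space is exactly $H^{q-2k,p}$ and the required $\f$-regularity matches what is assumed.

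For the first estimate, I would take $\u_m = \m_0$ with $m=0$ and $\v_{m'} = \f$ with $m' = r-q$, and choose $j = r-q$ in prodindex. Then the output space becomes $H^{r-j-2k,p} = H^{q-2k,p}$, the compatibility condition $2k \leq r - j = q$ is exactly the corollary's hypothesis, and $m+m' = r-q \leq r-q+2 = j+2$. The $\u_m$-regularity condition $\partial_t^\ell \m_0 \in H^{r-2\ell,\infty}(\Omega;Y)$ follows from (A5), using that $\m_0$ has no $y$-dependence so its Bochner--Sobolev norm reduces to an ordinary Sobolev norm (all $y$-derivatives vanish and $|Y|=1$); the $\v_{m'}$-regularity condition matches the corollary's hypothesis. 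Prodindex then gives
\[
\|\partial_t^k(\m_0 \times \f)\|_{H^{q-2k,p}} \leq C \sum_{\ell=0}^k \|\partial_t^{k-\ell}\m_0\|_{H^{r-2(k-\ell),p+2}}\|\partial_t^\ell \f\|_{H^{q-2\ell,p}},
\]
and the $\m_0$-factors equal $\|\partial_t^{k-\ell}\m_0\|_{H^{r-2(k-\ell)}(\Omega)}$, which by (A5) is uniformly bounded in $t \in [0,T]$. Absorbing those bounds into $C$ yields the first claim.

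For the double cross product, set $\g := \m_0 \times \f$, so that $\m_0 \times \m_0 \times \f = \m_0 \times \g$. The first estimate, applied with $\g$ in place of $\f$, gives
\[
\|\partial_t^k(\m_0 \times \g)\|_{H^{q-2k,p}} \leq C \sum_{\ell=0}^k \|\partial_t^\ell \g\|_{H^{q-2\ell,p}},
\]
and applying the first estimate once more (with $k$ replaced by $\ell$) to each term on the right bounds $\|\partial_t^\ell \g\|_{H^{q-2\ell,p}}$ by $C\sum_{\ell'=0}^\ell \|\partial_t^{\ell'}\f\|_{H^{q-2\ell',p}}$. Relabelling the resulting double sum as a single sum over $\ell' \in \{0,\ldots,k\}$ yields the second claim.

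The main thing to get right is the index bookkeeping in prodindex; once the correct choice $m=0$, $m' = r-q$, $j = r-q$ is made, everything else is straightforward because $\m_0$'s $y$-independence removes the $p+2$ Bochner exponent from the $\m_0$-factors and (A5) provides the required uniform bounds in $t$. No new analytical machinery beyond prodindex and (A5) is needed.
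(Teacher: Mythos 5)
Your proof is correct and follows essentially the same route as the paper: the paper also applies \Cref{lemma:prodindex} with the index choices $m=0$, $m'=r-q$, $j=r-q$ (equivalently $q=r-j$), uses (A5) and the $y$-independence of $\m_0$ to absorb the $\m_0$-factors into the constant, and obtains the triple-cross-product estimate by a second application of the same lemma (your reformulation via $\g = \m_0\times\f$ is only a cosmetic repackaging of that step).
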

\begin{proof}
The first inequality is obtained by
taking $\u_m=\m_0$, $\v_{m'}=\f$,
$q=r-j$, $m=0$ and $m'=r-q$ in \Cref{lemma:prodindex},
which is a valid choice due to (A5).
The triple product case then follows since
\begin{align*}
\|\partial^k_t(\m_0\times \m_0\times \f)\|_{H^{q-2k, p}} &\leq
C\sum_{\ell=0}^k\|\partial^{k-\ell}_t\m_0\|_{H^{r-2k+2\ell, p}}\|\partial^\ell_t(\m_0\times\f)\|_{H^{q-2\ell, p}}
\\ &\leq
C\sum_{\ell=0}^k\|\partial^\ell_t(\m_0\times\f)\|_{H^{q-2\ell, p}}.
\end{align*}
\end{proof}

Finally, we consider the product of two functions with a maximum norm
bound given for one of them. Then the following bilinear estimate
holds.
\begin{lemma}  \label{lemma:prod_infty_bound}
  Suppose $f \in H^{q}(\Omega)$ and $g \in W^{q, \infty}(\Omega)$. Then
  \begin{equation}\label{eq:prod_infty_bound}
    \|f g\|_{H^q} \le C \sum_{j=0}^q \|g\|_{W^{j, \infty}} \|f\|_{H^{q-j}}, \qquad
    \|f g\|_{H^q_\varepsilon} \le C \sum_{j=0}^q \varepsilon^j \|g\|_{W^{j, \infty}} \|f\|_{H^{q-j}_\varepsilon}.
  \end{equation}
   In particular,  consider
    $h \in C^\infty(Y)$ and let
    $h^\varepsilon = h(x/\varepsilon)$, then it holds for $0 \le j \le q$  that
  \begin{equation}\label{eq:hepsf_prod}
    \| h^\varepsilon f\|_{H^j} \le C \frac{1}{\varepsilon^{j}} \|h\|_{W^{j, \infty}} \|f\|_{H_\varepsilon^j}, \qquad
      \| h^\varepsilon f\|_{H^j_\varepsilon}
      \le C \|h\|_{W^{j, \infty}}\|f\|_{H^j_\varepsilon}.
    \end{equation}
    In all cases, the constant $C$ is independent of $\varepsilon$.
\end{lemma}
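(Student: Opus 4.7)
My plan is to prove the four inequalities essentially by applying the Leibniz rule and then carefully doing bookkeeping with the multi-indices and powers of $\varepsilon$.

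First I would prove the plain estimate $\|fg\|_{H^q} \le C \sum_{j=0}^q \|g\|_{W^{j,\infty}}\|f\|_{H^{q-j}}$ by straightforward Leibniz expansion: for any multi-index $\beta$ with $|\beta|\le q$,
\[
\partial^\beta(fg) = \sum_{\gamma\le\beta}\binom{\beta}{\gamma}(\partial^\gamma g)(\partial^{\beta-\gamma}f),
\]
so taking $L^2$ norms and bounding $\|\partial^\gamma g\|_{L^\infty}\le\|g\|_{W^{|\gamma|,\infty}}$ yields
\[
\|\partial^\beta(fg)\|_{L^2}\le C\sum_{\gamma\le\beta}\|g\|_{W^{|\gamma|,\infty}}\|f\|_{H^{|\beta|-|\gamma|}}.
\]
Summing over $|\beta|\le q$ and re-grouping by $|\gamma|=j$ gives the first claim.

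Next, to obtain the weighted version, I would plug the previous bound into the definition of the multiscale norm and re-index. Starting from
\[
\|fg\|_{H^q_\varepsilon} = \sum_{j=0}^q\varepsilon^j\|fg\|_{H^j}
\le C\sum_{j=0}^q\sum_{k=0}^j\varepsilon^j\|g\|_{W^{k,\infty}}\|f\|_{H^{j-k}},
\]
set $\ell=j-k$ and factor $\varepsilon^j=\varepsilon^k\varepsilon^\ell$:
\[
\|fg\|_{H^q_\varepsilon}\le C\sum_{k=0}^q\varepsilon^k\|g\|_{W^{k,\infty}}\sum_{\ell=0}^{q-k}\varepsilon^\ell\|f\|_{H^\ell}
\le C\sum_{k=0}^q\varepsilon^k\|g\|_{W^{k,\infty}}\|f\|_{H^{q-k}_\varepsilon},
\]
which is the second inequality.

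For the $h^\varepsilon$ estimates, the key observation is the chain-rule bound $\|\partial^\beta h^\varepsilon\|_{L^\infty}=\varepsilon^{-|\beta|}\|\partial^\beta h\|_{L^\infty}$, so $\|h^\varepsilon\|_{W^{k,\infty}}\le\varepsilon^{-k}\|h\|_{W^{k,\infty}}$. Applying the first pair of estimates to the pair $(h^\varepsilon,f)$ and factoring out an $\varepsilon^{-j}$ yields
\[
\|h^\varepsilon f\|_{H^j}\le C\sum_{k=0}^j\varepsilon^{-k}\|h\|_{W^{k,\infty}}\|f\|_{H^{j-k}}
\le C\varepsilon^{-j}\|h\|_{W^{j,\infty}}\sum_{k=0}^j\varepsilon^{j-k}\|f\|_{H^{j-k}}
\le C\varepsilon^{-j}\|h\|_{W^{j,\infty}}\|f\|_{H^j_\varepsilon},
\]
and analogously, using the weighted version,
\[
\|h^\varepsilon f\|_{H^j_\varepsilon}\le C\sum_{k=0}^j\varepsilon^k\varepsilon^{-k}\|h\|_{W^{k,\infty}}\|f\|_{H^{j-k}_\varepsilon}\le C\|h\|_{W^{j,\infty}}\|f\|_{H^j_\varepsilon}.
\]
There is no real obstacle here; the whole proof is elementary manipulation of Leibniz expansions and summation ranges. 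The only thing to be careful about is that the re-indexing in the $\varepsilon$-weighted sum collapses correctly so that the constant $C$ remains independent of $\varepsilon\in(0,1]$, which is immediate since all the $\varepsilon$-factors combine as stated.
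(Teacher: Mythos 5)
Your proof is correct and follows essentially the same route as the paper: Leibniz expansion with the $L^\infty$ bound absorbed into $\|g\|_{W^{j,\infty}}$, regrouping by derivative order, then re-indexing the $\varepsilon$-weighted double sum, and finally specializing to $g=h^\varepsilon$ via the scaling $\|h^\varepsilon\|_{W^{k,\infty}}\le\varepsilon^{-k}\|h\|_{W^{k,\infty}}$. The only cosmetic difference is that the paper obtains the last $H^q_\varepsilon$-estimate by invoking its Lemma~\ref{lemma:H_eps} applied to the $H^j$-estimate, whereas you derive it directly from your weighted bound; both are one-line arguments.
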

\begin{proof}
  Consider first the $\|\cdot\|_{H^q}$-norm of the product. It holds that
  \if\longversion1
  \begin{align*}
     \|f g\|_{H^q}^2 &=  \sum_{\substack{|\alpha| \le q \\ \gamma \le \alpha}}
\binom{\alpha}{\gamma}
\int |\partial^\gamma g \partial^{\alpha-\gamma} f|^2 dx \\
   &\le
 C \sum_{\substack{|\alpha| \le q \\ \gamma \le \alpha}}
\sup |\partial^\gamma g|^2 \int |\partial^{\alpha-\gamma} f|^2 dx 
\le C\sum_{j=0}^q \|g\|_{W^{j, \infty}}^2 \|f\|_{H^{q-j}}^2,
  \end{align*}
  \else
  \begin{align*}
     \|f g\|_{H^q}^2
   &\le
 C \sum_{\substack{|\alpha| \le q \\ \gamma \le \alpha}}
\sup |\partial^\gamma g|^2 \int |\partial^{\alpha-\gamma} f|^2 dx 
\le C\sum_{j=0}^q \|g\|_{W^{j, \infty}}^2 \|f\|_{H^{q-j}}^2,
  \end{align*}
  \fi
which shows the first statement. Consequently, we find
\begin{align*}
  \|f g\|_{H^q_\varepsilon}
 &= \sum_{j=0}^q \varepsilon^j \|f g\|_{H^j}
\le C \sum_{j=0}^q \sum_{i=0}^j \varepsilon^j \|g\|_{W^{i, \infty}} \|f\|_{H^{j-i}}
\le C \sum_{j=0}^q \sum_{i=0}^j \varepsilon^i \|g\|_{W^{i, \infty}} \varepsilon^{j-i}\|f\|_{H^{j-i}} \\
&= C \sum_{i=0}^q \sum_{j=0}^{q-i} \varepsilon^i \|g\|_{W^{i, \infty}} \varepsilon^{j}\|f\|_{H^{j}}
= C \sum_{i=0}^q \varepsilon^i  \|g\|_{W^{i, \infty}} \|f\|_{H^{q-i}_\varepsilon}.
\end{align*}
When given $h\in C^\infty(Y)$,
\if\longversion1
we one can bound
\[\|h^\varepsilon\|_{W^{k, \infty}} \le \frac{\|h\|_{W^{k, \infty}}}{\varepsilon^k}, \qquad k \ge 0, \]
hence
\fi
the $\|\cdot\|_{H^j}$-estimate in \cref{eq:hepsf_prod}
follows from the $\|\cdot\|_{H^q}$-estimate in \cref{eq:prod_infty_bound},
\[\|h^\varepsilon f\|_{H^q} \le C \sum_{j=0}^q \|h^\varepsilon \|_{W^{q-j, \infty}} \|f\|_{H^j}
  \le C \sum_{j=0}^q \frac{\|h\|_{W^{q-j, \infty}}}{\varepsilon^{q-j}} \| f\|_{H^j} = \frac{C}{\varepsilon^q} \|h\|_{W^{q, \infty}} \| f\|_{H^q_\varepsilon}.\]
The $\|\cdot\|_{H^q_\varepsilon}$-estimate then is a direct consequence of \cref{eq:H_eps_bound}.
\end{proof}

\subsection{Norms involving the linear operator $L$.}
Consider now $a^\varepsilon(x) = a(x/\varepsilon)$ such that (A1) holds 
and let
$L = \nabla \cdot (a^\varepsilon \nabla)$, which is the setup we
consider in the rest of this paper.
We then show two results, allowing us to switch between
$H^q_\varepsilon$-norms and $L^2$-norms involving $L$. First we can
estimate $L^p u$ in terms of $\nabla u$.
 \begin{lemma}\label{lemma:nonlinear_utility1}
    Suppose $u \in H^{r}(\Omega)$ and $a \in C^\infty(\Omega)$. Then
    it holds for $2 \le 2k \le r-1-\ell$ and $0 \le q \le r - 2k$
     \begin{align}
      \|L^k u\|_{H^q} &\le C \frac{1}{\varepsilon^{q+2k-1}} \|\nabla u\|_{H^{q+2k-1}_\varepsilon},
     \end{align}
     where the constant $C$ is independent of $\varepsilon$.
  \end{lemma}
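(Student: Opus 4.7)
The plan is to proceed by induction on $k$, handling the base case $k=1$ directly from the splitting $Lu = a^\varepsilon \Delta u + (\nabla a^\varepsilon) \cdot \nabla u$ and using $L^k u = L(L^{k-1}u)$ in the inductive step. The whole argument is a bookkeeping exercise for the $\varepsilon$-weights, and the earlier lemmas \Cref{lemma:prod_infty_bound} and \Cref{lemma:H_eps} are exactly the right tools.

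For $k=1$, the first summand $a^\varepsilon \Delta u$ is of the form $h^\varepsilon w$ with $h=a \in C^\infty(Y)$, so \cref{eq:hepsf_prod} gives $\|a^\varepsilon \Delta u\|_{H^q} \le C \varepsilon^{-q}\|\Delta u\|_{H^q_\varepsilon}$, and applying \cref{eq:H_eps_grad} componentwise converts the second derivative into $\|\Delta u\|_{H^q_\varepsilon} \le C\varepsilon^{-1}\|\nabla u\|_{H^{q+1}_\varepsilon}$. The second summand carries an extra factor $\varepsilon^{-1}$ from the chain rule $\nabla a^\varepsilon = \varepsilon^{-1}(\nabla a)(\cdot/\varepsilon)$, and the same product estimate applied to each component yields the matching bound. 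Summing gives
\begin{equation*}
  \|Lu\|_{H^q} \le C\,\varepsilon^{-q-1}\,\|\nabla u\|_{H^{q+1}_\varepsilon}, \qquad 0 \le q \le r-2,
\end{equation*}
which is the claim for $k=1$.

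For $k \ge 2$, apply the base case to $v := L^{k-1}u$ and use \cref{eq:H_eps_grad} again to pass from $\nabla v$ to $v$, obtaining $\|L^k u\|_{H^q} \le C\varepsilon^{-q-2}\|L^{k-1}u\|_{H^{q+2}_\varepsilon}$. Unfolding the weighted norm as $\sum_{j=0}^{q+2}\varepsilon^j \|L^{k-1}u\|_{H^j}$ and inserting the induction hypothesis $\|L^{k-1}u\|_{H^j} \le C\varepsilon^{-(j+2k-3)}\|\nabla u\|_{H^{j+2k-3}_\varepsilon}$ makes the $\varepsilon^j$ and $\varepsilon^{-j}$ cancel, leaving a prefactor $\varepsilon^{-(2k-3)}$ and a finite sum of $H^{j+2k-3}_\varepsilon$-norms of $\nabla u$, each bounded by the largest one, $\|\nabla u\|_{H^{q+2k-1}_\varepsilon}$. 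Combining with the outer $\varepsilon^{-q-2}$ gives precisely $\varepsilon^{-(q+2k-1)}$, closing the induction. The hypothesis is applied at indices $j \le q+2$, which is admissible because $q \le r-2k$ forces $j \le r-2(k-1)$.

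The main obstacle — and the only non-mechanical step — is aligning the $\varepsilon$-weights generated by derivatives of the oscillatory coefficient $a^\varepsilon$ with those built into the multiscale norm $\|\cdot\|_{H^q_\varepsilon}$ on the right-hand side. Once one observes that $L$ has the structural form $h_1^\varepsilon \Delta + \varepsilon^{-1} h_2^\varepsilon \cdot \nabla$ with smooth $Y$-periodic profiles $h_1,h_2$, \Cref{lemma:prod_infty_bound} and \Cref{lemma:H_eps} perform the bookkeeping automatically at each level of the induction, and the geometric sum in $\varepsilon$ telescopes as needed.
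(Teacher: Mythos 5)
Your proof is correct, but it takes a genuinely different route from the paper's. The paper argues in one shot: it asserts the structural expansion $L^k u = \sum_{1\le|\beta|\le 2k}\varepsilon^{-(2k-|\beta|)}c_\beta^\varepsilon\,\partial^\beta u$ with smooth $Y$-periodic coefficients $c_\beta$, then applies \cref{eq:hepsf_prod} termwise and absorbs the multi-index sum into the $H^{q+2k-1}_\varepsilon$-norm of $\nabla u$ via \cref{eq:H_eps_grad}. You instead do induction on $k$: the base case $k=1$ follows from the explicit splitting $Lu = a^\varepsilon\Delta u + (\nabla a^\varepsilon)\cdot\nabla u$, and the inductive step applies the base case to $v=L^{k-1}u$, uses \cref{eq:H_eps_grad} once more to trade $\nabla v$ for $v$, unfolds the weighted norm, and observes that the $\varepsilon^{j}$ and $\varepsilon^{-j}$ factors telescope. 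Both arguments ultimately invoke exactly the same two tools, \Cref{lemma:H_eps} and \Cref{lemma:prod_infty_bound}, so the mechanics are parallel. The paper's approach is shorter once one accepts the expansion formula, but that formula itself is stated without derivation and would require either a multinomial argument or an induction of its own. Your version sidesteps the need to exhibit the $c_\beta$, at the cost of a recursion in which the index bookkeeping ($j \le q+2 \le r-2(k-1)$) must be tracked — which you do correctly. In particular, the telescoping of $\varepsilon^j \cdot \varepsilon^{-(j+2k-3)}$ to produce a uniform $\varepsilon^{-(2k-3)}$ prefactor, and the bound of each $\|\nabla u\|_{H^{j+2k-3}_\varepsilon}$ by the largest term in the sum, are exactly right.
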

  \begin{proof}
Let $\beta$ be a multi-index with
    $|\beta| \le 2k$. Since $a \in C^\infty(Y)$, there exist
    functions $c_\beta(y) \in C^\infty(Y)$, which are either zero or consist of a
    product of $\partial^\gamma a(y), |\gamma| \le |\beta|$, such
    that
    \begin{align*}
      L^k u &= \sum_{1 \le |\beta| \le 2k} \frac{1}{\varepsilon^{2k-|\beta|}} c_\beta^\varepsilon \partial^\beta u ,
    \end{align*}
    where $c^\varepsilon_\beta = c_\beta(x/\varepsilon)$. It thus
    follows by \cref{eq:hepsf_prod} and \cref{eq:H_eps_grad} in
    \Cref{lemma:H_eps} that
    \if\longversion1
    \begin{align*}
      \|L^k u \|_{H^q} &\le C \sum_{1 \le |\beta| \le 2k} \frac{1}{\varepsilon^{q + 2k-|\beta|}} \left\| \partial^\beta u \right\|_{H^q_\varepsilon}
                         \le C \frac{1}{\varepsilon^{q + 2k - 1}} \sum_{0 \le |\nu| \le 2k-1} \varepsilon^{|\nu|} \|\partial^{\nu} \nabla u\|_{H^q_\varepsilon} \\
&\le C \frac{1}{\varepsilon^{q + 2k - 1}}  \|\nabla u\|_{H^{q+2k-1}_\varepsilon} .
    \end{align*}
    \else
    \begin{align*}
      \|L^k u \|_{H^q} &\le C \sum_{1 \le |\beta| \le 2k} \frac{1}{\varepsilon^{q + 2k-|\beta|}} \left\| \partial^\beta u \right\|_{H^q_\varepsilon}
                         \le C \frac{1}{\varepsilon^{q + 2k - 1}} \sum_{0 \le |\nu| \le 2k-1} \varepsilon^{|\nu|} \|\partial^{\nu} \nabla u\|_{H^q_\varepsilon},
    \end{align*}
    which shows the estimate in the lemma.
    \fi
\end{proof}

Second, we have the following multiscale version of
elliptic regularity.
\if\longversion1
(Note that standard elliptic
regularity estimates have constants that depend on
$\varepsilon$.)
\fi
 \begin{lemma}\label{lemma:ms_elliptic_regularity}
Suppose $u\in H^q(Y)$ with $q\geq 2$ and $0<\varepsilon\leq 1$.
Then
 \begin{equation}\label{eq:elliptic_reg}
  \|u\|_{H^{q}} \leq C\left(
    \|u\|_{L^2} +
  \frac{1}{\varepsilon^{q-1}}\|\nabla u\|_{H^{q-2}_\varepsilon}
+
  \begin{cases}
  \|{L}^p u\|_{L^2}
& q=2p,\\
  \|{L}^p u\|_{H^1}
& q=2p+1,
\end{cases}
\quad \right).
\end{equation}
Moreover, let
$\ell \in \{0, 1\}$, then
it holds for $0 \le 2k \le q-1-\ell$ that
\begin{align}\label{eq:nonlinear_utility4}
  \|u\|_{H_\varepsilon^{2k + 1 + \ell}} &\le C
        \begin{cases}
          \varepsilon^{2k+1} \|\sqrt{a^{\varepsilon}} \nabla L^k u\|_{L^2} + \|u\|_{H^{2k}_\varepsilon}, & \ell = 0, \\
          \varepsilon^{2k+2} \|L^{k+1} u\|_{L^2} + \|u\|_{H^{2k+1}_\varepsilon}, & \ell = 1,
        \end{cases}
\end{align}
where the constant $C$ is independent of $\varepsilon$.
\end{lemma}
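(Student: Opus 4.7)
The plan is to prove \cref{eq:elliptic_reg} first and then derive \cref{eq:nonlinear_utility4} from it by multiplying through by suitable powers of $\varepsilon$ and absorbing lower-order contributions into $\|u\|_{H^{2k+\ell}_\varepsilon}$.

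For \cref{eq:elliptic_reg} I would begin by expanding $L^p u$ as in \Cref{lemma:nonlinear_utility1}. Writing
\[
L^p u = (a^\varepsilon)^p \Delta^p u + R_p(u), \qquad R_p(u) := \sum_{1 \le |\alpha| \le 2p-1} \varepsilon^{|\alpha|-2p} c_\alpha^\varepsilon\, \partial^\alpha u,
\]
where each $c_\alpha \in C^\infty(Y)$ is a product of derivatives of $a$, I would then solve for $\Delta^p u$ and invoke standard periodic elliptic regularity: $\|u\|_{H^{2p}} \le C(\|\Delta^p u\|_{L^2} + \|u\|_{L^2})$ in the even case $q=2p$, and $\|u\|_{H^{2p+1}} \le C(\|\Delta^p u\|_{H^1} + \|u\|_{L^2})$ in the odd case $q=2p+1$. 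The task then reduces to controlling $\|(a^\varepsilon)^{-p} R_p(u)\|_{L^2}$ and $\|(a^\varepsilon)^{-p} R_p(u)\|_{H^1}$, respectively. Since $L$ has no zeroth-order part, every term in $R_p$ contains at least one derivative of $u$; I can therefore write $\partial^\alpha u = \partial^{\alpha'}\nabla u$ with $|\alpha'|=|\alpha|-1$ and estimate each summand by $\|\nabla u\|_{H^{|\alpha|-1}}$. A short reindexing then shows that the combined sum is exactly dominated by $\varepsilon^{-(q-1)}\|\nabla u\|_{H^{q-2}_\varepsilon}$. In the odd case I would apply the $H^1$-variant of \Cref{lemma:prod_infty_bound} to $(a^\varepsilon)^{-p}$ times each term; the extra $\varepsilon^{-1}\|L^p u\|_{L^2}$ factor that appears when differentiating $(a^\varepsilon)^{-p} L^p u$ is absorbed by \Cref{lemma:nonlinear_utility1} applied with $q=0$ and $k=p$.

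For \cref{eq:nonlinear_utility4} the strategy is to apply \cref{eq:elliptic_reg} with the appropriate $q$ and then multiply through by the correct power of $\varepsilon$. Concretely, in the case $\ell=1$ I would use \cref{eq:elliptic_reg} with $q=2k+2$ and $p=k+1$, so that $\|L^{k+1} u\|_{L^2}$ appears directly on the right; multiplying by $\varepsilon^{2k+2}$ turns the lower-order factor $\varepsilon^{-(2k+1)}\|\nabla u\|_{H^{2k}_\varepsilon}$ into $\varepsilon\|\nabla u\|_{H^{2k}_\varepsilon}$, which is dominated by $\|u\|_{H^{2k+1}_\varepsilon}$ via the elementary bound $\varepsilon\|\nabla u\|_{H^j_\varepsilon} \le \|u\|_{H^{j+1}_\varepsilon}$. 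For $\ell=0$ and $k\ge 1$ I would proceed analogously with $q=2k+1$, splitting the $\|L^k u\|_{H^1}$ term into $\|L^k u\|_{L^2}+\|\nabla L^k u\|_{L^2}$, using the lower bound on $a^\varepsilon$ to replace the gradient piece by $a_{\mathrm{min}}^{-1/2}\|\sqrt{a^\varepsilon}\nabla L^k u\|_{L^2}$, and bounding the $L^2$ piece by $C\varepsilon^{-(2k-1)}\|\nabla u\|_{H^{2k-1}_\varepsilon}$ through \Cref{lemma:nonlinear_utility1}. The remaining case $\ell=0$, $k=0$ lies outside the range $q\ge 2$ of \cref{eq:elliptic_reg} and I would treat it directly by writing $\|u\|_{H^1_\varepsilon}=\|u\|_{L^2}+\varepsilon\|u\|_{H^1}$ and bounding $\|\nabla u\|_{L^2}$ by $a_{\mathrm{min}}^{-1/2}\|\sqrt{a^\varepsilon}\nabla u\|_{L^2}$.

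The main delicate point I expect is ensuring that no spurious term of the form $\varepsilon^{-\kappa}\|u\|_{L^2}$ with $\kappa>0$ appears on the right-hand side of \cref{eq:elliptic_reg}. This is guaranteed once I use that $R_p(u)$ only involves derivatives of $u$ of strictly positive order, which allows every summand to be rewritten in terms of $\|\nabla u\|_{H^{\cdot}}$ rather than $\|u\|_{H^{\cdot}}$; the telescoping into $\varepsilon^{-(q-1)}\|\nabla u\|_{H^{q-2}_\varepsilon}$ then follows essentially from the definition of the weighted multiscale norm.
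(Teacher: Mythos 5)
Your argument is correct, but it follows a genuinely different route from the paper for the first part. The paper proves \cref{eq:elliptic_reg} by first establishing a pointwise-in-$|\sigma|$ version \cref{eq:elliptic_regsimple} through an explicit integration-by-parts energy estimate for the base case $|\sigma|=2$ (writing $\|D^2u\|_{L^2}^2 \le C\sum_k\int a^\varepsilon|\nabla u_k|^2 = -\sum_k\int u_k L u_k$ and commuting $L$ with $\partial_{x_k}$), then inducting in steps of two via a commutator estimate for $\partial^\alpha L^p u - L^p\partial^\alpha u$; standard elliptic regularity is never invoked. You instead extract the principal part $(a^\varepsilon)^p\Delta^p u$ of $L^p u$, appeal to periodic elliptic regularity for $\Delta^p$ (which on the torus is a one-line Fourier inequality), and control the lower-order remainder $R_p$ by a reindexing. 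Both arguments work, and both ultimately need \Cref{lemma:nonlinear_utility1} and the smoothness of $a$. What each buys: your decomposition makes transparent why only derivatives of positive order appear and hence why $\|\nabla u\|_{H^{q-2}_\varepsilon}$ (rather than $\|u\|_{H^{q-1}_\varepsilon}$) suffices, and it avoids the two-level induction. The paper's energy argument is self-contained in that it never goes outside the operator $L$, and it is the natural preparation for the $|\sigma|=2p+1$ case, which it handles simply by taking one more $x$-derivative. For \cref{eq:nonlinear_utility4} the two approaches are close in spirit; the paper splits $\|u\|_{H^{2k+1+\ell}_\varepsilon}$ into even and odd sub-sums and applies \cref{eq:elliptic_reg} termwise, while you more economically add the single top-order contribution to $\|u\|_{H^{2k+\ell}_\varepsilon}$, absorbing the lower-order factor via $\varepsilon\|\nabla u\|_{H^j_\varepsilon}\le\|u\|_{H^{j+1}_\varepsilon}$; your handling of the excluded case $\ell=0$, $k=0$ (where \cref{eq:elliptic_reg} does not apply because $q'=1<2$) is a necessary detail that the paper's formulation also covers, and you treat it correctly.
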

\begin{proof}
  To show \cref{eq:elliptic_reg} we first prove that
  given a multi-index $\sigma$ with $2 \le |\sigma|\le q$,
 \begin{equation}\label{eq:elliptic_regsimple}
   \|\partial^\sigma u\|_{L^2} \leq C \left(
     \frac{1}{\varepsilon^{|\sigma|-1}}\|\nabla u\|_{H^{|\sigma|-2}_\varepsilon} +
  \begin{cases}
  \|{L}^p u\|_{L^2}
& |\sigma|=2p,\\
  \|{L}^p u\|_{H^1}
& |\sigma|=2p+1,
\end{cases}
\right).
\end{equation}
We start by proving this for
$p=1$ and $|\sigma|=2$. Then we have, with $u_k:=\partial_{x_k}u$,
       \begin{align*}
     \|D^2u\|_{L^2}^2
     &=:
     \sum_{|\sigma|=2}
     \|\partial^\sigma u\|_{L^2}^2=
     \sum_{k=1}^n \int_\Omega
          |\nabla u_k|^2 dx
     \leq C
     \sum_{k=1}^n \int_\Omega
          a^\varepsilon|\nabla u_k|^2 dx
     =
     -\sum_{k=1}^n \int_\Omega
           u_k L u_kdx\\
     &=
       \int_\Omega L u
       \sum_{k=1}^n \partial^2_{x_k}u dx
       -\sum_{k=1}^n \int_\Omega u_k [L u_k-\partial_{x_k}(L u)]
       dx
         \\
     &=
       \int_\Omega L u\, \Delta udx
       +\sum_{k=1}^n \int_\Omega (\nabla u_k) \cdot [a^\varepsilon\nabla u_k-\partial_{x_k}a^\varepsilon\nabla u]
       dx\\
     &=
       \int_\Omega L u\, \Delta udx
       -\sum_{k=1}^n \int_\Omega  \partial_{x_k} a^\varepsilon \nabla u_k
       \cdot \nabla u
       dx.
       \end{align*}
 Application of Cauchy-Schwarz and Young's inequality with a constant hence yields
       \begin{align*}
       \|D^2u\|_{L^2}^2  
           &\leq
           \frac{\gamma}{2}\|\Delta u\|_{L^2}^2
          +\frac{1}{2\gamma} \|Lu\|^2_{L^2}+
\sum_{k=1}^n  \frac{\gamma}{2}\|\nabla u_k\|^2_{L^2}
+         \sum_{k=1}^n\frac{1}{2\gamma} \frac{1}{\varepsilon^2} \|a\|^2_{W^{1,\infty}}
\|\nabla u\|^2_{L^2} \\
           &\leq
           {\gamma}\|D^2u\|_{L^2}^2
          +\frac{1}{2\gamma} \|Lu\|^2_{L^2}
+   \frac{n}{2\gamma\varepsilon^2} \|a\|^2_{W^{1,\infty}}
\|\nabla u\|^2_{L^2},
       \end{align*}
       for any constant $\gamma > 0$.
Thus, by taking $\gamma$ small enough we get
$$
          \|D^2u\|^2_{L^2}\leq C\left(
\|Lu\|^2_{L^2}
+   \frac{1}{\varepsilon^2}\|\nabla u\|^2_{L^2}
  \right),
$$
from which \cref{eq:elliptic_regsimple} for $|\sigma|=2$ follows since $\varepsilon \leq 1$.
Next, we assume that
\cref{eq:elliptic_regsimple} holds for $2\leq |\sigma|\leq 2p$.
Given another multi-index $\alpha$, we then obtain upon applying
\cref{eq:elliptic_regsimple} for $|\sigma|=2p$ and \Cref{lemma:H_eps}, that
    \begin{align*}
  \|\partial^{\sigma+\alpha} u\|_{L^2} &\leq
  C \left(\|L^p\partial^{\alpha}u\|_{L^2}
  +
  \frac{1}{\varepsilon^{2p-1}}\|\partial^{\alpha}\nabla u\|_{H^{2p-2}_\varepsilon}
  \right)\\
  &\leq
  C \left(\|\partial^{\alpha}L^p u\|_{L^2}+
  \|\partial^{\alpha}L^p u-
     L^p \partial^{\alpha}u\|_{L^2}
  +
  \frac{1}{\varepsilon^{2p+|\alpha|-1}}\|\nabla u\|_{H^{2p+|\alpha|-2}_\varepsilon}
  \right).
    \end{align*}
    Expressing $L^p u$ involving some smooth functions
    $c_\beta^\varepsilon(x)=c_\beta(x/\varepsilon)$, as in the proof
    of \Cref{lemma:nonlinear_utility1}, we can write
       \begin{align*}
     \partial^{\alpha}L^p u &=
     \sum_{\substack{1 \le |\beta| \le 2p \\ 0\leq \gamma \leq  \alpha}}
\binom{\alpha}{\gamma}
\frac{1}{\varepsilon^{2p-|\beta|+|\gamma|}}
(\partial^{\gamma}c_\beta^\varepsilon) \partial^{\beta+\alpha-\gamma} u.
       \end{align*}
Therefore, it holds that
       \begin{align*}
         \|\partial^{\alpha}L^p u-
     L^p \partial^{\alpha}u\|_{L^2}
     &\leq
          C\sum_{\substack{1 \le |\beta| \le 2p \\ 1\leq |\gamma| \leq  |\alpha|}}
\frac{1}{\varepsilon^{2p-|\beta|+|\gamma|}}
\|\partial^{\beta+\alpha-\gamma} u\|_{L^2}
 \leq
\frac{C}{\varepsilon^{2p+|\alpha|-1}}
\|\nabla u \|_{H^{2p+|\alpha|-2}_{\varepsilon}},
\end{align*}
       and thus we have in total
       \[\|\partial^{\sigma + \alpha} u\|_{L^2} \le C
         \left( \|\partial^\alpha L^p u\|_{L^2} +  \frac{1}{\varepsilon^{2p+|\alpha|-1}}\|\nabla u\|_{H^{2p+|\alpha|-2}_\varepsilon}
  \right).\]
When $|\alpha|=1$ we then get \cref{eq:elliptic_regsimple} with
$|\sigma|=2p+1$  by noting that
\if\longversion1
\begin{align*}
  \|  \partial^{\alpha}L^p u\|_{L^2}  &\leq    C\|L^{p}u\|_{H^1}.
\end{align*}
\else
$  \|  \partial^{\alpha}L^p u\|_{L^2}  \leq    C\|L^{p}u\|_{H^1}.$.
\fi
On the other hand, when $|\alpha|=2$, we get with one
more application of \cref{eq:elliptic_regsimple} and
\Cref{lemma:nonlinear_utility1},
\begin{align*}
\|
\partial^{\alpha}L^p u\|_{L^2}
&\leq
     C \left(\|L^{p+1}u\|_{L^2}
  +
  \frac{1}{\varepsilon}\|\nabla L^p u\|_{L^{2}}
  \right)
\leq      C \left(\|L^{p+1}u\|_{L^2}
  +
  \frac{1}{\varepsilon^{2p+1}}\|\nabla u\|_{H^{2p}_\varepsilon}
  \right).
\end{align*}
This completes the induction step and proves
\cref{eq:elliptic_regsimple}.
To finally prove \cref{{eq:elliptic_reg}} we use
\cref{eq:elliptic_regsimple} together with \Cref{lemma:nonlinear_utility1}, and note that
for $2\leq |\sigma|\leq q-1$,
\begin{align*}
  \|\partial^\sigma u\|_{L^2} &\leq C \left(
  \frac{1}{\varepsilon^{q-2}}\|\nabla u\|_{H^{q-3}_\varepsilon} +
  \begin{cases}
  \frac{1}{\varepsilon^{2p-1}}\|\nabla u\|_{H^{2p-1}_\varepsilon},
& |\sigma|=2p,\\
  \frac{1}{\varepsilon^{2p}}\|\nabla u\|_{H^{2p}_\varepsilon},
& |\sigma|=2p+1,
  \end{cases}
  \
  \right)
  \leq
  \frac{C}{\varepsilon^{q-2}}\|\nabla u\|_{H^{q-2}_\varepsilon},
\end{align*}
which clearly also holds for $|\sigma|=1$. Hence,
 \begin{equation}
  \|u\|_{H^{q}} \leq C
  \sum_{|\sigma|=0}^{q}
    \|\partial^\sigma u\|_{L^2}
    \leq C
    \left(\|u\|_{L^2}
    +
  \frac{1}{\varepsilon^{q-2}}\|\nabla u\|_{H^{q-2}_\varepsilon}
+
  \sum_{|\sigma|=q}
    \|\partial^\sigma u\|_{L^2}
\right)
\end{equation}
which together with
\cref{eq:elliptic_regsimple}
gives
\cref{eq:elliptic_reg}.

  To finally prove \cref{eq:nonlinear_utility4}, we consider odd and
  even indices in the sum in $\|\cdot\|_{H^{2k+1+\ell}_\varepsilon}$
  separately and use elliptic regularity as given by
  \cref{eq:elliptic_reg}, which results in
  \begin{align*}
    \|u\|_{H_\varepsilon^{2k+1+\ell}} &= \sum_{j=0}^{2k+1+\ell} \varepsilon^{j} \|u\|_{H^j}
    = \sum_{j=0}^{k+\ell} \varepsilon^{2j} \|u\|_{H^{2j}}
    + \sum_{j=0}^{k} \varepsilon^{2j+1} \|u\|_{H^{2j+1}} \\
    &\le C \left(\sum_{j=0}^{k+\ell} \varepsilon^{2j} \|L^j u\|_{L^2}
      + \sum_{j=0}^{k} \varepsilon^{2j+1} \|L^j u\|_{H^1}
      + \varepsilon \|\nabla u\|_{H^{2(k+\ell) - 2}_\varepsilon}
      + \varepsilon\|\nabla u\|_{H^{2k-1}_\varepsilon}\right)\\
    &\le  C \left( \sum_{j=0}^{k+\ell} \varepsilon^{2j}  \|L^j u\|_{L^2}
  + \sum_{j=0}^{k} \varepsilon^{2j+1} \|\nabla L^j u\|_{L^2} + \varepsilon \|\nabla u\|_{H^{2k -1 + \ell}_\varepsilon}
  \right).
  \end{align*}
  Application of
  \Cref{lemma:nonlinear_utility1} to all but the highest order terms
  in each sum together with \Cref{lemma:H_eps} then yields
\begin{align*}
\|u\|_{H_\varepsilon^{2k + 1 + \ell}}
    &\le  C \left(\varepsilon^{2(k+\ell)} \|L^{k+\ell} u\|_{L^2} + \varepsilon^{2k+1} \|\nabla L^k u\|_{L^2}
      + \varepsilon \|\nabla u\|_{H^{2k+\ell-1}_\varepsilon}\right) \\
      &\le  C
        \begin{cases}
          \varepsilon^{2k+1} \|\nabla L^k u\|_{L^2} + \|u\|_{H^{2k}_\varepsilon}, & \ell = 0, \\
          \varepsilon^{2k+2} \|L^{k+1} u\|_{L^2} + \|u\|_{H^{2k+1}_\varepsilon}, & \ell = 1.
        \end{cases}
     \end{align*}
  Using the fact that
  $a_\mathrm{min} \le a \le a_\mathrm{max}$ we then obtain the
  result in the lemma.
\end{proof}

\subsection{Application of $\L$ to a cross product.}
The next lemma is based on ideas from \cite{melcher} but has
to be significantly adapted for the problem considered here. We consider $\varepsilon$-dependent functions $\u$ and $\f$, where we assume
that $\f \in W^{q + 2k-1, \infty}(\Omega)$ such that its
$\|\cdot\|_{W^{j}}$ norm is bounded in terms of $\varepsilon$.  We
show that when applying the operator $\L^k$ to the cross product of
either $\u$ or $\f$ and $\L \u$, one can factor out the
highest order term and obtains a remainder term that is bounded in
terms of the $\|\cdot\|_{H^{q +2k}_\varepsilon}$-norm of the
gradient of $\u$. Again we assume that (A1) is true.

\begin{lemma}\label{lemma:nonlinear_utility3}
  Given $k\ge 0, q \ge 0$, suppose $\u \in H^{q + 2k + 1}(\Omega)$ and
  $\f \in W^{q + 2k-1, \infty}(\Omega)$ such that
  \begin{equation}
    \label{eq:m_tilde_bound_ass1}
  \|\bnabla \u\|_{L^\infty} \le M, \qquad  \|\f \|_{W^{j, \infty}} \le  \tilde M \left(1 + \varepsilon^{1-j}\right), \qquad\ 0 \le j \le q+2k-1 \,,
\end{equation}
for constants $M$ and $\tilde M$ independent of $\varepsilon$.
  Then it holds for
  $\w \in \{\u, \f\}$ that
  \begin{equation*}
    \L^k (\w \times \L \u) = \w \times \L^{k+1} \u + \mathbf{R}_{k, \w}\,,
\qquad \text{where} \qquad
\|\R_{k, \w}\|_{H^q} \le  C \frac{1}{\varepsilon^{q + 2k}} \|\bnabla \u\|_{H^{q + 2k}_\varepsilon}\,,
  \end{equation*}
   for a constant $C$ independent of $\varepsilon$.
\end{lemma}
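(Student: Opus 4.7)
The plan is induction on $k$, with the base case $k=0$ giving $\R_{0,\w}=0$ trivially. The key computational tool is the componentwise Leibniz identity for $\L$,
\[
\L(\w\times\v)=\w\times\L\v+(\L\w)\times\v+2a^\varepsilon\sum_\ell(\partial_{x_\ell}\w)\times(\partial_{x_\ell}\v),
\]
derived from the scalar formula $L(fg)=fLg+gLf+2a^\varepsilon\nabla f\cdot\nabla g$. Iterating this identity gives a recursion for $\R_{k+1,\w}$ in terms of $\R_{k,\w}$ plus two new cross-product terms involving $\L^{k+1}\u$.

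I would actually prefer to bypass the recursion and work from a direct closed-form expansion obtained by using the representation $\L^k u=\sum_{1\le|\rho|\le 2k}\varepsilon^{-(2k-|\rho|)}c_\rho^\varepsilon\partial^\rho u$ from the proof of \Cref{lemma:nonlinear_utility1} and distributing the $\partial^\rho$ over the product $\w\times\L\u$ via Leibniz. The result is
\[
\L^k(\w\times\L\u)=\sum_{\substack{|\alpha|\le 2k,\,|\beta|\ge 1\\ |\alpha|+|\beta|\le 2k+2}}\varepsilon^{|\alpha|+|\beta|-(2k+2)}c_{\alpha,\beta}^\varepsilon\,(\partial^\alpha\w)\times(\partial^\beta\u),
\]
for certain $c_{\alpha,\beta}\in C^\infty(Y)$. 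The constraint $|\beta|\ge 1$ reflects that $\L\u$ always contributes at least one derivative of $\u$; the subsum with $|\alpha|=0$ reassembles into $\w\times\L^{k+1}\u$, so $\R_{k,\w}$ is exactly the subsum with $|\alpha|\ge 1$.

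To bound a typical term $T_{\alpha,\beta}=\varepsilon^{|\alpha|+|\beta|-(2k+2)}c_{\alpha,\beta}^\varepsilon(\partial^\alpha\w)\times(\partial^\beta\u)$ in $H^q$, I would use $|\beta|\ge 1$ to rewrite $\partial^\beta\u=\partial^{\beta-e_l}(\bnabla\u)_l$ so that every $\u$-factor contributes at least one gradient, matching the $\bnabla\u$ on the right-hand side. For $\w=\u$, the analogous rewrite on the other factor uses $|\alpha|\ge 1$; the bilinear estimate \cref{eq:interpol_ineq} of \Cref{lemma:bilinearest} combined with $\|\bnabla\u\|_{L^\infty}\le M$ then controls the $L^2$-norm of the product by $M\|\bnabla\u\|_{H^{|\alpha|+|\beta|-2}}$, and passing to $H^q$ through the oscillating coefficient $c_{\alpha,\beta}^\varepsilon$ via \Cref{lemma:prod_infty_bound} gives the required $\varepsilon^{-(q+2k)}\|\bnabla\u\|_{H^{q+2k}_\varepsilon}$. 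For $\w=\f$, I would instead absorb $\partial^\alpha\f$ (and further derivatives supplied by the $H^q$-norm) into a $W^{j',\infty}$-factor using the scale-sensitive bound $\|\f\|_{W^{j,\infty}}\le\tilde M(1+\varepsilon^{1-j})$, and place the remaining $\u$-factor in $H^{q-j'}$ via \cref{eq:prod_infty_bound}.

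The main obstacle is the precise $\varepsilon$-power bookkeeping. A helpful structural fact is that every term in the direct expansion has $\varepsilon$-exponent exactly $|\alpha|+|\beta|-(2k+2)$: derivatives of $a^\varepsilon$-factors produced by $\L^k$ and by $\L\u$ introduce $\varepsilon^{-1}$ each, and their total count is exactly $2k+2-|\alpha|-|\beta|$, namely the derivatives that did not land on $\w$ or $\u$. This exponent, together with the $\varepsilon^{-1}$'s produced when differentiating $c_{\alpha,\beta}^\varepsilon$ and, for $\w=\f$, the $\varepsilon^{1-j'}$'s from the hypothesis on $\|\f\|_{W^{j',\infty}}$, must match $\varepsilon^{-(q+2k)}$ against the $\varepsilon^j$-weights inside $\|\bnabla\u\|_{H^{q+2k}_\varepsilon}$. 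The case $\w=\u$ is the most delicate because only $\bnabla\u$ is controlled in $L^\infty$, so the gradient reduction has to be carried out on \emph{both} product factors; this is possible precisely because $\R_{k,\w}$ carries the structural constraints $|\alpha|\ge 1$ and $|\beta|\ge 1$.
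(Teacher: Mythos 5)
Your proposal is correct and relies on the same key lemmas as the paper's proof, but it does reorganize the argument in a meaningful way. The paper first isolates the remainder through a recursion, writing $\R_{k,\w}=\sum_{\ell=1}^k\L^{k-\ell}\mathbf{r}_\ell(\w)$ with $\mathbf{r}_\ell(\w)=\L\w\times\L^\ell\u+2a\sum_j\partial_{x_j}\w\times\partial_{x_j}\L^\ell\u$; it then expands each $\mathbf{r}_\ell$ into $\varepsilon$-weighted cross-products with $C^\infty(Y)$ coefficients, bounds those via \cref{eq:interpol_ineq} (accumulated into the $H^q_\varepsilon$-estimate \cref{eq:interpol_e}) and \Cref{lemma:prod_infty_bound}, and finally disposes of the outer $\L^{k-\ell}$ via \Cref{lemma:nonlinear_utility1}. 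You instead flatten the whole of $\L^k(\w\times\L\u)$ into a single closed-form sum indexed by $(\alpha,\beta)$, identify $\w\times\L^{k+1}\u$ with the $|\alpha|=0$ subsum, and take $\R_{k,\w}$ to be the $|\alpha|\ge1$ subsum. Your form has the merit of making the dual structural constraints $|\alpha|\ge1$, $|\beta|\ge1$ (and hence $|\beta|\le 2k+1$) visible from the start, which is exactly what permits the gradient reduction on \emph{both} factors in the $\w=\u$ case and makes the $\varepsilon$-power matching against $\varepsilon^{-(q+2k)}\|\bnabla\u\|_{H^{q+2k}_\varepsilon}$ transparent. The paper's recursive form avoids having to verify the reassembly of the $|\alpha|=0$ subsum and re-uses \Cref{lemma:nonlinear_utility1} verbatim. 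A complete write-up of your version would still need to justify two things you state as structural facts: that the $\varepsilon$-exponent of each term is exactly $|\alpha|+|\beta|-(2k+2)$ (this follows since every derivative that does not land on $\w$ or $\u$ lands on an $a^\varepsilon$-factor and contributes $\varepsilon^{-1}$), and that the $|\alpha|=0$ subsum equals $\w\times\L^{k+1}\u$ (this follows by observing that the terms with no derivative on $\w$ constitute $\w\times\L^k(\L\u)$). Both are correct, and with those details filled in, the $\w=\u$ bound via \cref{eq:interpol_e} and \cref{eq:hepsf_prod}, and the $\w=\f$ bound via \cref{eq:prod_infty_bound} and the scale-sensitive assumption on $\|\f\|_{W^{j,\infty}}$, close as you describe.
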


\begin{proof}
  When $k = 0$, the claim in the lemma is trivially true with $\R_{0, \w} = 0$. Let
  $\w$ be either $\u$ or $\f$, then it holds for $k > 0$ that
  \[
  \begin{split}
\L^k (\w \times \L \u)
& = \L^{k-1} (\w \times \L^2 \u + \L \w \times \L \u +  2 a \sum_{j = 1}^n \partial_{x_j} \w \times \partial_{x_j} \L \u) \\
&= \w \times \L^{k+1} \u + \sum_{\ell=1}^{k} \L^{k-\ell}\left(\L \w \times \L^{\ell} \u + 2 a \sum_{j = 1}^n \partial_{x_j} \w \times \partial_{x_j} \L^{\ell} \u \right),
\end{split}
\]
which implies that $\R_{k, \w}$ in the lemma 
is given by
\begin{align*}
  \mathbf{R}_k
  =: \sum_{\ell=1}^{k} \L^{k-\ell} \mathbf{r}_\ell(\w)
   \quad \text{and} \quad
   \mathbf{r}_\ell(\w) := \L \w \times \L^{\ell} \u + 2 a \sum_{j = 1}^n \partial_{x_j} \w \times \partial_{x_j} \L^{\ell} \u
\end{align*}
In the following, we obtain bounds for $\|\R_{k, \w}\|_{H^q}$, first
for $\w=\u$ and then later for $\w = \f$. For the first
estimate, we use the fact that according to assumption (A1), there
exist functions $c_{\beta, \gamma}(y) \in C^\infty(\Omega)$, similar
to the ones in the proof of \Cref{lemma:nonlinear_utility1}, which
might also be zero, such that
\begin{align*}
  \L \u \times \L^{\ell} \u + 2 a \sum_{j = 1}^n \partial_{x_j} \u \times \partial_{x_j} \L^{\ell} \u
  =  \frac{1}{\varepsilon^{2\ell}} \sum_{\substack{1 \le |\beta|, 1 \le |\gamma| \\ |\beta + \gamma| \le 2 + 2\ell}}
  c_{\beta, \gamma}\left(\frac{x}{\varepsilon}\right)
  \varepsilon^{|\beta| + |\gamma|-2}
  \left(
    \partial^{\gamma}  \u \times \partial^\beta \u \right).
\end{align*}
Furthermore, it is a consequence of the interpolation inequality
\cref{eq:interpol_ineq} that given
multiindices $\beta$ and $\gamma$ with $|\beta| \ge 1$,
$|\gamma| \ge 1$,
\[\|\partial^\gamma \u \times \partial^\beta \u\|_{H^j} \le C \|\bnabla \u\|_{L^\infty} \|\bnabla \u\|_{H^{j + |\beta| + |\gamma| - 2}}, \qquad 0 \le j \le q\,,\]
wherefore we find proceeding as in the proof of \Cref{lemma:H_eps} that
\begin{align}\label{eq:interpol_e}
  \varepsilon^{|\beta| + |\gamma| - 2} \|\partial^\gamma \u \times \partial^\beta  \u\|_{H^q_\varepsilon}
&\le C \|\bnabla \u\|_{L^\infty} \sum_{j=0}^q \varepsilon^{j + |\beta| + |\gamma| - 2} \|\bnabla \u\|_{H^{j + |\beta| + |\gamma| - 2}} \nonumber \\
&\le C \|\bnabla \u\|_{L^\infty} \|\bnabla \u\|_{H^{q + |\beta| + |\gamma| - 2}_\varepsilon}.
\end{align}
Therefore, it follows by \cref{eq:hepsf_prod} in \Cref{lemma:prod_infty_bound}
  and \cref{eq:interpol_e} that
\begin{align*}
  \left \|\mathbf{r}_\ell(\u)\right\|_{H^q}
  &\le \frac{C}{\varepsilon^{q + 2 \ell}}
 \sum_{\substack{1 \le |\beta|, 1 \le |\gamma| \\ |\beta + \gamma| \le 2 + 2\ell}}
  \varepsilon^{|\beta| + |\gamma|-2}
  \| \partial^{\gamma} \u \times  \partial^\beta \u \|_{H^q_\varepsilon}  \le \frac{C}{\varepsilon^{q + 2 \ell}} \|\bnabla \u\|_{L^\infty} \|\bnabla \u\|_{H^{q + 2 \ell}_\varepsilon},
\end{align*}
and we obtain using \Cref{lemma:nonlinear_utility1} and \cref{eq:H_eps_bound} in \Cref{lemma:H_eps}  that
\begin{samepage}
\begin{align*}
  \left\|\L^{k-\ell} \mathbf{r}_\ell(\u)
  \right\|_{H^q}
  &\le \frac{C}{\varepsilon^{q + 2k- 2\ell}} \left\|\mathbf{r}_\ell(\u)
    \right\|_{H^{q + 2k - 2\ell}_\varepsilon} \le \frac{C}{\varepsilon^{q + 2 k}} \|\bnabla \u\|_{L^\infty} \|\bnabla \u\|_{H^{q + 2 k}_\varepsilon}.
\end{align*}
This shows that the norm of $\R_{k, \u}$ can be
bounded as stated in the lemma.
\end{samepage}

In case of $\w = \f$, the estimate is based on
\cref{eq:prod_infty_bound} in \Cref{lemma:prod_infty_bound} and the
fact that \cref{eq:m_tilde_bound_ass1} holds for $\f$. When
applying \Cref{lemma:nonlinear_utility1} and using these bounds, we find that
given $q' = q + 2k -2\ell$ and a multi-index $\gamma$ with $|\gamma| = 1$,
\begin{align*}
  \|\L \f \times \L^\ell \u\|_{H^{q'}_\varepsilon}
  &\le  C \sum_{j=0}^{q'} \varepsilon^j \|\L \f\|_{W^{j, \infty}} \|\L^\ell \u\|_{H^{q' - j}_\varepsilon} \le \frac{C}{\varepsilon} \|\L^\ell \u\|_{H^{q'}_\varepsilon} \le C \varepsilon^{-2 \ell } \|\bnabla \u\|_{H^{q' + 2\ell -1}_\varepsilon}, \\
   \|\partial^\gamma \f \times \partial^\gamma \L^{\ell} \u \|_{H^{q'}_\varepsilon}
  &\le C \sum_{j=0}^{q'} \varepsilon^j \|\partial^\gamma \f\|_{W^{j, \infty}} \|\partial^\gamma \L^{\ell} \u \|_{H^{q' - j}}
  \le C \varepsilon^{-2 \ell} \|\bnabla \u\|_{H_\varepsilon^{q' + 2\ell}}.
\end{align*}
Hence, it holds that
\begin{align*}
  \|\L^{k-\ell}(\L \f \times \L^{\ell} \u)\|_{H^q}
 &\le C \frac{1}{\varepsilon^{q + 2k - 2\ell}} \|\L \f \times \L^\ell \u\|_{H^{q + 2k -2\ell}_\varepsilon}
  \le C \frac{1}{\varepsilon^{q + 2k}} \|\bnabla \u\|_{H^{q + 2k -1}_\varepsilon},
\end{align*}
as well as
\begin{align*}
\left\|\L^{k-\ell} \left(a \sum_{i = 1}^n \partial_{x_{i}} \f \times \partial_{x_{i}} \L^{\ell} \u \right)\right\|_{H^q}
&\le C \frac{1}{\varepsilon^{q + 2k-2\ell}} \sum_{|\gamma| = 1} \|\partial^\gamma \f \times \partial^\gamma \L^{\ell} \u \|_{H^{q + 2k - 2\ell}_\varepsilon} \\
  &\le C \frac{1}{\varepsilon^{q + 2k}} \|\bnabla \u\|_{H^{q + 2k}_\varepsilon}.
\end{align*}
Thus, $\R_{k, \f}$ can be bounded in the same way as $\R_{k, \u}$. This completes the proof.
\end{proof}
\section{Stability estimate} \label{sec:nonlinear}

In this section, we derive a stability estimate for the error
introduced when approximating $\m^\varepsilon$ satisfying the
Landau-Lifshitz equation, \cref{eq:main_prob}, by $\tilde \m^\varepsilon$ that satisfies a perturbed version of the equation,
\begin{equation}
  \label{eq:llg_approx}
  \partial_t \tilde \m^\varepsilon = - \tilde \m^\varepsilon \times \L \tilde \m^\varepsilon -  \alpha \tilde \m^\varepsilon \times \tilde \m^\varepsilon \times \L \tilde \m^\varepsilon- \boldsymbol \eta^\varepsilon \,, \quad 0 \le t \le T^\varepsilon,
\end{equation}
where we recall that $T^\varepsilon=\varepsilon^\sigma T$ some some
$\sigma\in[0,2]$.
In particular, we suppose that the assumptions (A1)-(A4) hold and
that initially,
$\tilde \m^\varepsilon(x, 0) = \m^\varepsilon(x, 0)$. Moreover, we
assume that $\tilde \m^\varepsilon \in  C([0,T^\varepsilon]; W^{q+1, \infty}(\Omega)$ and that there is a constant $\tilde{M}$ such that
\begin{equation}
  \label{eq:m_tilde_bound_ass}
    \|\tilde \m^\varepsilon(\cdot,t)\|_{W^{k, \infty}} \le  \tilde M \left(1 + \frac{1}{\varepsilon^{k-1}}\right), \qquad\ 0 \le k \le q+1,
\end{equation}
for $0 \le t \le T^\varepsilon$, uniformly in $\varepsilon$. Note that this
assumption is chosen such that it fits with the estimates that will
be shown in \Cref{sec:application}.
We can then prove the following stability estimate for the difference between $\m^\varepsilon$ and $\tilde \m^\varepsilon$.
\begin{theorem}\label{thm:error_norm}
  Assume (A1) - (A4) hold and let $q \le s$ as given in (A4).
  Suppose $\tilde \m^\varepsilon \in C^1([0, T^\varepsilon];  W^{q+1, \infty}(\Omega))$ is the
  solution to \cref{eq:llg_approx} such that
  \cref{eq:m_tilde_bound_ass} holds and
  $\boldsymbol \eta^\varepsilon(\cdot, t) \in H^q(\Omega)$ for
  $0 \le t \le T^\varepsilon$.  Then there is a constant $C$ independent of $\varepsilon$ but dependent on $T$ and $a$,
  such that the error $\e := \m^\varepsilon - \tilde \m^\varepsilon$
  satisfies,
\begin{equation}\label{eq:eHq_est}
  \|\e(\cdot, t)\|^2_{H^q} \le C t  \sup_{0 \le \zeta \le t} \frac{1}{\varepsilon^{2q}}\left(\|\boldsymbol \eta^\varepsilon(\cdot, \zeta)\|_{H^q_\varepsilon}^2 + \|\nabla|\tilde \m^\varepsilon(\cdot, \zeta)|^2 \|_{H^{q}_\varepsilon }^2\right), \qquad 0 \le t \le T^\varepsilon\,.
\end{equation}
\end{theorem}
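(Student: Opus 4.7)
The plan is to perform an energy-type estimate, building up from $L^2$ to $H^q$ via the multiscale elliptic regularity of \Cref{lemma:ms_elliptic_regularity}, closed by Gronwall's inequality. Subtracting \cref{eq:llg_approx} from \cref{eq:main_prob} and writing $\m^\varepsilon = \tilde\m^\varepsilon + \e$ yields
\begin{equation*}
\partial_t \e = -\m^\varepsilon \times \L\e - \e \times \L\tilde\m^\varepsilon - \alpha \D + \boldsymbol\eta^\varepsilon,
\end{equation*}
where $\D$ collects the difference of the two damping terms, expanded via $\u\times\u\times\w = (\u\cdot\w)\u-|\u|^2\w$. For the $q=0$ case I would test against $\e$ and integrate. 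By the triple product identity \cref{eq:first_triple_product_identity}, $\int\e\cdot(\m^\varepsilon\times\L\e)\,dx = \int\L\e\cdot(\e\times\m^\varepsilon)\,dx$, and integration by parts together with \cref{eq:first_tpi_gradient} (which gives $\bnabla\e:(\bnabla\e\times\m^\varepsilon)=0$) leaves only a term controlled by $\|\bnabla\m^\varepsilon\|_{L^\infty}\le M$. The damping contribution, after using $|\m^\varepsilon|\equiv 1$, produces a coercive piece $-\alpha\int a^\varepsilon|\bnabla\e|^2\,dx$ together with cross terms in which $\tilde\m^\varepsilon$ enters only through $\tilde\m^\varepsilon\cdot\bnabla\tilde\m^\varepsilon = \tfrac12\bnabla|\tilde\m^\varepsilon|^2$; this is precisely the quantity appearing on the right-hand side of the theorem.

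For $q\ge 1$, the strategy is to apply $L^k$ to the equation for $\e$ for $0\le 2k\le q$ and repeat the energy estimate on $L^k\e$. The essential algebraic tool is \Cref{lemma:nonlinear_utility3}, which yields $L^k(\m^\varepsilon\times\L\e) = \m^\varepsilon\times L^{k+1}\e + \R_{k,\m^\varepsilon}$ and similarly for $L^k(\f\times\L\u)$-type terms appearing from the damping and the cross term $\e\times\L\tilde\m^\varepsilon$, with remainders bounded in $L^2$ by $C\varepsilon^{-2k}\|\bnabla\e\|_{H^{2k}_\varepsilon}$ and analogous expressions for $\tilde\m^\varepsilon$ supplied by the hypothesis \cref{eq:m_tilde_bound_ass}. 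Testing $L^k\e$ against the highest-order term $\m^\varepsilon\times L^{k+1}\e$ and integrating by parts again invokes \cref{eq:first_tpi_gradient}, eliminating the problematic top-order derivatives. The damping term furnishes a coercive $-\alpha\int a^\varepsilon|\bnabla L^k\e|^2\,dx$ which, combined with the accumulated lower-order energies, controls $\|\e\|_{H^{2k+1}_\varepsilon}$ via \cref{eq:nonlinear_utility4} in \Cref{lemma:ms_elliptic_regularity}. Bilinear estimates from \Cref{lemma:bilinearest,lemma:prod_infty_bound} are then applied to bound the remainders and the pieces of $L^k\D$ in terms of $\|L^j\e\|_{L^2}$ for $j\le k$, the residual $\|\boldsymbol\eta^\varepsilon\|_{H^q_\varepsilon}$ and the length-defect $\|\bnabla|\tilde\m^\varepsilon|^2\|_{H^q_\varepsilon}$, scaled by the appropriate powers of $\varepsilon$. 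Summing the differential inequalities for $\|L^k\e\|_{L^2}^2$ with the $H^q_\varepsilon$-matching weights $\varepsilon^{2k}$, applying Gronwall on $[0,t]$, and converting back to the $H^q$ norm through \Cref{lemma:ms_elliptic_regularity} produces the bound \cref{eq:eHq_est}.

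The main obstacle will be the careful $\varepsilon$-bookkeeping across all error terms: the bounds on $\tilde\m^\varepsilon$ from \cref{eq:m_tilde_bound_ass} scale like $\varepsilon^{1-k}$ in $W^{k,\infty}$, so each high-order derivative hitting $\tilde\m^\varepsilon$ costs a power of $\varepsilon$, and these losses must be matched exactly by the gains encoded in the $H^q_\varepsilon$ norms on the right so as to produce the factor $\varepsilon^{-2q}$ and no worse. A second technical point is that, unlike the term $\m^\varepsilon\times L^{k+1}\e$, the analogous cross term $\int L^k\e\cdot(\tilde\m^\varepsilon\times L^{k+1}\e)\,dx$ is not killed purely algebraically because $|\tilde\m^\varepsilon|\ne 1$ in general; here one must integrate by parts and isolate the factor $\tilde\m^\varepsilon\cdot\bnabla\tilde\m^\varepsilon = \tfrac12\bnabla|\tilde\m^\varepsilon|^2$, which is exactly the mechanism by which this length-defect quantity appears on the right-hand side of \cref{eq:eHq_est}.
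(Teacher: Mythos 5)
Your proposal is essentially the paper's own argument: derive the error equation, cancel the top-order precession terms using the triple-product identities together with the $\|\bnabla\m^\varepsilon\|_{L^\infty}$ bound, extract coercivity from the damping term, bound remainders via \Cref{lemma:ms_elliptic_regularity}, \Cref{lemma:nonlinear_utility3}, \Cref{lemma:prod_infty_bound} and the bilinear estimates, and close with Gr\"onwall after the $\varepsilon$-bookkeeping. One misattribution worth correcting: the integral $\int L^k\e\cdot(\tilde\m^\varepsilon\times L^{k+1}\e)\,dx$ that you single out is in fact killed algebraically regardless of $|\tilde\m^\varepsilon|$, since integration by parts produces $\int a^\varepsilon\bnabla L^k\e:(\tilde\m^\varepsilon\times\bnabla L^k\e)\,dx=0$ by \cref{eq:first_tpi_gradient}; the length-defect $\bnabla|\tilde\m^\varepsilon|^2$ actually originates in the \emph{damping} double cross product --- the paper's $\D_3=\tfrac12 (L|\tilde\m^\varepsilon|^2)\,\tilde\m^\varepsilon$ piece, coming from $\tilde\m^\varepsilon\cdot\L\tilde\m^\varepsilon$ in the vector triple-product expansion --- which survives precisely because $|\tilde\m^\varepsilon|\not\equiv 1$.
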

To prove this theorem, we first derive a differential equation for
$\e$. Then an estimate for $\|\e\|_{L^2}$ is shown, since the proof
in that case is somewhat different then for higher order
norms. Finally, we complete the section by using induction to show
that \cref{eq:eHq_est} holds for general $q$.  Note that these
proofs are based on ideas from \cite{melcher}.
For better readability, we drop the superscript $\varepsilon$ for
$\m$ and $\boldsymbol \eta$ in the rest of this section, keeping in
mind that they are $\varepsilon$-dependent. However, we keep the
notation $a^\varepsilon$ to stress that the constants in the
estimates depend on norms of $a$, but not $a^\varepsilon$.

To obtain a differential equation for $\e := \m - \tilde \m$,
let $\m$ and $\tilde \m$ satisfy \cref{eq:main_prob} and
\cref{eq:llg_approx}, respectively. Then $\e$ is the solution to
\begin{align}  \label{eq:dte}
  \partial_t \e &= 
\D_1 + \alpha (\L \e +  \D_2 + \D_3) + \boldsymbol \eta\,,
\end{align}
where $\D_1$ is the difference between the precession terms in \cref{eq:main_prob} and \cref{eq:llg_approx},
\begin{align}
  \D_1 := - \m \times \L \m + \tilde \m \times \L \tilde \m 
&= - \m \times \L \e - \e \times \L \tilde \m,  \label{eq:D11}
\end{align}
and $\D_2$ and $\D_3$ arise when taking the difference of the damping terms,
\begin{align*}
- \m \times \m \times \L \m + \tilde \m \times \tilde \m \times \L \tilde \m
 &= \L \m | \m|^2 -  \L \tilde \m |\tilde \m|^2 +  a^\varepsilon \m |\bnabla \m|^2 \\ &\qquad - a^\varepsilon \tilde \m |\bnabla \tilde \m|^2 +
\bnabla \cdot (a^\varepsilon \tilde \m \cdot \bnabla \tilde \m) \tilde \m
= \L \e +  \D_2 + \D_3
\,,
\end{align*}
where
\begin{subequations}
  \begin{align}
   \D_2 :&= (\e \cdot (\m + \tilde \m)) \L \tilde \m
 + a^\varepsilon \e |\bnabla \m|^2 + a^\varepsilon \tilde \m (\bnabla \e: \bnabla(\m + \tilde \m)), \label{eq:D21}\\
\D_3 :&= \bnabla \cdot (a^\varepsilon \tilde \m \cdot \bnabla \tilde \m) \tilde \m = \frac{1}{2} L |\tilde \m|^2 \tilde \m.
  \end{align}
\end{subequations}
Note that by assumption, $|\m|^2 = 1$, constant in time and space,
but $|\tilde \m|^2$ is not constant, therefore the remainder term involving
only $\tilde \m$, $\D_3$, does not vanish.

\subsection{$L^2$-estimate.}
To obtain an estimate for the change in the norm of the
error $\e$, we multiply \cref{eq:dte} by $\e$ and integrate in
space, which  yields
\begin{align*}%
\frac{1}{2} \partial_t \|\e\|^2_{L^2} &= \int_\Omega \e \cdot \partial_t \e = \int_\Omega \e \cdot \D_1 dx + \alpha \int_\Omega \e \cdot (\L\e +  \D_2 + \D_3) dx  + \int_\Omega \e \cdot \boldsymbol \eta dx
\\ &=
\int_\Omega \e \cdot \D_1 dx - \alpha \int_\Omega a^\varepsilon \bnabla \e : \bnabla \e dx + \alpha \int_\Omega \e \cdot (\D_2 + \D_3) dx  + \int_\Omega \e \cdot \boldsymbol \eta dx \,.
\end{align*}
It thus holds that
\begin{align}
   \frac{1}{2} \partial_t \|\e\|^2_{L^2} + \alpha  \|\sqrt{a^\varepsilon} \bnabla \e\|_{L^2}^2
&= \I_1 +  \alpha (\I_2 + \I_3) + \int_\Omega \e \cdot \boldsymbol \eta dx \label{eq:dte_int},
\end{align}
where we define
for the sake of notation,
\[\I_k := \int_\Omega \e \cdot \D_k dx, \qquad k = 1, 2, 3\,.\]
Our goal in the following then is to derive bounds for the integrals
$\I_k$ that only depend on the $L^2$-norms of $\e$ and
$\sqrt{a^{\varepsilon}} \bnabla \e$, multiplied by a suitable constant that we can
choose such that the terms involving $\sqrt{a^{\varepsilon}} \bnabla \e$ on the
left- and right-hand side cancel. This makes it possible to use Gr\"onwall's
inequality to obtain \cref{eq:eHq_est} for $q = 0$. 
Using the fact that the cross product of a vector by itself is zero,
$\D_1$ can be rewritten as
\begin{equation}
  \D_1 = - \tilde \m \times \L \e - \e \times \L \m =  - \bnabla \cdot  (\e \times a^\varepsilon \bnabla \m + \tilde \m \times a^\varepsilon \bnabla \e) \label{eq:D10}\,.
\end{equation}
Applying integration by parts and the
\if\longversion0
standard scalar triple product identity,
\else
identity
\cref{eq:first_tpi_gradient},
\fi
we then find that due to
orthogonality,
\begin{align*}
  \I_{1} = - \int_\Omega \e \cdot \left[ \bnabla \cdot (\e \times a^\varepsilon \bnabla \m
  + \tilde \m \times a^\varepsilon \bnabla \e)\right] dx
= \int_\Omega a^\varepsilon \bnabla \e : (\e \times \bnabla \m) dx\,.
\end{align*}
Therefore one can bound the first integral as
\begin{equation}
  \label{eq:i1_bound}
   |\I_1| \le  \|\sqrt{a^\varepsilon} \bnabla \e\|_{L^2} \|\e\|_{L^2} \|\sqrt{a^\varepsilon}\bnabla \m\|_\infty \le \frac{\gamma}{2} \|\sqrt{a^\varepsilon} \bnabla \e\|^2_{L^2} + \frac{a_\mathrm{max}M^2}{2\gamma} \|\e\|^2_{L^2}\,.
\end{equation}
For the second integral, we have according to the definition of $\D_2$, \cref{eq:D21},
\begin{align*}
  \I_2 &= \int_\Omega a^\varepsilon |\e|^2 |\bnabla \m|^2 dx
         + \int_\Omega a^\varepsilon (\e \cdot \tilde \m) (\bnabla \e : \bnabla (\m + \tilde \m)) dx
         \\&\qquad - \int_\Omega a^\varepsilon \bnabla (\e (\e \cdot (\m + \tilde \m))) : \bnabla \tilde \m  dx,
\end{align*}
where we used integration by parts on the last term. Applying Cauchy-Schwarz
\if\longversion1
to these integrals yields
\[
\begin{split}
\left|\int_\Omega a^\varepsilon (\e \cdot \tilde \m) (\bnabla \e : \bnabla (\m + \tilde \m)) dx\right|
&\le  \|\sqrt{a^\varepsilon} \bnabla \e\|_{L^2} \|\e\|_{L^2} \|\sqrt{a^\varepsilon}\tilde \m\|_{\infty} \|\bnabla (\m + \tilde \m)\|_{\infty}
\end{split}
\]
and similarly,
\begin{align*}
&\left|\int_\Omega a^\varepsilon \bnabla (\e (\e \cdot (\m + \tilde \m))) : \bnabla \tilde \m  dx\right|
\\ &\hspace{1cm}\le \sqrt{a_\mathrm{max}} \|\tilde \m\|_{L^\infty} \|\m + \tilde \m\|_{W^{1,\infty}} \left(\|\e\|_{L^2}\|\sqrt{a^\varepsilon} \bnabla \e\|_{L^2}
+ \|\e\|^2_{L^2}\right)\,.
\end{align*}
From 
\else
,
\fi
 Young's inequality with a constant together with the bounds
\cref{eq:m_tilde_bound_ass} and using assumption (A4), we thus obtain
for $t\in[0,T^\varepsilon]$,
\begin{align}\label{eq:i2_bound}
  |\I_{2}| \le \frac{\gamma}{2} \|\sqrt{a^\varepsilon} \bnabla \e\|^2_{L^2} + a_\mathrm{max} \left(\frac{1}{2\gamma} + 1\right) (M^2 \tilde M^2 + \tilde M^4) \|\e\|^2_{L^2}, \qquad \text{for all } \gamma > 0.
\end{align}
In order to derive a bound for $\I_3$, note first that
since $\m \cdot \bnabla \m = \boldsymbol 0$, it holds that
\begin{align*}
  \nabla (\e \cdot \tilde \m) &
= (\tilde \m \cdot \bnabla \e - \m \cdot \bnabla \e - \tilde \m \cdot \bnabla \tilde \m )^T
= - (\e \cdot \bnabla \e)^T - \frac{1}{2} \nabla |\tilde \m|^2\,,
\end{align*}
which implies that
\begin{align*}
  \I_3 &= \frac{1}{2} \int_\Omega (\e \cdot \tilde \m) L |\tilde \m|^2 dx
= - \frac{1}{2} \int_\Omega a^\varepsilon \nabla (\e \cdot \tilde \m) \cdot \nabla | \tilde \m|^2 dx \\
&= \frac{1}{2} \int_\Omega a^\varepsilon (\e \cdot \bnabla \e)^T \cdot \nabla | \tilde \m|^2 dx
+ \frac{1}{4} \|\sqrt{a^\varepsilon} \nabla |\tilde \m|^2\|^2_{L^2}\,.
\end{align*}
It then follows that for any $\gamma > 0$,
\begin{align}\label{eq:i3_bound}
|\I_3| \le
\frac{\gamma}{2} \|\sqrt{a^\varepsilon} \bnabla \e\|_{L^2}^2  + C a_\mathrm{max} \left(\frac{1}{2\gamma} \tilde M^2 \|\e\|^2_{L^2} +  \|\nabla |\tilde \m|^2\|_{L^2}^2 \right)\,.
\end{align}
The last integral in \cref{eq:dte_int} can be directly bounded
using Cauchy-Schwarz and Young,
\begin{equation}
  \label{eq:eta_remainder}
  \int_\Omega \e \cdot \boldsymbol \eta \,dx \le C (\|\e\|^2_{L^2} + \|\boldsymbol \eta\|_{L^2}^2)\,.
\end{equation}
Putting \cref{eq:i1_bound}, \cref{eq:i2_bound} and
\cref{eq:i3_bound} into \cref{eq:dte_int} then
yields,
upon choosing $\gamma$ sufficiently small,
\[
 \partial_t \|\e\|^2_{L^2}
\le C \left(\frac{M^2}{\gamma} \|\e\|^2_{L^2} + \|\nabla |\tilde \m|^2\|_{L^2}^2 + \|\boldsymbol \eta\|^2_{L^2}\right)\,, \qquad 0\leq t \leq T^\varepsilon\,,
\]
for some $C$ independent of $\varepsilon$ and $t$.
As $\e(0) = 0$, it follows by Gr\"onwall's inequality that
\begin{equation}
  \label{eq:eL2_est}
\|\e(\cdot, t)\|^2_{L^2} \le c e^{C (M^2/\gamma) T^\varepsilon} \int_0^t \|\boldsymbol \eta(\cdot, s)\|_{L^2}^2 + \|\nabla |\tilde \m(\cdot, s)|^2\|^2_{L^2} ds\,, \qquad 0\leq t \leq T^\varepsilon,
\end{equation}
where the prefactor can be taken independent of
$\varepsilon$
as $T^\varepsilon\leq T$.
This proves the estimate in \Cref{thm:error_norm} for $q = 0$.

\subsection{Higher-order estimates.}
In this section, we show estimates for $\|\e\|_{H^q}$, $q > 0$ to
complete the proof of \Cref{thm:error_norm}. The general structure
of these estimates is similar to the $L^2$-estimate. However,
we include an induction argument to obtain the final result.
Furthermore, bounds for the $H^q$-norms of $\D_2$ are required to
complete the proof. These are given in the
following lemma.

\begin{lemma}\label{lemma:d2_norm}
  Let $\D_2$ be given by \cref{eq:D21} and suppose that
  $\e \in H^{q+1}(\Omega)$ and that there is a constant $C$ independent of $\varepsilon$ such
  that $\|\e\|_\infty \le C$ and $\|\bnabla \e\|_\infty \le C$. Then it
  holds that
  \[\|\D_2\|_{H^q} \le
\frac{1}{\varepsilon^{q+1}} \|\e\|_{H^{q+1}_\varepsilon}
\,.\]
\end{lemma}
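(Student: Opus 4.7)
The plan is to split $\D_2$ into its three defining summands and treat each by the bilinear Sobolev tools just developed in Section 5, primarily Lemma~\ref{lemma:prod_infty_bound}, the interpolation inequality \cref{eq:interpol_ineq}, and the multiscale norm property \cref{eq:H_eps_grad}. Before attacking the summands I would first record an auxiliary pointwise bound on $\L\tilde\m$: expanding $\L\tilde\m = a^\varepsilon\Delta\tilde\m + \bnabla a^\varepsilon\cdot\bnabla\tilde\m$ and using $|\partial^\gamma a^\varepsilon|\le C\varepsilon^{-|\gamma|}$ together with the hypothesis $\|\tilde\m\|_{W^{k,\infty}}\le \tilde M(1+\varepsilon^{1-k})$, one obtains $\|\L\tilde\m\|_{W^{j,\infty}}\le C\varepsilon^{-1-j}$, so that $\varepsilon^{j}\|\L\tilde\m\|_{W^{j,\infty}}\le C\varepsilon^{-1}$ uniformly in~$j$.

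For the first summand $(\e\cdot(\m+\tilde\m))\L\tilde\m$ I would apply Lemma~\ref{lemma:prod_infty_bound} with $g=\L\tilde\m$, extract the uniform $\varepsilon^{-1}$, and then apply Lemma~\ref{lemma:prod_infty_bound} a second time to $\e\cdot(\m+\tilde\m)$ with $g=\m+\tilde\m$, using the bounds $\|\m+\tilde\m\|_{W^{k,\infty}}\le C(1+\varepsilon^{1-k})$ (for $\m$ the higher-derivative control comes from (A4) combined with the Sobolev inequality \cref{eq:sob}). This gives $\|\D_2^A\|_{H^q_\varepsilon}\le C\varepsilon^{-1}\|\e\|_{H^q_\varepsilon}$, and converting with the trivial inequality $\|\cdot\|_{H^q}\le \varepsilon^{-q}\|\cdot\|_{H^q_\varepsilon}$ yields the claimed form. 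For the second summand $a^\varepsilon\e|\bnabla\m|^2$, the factor $a^\varepsilon|\bnabla\m|^2$ is only $L^\infty$-bounded (by $a_{\max}M^2$), so I would differentiate directly by Leibniz, pay $\varepsilon^{-|\gamma|}$ for each derivative landing on $a^\varepsilon$, expand derivatives of $|\bnabla\m|^2=\bnabla\m:\bnabla\m$ by Leibniz, and control each resulting product $(\partial^{\delta_1}\bnabla\m)(\partial^{\delta_2}\bnabla\m)$ by \cref{eq:interpol_ineq} using $\|\bnabla\m\|_\infty\le M$ and the $H^{s+1}$-regularity of $\m$; the $\e$-factors are treated with $\|\e\|_\infty\le C$ when no derivative lands on them and in $L^2$ otherwise. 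The third summand $a^\varepsilon\tilde\m(\bnabla\e:\bnabla(\m+\tilde\m))$ is the only place where a derivative of $\e$ appears, so I would regroup it as $g(\bnabla\e:\bnabla(\m+\tilde\m))$ with $g=a^\varepsilon\tilde\m$ satisfying $\|g\|_{W^{j,\infty}}\le C\varepsilon^{-j}$, apply Lemma~\ref{lemma:prod_infty_bound}, bound $\|\bnabla\e\|_{H^j_\varepsilon}\le \varepsilon^{-1}\|\e\|_{H^{j+1}_\varepsilon}$ via \cref{eq:H_eps_grad}, and treat $\bnabla(\m+\tilde\m)$ by the same mechanism as in the first two summands. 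This summand is precisely what forces $\|\e\|_{H^{q+1}_\varepsilon}$ rather than $\|\e\|_{H^q_\varepsilon}$ on the right-hand side.

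The main obstacle is bookkeeping the different $\varepsilon$-scalings coming from the three kinds of objects that get differentiated: derivatives of $a^\varepsilon$ each cost $\varepsilon^{-1}$, derivatives of $\tilde\m$ each improve a factor of $\varepsilon^{-1}$ past the first one, and derivatives of $\m$ are only controlled in $L^2$ via (A4) and \cref{eq:interpol_ineq}. One has to check that no distribution of derivatives among $a^\varepsilon$, $\m$, $\tilde\m$, and $\e$ produces a worse total scaling than $\varepsilon^{-(q+1)}\|\e\|_{H^{q+1}_\varepsilon}$, and the computation is saved precisely because every additional derivative falling on $\e$ shifts into a higher $H^k_\varepsilon$-slot that carries an extra $\varepsilon^k$ absorbing the growing prefactor.
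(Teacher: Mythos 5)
There is a genuine gap, and it is structural rather than a matter of bookkeeping. The step you are missing is the substitution $\m = \e + \tilde\m$ inside $\D_2$ \emph{before} any norms are taken, which is how the paper eliminates $\m$ entirely from the expression. Keeping $\m$ in your decomposition creates two problems that your plan cannot repair.

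First, you invoke $\|\m+\tilde\m\|_{W^{k,\infty}}\le C(1+\varepsilon^{1-k})$ and attribute the $\m$-part to ``(A4) plus Sobolev embedding.'' But (A4) only gives $\|\bnabla\m\|_{L^\infty}\le M$ uniformly in $\varepsilon$, together with $\m\in H^{s+1}$; the Sobolev inequality \cref{eq:sob} would give $\|\m\|_{W^{k,\infty}}\lesssim\|\m\|_{H^{k+2}}$, and from the energy estimates (e.g.\ \Cref{lemma:l2_m_eps_scalar}) one only has $\|\m\|_{H^{k+2}}\le C(1+\varepsilon^{-1-k})$. That is worse by a factor $\varepsilon^{-2}$ than the bound \cref{eq:m_tilde_bound_ass} you need for \Cref{lemma:prod_infty_bound} to deliver $\varepsilon^{-(q+1)}$ overall. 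There is simply no $W^{k,\infty}$ control on $\m$ at the same scaling as $\tilde\m$.

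Second, and more fundamentally, for the middle summand $a^\varepsilon\e|\bnabla\m|^2$ your Leibniz-plus-interpolation plan produces a term of the form
$\|\e\|_{L^\infty}\,\||\bnabla\m|^2\|_{H^{q}}$. This is $O(\varepsilon^{-q})$, but it is \emph{not} proportional to any Sobolev norm of $\e$: it does not vanish as $\|\e\|_{H^{q+1}_\varepsilon}\to 0$. The lemma's conclusion, and its use inside the Gr\"onwall argument of \Cref{thm:error_norm}, requires the right-hand side to be a multiple of $\|\e\|_{H^{q+1}_\varepsilon}$. You cannot trade $\|\e\|_{L^\infty}$ for a Sobolev norm of $\e$, so the estimate does not close. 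The paper sidesteps this by writing $\bnabla\m=\bnabla\e+\bnabla\tilde\m$, so that every high-derivative factor is either $\e$ (which is what the target norm measures) or $\tilde\m$ (which is $W^{k,\infty}$-controlled by assumption); this gives $\D_{21}$ and $\D_{22}$ as in the paper's proof, each estimable by \Cref{lemma:prod_infty_bound} and \cref{eq:norm_gradu_bound}. That algebraic preprocessing is the essential idea, not merely a convenience, and without it the plan fails.
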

\begin{proof}
  First, note that we can use \cref{eq:interpol2} to bound for
  $\ell \le q$,
\begin{equation}
  \label{eq:norm_gradu_bound}
  \|\,|\e|^2\,\|_{H^{\ell}} \le
   C \|\e\|_{L^\infty}
     \|\e\|_{H^{\ell}}, \qquad \|\,|\bnabla \e|^2\,\|_{H^{\ell}} \le
   C \|\bnabla \e\|_{L^\infty}
     \|\bnabla \e\|_{H^{\ell}}.
\end{equation}
Using the fact that $\m = \e + \tilde \m$, we can moreover show that
\begin{align*}
  \D_2
  &= \L \tilde \m (|\e|^2 + 2 (\e \cdot \tilde \m))
 + a^\varepsilon \e (|\bnabla \tilde \m|^2 +  |\bnabla \e|^2)\\ & \qquad + a^\varepsilon \tilde \m (|\bnabla \e|^2
+ 2 (\bnabla \e : \bnabla \tilde \m))
 + 2 a^\varepsilon \e (\bnabla \e : \bnabla \tilde \m) , 
\end{align*}
where the last term satisfies
\[|a^\varepsilon\e (\bnabla \e : \bnabla \tilde \m)| \le C |a^\varepsilon\e |\bnabla \e|^2 + a^\varepsilon\e|\bnabla \tilde \m|^2|. \]
Thus, it holds according to \cref{eq:hepsf_prod} in  \Cref{lemma:prod_infty_bound} 
that
\begin{equation}
  \label{eq:D2_bound_part1}
\|\D_2\|_{H^q} \le \left(\|\D_{21}\|_{H^q} +  \|a^\varepsilon \D_{22}\|_{H^q}\right) 
\le  C \left(\|\D_{21}\|_{H^q} +  \frac{1}{\varepsilon^q} \|\D_{22}\|_{H^q_\varepsilon}\right), 
\end{equation}
where we let
\begin{align*}
  \D_{21} :&= \L \tilde \m |\e|^2 +  \L \tilde \m (\e \cdot \tilde \m), \\
  \D_{22} :&=  \e |\bnabla \tilde \m|^2 + \e |\bnabla \e|^2 + \tilde \m |\bnabla \e|^2 +  \tilde \m (\bnabla \e : \bnabla \tilde \m).
\end{align*}
For the norms of the terms involved in $\D_{21}$, it holds by \Cref{lemma:prod_infty_bound} and \cref{eq:norm_gradu_bound} that
\begin{align*}
  \|\L \tilde \m |\e|^2\|_{H^q}
  &\le C \sum_{j=0}^q \|\L \tilde \m\|_{W^{q-j, \infty}} \||\e|^2\|_{H^j}
    \le C \sum_{j=0}^q \|\L \tilde \m\|_{W^{q-j, \infty}} \|\e\|_\infty \|\e\|_{H^j},
\\
  \|\L \tilde \m (\e \cdot \tilde \m)\|_{H^q}
    &\le C \sum_{j=0}^q \sum_{i=0}^j \|\L \tilde \m\|_{W^{q-j, \infty}} \|\tilde \m\|_{W^{j-i, \infty}} \|\e\|_{H^i},
\end{align*}
which together with the assumption on the boundedness of
$\tilde \m$, \cref{eq:m_tilde_bound_ass}, implies that
\begin{equation}
  \label{eq:D21_bound}
  \|\D_{21}\|_{H^q}
\le C \sum_{j=0}^q \left(\frac{1}{\varepsilon^{q-j+1}} \|\e\|_{H^j} + \sum_{i=0}^{j-1}
\frac{1}{\varepsilon^{q-i}} \|\e\|_{H^i} 
\right)
 \le
 C \frac{1}{\varepsilon^{q+1}} \|\e\|_{H^q_\varepsilon}.
\end{equation}
Again using \Cref{lemma:prod_infty_bound} and
\cref{eq:norm_gradu_bound}, we can furthermore show that
the norms involved in $\D_{22}$ satisfy
\begin{align*}
  \|\e|\bnabla \tilde \m|^2\|_{H^q}
  &
  \le C \sum_{j=0}^q \sum_{i=0}^j \|\bnabla \tilde \m\|_{W^{q-j-i, \infty}}
  \|\bnabla \tilde \m\|_{W^{i, \infty}}  \|\e\|_{H^j},\\
  \|\tilde \m|\bnabla \e|^2\|_{H^q}
  &\le C   \sum_{j=0}^q \| \tilde \m\|_{W^{q-j}, \infty} \||\bnabla \e|^2\|_{H^j}
    \le  C   \sum_{j=0}^q \| \tilde \m\|_{W^{q-j, \infty}} \|\bnabla \e\|_{L^\infty}  \|\bnabla \e\|_{H^j}, \\
  \|\tilde \m (\bnabla \e:\bnabla \tilde \m)\|_{H^q}
  &
  \le  \sum_{j=0}^q \sum_{i=0}^j \| \tilde \m\|_{W^{q-j}, \infty} \|\bnabla \tilde \m\|_{W^{j-i, \infty}} \|\bnabla \e\|_{H^i},
\end{align*}
and finally, as shown in \cite{melcher}, we have as a consequence of
\cref{eq:interpol2} and the boundedness of the gradients of $\m$ and
$\tilde \m$ that
\begin{align*}
  \|\e|\bnabla \e|^2\|_{H^q} \le C\left(\|\e\|_{L^\infty} \|\bnabla \e\|_{L^\infty} \|\bnabla \e\|_{H^q} + \|\bnabla \e\|_{L^\infty}^2 \|\bnabla \e\|_{H^{q-1}} \right) \le  C \|\bnabla \e\|_{H^q}.
\end{align*}
Applying the assumption \cref{eq:m_tilde_bound_ass}, we thus get
\begin{align}
  \|\D_{22}\|_{H_j} &\le C \left(\sum_{i=0}^j \frac{1}{\varepsilon^{j-i}} \|\e\|_{H^i}
+ \sum_{i=0}^j \frac{1}{\varepsilon^{\max(0, j-i-1)}} \|\e\|_{H^{i+1}}
+ \sum_{i=0}^j \frac{1}{\varepsilon^{j-i}} \|\e\|_{H^{i+1}}
+  \|\e\|_{H^{j+1}}
\right) \nonumber \\
&\le C \left(\sum_{i=0}^j \frac{1}{\varepsilon^{j-i}} \|\e\|_{H^i} + \|\e\|_{H^{j+1}}\right)
  \le C \frac{1}{\varepsilon^{j+1}} \|\e\|_{H^{j+1}_\varepsilon}.
  \label{eq:D22_bound}
\end{align}
In total, the combination of \cref{eq:D21_bound} and
\cref{eq:D22_bound} with \cref{eq:D2_bound_part1} and application of \cref{eq:H_eps_bound} in
\Cref{lemma:H_eps} results in
\[\|\D_2\|_{H^q} \le C \left(
\frac{1}{\varepsilon^{q+1}} \|\e\|_{H^q_\varepsilon}
+ \frac{1}{\varepsilon^{q+1}} \|\e\|_{H^{q+1}_\varepsilon} 
\right)
\le
C \frac{1}{\varepsilon^{q+1}} \|\e\|_{H^{q+1}_\varepsilon}
.\]
This completes the proof.
\end{proof}

To continue with the proof of \Cref{thm:error_norm}, consider now
$\bnabla \L^{k} \e$ with $k\ge 0$.  Based on \cref{eq:dte}, we find
using integration by parts that
\begin{align}%
  \frac{1}{2} \partial_t \|\sqrt{a^\varepsilon} \bnabla \L^k \e\|_{L^2}^2
  &= \int_\Omega a^\varepsilon \bnabla \L^k \e : \bnabla \L^k \partial_t  \e dx
  = -\int_\Omega (\L^{k+1} \e)\cdot \L^k \partial_t \e dx \\
  &= -\int_\Omega (\L^{k+1}\e) \cdot \L^k \D_1 dx - \alpha \int_\Omega (\L^{k+1}\e) \cdot  \L^k (\L \e + (\D_2 + \D_3)) dx  \nonumber \\
  & \qquad + \int_\Omega (a^\varepsilon \bnabla \L^k\e) \cdot \bnabla \L^k \eta dx \nonumber.
\end{align}
Similarly, we obtain for $k > 0$ that
\begin{align*}
\frac{1}{2} \partial_t \| \L^k \e\|_{L^2}^2 
 &= - \int_\Omega  a^\varepsilon \bnabla \L^k \e : \bnabla \L^{k-1} \D_1 dx -
   \alpha \int_\Omega  (a^\varepsilon \bnabla \L^k \e) : \bnabla \L^{k-1} (\L \e + (\D_2 + \D_3) dx
  \\ &\qquad + \int_\Omega  (\L^k \e) \cdot \L^k \eta dx.
\end{align*}
It thus holds that for $k \ge 0$,
\begin{align}
 \frac{1}{2} \partial_t \|\sqrt{a^\varepsilon} \bnabla \L^k \e\|^2_{L^2} + \alpha  \|\L^{k+1} \e\|_{L^2}^2
&= - \J_{1,k} - \alpha (\J_{2,k} + \J_{3,k}) + \int_\Omega a^\varepsilon \bnabla \e \cdot \bnabla \eta dx \label{eq:dtw_int}, \\
 \frac{1}{2} \partial_t \|\L^{k+1} \e\|^2_{L^2} + \alpha  \|\sqrt{a^\varepsilon} \bnabla \L^{k+1} \e\|_{L^2}^2
&= - \mathbf{K}_{1,k} - \alpha (\mathbf{K}_{2,k} + \mathbf{K}_{3,k}) + \int_\Omega  \L \e \cdot \L \eta dx \label{eq:dtLe_int},
\end{align}
where
\[\J_{j,k} := \int_\Omega \L^{k+1} \e \cdot \L^k \D_{j} dx, \quad \mathbf{K}_{j,k} := \int_\Omega a^\varepsilon \bnabla \L^{k+1} \e : \bnabla \L^k \D_j dx\,.\]
We now derive bounds for these integrals. In general, the estimates
for the $\J_{j,k}$ and $\mathbf{K}_{j,k}$ integrals are very similar to each
other and only differ regarding details.  We therefore focus mostly on
the $\J_{j,k}$ estimates.

To bound the first terms, $\J_{1,k}$ and $\mathbf{K}_{1, k}$, one can use the fact
that by \Cref{lemma:nonlinear_utility3},
\[\L^k \D_1 = \L^k (\e \times \L\e + \tilde \m \times \L \e + \e \times \L \tilde \m )
= \m \times \L^{k+1} \e + \R_{k, \e} + \R_{k, \tilde \m} +
\L^{k}(\e \times \L \tilde \m).\]
The highest order term here, $\m \times \tilde \L^{k+1} \e$, cancels
in the integral in $\J_{1,k}$ due to orthogonality. Consequently,
application of Cauchy-Schwarz and Young's inequality yields 
\begin{align*}
  |\J_{1,k}| &= \left|\int_\Omega \L^{k+1} \e \cdot (\R_{k, \e} + \R_{k, \tilde \m} + \L^k(\e \times \L \tilde \m)) dx\right| \\
 &\le \frac{\gamma}{2} \|\L^{k+1} \e\|_{L^2}^2 + \frac{1}{2 \gamma} \left(\|\R_{k, \e}\|_{L^2}^2 + \|\R_{k, \tilde \m}\|_{L^2}^2 + \|\L^k(\e \times \L \tilde \m)\|_{L^2}^2\right).
\end{align*}
Making use of \Cref{lemma:nonlinear_utility1},
\Cref{lemma:prod_infty_bound} and the assumption
\cref{eq:m_tilde_bound_ass}, the latter norm can be bounded as
\[\|\L^k(\e \times \L \tilde \m)\|_{L^2} 
\le C \frac{1}{\varepsilon^{2k}} \sum_{i=0}^{2k} \varepsilon^i \|\L \tilde \m\|_{W^{i, \infty}} \|\e\|_{H^{2k-i}_\varepsilon}
\le C \frac{1}{\varepsilon^{2k+1}} \|\e\|_{H^{2k}_\varepsilon}
.
\]
Together with the bounds for $\|\R_{k, \u}\|_{L^2}$ according to
\Cref{lemma:nonlinear_utility3}, we thus
 get
\begin{align}\label{eq:j1est}
  |\J_{1,k}| &\le \frac{\gamma}{2} \|\L^{k+1} \e\|_{L^2}^2 + \frac{C}{2 \gamma}  \frac{1}{\varepsilon^{2(2k+1)}}\|\e\|^2_{H^{2k+1}_\varepsilon}.
\end{align}
We obtain an according estimate for $\mathbf{K}_{1, k}$ by taking
the gradient of $\L^k \D_1$ and proceeding in the same way as for $\J_{1,k}$.
However, we have to consider that
\[\bnabla (\m \times \L^{k+1} \e) = \m \times \bnabla \L^{k+1} \e + \bnabla \m \times \L^{k+1} \e,\]
where only the first term on the right-hand side cancels due to
orthogonality in $\mathbf{K}_{1,k}$. To bound the $L^2$-norm of the
second term, we use the fact that by assumption (A4) we have an
infinity bound on $\bnabla \m$, making it possible to remove it form
the norm. The remaining term can be bounded using
\Cref{lemma:nonlinear_utility1}. In total, this results in
\begin{align}
  |\mathbf{K}_{1,k}|
 &\le \frac{\gamma}{2} \|\sqrt{a^\varepsilon} \bnabla \L^{k+1} \e\|_{L^2}^2 \nonumber
 \\ & \quad+ \frac{1}{2 \gamma} \left(\|\bnabla \m\times \L^{k+1} \e\|_{L^2}^2 + \|\R_{k, \e}\|_{H^1}^2 + \|\R_{k, \tilde \m}\|_{H^1}^2 + \|\bnabla \L^k(\e \times \L \tilde \m)\|_{L^2}^2\right) \nonumber \\
 &\le \frac{\gamma}{2} \|\sqrt{a^\varepsilon} \bnabla \L^{k+1} \e\|_{L^2}^2 + \frac{C}{2 \gamma}
 \frac{1}{\varepsilon^{2(2k+2)}}\|\e\|^2_{H^{2k+2}_\varepsilon}.
 \label{eq:k1est}
\end{align}

For the second kind of integrals, $\J_{2,k}$ and $\mathbf{K}_{2,k}$,
application of Cauchy-Schwarz and Young's inequality, yields
directly that for all constants $\gamma > 0$,
\[  |\J_{2, k}| 
\le
\frac{\gamma}{2} \|\L^{k+1}\e\|^2_{L^2} + \frac{1}{2\gamma} \|\L^k \D_2\|^2_{L^2}.
\]
Using \Cref{lemma:nonlinear_utility1} together with
\Cref{lemma:d2_norm} to go from the norm of $\L^k \D_2$ to an
estimate in terms of $\e$ then gives
\begin{align*}
  \|\L^k \D_2\|_{L^2}
  &\le C \sum_{j=1}^{2k} \frac{1}{\varepsilon^{2k-j}} \|\D_2\|_{H^{j}}
  \le C \frac{1}{\varepsilon^{2k+1}} \sum_{j=1}^{2k} \|\e\|_{H^{j+1}_\varepsilon}
    \le C  \frac{1}{\varepsilon^{2k+1}}\|\e\|_{H^{2k+1}_\varepsilon},
\end{align*}
and it follows that
\begin{equation}
  \label{eq:j2est}
   |\J_{2, k}|
  \le \frac{\gamma}{2} \|\L^{k+1}\e\|^2_{L^2} + \frac{C}{2\gamma}
  \frac{1}{\varepsilon^{2(2k+1)}}\|\e\|^2_{H^{2k+1}_\varepsilon}
\end{equation}
and for $\mathbf{K}_{2,k}$ we obtain similarly,
\begin{equation}
  \label{eq:k2est}
  |\mathbf{K}_{2,k}| \le  \frac{\gamma}{2} \|\sqrt{a^\varepsilon}\bnabla \L^{k+1}\e\|^2_{L^2} + \frac{C}{2\gamma}
 \frac{1}{\varepsilon^{2(2k+2)}}\|\e\|^2_{H^{2k+2}_\varepsilon}.
\end{equation}
Application of \cref{eq:nonlinear_utility4} in \Cref{lemma:ms_elliptic_regularity} to the right-hand side in the estimates
\cref{eq:j1est} and \cref{eq:j2est} then results in
\begin{align*}
  |\J_{1,k}| + \alpha |\J_{2,k}|
  &\le c \frac{\gamma}{2} \|\L^{k+1} \e\|_{L^2}^2 +
  \frac{C}{2\gamma}
 \frac{1}{\varepsilon^{2(2k+1)}}\|\e\|^2_{H^{2k+1}_\varepsilon}
  \\
  &\le c \frac{\gamma}{2} \|\L^{k+1} \e\|_{L^2}^2 + \frac{C}{2\gamma} \left(
    \|\sqrt{a^\varepsilon} \bnabla \L^k \e\|_{L^2}^2 +  \frac{1}{\varepsilon^{2(2k+1)}}\|\e\|_{H^{2k}_\varepsilon}^2
    \right).
\end{align*}
Correspondingly, we find based on \cref{eq:nonlinear_utility4},  \cref{eq:k1est} and \cref{eq:k2est} that
\begin{align*}
  |\mathbf{K}_{1,k}| + \alpha |\mathbf{K}_{2,k}|
  &\le  c \frac{\gamma}{2} \|\sqrt{a^\varepsilon} \bnabla \L^{k+1} \e\|_{L^2}^2 + \frac{C}{2\gamma}  \left(
    \|\L^{k+1} \e\|_{L^2}^2 +
    \frac{1}{\varepsilon^{2(2k+2)}}\|\e\|_{H^{2k+1}_\varepsilon}^2
    \right).
\end{align*}

To do the estimates for $\J_{3,k}$ and $\mathbf{K}_{3,k}$, note
that 
it follows by \Cref{lemma:nonlinear_utility1},
\Cref{lemma:prod_infty_bound} and \cref{eq:m_tilde_bound_ass}, that
\begin{align*}
  \|\L^k (L|\tilde \m|^2 \tilde \m)\|_{H^q}
&\le C \frac{1}{\varepsilon^{q + 2k}} \|L |\tilde \m|^2 \tilde \m\|_{H^{q+2k}_\varepsilon}
\le C \frac{1}{\varepsilon^{q + 2k}} \sum_{j=0}^{q+2k} \varepsilon^j \|\tilde \m\|_{W^{j, \infty}}  \|L |\tilde \m|^2\|_{H^{q+2k-j}_\varepsilon} \\
&\le C \frac{1}{\varepsilon^{q + 2k}} \|L |\tilde \m|^2\|_{H^{q+2k}_\varepsilon}
  \le C \frac{1}{\varepsilon^{q + 2k+1}} \|\nabla |\tilde \m|^2 \|_{H^{q+2k+1}_\varepsilon}.
\end{align*}
Hence, we find for $\J_{3,k}$ and $\mathbf{K}_{3,k}$ that
\begin{subequations}
  \begin{align}
      |\J_{3,k}| &= \left|\frac{1}{2}\int_\Omega \L^{k+1}\e \cdot \L^k(\tilde \m L |\tilde \m|^2) dx \right|
 \le \frac{\gamma}{4} \|\L^{k+1} \e\|^2_{L^2} + \frac{C}{2 \gamma} \frac{1}{\varepsilon^{2(2k+1)}} \|\nabla|\tilde \m|^2 \|_{H^{2k+1}_\varepsilon}^2\,, \label{eq:j3est}\\
 |\mathbf{K}_{3,k}| 
&\le \frac{\gamma}{4} \|\sqrt{a^\varepsilon} \bnabla \L \e\|^2_{L^2}
+ \frac{C}{2 \gamma} \frac{1}{\varepsilon^{2(2k+2)}} \|\nabla|\tilde \m|^2 \|_{H^{2k+2}_\varepsilon}^2
 \label{eq:k3est}\,.
  \end{align}
\end{subequations}
The remaining integrals in \cref{eq:dtw_int} and \cref{eq:dtLe_int},
involving $\boldsymbol \eta$, are bounded using Cauchy-Schwarz and
Young in the same way as in \cref{eq:eta_remainder}, which together with \Cref{lemma:H_eps} and \Cref{lemma:nonlinear_utility1} results in
\begin{align*}
  \left|\int_\Omega a^\varepsilon \bnabla \L^k \e : \bnabla \L^k \boldsymbol \eta dx \right|
  &\le \frac{\gamma}{2} \|\sqrt{a^\varepsilon} \bnabla \L^k \e\|_{L^2}^2 + \frac{C}{2 \gamma} \|\bnabla \L^k \boldsymbol \eta \|_{L^2}^2 \\
  &\le \frac{\gamma}{2} \|\sqrt{a^\varepsilon} \bnabla \L^k \e\|_{L^2}^2 + \frac{C}{2 \gamma} \frac{1}{\varepsilon^{2(2k+1)}} \| \boldsymbol \eta \|_{H^{2k+1}_\varepsilon}^2,
\end{align*}
and correspondingly for \cref{eq:dtLe_int}.
Thus, it holds in total
that when choosing $\gamma$ small enough in the estimates for
$\J_{1,k}$, $\J_{2,k}$ and $\J_{3,k}$, we get from \cref{eq:dtw_int}
\begin{align*}
  \frac{1}{2} \partial_t \|\sqrt{a^\varepsilon} \bnabla \L^k \e\|^2_{L^2}
  &\le C \left(\|\sqrt{a^\varepsilon} \bnabla \L^k \e\|_{L^2}^2 + \J_{R,k}(t)\right),\qquad 0\leq t \leq T^\varepsilon,
\end{align*}
for some $C$ independent of $\varepsilon$ and $t$, and where
$\J_{R,k}$ only depends on lower order norms of $\e$ as well as
terms involving $\boldsymbol \eta$ and $\tilde \m$ ,
\begin{align*}
  \J_{R,k}(t) :=
  \frac{1}{\varepsilon^{2(2k+1)}} \left(\|\e\|_{H^{2k}_\varepsilon}^2 
  + \|\nabla|\tilde \m|^2 \|_{H^{2k+1}_\varepsilon}^2
  +   \|\boldsymbol \eta\|_{H^{2k+1}_\varepsilon}^2 \right).
\end{align*}
Assume now that \cref{eq:eHq_est} holds up to $q = 2k$. This is true for
$k = 0$ according to the estimate in the previous section,
\cref{eq:eL2_est}. Then
\begin{align*}
  \|\e\|_{H^{2k}_\varepsilon}^2 = C \sum_{j=0}^{2k} \varepsilon^{2j} \|\e\|_{H^j}^2
  &\le  C t \sup_{0 \le s \le t} \left( \|\boldsymbol \eta(\cdot, s)\|_{H^{2k}_\varepsilon}^2
    + \|\nabla |\tilde \m(\cdot, s)|^2\|_{H^{2k}_\varepsilon}^2\right)
\end{align*}
and thus
\begin{align*}
  \J_{R,k}(t)
&\le \frac{C(t + 1)}{\varepsilon^{2(2k+1)}} \sup_{0 \le s \le t}\left(\|\boldsymbol \eta(\cdot, s)\|_{H^{2k+1}_\varepsilon}^2 + \|\nabla|\tilde \m(\cdot, s)|^2  \|_{H^{2k+1}_\varepsilon}^2\right).
\end{align*}
Since moreover $\bnabla \L^k \e(0,x)=0$, we have by Gr\"onwall's
inequality, that for $0 \le t \le T^\varepsilon$,
\begin{equation*}
   ||\sqrt{a^\varepsilon} \bnabla \L^k \e(\cdot, t)||_{L^2}^2
   \leq C\int_0^t  \frac{Ct + 1}{\varepsilon^{2(2k+1)}} \sup_{0 \le s \le t}\left(\|\boldsymbol \eta(\cdot, s)\|_{H^{2k+1}_\varepsilon}^2 + \|\nabla|\tilde \m(\cdot, s)|^2 \|_{H^{2k+1}_\varepsilon}^2\right) ds.
 \end{equation*}
 where, as in \cref{eq:eL2_est},
 the prefactor is independent of $\varepsilon$.
  It then holds for $k = 0$ that
  \begin{align*}
   \|\e(\cdot, t)\|_{H^{1}}^2
   &\le C \left(\|\e\|_{L^2}^2 +  \|\sqrt{a^\varepsilon} \bnabla \e\|_{L^2}^2\right)
   \le \frac{C t}{\varepsilon^{2}} \sup_{0 \le s \le t}
\left(\|\boldsymbol \eta(\cdot, s)\|_{H^{1}_\varepsilon}^2 +   \|\nabla|\tilde \m(\cdot, s)|^2  \|_{H^{1}_\varepsilon}^2\right),
 \end{align*}
 while it follows by elliptic regularity,  \cref{eq:elliptic_reg}, that for $k \ge 1$. 
 \begin{align*}
   \|\e(\cdot, t)\|_{H^{2k+1}}^2
   &\le C \left(\|\e\|_{L^2}^2 + \frac{1}{\varepsilon^{2(2k+1)}} \|\e\|_{H^{2k}_\varepsilon}^2 + \|\sqrt{a^\varepsilon} \bnabla L^k \e\|_{L^2}^2\right) \\
   &\le C t\frac{1}{\varepsilon^{2(2k+1)}} \sup_{0 \le s \le t}
\left(\|\boldsymbol \eta(\cdot, s)\|_{H^{2k+1}_\varepsilon}^2 +   \|\nabla|\tilde \m(\cdot, s)|^2  \|_{H^{2k+1}_\varepsilon}^2\right),
 \end{align*}
 which shows the estimate in \Cref{thm:error_norm} for odd $q$ given
 that it holds up to $q-1$.  Finally, we obtain in the same way when
 combining the estimates \cref{eq:k1est}, \cref{eq:k2est},
 \cref{eq:k3est}, for $\mathbf{K}_{1,k}$, $\mathbf{K}_{2,k}$ and
 $\mathbf{K}_{3,k}$, with \cref{eq:dtLe_int}, and again using
 ellitpic regularity \cref{eq:elliptic_reg} and applying
 Gr\"onwall's inequality, that for $0\leq t\leq T^\varepsilon$,
 \begin{align*}
  \|\e\|_{H^{2k+2}}^2 &\le C \left(\|\e\|_{L^2}^2 + \frac{1}{\varepsilon^{2(2k+2)}} \|\e\|_{H^{2k+1}_\varepsilon}^2 +  \|\L^{k+1} \e\|^2_{L^2} \right)\\
                      &\le  C t \frac{1}{\varepsilon^{2(2k+2)}} \sup_{0 \le s \le t}
\left(\|\boldsymbol \eta(\cdot, s)\|_{H^{2k+2}_\varepsilon}^2 +   \|\nabla|\tilde \m(\cdot, s)|^2 \|_{H^{2k+2}_\varepsilon}^2\right),
\end{align*}
which shows the estimate in \Cref{thm:error_norm} for even $q > 0$
given that it holds for $q-1$. This completes the proof.

\section{Estimates of homogenized solution and correction terms}\label{sec:application}

In this section, we provide estimates for the norms of the
correction terms $\m_j$, $j \ge 1$. To obtain these, we use a
theorem for general linear equations of the form as
\cref{eq:higher_order_pde}, which is presented in the next
subsection. We moreover derive bounds for the remaining quantities
involved in the stability estimate \Cref{thm:error_norm}.

\subsection{Linear equation.}\label{sec:lin_eq}
%
%
First, we consider solutions $\m$ to the inhomogeneous
linear equation
\begin{subequations} \label{eq:lin_prob}
\begin{align}
  \partial_\tau \m(x,y,t,\tau) &= \LL \m(x,y,t,\tau) + \F(x,y,t,\tau)\,, \\
  \m(x, y, t, 0) &= \g(x, y, t),
\end{align}
\end{subequations}
with periodic boundary conditions in $y$ and up to some fixed final time $T > 0$.
The linear operator $\LL$ is defined
as in \cref{eq:main_ll}.
It depends on the material coefficient $a$ and on
the solution of the homogenized equation
$\m_0$.

We note that since $\LL$ has a non-trivial null space and $\alpha>0$
this is a degenerate parabolic equation in $(y,\tau)$.
In the following, it will be beneficial to split the solution $\m$, initial data
$\g$ and forcing $\F$ in \cref{eq:lin_prob} into a part that
lies in the null-space, and a part that is orthogonal to it.
To this means,
we introduce the
matrix $\M$ corresponding to the orthogonal projection
onto $\m_0$ and the
averaging operator $\AA$,
\begin{align}\label{eq:aa}
  \M(x, t) :=  \m_0 \m_0^T\,, \qquad  \AA \m := \int_Y \m(x, y, t, \tau) dy \,.
\end{align}
and then define projections
\begin{align}\label{eq:pp_qq}
  \PP\m :=
 (\I - \M) (\I - \AA) \m\,
\qquad\text{and} \qquad
  \QQ \m := \M\m + (\I-\M) \AA \m \,,
\end{align}
which means that $\QQ = \I - \PP$.
According to this definition, $\PP \m$ is orthogonal to $\m_0$ and has zero
average in $y$, while $\QQ \m$ consists of the average of $\m$ and
the contribution to $\m - \AA \m$ that is parallel to $\m_0$. In
particular, $\QQ$ is a projection onto the null-space of $\LL$.
Note that $\PP$ and $\QQ$ depend on $(x, t)$, but not on $(y,\tau)$.
Then we have the following theorem about the size of the two parts
of the solution

\begin{theorem}\label{thm:inhom_lin}
Assume (A1), (A3) and (A5) hold.
If
  $\partial_t^\ell\F(\cdot,\cdot,t,\cdot) \in C(\Real^+; H^{q-2\ell,\infty})$
 and $\partial^\ell_t\g(\cdot,\cdot,t)\in H^{q-2\ell,\infty}$ for $0\leq 2\ell\leq 2k\leq q\leq r$
 and $0\leq t \leq T$,
then $\partial^k_t\m(\cdot,\cdot,t,\cdot)  \in C(\Real^+; H^{q-2k,\infty})$
when $t\in [0,T]$
and
 for each integer $p\geq 0$,
 there are constants $C$ and $\gamma>0$, independent of $\tau\geq 0$, $t\in[0,T]$, $\F$ and $\g$,
 such that
 \begin{align}\label{eq:inhom_linPtime}
    \|\partial^k_t \PP\m(\cdot, \cdot, t, \tau)\|_{H^{q-2k, p}}  &\le C\sum_{\ell=0}^k
 \Big(e^{- \gamma \tau} \|\partial^\ell_t\PP \g(\cdot, \cdot,t)\|_{H^{q-2\ell, p}}
\nonumber\\
&\ \ \ \qquad
+ \int_0^\tau e^{-\gamma(\tau - s)}
\|\partial^\ell_t \PP \F(\cdot, \cdot, t, s)\|_{H^{q-2\ell, p}}
ds\Bigr),\\
    \|\partial^k_t \QQ \m(\cdot, \cdot, t, \tau)\|_{H^{q-2k, p}}  & \le
    \|\partial^k_t \QQ \g(\cdot, \cdot,t)\|_{H^{q-2k, p}}
 + \int_0^\tau \|\partial^k_t \QQ \F(\cdot, \cdot, t, s) \|_{H^{q-2k, p}} ds.
 \label{eq:inhom_linQtime}
  \end{align}
\end{theorem}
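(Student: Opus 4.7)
The plan is to exploit the complementary projections $\PP$ and $\QQ$ to decouple the PDE into two independent subproblems, then handle each by either direct integration or an energy estimate yielding exponential decay. The key algebraic observation is that $\LL\PP=\PP\LL=\LL$ while $\LL\QQ=\QQ\LL=0$. Indeed, $\m_0\cdot\LL\m=0$ pointwise (both terms are cross products involving $\m_0$ and $\u\cdot(\u\times\v)=0$), so $\M\LL\m=0$; and $\AA\LL\m=0$ because $\AA L_2\m=\int_Y\nabla_y\cdot(a\nabla_y\m)\,dy=0$ by periodicity; thus $\PP\LL=(\I-\M)(\I-\AA)\LL=\LL$. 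Conversely, $\LL\QQ\m=0$ since both $\M\m=(\m_0\cdot\m)\m_0$ and $(\I-\M)\AA\m$ are either parallel to $\m_0$ or independent of $y$, which causes the $L_2$ output to be parallel to $\m_0$ and therefore annihilated by $\m_0\times(\cdot)$ and $\m_0\times\m_0\times(\cdot)$. Applying $\PP$ and $\QQ$ to the equation (they commute with $\partial_\tau$, being independent of $\tau$) yields $\partial_\tau \PP\m = \LL\PP\m + \PP\F$ and $\partial_\tau \QQ\m = \QQ\F$.

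The $\QQ$-estimate \cref{eq:inhom_linQtime} then follows by direct integration in $\tau$, with the time derivatives distributed via Leibniz; the $\partial_t^j\QQ$ factors reduce to multiplications by $\partial_t^j\m_0$-terms controlled by (A5). For the $\PP$-estimate in the base case $k=p=q=0$, let $\u=\PP\m$, so that $\u\cdot\m_0\equiv 0$ and $\AA\u=0$. Multiplying the $\PP$-equation by $\u$ and integrating over $\Omega\times Y$, the precession contribution $-\int_\Omega\int_Y\u\cdot(\m_0\times L_2\u)\,dy\,dx$ vanishes after integration by parts through the identity \cref{eq:first_tpi_gradient} applied to $\nabla_y\u$, using the $y$-independence of $\m_0$. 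The damping term reduces, via $\m_0\times\m_0\times L_2\u=-L_2\u$ for $\u\perp\m_0$, to $-\alpha\|\sqrt{a}\,\nabla_y\u\|^2_{L^2(\Omega\times Y)}$. Poincaré in $y$ (valid since $\AA\u=0$) gives $\|\nabla_y\u\|\ge C_P^{-1}\|\u\|$, so that $\tfrac{d}{d\tau}\|\u\|\le -\gamma\|\u\|+\|\PP\F\|$ with $\gamma=\alpha a_\mathrm{min}/C_P^2$, and Gronwall produces the base case of \cref{eq:inhom_linPtime}.

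For general indices $(q,p)$ with $k=0$, I would apply the base energy estimate to the auxiliary family $\tilde\u_{\beta,\kappa}:=\PP\partial_x^\beta\partial_y^\kappa\m$, which lies in the range of $\PP$ and thus inherits the cancellations. Using $\LL\PP=\PP\LL=\LL$, it satisfies
\begin{equation*}
\partial_\tau\tilde\u_{\beta,\kappa} = \LL\tilde\u_{\beta,\kappa} + \PP\bigl[\partial_x^\beta\partial_y^\kappa,\LL\bigr]\m + \PP\,\partial_x^\beta\partial_y^\kappa\F,
\end{equation*}
where the commutator expands by Leibniz into a sum of strictly lower-order derivatives of $\m$ multiplied by derivatives of $a(y)$ (bounded by (A1)) or derivatives of $\m_0$ (bounded by (A5) and Sobolev embedding). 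An induction on $|\beta|+|\kappa|$ combined with Duhamel's formula for the contraction semigroup generated by $\LL$ on the range of $\PP$ (with decay rate $\gamma$) closes the bound. The transfer back to $\partial^\beta_x\partial^\kappa_y\PP\m$ uses $\partial_y^\kappa$ commuting with $\PP$, and $[\partial_x^\beta,\PP]$ being a lower-order operator controlled by derivatives of $\m_0$. The time-derivative estimates follow by an outer induction on $k$: differentiating the equation gives $\partial_\tau\partial_t^k\m=\LL\partial_t^k\m+\sum_{j<k}\binom{k}{j}(\partial_t^{k-j}\LL)\partial_t^j\m+\partial_t^k\F$, where $\partial_t^{k-j}\LL$ consists of multiplications by $\partial_t^\ell\m_0$ bounded through (A5), contributing to the effective forcing at the next induction step.

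The hard part will be the careful bookkeeping of the nested commutators among $\partial_t^k$, $\partial_x^\beta\partial_y^\kappa$, $\PP$, and $\LL$, and verifying at each induction step that the commutator forcing is estimable in exactly the norm demanded by \cref{eq:inhom_linPtime}--\cref{eq:inhom_linQtime}, so that the exponential factor $e^{-\gamma\tau}$ survives the Duhamel convolution. Existence and regularity of $\partial_t^k\m(\cdot,\cdot,t,\cdot)$ itself requires a separate argument via standard theory for the degenerate parabolic operator $\partial_\tau-\LL$, where assumption (A5) on $\m_0$ ensures the needed smoothness of the coefficients.
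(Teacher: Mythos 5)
Your overall architecture (decouple via $\PP,\QQ$; direct integration for $\QQ\m$; energy estimate with the Poincar\'e inequality and Gr\"onwall for $\PP\m$ in the base case $k=p=q=0$) coincides with the paper's, and the base case is correct. However, there is a genuine gap in the passage to higher $x$-derivatives. You propose to estimate $\tilde\u_{\beta,\kappa}:=\PP\partial_x^\beta\partial_y^\kappa\m$ via Duhamel, treating $\PP[\partial_x^\beta\partial_y^\kappa,\LL]\m$ as a controllable lower-order forcing. But this quantity is simply the wrong object, and the commutator does not behave as a small perturbation. Consider $\F\equiv 0$ and $\g=\QQ\g=(\m_0\cdot\g)\m_0$ with $\m_0\cdot\g$ genuinely $y$-dependent, so that $\PP\g=0$ and $\m(\tau)\equiv\QQ\g$ for all $\tau$. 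Then one computes directly that
\begin{equation*}
\PP\partial_{x}\m=(\partial_{x}\m_0)\bigl[(\m_0\cdot\g)-\AA(\m_0\cdot\g)\bigr]\neq 0,
\end{equation*}
constant in $\tau$, while $\partial_{x}\PP\m\equiv 0$. So $\PP\partial_x\m$ does \emph{not} decay even when $\PP\g=0$, and the desired norm $\|\PP\m\|_{H^{1,0}}$ involves $\partial_x\PP\m$, not $\PP\partial_x\m$. Correspondingly, $\PP[\partial_x,\LL]\m$ does not vanish: its surviving part is proportional to $\m_0\cdot\L_2\m=L_2(\m_0\cdot\m)$, which is pure $\QQ\m$-content and hence constant in $\tau$ when $\F=0$. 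Feeding this into the Duhamel convolution $\int_0^\tau e^{-\gamma(\tau-s)}\,ds$ yields a bound that saturates at a constant rather than decaying, and moreover would introduce $\|\QQ\g\|$ on the right-hand side, which is absent from \cref{eq:inhom_linPtime}. Your appeal to ``$[\partial_x^\beta,\PP]$ being a lower-order operator'' at the transfer step is precisely where the necessary cancellation between the non-decaying commutator forcing and the non-decaying part of $\tilde\u_{\beta,\kappa}$ would have to take place, but the argument as written does not demonstrate it.

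The paper circumvents this by a different construction: it defines the interlaced operator $D^\beta\f:=\PP\partial_{x_{\beta_n}}\PP\cdots\PP\partial_{x_{\beta_1}}\PP\f$ (with a $\PP$ between \emph{every} derivative, not one at the end) and proves that $D^\beta\m$ satisfies exactly $\partial_\tau D^\beta\m=\LL D^\beta\m+D^\beta\F$ with no commutator remainder. The cancellation is exact because at each inductive step the previous object $\w_\beta$ already lies in the range of $\PP$, hence $\L_2\w_\beta\perp\m_0$ and $\AA\L_2\w_\beta=0$; this forces $\PP(\partial_{x_k}\m_0\times\L_2\w_\beta)$ and the triple-product commutator pieces to vanish identically. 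The equivalence $\|\PP\m\|_{H^q}\simeq\bigl(\sum_{|\beta|\le q}\|D^\beta\m\|_{L^2}^2\bigr)^{1/2}$, proved separately, then transfers the decay of $D^\beta\m$ to the stated Sobolev norms. A similar observation applies to your $y$-derivative and $t$-derivative steps: the paper takes $\L_2^k\m$ (which commutes with $\LL$ exactly) plus elliptic regularity for $y$, and defines $D_t^k=(\PP\partial_t)^k\PP$ for $t$, in each case avoiding the non-vanishing commutator forcing your scheme produces.
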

This is proved in
\if\longversion0
\cite{longversion}

\else
  \Cref{append:lin_eq}
\fi
. The proof uses standard energy
estimates in which the precise growth rates of the different solution
parts are carefully analyzed.
Note that, since $\tau$ represents the fast scale, sharp bounds
on the growth in $\tau$ are necessary.

\if\longversion0
In \cite{longversion}
\else
In \Cref{append:lin_eq}
\fi
we also prove a few
properties of $\PP(\cdot,t)$ and $\QQ(\cdot,t)$, in particular that
they are bounded operators on $H^{q,p}$ for $0\leq q\leq r$ and
$p\geq 0$, uniformly in $t\in [0,T]$.  The following lemma gives the
more general result.
\begin{lemma}\label{lemma:projections1}
  Assume (A5) holds.  Suppose
  $\partial_t^\ell\v(\cdot,\cdot,t)\in H^{q-2\ell,p}$ for
  $0\leq 2\ell\leq 2k \leq q\leq r$ and $p\geq 0$. Then
    \begin{subequations}      \label{eq:PQtbounded}
          \begin{align}
    \|\partial_t^k\PP\v(\cdot,\cdot,t)\|_{H^{q-2k, p}}&\leq
       C\sum_{\ell=0}^k
   \|\partial_t^{\ell}\v(\cdot,\cdot,t)\|_{H^{q-2\ell,p}}, \\
    \|\partial_t^k\QQ\v(\cdot,\cdot,t)\|_{H^{q-2k, p}}&\leq
   C\sum_{\ell=0}^k
   \|\partial_t^{\ell}\v(\cdot,\cdot,t)\|_{H^{q-2\ell,p}},
    \end{align}
         \end{subequations}
    for $0\leq t\leq T$, where
    $C$ is independent of $\v$ and $t$.
  \end{lemma}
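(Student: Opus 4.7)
The plan is to exploit the simple algebraic structure of the two projections. Writing
\begin{equation*}
\PP\v = \v - \AA\v - \M\v + \M\AA\v, \qquad \QQ\v = \AA\v + \M\v - \M\AA\v,
\end{equation*}
it suffices to bound $\partial_t^k$ applied to each of $\v$, $\AA\v$, $\M\v$, and $\M\AA\v$ in the $H^{q-2k,p}$ norm. For $\v$ itself the bound is trivial since the corresponding term appears on the right of \cref{eq:PQtbounded}.

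For $\AA\v$ I would observe that the result is independent of $y$, so all its $y$-derivatives vanish; Cauchy--Schwarz in $y$ applied pointwise in $x$ then gives
\begin{equation*}
\|\AA\v\|_{H^{q-2k,p}} = \|\AA\v\|_{H^{q-2k,0}} \leq \|\v\|_{H^{q-2k,0}} \leq \|\v\|_{H^{q-2k,p}},
\end{equation*}
and since $\partial_t$ commutes with integration in $y$, the same bound survives $k$ time derivatives, with $\ell=k$ only.

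The substantive step is the triple product $\M\v = \m_0(\m_0\cdot\v)$. I would apply the Leibniz rule to expand $\partial_t^k(\M\v)$ into a finite sum of terms of the form $(\partial_t^{k_1}\m_0)\bigl((\partial_t^{k_2}\m_0)\cdot(\partial_t^{k_3}\v)\bigr)$ with $k_1+k_2+k_3=k$, and then apply the bilinear estimate \cref{eq:prodest} from \Cref{lemma:bilinearest} twice, once to the inner dot product and once to the outer scalar--vector product. The proof of \Cref{lemma:prodindex} carries over to dot and scalar--vector products verbatim since it only uses bilinearity. In each application I would place the $\m_0$ factor on the side requiring two extra $y$-derivatives; this is harmless because $\m_0$ is independent of $y$, so $\|\partial_t^{k_j}\m_0\|_{H^{r-2k_j,p+2}}$ reduces to the ordinary Sobolev norm $\|\partial_t^{k_j}\m_0\|_{H^{r-2k_j}(\Omega)}$, which is bounded uniformly in $t\in[0,T]$ by (A5). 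Choosing indices $q_0=q-2(k_2+k_3)$, $q_1=r-2k_2$, $q_2=q-2k_3$ in the inner step and analogous indices in the outer step, I get $q_1+q_2=q_0+r\geq q_0+5$, so the first alternative in \cref{eq:qjcond} is satisfied. This yields
\begin{equation*}
\|\partial_t^k(\M\v)\|_{H^{q-2k,p}} \leq C\sum_{\ell=0}^k \|\partial_t^\ell\v\|_{H^{q-2\ell,p}}.
\end{equation*}

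The final piece $\M\AA\v$ is then immediate: since $\AA$ commutes with $\partial_t$ and the $\AA$ estimate above gives $\|\partial_t^\ell\AA\v\|_{H^{q-2\ell,p}}\leq\|\partial_t^\ell\v\|_{H^{q-2\ell,p}}$, composing with the $\M$ bound transfers the same right-hand side. Assembling the four estimates according to the decompositions of $\PP$ and $\QQ$ proves both parts of \cref{eq:PQtbounded}. The main technical obstacle is keeping track of indices when applying \cref{eq:prodest}: one must verify that $q_1+q_2\geq\min(3+q_0,5)$ holds in each of the two applications, which is exactly where the assumption $r\geq 5$ in (A5) is used.
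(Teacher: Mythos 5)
Your proof is correct, and it is essentially the paper's argument with one small organizational change. The paper writes $\PP\v=(\I-\AA)(\I-\M)\v$, invokes the boundedness of $\I-\AA$ from Lemma~\ref{lemma:projectionsApp}, and then applies the Leibniz rule plus \cref{eq:prodest} \emph{once} to $\partial_t^k(\M\v)$, using the fact that Lemma~\ref{lemma:projectionsApp} has already established $\partial_t^k\M\in C(0,T;H^{r-2k}(\Omega))$. You instead expand $\PP\v$ and $\QQ\v$ additively into the four terms $\v$, $\AA\v$, $\M\v$, $\M\AA\v$, and unpack $\M=\m_0\m_0^T$ directly, which forces two applications of \cref{eq:prodest} (inner dot product, outer scalar--vector product). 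Both are valid; the paper's version is a bit more economical because it amortizes the $\m_0\m_0^T$ bookkeeping into the single auxiliary lemma about $\M$, whereas you redo that bilinear estimate inline. Your index verification is right: $q_1+q_2=q_0+r$ in each application, so the first alternative of \cref{eq:qjcond} holds; this matches the paper exactly. One minor imprecision in your exposition: $r\geq 3$ already suffices to satisfy \cref{eq:qjcond} here, so the full strength $r\geq 5$ of (A5) is not used at this particular spot; it is needed elsewhere, for instance to make the Sobolev embeddings in \Cref{lemma:bilinearest} work on a three-dimensional domain.
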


Note that this lemma shows that the projected initial data functions
$\PP\g,\QQ\g\in H^{q,\infty}(\Omega)$ if the unprojected function
$\g\in H^{q,\infty}(\Omega)$ and similar for
the forcing function $\F$ and the time derivatives of the functions.
This justifies why we only ask for
smoothness of the unprojected functions in \Cref{thm:inhom_lin}.

\subsection{Correction terms.}\label{sec:correction_estimates}
We now apply \Cref{thm:inhom_lin} to the correctors $\m_j$ in the
asymptotic expansion \cref{eq:asymptotic_expansion} in order to
obtain estimates for their norms.  Here and throughout the rest of
this section we suppose that the assumptions (A1)-(A5) are true.
We recall
from \cref{eq:higher_order_pde} that the correction terms satisfy
linear PDEs,
\begin{align}\label{eq:mj_pde}
  \partial_\tau \m_j(x, y, t, \tau) &= \LL \m_j(x, y, t, \tau)+\F_{j}(x, y, t, \tau), \\
  \m_j(x,y,t,0) &= 0,\nonumber
\end{align}
where $\F_j$ is defined in \cref{eq:forces}.
Additionally, we 
consider the term $\v$ in the definition of $\m_1$
\cref{eq:m1} to get a better understanding of the behavior of $\m_1$.
As given in \cref{eq:v_total}, $\v$ satisfies
\begin{align}\label{eq:v_pde}
  \partial_\tau \v(x, y, t, \tau) &= \LL \v(x, y, t, \tau), \\
  \v(x,y,t,0) &= - \bnabla_x \m_0(x,t) \boldsymbol\chi(y)\,,\nonumber
\end{align}
where $\boldsymbol\chi$ is the solution to the cell problem,
\cref{eq:cell_problem}.  We then obtain the following result.
\begin{theorem}\label{thm:mjestimate}
  For $0\leq t\leq T$ and $0\leq 2k\leq r-j$ we have
\begin{align}\label{eq:mjregularity}
\partial_t^k\m_j(\cdot,\cdot,t,\cdot)  \in C(\Real^+; H^{r-j-2k,\infty}), \qquad \partial_t^k \v(\cdot, \cdot, t, \cdot) \in C(\Real^+, H^{r-1-2k, \infty}),
\end{align}
and there are constants $\gamma > 0$ and $C$ independent of
$\varepsilon, \tau \ge 0$ and $0 \le t \le T$ such that when $p\geq 0$,
\begin{subequations}  \label{eq:mj_estimate}
\begin{align}
  \|\partial^k_t\v(\cdot, \cdot, t, \tau)\|_{H^{r-1-2k, p}} &\le C e^{-\gamma \tau}\,, \label{eq:v_estimate}\\
  \label{eq:Pmj_estimate}
  \|\partial_t^k\PP\m_j(\cdot, \cdot,  t, \tau)\|_{H^{r-j-2k, p}} &\le C\,,   \\
  \label{eq:Qmj_estimate}
  \|\partial_t^k\QQ\m_j(\cdot, \cdot,  t, \tau)\|_{H^{r-j-2k, p}} &\le C\left(1+\tau^{\max(0,j-2)}\right)\,,  \\
  \|\partial_t^k\m_j(\cdot, \cdot,  t, \tau)\|_{H^{r-j-2k, p}} &\le C\left(1+\tau^{\max(0,j-2)}\right)\,.  \label{eq:mjtot_estimate}
\end{align}
\end{subequations}
Moreover, it holds for $\tau\geq 0$ and $0\leq t\leq T$ that
\begin{equation}
  \label{eq:m1vorth}
\m_1\perp \m_0, \qquad \v\perp\m_0,\qquad
\PP\m_1=\m_1, \qquad \PP\v=\v.
\end{equation}
\end{theorem}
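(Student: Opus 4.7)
The plan is to apply \Cref{thm:inhom_lin} as the main tool, handling $\v$ first (since its equation is autonomous and homogeneous), then $\m_1$ (which has the closed form $\m_1=\bnabla_x\m_0\boldsymbol\chi+\v$ from (\ref{eq:m1})), and finally inducting on $j$ for $\m_j$ with $j\ge 2$.

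For $\v$, its initial datum $\g=-\bnabla_x\m_0\boldsymbol\chi$ satisfies $\PP\g=\g$ and $\QQ\g=0$. Indeed, $|\m_0|\equiv 1$ gives $\m_0\cdot\partial_{x_i}\m_0=\tfrac12\partial_{x_i}|\m_0|^2=0$, so $\m_0\cdot(\bnabla_x\m_0\boldsymbol\chi)=0$ pointwise, while $\int_Y\boldsymbol\chi\,dy=0$ by the convention on the cell problem (\ref{eq:cell_problem}). Applying \Cref{thm:inhom_lin} with $\F\equiv 0$ then gives $\QQ\v\equiv 0$, so $\v=\PP\v$, together with the exponential decay (\ref{eq:v_estimate}); the regularity in (\ref{eq:mjregularity}) follows from (A5), the smoothness of $\boldsymbol\chi$, and \Cref{lemma:projections1}. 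The decomposition $\m_1=\bnabla_x\m_0\boldsymbol\chi+\v$ combined with the same two orthogonality facts immediately yields $\PP\m_1=\m_1$, $\QQ\m_1=0$, and the $j=1$ cases of (\ref{eq:Pmj_estimate})--(\ref{eq:mjtot_estimate}).

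For $j\ge 2$, the inductive step applies \Cref{thm:inhom_lin} to (\ref{eq:mj_pde}) with zero initial data. The bilinear estimates \Cref{lemma:bilinearest}, \Cref{lemma:prodindex}, and \Cref{lemma:m0bound} applied to the algebraic structure (\ref{eq:V_notation})--(\ref{eq:forces}), together with the inductive bounds on $\m_0,\ldots,\m_{j-1}$ and their time derivatives, give
\begin{equation*}
\|\partial_t^k\F_j\|_{H^{r-j-2k,p}}\le C(1+\tau^{\max(0,j-3)}),
\end{equation*}
where for $j\ge 4$ the worst growth comes from the $-\partial_t\m_{j-2}$ term in (\ref{eq:forces}). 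Plugging this into (\ref{eq:inhom_linQtime}) and integrating yields the $\QQ$-bound (\ref{eq:Qmj_estimate}). The base case $j=2$ is special and must be handled directly: by the very construction of (\ref{eq:m0_eq}) in \Cref{sec:homogeq_deriv}, the average $\AA\F_2=-\E_1-\E_2$ is built out of $\v$ and $\L_2\v$ and is therefore exponentially decaying in $\tau$; and a parallel scalar-product computation using $\m_0\cdot(\m_0\times\cdot)\equiv 0$ and $\m_0\cdot\partial_t\m_0=0$ shows that $\M\F_2$ likewise decays exponentially. Hence $\QQ\F_2$ decays exponentially, and its $\tau$-integral stays uniformly bounded.

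The main obstacle I anticipate is the uniform-in-$\tau$ bound on $\PP\m_j$ asserted in (\ref{eq:Pmj_estimate}) for $j\ge 4$: the naive convolution $\int_0^\tau e^{-\gamma(\tau-s)}s^{j-3}\,ds\sim\tau^{j-3}/\gamma$ would only produce polynomial growth. The resolution must exploit that the secular (polynomially growing) parts of the lower correctors live entirely in $\QQ\m_k$, which decomposes as $\m_0(\m_0\cdot\m_k)$ plus a $y$-independent piece; the latter is annihilated by $\L_2$, and the former, when combined with the $\m_0$-cross products appearing in $\R,\S,\Z$ and the triple-product identities (\ref{eq:first_triple_product_identity})--(\ref{eq:second_triple_product_identity}), contributes only to $\QQ\F_j$. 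Tracing this algebra carefully to conclude that $\PP\F_j$ is uniformly bounded, so that (\ref{eq:inhom_linPtime}) then gives (\ref{eq:Pmj_estimate}), is the technical heart of the induction. The total bound (\ref{eq:mjtot_estimate}) follows by writing $\m_j=\PP\m_j+\QQ\m_j$ and using the triangle inequality.
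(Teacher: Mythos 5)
Your proposal follows the paper's overall strategy: apply \Cref{thm:inhom_lin} to $\v$, treat $\m_1$ and the special $j=2$ forcing $\F_2$ separately, and then induct on $j$. Your reconstruction of the $\v$, $\m_1$, and $\m_2$ base cases is essentially correct; the paper works directly with $\F_1$ (showing $\QQ\F_1=0$) rather than through the decomposition $\m_1=\bnabla_x\m_0\boldsymbol\chi+\v$, but both routes yield $\QQ\m_1=0$ from the same two facts ($\AA\boldsymbol\chi=0$ and $\m_0\perp\bnabla_x\m_0$). Your reduction of the $j=2$ case to exponential decay of $\QQ\F_2=-\R_1-\alpha\S_1-\E_2$ matches the paper. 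One small error: you say the worst growth in $\F_j$ comes from $-\partial_t\m_{j-2}$; in fact $\partial_t\m_{j-2}\lesssim 1+\tau^{\max(0,j-4)}$, while by \Cref{lemma:rstvzestimate} the dominant terms are $\R_{j-1},\S_{j-1},\Z_{j-1}\lesssim 1+\tau^{\max(0,j-3)}$, so those dominate.

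Your flagged concern about the $\PP$ estimate for $j\ge 4$ is real, and you are right to single it out. The convolution $\int_0^\tau e^{-\gamma(\tau-s)}(1+s^{m})\,ds$ grows like $\tau^m/\gamma$ for $m\ge 1$, so if all one knows is $\|\PP\F_{j+1}(\cdot,\cdot,t,s)\|\lesssim 1+s^{\max(0,j-2)}$, then (\ref{eq:inhom_linPtime}) only gives $\|\PP\m_{j+1}\|\lesssim 1+\tau^{\max(0,j-2)}$, not the uniform bound (\ref{eq:Pmj_estimate}). The paper's proof does not address this: at that point it simply writes $\|\partial_t^k\PP\m_{j+1}\|\le C\int_0^\tau e^{-\gamma(\tau-s)}(1+\tau^{\max(0,j-2)})\,ds\le C$, and the last inequality is false for $j\ge 3$ whether the factor inside is read as depending on $s$ or on the fixed outer $\tau$. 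In other words, the paper does not carry out the structural cancellation you speculate about, and neither do you: your sketch of why $\PP\F_j$ might be uniformly bounded (secular parts entering only through $\QQ$, killed by $\L_2$ and by $\m_0\times$) is plausible but unproven, and in fact terms like $\m_1\times\L_2\QQ\m_j$ inside $\R_j$ produce growing contributions that are orthogonal to $\m_0$ and not annihilated by $\PP$. So this step remains a gap in your proposal as written. It is worth noting that the combined bound (\ref{eq:mjtot_estimate}), which is what Theorem 3.1 actually consumes, is insensitive to this: even with $\|\PP\m_j\|\lesssim 1+\tau^{\max(0,j-3)}$ the sum $\PP\m_j+\QQ\m_j$ still satisfies (\ref{eq:mjtot_estimate}) since the $\QQ$ part dominates.
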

This theorem entails in particular the following.
\begin{itemize}
\item The first corrector $\m_1$ has zero average, is orthogonal to $\m_0$ and stays bounded for all $\tau\geq 0$.
\item As the first one, the second corrector $\m_2$ is uniformly bounded in $\tau$,
but it is neither orthogonal nor parallel to $\m_0$.
\item Higher order correctors are not bounded in $\tau$ but grow algebraically.
\end{itemize}
To prove the theorem, we use induction. We consider first the base
cases, norms of $\v$ and $\m_1$ and their time derivatives, which in
turn makes it possible to bound the norms of
$\partial_t^k \QQ \F_2$. Then we provide a utility lemma giving
estimates for the quantities involved in higher order $\F_j$,
$j \ge 3$. Finally, we conclude the proof with an induction step
showing \cref{eq:mj_estimate} for general $j$.

\subsubsection{$\m_1$ and $\v$ estimates.}\label{sec:m1v}
To begin with, we show \cref{eq:m1vorth}. For $\m_1$, the forcing
term $\F_1$ only depends on $\m_0$.  In fact, as
$\Z_0=\L_1\m_0=\nabla_x\m_0 \nabla_y a$ the expression for $\F_1$ in
\cref{eq:forces1} can be written as
\begin{align*}
 \F_1(x, y, t, \tau)
&= -\m_0(x, t) \times [\nabla_x \m_0(x, t) + \alpha \m_0(x, t) \times \nabla_x \m_0(x, t)] \nabla_y a(y)
\,,
\end{align*}
which shows that $\F_1$ is orthogonal to $\m_0$. Moreover,
since the averaging operator $\AA$ commutes with differentiation in $y$,
\begin{align*}
  \AA \F_1 &=
 -\m_0(x, t) \times [\nabla_x \m_0(x, t) + \alpha \m_0(x, t) \times \nabla_x \m_0(x, t)] \AA\nabla_y a(y)=0\,,
\end{align*}
and consequently $ \QQ\F_1 = 0$ and $\PP\F_1 = \F_1$.
For $\v$ it holds at the initial time $\tau = 0$,
\[\g(x,y,t)=\v(x,y,t,0)=-\nabla_x\m_0(x,t)\boldsymbol\chi(y).\]
Since we choose $\boldsymbol\chi$ such that $\AA \boldsymbol\chi = 0$ and
due to the fact that $\m_0$ is orthogonal to its gradient,
$\m_0\cdot \bnabla_x \m_0=\nabla|\m_0|^2/2 = 0$, we have $\QQ \g=0$
and $\PP \g=\g$.  It thus follows from \Cref{thm:inhom_lin} that
$\QQ\m_1=\QQ\v=0$ and consequently for all $\tau \ge 0$,
\[\PP \m_1 = \m_1, \qquad \PP \v = \v.\]
Hence, \cref{eq:m1vorth} holds.  Next, we consider the norms of
$\partial_t^k \v$ and $\partial_t^k \m_1$.  \Cref{lemma:m0bound}
shows that for $0\leq \ell\leq k$,
\begin{align*}
  \|\partial^\ell_t \F_1 \|_{H^{r-1-2\ell,p}} &\leq C\sum_{s=0}^\ell
  \|\nabla_x \partial^s_t \m_0\nabla_y a\|_{H^{r-1-2s,p}}
   \leq
  C\sum_{s=0}^\ell\|\partial^s_t \m_0\|_{H^{r-2s,p}} \|a\|_{H^{p+1}}
   \leq C,
\end{align*}
and similarly,
\begin{align*}
  \|\partial^\ell_t\g\|_{H^{r-1-2\ell,p}} \leq
\|\nabla_x\partial^\ell_t\m_0\|_{H^{r-1-2\ell}}
\|\boldsymbol\chi\|_{H^p} \le
C\|\partial^\ell_t\m_0\|_{H^{r-2\ell}}\le C \,.
\end{align*}
Since $\F_1$ and $\g$ coincide with their $\PP$-projections,
we have
\[  \|\partial^\ell_t \PP \F_1 \|_{H^{r-1-2\ell,p}} =
  \|\partial^\ell_t \F_1 \|_{H^{r-1-2\ell,p}}, \quad \|\partial^\ell_t\PP\g\|_{H^{r-1-2\ell,p}}=
  \|\partial^\ell_t\g\|_{H^{r-1-2\ell,p}},\]
and thus obtain from  \Cref{thm:inhom_lin} that
\begin{align}
  \|\partial^k_t\v\|_{H^{r-1-2k, p}} &=
  \|\partial^k_t\PP\v\|_{H^{r-1-2k, p}} \leq
  C  \sum_{\ell=0}^k  e^{-\gamma \tau} \|\partial^\ell_t\PP\g\|_{H^{r-1-2\ell,p}}
\le C e^{-\gamma \tau}, \label{eq:v_est1}\\
    \|\partial^k_t \m_1 \|_{H^{r-1-2k, p}}&=
        \| \partial^k_t\PP\m_1 \|_{H^{r-1-2k, p}}
      \le C \sum_{\ell=0}^k  \int_0^\tau  e^{-\gamma(\tau - s)} \|\partial^\ell_t\PP \F_1\|_{H^{r-1-2\ell, q}} ds \nonumber \\
      &\le C   \int_0^\tau  e^{-\gamma(\tau - s)}ds
\le C \label{eq:m1_est1}
  \,.
\end{align}
This shows \cref{eq:mj_estimate} for $\v$ and the first corrector $\m_1$.

Consider now $\F_2$ as defined in \cref{eq:F2}, which consists only of quantities involving $\v$ and $\m_1$. Combining \cref{eq:AF2} with the definition
of the homogenized solution \cref{eq:m0_eq} shows that the average
of $\F_2$ is $\AA \F_2 = - (\E_1 + \E_2)$, with $\E_1$ and $\E_2$ given in
\cref{eq:E1} and \cref{eq:E2},
\[\E_1 = \AA(\R_1 + \alpha \S_1), \qquad \E_2 = \m_0 \times \AA(\L_1 \v) + \m_0 \times \m_0 \times \AA(\L_1 \v),\]
where $\R_1=\m_1 \times \L_2 \v$ and
$\S_1=\m_1 \times \m_0 \times \L_2 \v$, defined according to
\cref{eq:T_notation} and \cref{eq:S_notation}, are parallel to
$\m_0$ due to the orthogonality given in \cref{eq:m1vorth}. This implies that $\E_1$ is
parallel to $\m_0$ as well, while $\E_2$ is orthogonal to $\m_0$ as it is of the form
$\m_0 \times \cdot$. Hence, it holds that
\[(\I-\M)(\E_1 + \E_2) = \E_2.\]
Again using the fact that terms of the form
$\m_0 \times \cdot$, as well as $\partial_t \m_0$, are orthogonal to
$\m_0$, we thus can show that  application of the operator $\QQ$ to $\F_2$  yields
$$
  \QQ \F_2 = \M\F_2 + (\I-\M)\AA \F_2 =
  -\M(\R_1+\alpha\S_1) - (\I-\M)(\E_1+\E_2),
  =
 -\R_1-\alpha\S_1 -\E_2.
$$
Using \Cref{lemma:prodindex}, with $j=2$ and $m=m'=1$ as
$\partial_t^k\m_1,\partial_t^k\v \in H^{r-1-2k,p}$, therefore yields
together with \cref{eq:v_est1} and \cref{eq:m1_est1},
\begin{equation*}
  \begin{split}
  \|\partial_t^k \R_1 \|_{H^{r-2-2k, p}}&\leq
  C\sum_{\ell=0}^k
  \|\partial_t^{k-\ell} \m_1 \|_{H^{r-1-2k+2\ell, p+2}}
  \|\partial_t^\ell \L_2\v \|_{H^{r-1-2\ell, p}} \\ &\leq
  C\sum_{\ell=0}^k
  \|\partial_t^\ell \v \|_{H^{r-1-2\ell, p+2}}
  \leq Ce^{-\gamma\tau},
\\  \|\partial_t^k \S_1 \|_{H^{r-2-2k, p}}&\leq
  C\sum_{\ell=0}^k
  \|\partial_t^\ell (\m_0\times\L_2\v) \|_{H^{r-1-2\ell, p}}\leq
  C\sum_{\ell=0}^k
  \|\partial_t^\ell\L_2\v \|_{H^{r-1-2\ell, p}} \\ &\leq
  C\sum_{\ell=0}^k
  \|\partial_t^\ell\v \|_{H^{r-1-2\ell, p+2}}
  \leq Ce^{-\gamma\tau}.
      \end{split}
    \end{equation*}
    Finally, using \Cref{lemma:m0bound} with $\f = \AA \L_1 \v$ gives
    \[
  \|\partial_t^k \E_2 \|_{H^{r-2-2k, p}}\leq
  C\sum_{\ell=0}^k
  \|\partial_t^\ell \AA\L_1\v\|_{H^{r-2-2\ell, p}}\leq
  C\sum_{\ell=0}^k
  \|\partial_t^\ell\v\|_{H^{r-1-2\ell, p+1}}
  \leq Ce^{-\gamma\tau}.
  \]
We thus conclude that
\begin{equation}
  \label{eq:dtk_QF2}
  \|\partial_t^k \QQ \F_2\|_{H^{r-j-2k, p}} \le C e^{-\gamma \tau}\,.
\end{equation}

\subsubsection{Higher order $\m_j$-estimate.}
%
%
We now consider $\m_j$ with $j\geq 2$.  First, note that due to
assumption (A1), it holds in general for $\m_j$, $j \ge 0$, that
\begin{align*}
  \|\L_k \m_j\|_{H^{q, p}} \le C \|\m_j\|_{H^{q+2-k, p+k}}\,, \qquad k = 0, 1, 2,
\end{align*}
for $p, q \ge 0$, whenever the norms are bounded.
This can be used to prove a lemma providing upper bounds for norms of all the intermediate quantities involved in the forcing term $\F_j$ in \cref{eq:mj_pde}.
\begin{lemma}\label{lemma:rstvzestimate}
  Suppose that \cref{eq:mjregularity} and \cref{eq:mj_estimate} hold
  for $1\leq j\leq J$, $0 \le 2k \le r-j$ and $0 \le t \le
  T$. 
Then, for $p\geq 0$,
\begin{align}
\|\partial^k_t\Z_j\|_{H^{r-j-1-2k,p}}&\leq  C\left(1+\tau^{\max(0,j-2)}\right),\quad 0\leq j\leq J,\quad 0\leq 2k\leq r-j-1, \\
\|\partial^k_t\X_j\|_{H^{r-j-2k,p}}&\leq C\left(1+\tau^{\max(0,j-2)}\right),\quad 1\leq j\leq J,
\quad 0\leq 2k\leq r-j, \label{eq:Xjeq}
\end{align}
where $\X_j$ is any of $\R_j$, $\S_j$, $\T_j$ and $\V_j$ as defined in \cref{eq:V_notation}, \cref{eq:T_notation} and \cref{eq:S_notation} and the constant $C$ is independent of $t$ and $\tau$.
\end{lemma}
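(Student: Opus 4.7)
The plan is to establish the estimates in the structural order $\Z_j \to \V_j \to (\R_j,\T_j) \to \S_j$, so that each quantity is controlled in terms of ones bounded previously. Two tools suffice: the operator bound $\|\L_k u\|_{H^{q,p}} \le C\|u\|_{H^{q+2-k,p+k}}$ stated just before the lemma, and \Cref{lemma:prodindex} for cross products. The hypothesis supplies the bounds \cref{eq:mjtot_estimate} on $\m_j$ for every $1 \le j \le J$ and every $p \ge 0$, so no further induction is required; the argument is largely a matter of combinatorial bookkeeping of the $\tau$-growth exponents.

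For $\Z_0 = \L_1 \m_0$, the $\L_1$-bound together with the $y$-independence of $\m_0$ reduces the estimate to $\|\partial_t^k \m_0\|_{H^{r-2k}}$, which is bounded by (A5). For $1 \le j \le J$, the decomposition $\Z_j = \L_0 \m_{j-1} + \L_1 \m_j$ and the operator bound yield two terms, controlled by the hypothesis at growth rates $\tau^{\max(0,j-3)}$ and $\tau^{\max(0,j-2)}$; the larger exponent dominates. The estimate for $\V_j = \L_2 \m_j + \Z_{j-1}$ then follows in one line from the $\L_2$-bound combined with the bound just obtained on $\Z_{j-1}$.

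For $\R_j = \sum_{\ell=1}^{j} \m_{j+1-\ell}\times\V_\ell$ and $\T_j = \sum_{\ell=1}^{j} \m_{j-\ell}\times\V_\ell$, I apply \Cref{lemma:prodindex} termwise with $\v_{m'} = \V_\ell$ playing the role of a ``corrector of order $\ell$'' in the $H^{r-\ell-2s,\infty}$ sense, which is legitimate since the $\V$-bound holds for every $p$. The index constraint $m+m' \le j+2$ reads $j+1 \le j+2$ for $\R_j$ and $j \le j+2$ for $\T_j$, both satisfied. The product of $\tau$-factors contributes an exponent $\max(0,(j+1-\ell)-2) + \max(0,\ell-2)$ (respectively with $j-\ell$ in place of $j+1-\ell$); a short case split over $\ell \in \{1,\dots,j\}$ shows that this sum is bounded by $\max(0,j-2)$ in every case, which gives the claimed rate. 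An identical application of \Cref{lemma:prodindex} with $\T_\ell$ in place of $\V_\ell$ gives the bound for $\S_j$, since $\T_\ell$ carries the same regularity and $\tau$-growth profile as $\V_\ell$.

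The continuity in $\tau$ asserted in \cref{eq:mjregularity} for $\X_j$ is inherited from that of $\m_j$ through the bounded operators $\L_k$ and through \Cref{lemma:prodindex}. The only mildly delicate point is the combinatorial step: verifying that $\max(0,a) + \max(0,b) \le \max(0,j-2)$ whenever $(a,b)$ is $(j-1-\ell,\ell-2)$ or $(j-\ell-2,\ell-2)$ for $\ell \in \{1,\dots,j\}$. This reduces to four subcases according to the signs of $a$ and $b$, after each of which the bound is immediate. The remaining index constraints of \Cref{lemma:prodindex}---non-negative regularity indices and $m+m'\le j+2$---are comfortably satisfied under the lemma's standing hypothesis $2k \le r-j$.
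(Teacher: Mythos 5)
Your proposal is correct and follows essentially the same route as the paper: decompose $\Z_j$ and $\V_j$ via the operator bound, then control $\T_j$, $\R_j$, $\S_j$ termwise through \Cref{lemma:prodindex}, verifying the index constraint $m+m'\le j+2$ and combining the $\tau$-exponents. The paper packages the cross-product step as a single intermediate estimate with $m'\in\{m-1,m\}$ where you do a direct case split, but the arithmetic (in particular $\max(0,a)+\max(0,b)\le\max(0,j-2)$) and the ordering $\Z_j\to\V_j\to\R_j,\T_j\to\S_j$ are the same.
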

\begin{proof}
Let $1\leq j\leq J$, $p\geq 0$ and $0\leq 2k\leq r-j-1$.
For $\Z_j$ we have
\begin{align*}
\|\partial_t^k\Z_j\|_{H^{r-j-1-2k,p}}&\leq
\|\partial_t^k\L_0\m_{j-1}\|_{H^{r-j-1-2k,p}}+
\|\partial_t^k\L_1\m_{j}\|_{H^{r-j-1-2k,p}}\\
&\leq
C(\|\partial_t^k\m_{j-1}\|_{H^{r-j+1-2k,p}}+
\|\partial_t^k\m_{j}\|_{H^{r-j-2k,p+1}})\\
&\leq C (1+\tau^{\max(0,j-3)}+\tau^{\max(0,j-2)})
\leq C\left(1+\tau^{\max(0,j-2)}\right).
\end{align*}
 Since by definition, $\m_{-1}\equiv 0$, the result still holds true for $j=0$.
For $\V_j$ we then have accordingly, when $0\leq 2k\leq r-j$,
\begin{align*}
\|\partial_t^k\V_j\|_{H^{r-j-2k,p}}&\leq
\|\partial_t^k\L_2\m_{j}\|_{H^{r-j-2k,p}}+
\|\partial_t^k\Z_{j-1}\|_{H^{r-j-2k,p}}\\ &\leq
C\|\partial_t^k\m_{j}\|_{H^{r-j-2k,p+2}}+
\|\partial_t^k\Z_{j-1}\|_{H^{r-j-2k,p}}\\
&\leq C (1+\tau^{\max(0,j-2)}+\tau^{\max(0,j-3)})
\leq C\left(1+\tau^{\max(0,j-2)}\right).
\end{align*}
This shows \Cref{lemma:prodindex} for $\X_j = \V_j$.  Suppose now
that that $\Y_m$ satisfies \cref{eq:Xjeq} for $1\leq m\leq j$ and
that $m' \in \{m-1, m\}$. Since $j-m'+m \le j+1$, we then find by \Cref{lemma:prodindex}
that
\begin{align*}
\left\|\partial_t^k (\m_{j-m'}\times\Y_m)\right\|_{H^{r-j-2k,p}}
&\leq C
\sum_{\ell=0}^k
\left\|\partial_t^{k-\ell} \m_{j-m'}\right\|_{H^{r-j+m'-2k+2\ell,p+2}}
\left\|\partial_t^\ell \Y_m\right\|_{H^{r-m-2\ell,p}}\\
&\leq
C(1+\tau^{\max(0,j-m'-2)})(1+\tau^{\max(0,m-2)}) \\
&\leq
C(1+\tau^{\max(0,j-m'-2,m-2,j + m - m'-4)}) \\
&\leq C(1+\tau^{\max(0,j-2,j-3,j-4)}) \leq C(1+\tau^{\max(0,j-2)}).
\end{align*}
When using this result for $\Y_m = \V_m$ and $m' = m$, we get
\begin{align*}
\|\partial_t^k\T_j\|_{H^{r-j-2k,p}}
&\leq
\sum_{m=1}^{j}\left\|\partial_t^k(\m_{j-m}\times\V_m\right)\|_{H^{r-j-2k,p}}
\leq C\left(1+\tau^{\max(0,j-2)}\right).
\end{align*}
Similarly, when $m' = m-1$, we find by choosing $\Y_m$ to be $\V_m$ and $\T_m$ respectively,
\begin{align*}
\|\partial_t^k\R_j\|_{H^{r-j-2k,p}}
&\leq
\sum_{m=1}^{j}\left\|\partial_t^k(\m_{j+1-m}\times\V_m)\right\|_{H^{r-j-2k,p}}
\leq C\left(1+\tau^{\max(0,j-2)}\right),\\
\|\partial_t^k\S_j\|_{H^{r-j-2k,p}}
&\leq
\sum_{m=1}^{j}\left\|\partial_t^k(\m_{j+1-m}\times\T_m)\right\|_{H^{r-j-2k,p}}
\leq C\left(1+\tau^{\max(0,j-2)}\right).
\end{align*}
This proves the lemma.
\end{proof}

We now have the tools necessary to conclude the induction step for
\Cref{thm:mjestimate}.
For $j = 1$, we have already shown \cref{eq:mjregularity} and
\cref{eq:mjtot_estimate}. Assume now that they hold up to some $j$
with $1 \le j \le r-2k-1$. We then show in the following that they
also hold for $j+1\leq r-2k$.  To this means, suppose
$0\leq 2k\leq r-j-1 =: q'$ and $p\geq 0$. From the definition of
$\F_j$ according to \cref{eq:forces} it follows using
\Cref{lemma:m0bound} and \Cref{lemma:rstvzestimate}
 that
\begin{align*}
&\|\partial_t^k\F_{j+1}\|_{H^{q'-2k,p}}
  \leq  \|\partial_t^{k+1} \m_{j-1}\|_{H^{q'-2k,p}}
    +
\|\partial_t^k\mathbf{R}_{j}\|_{H^{q'-2k,p}}+
    \|\partial_t^k(\m_0\times\Z_{j})\|_{H^{q'-2k,p}} \\
    &\qquad \qquad + \alpha \left(\|\partial_t^k(\m_0 \times \mathbf{R}_{j})\|_{H^{q'-2k,p}}
+ \|\partial_t^k(\m_0\times\m_0\times\Z_{j})\|_{H^{q'-2k,p}}
+ \left\|\partial_t^k\S_j\right\|_{H^{q'-2k,p}}\right) \\
&\quad\leq
  C\sum_{\ell=0}^k \left(\|\partial_t^\ell\Z_{j}\|_{H^{q'-2\ell,p}}+
  \|\partial_t^\ell\mathbf{R}_{j}\|_{H^{q'-2\ell,p}}\right)  + \alpha\left\|\partial_t^k\S_j\right\|_{H^{q'-2k,p}}
    +  \|\partial_t^{k+1} \m_{j-1}\|_{H^{q'-2k,p}}\\
&\quad\leq C\left(1+\tau^{\max(0,j-2)}+\tau^{\max(0,j-3)}\right)\leq
C\left(1+\tau^{\max(0,j-2)}\right).
\end{align*}
By \cref{eq:PQtbounded} the same estimate holds for $\partial_t^k\PP\F_{j+1}$
and $\partial_t^k\QQ\F_{j+1}$. However, for the latter we have due to \cref{eq:dtk_QF2},
$$
\|\partial_t^k \QQ\F_{j+1} \|_{H^{r-j-1-2k, p}}\leq C
\begin{cases}
 e^{-\gamma\tau}, & j=1,\\
  1+\tau^{j-2}, & j\geq 2.
\end{cases}
$$
The estimate \cref{eq:Pmj_estimate} with $j+1$ then follows from
\Cref{thm:inhom_lin} as
\begin{align*}
    \|\partial_t^k \PP\m_{j+1} \|_{H^{r-j-1-2k, p}}
&\leq
C\sum_{\ell=0}^k \int_0^\tau  e^{-\gamma(\tau - s)} \|\partial_t^\ell \PP \F_{j+1}\|_{H^{r-j-1-2\ell, p}}ds
\\ &
\leq
C\int_0^\tau  e^{-\gamma(\tau - s)}(1 + \tau^{\max(0,j-2)})ds\leq
C,
\end{align*}
and accordingly,
\begin{align*}
    \|\partial_t^k \QQ\m_{j+1} \|_{H^{r-j-1-2k, p}}
&\leq
C \int_0^\tau  \|\partial_t^k \QQ \F_{j+1}\|_{H^{r-j-1-2k, p}}ds
\leq
C \int_0^\tau
\begin{cases}
 e^{-\gamma\tau}, & j=1,\\
 1+\tau^{j-2}, & j\geq 2,
\end{cases}
\ ds\\
&\leq
C
\begin{cases}
1, & j=1,\\
 1+\tau^{j-1}, & j\geq 2,
\end{cases}
\leq
C(1+\tau^{\max(0,j-1)}),
\end{align*}
which yields \cref{eq:Qmj_estimate} with $j+1$.
Finally, \cref{eq:mjtot_estimate} is obtained using the triangle inequality.
This concludes the induction step and the proof of \Cref{thm:mjestimate}.


\subsection{Approximations $\tilde \m_J$ and $\tilde \m^\varepsilon_J$.}
In this section we consider the approximation
$\tilde \m^\varepsilon_J$ to $\tilde \m$ as defined in
\cref{eq:main_m_tilde_expanded} and correspondingly,
\begin{equation*}
  \tilde{\m}_J(x,y,t,\tau;\varepsilon) = \m_0(x, t) + \sum_{j=1}^J \varepsilon^j \m_j(x, y, t, \tau), \qquad \tilde \m_J^\varepsilon(x, t) = \tilde \m_J(x, x/\varepsilon, t, t/\varepsilon^2).
\end{equation*}
We are interested in different aspects of
$\tilde{\m}^\varepsilon_J$ and
$\tilde{\m}_J$ up to time $T^\varepsilon$ as given in \cref{eq:Tepsdef},
\cref{eq:Teps}
, here repeated for convenience of the reader,
\begin{equation}
  \label{eq:Teps2}
  T^\varepsilon := \varepsilon^\sigma T \qquad \text{with} \qquad
\begin{cases}
   0 \le \sigma \le 2, & J \le 2, \\
   1 - \frac{1}{J-2} \le \sigma \le 2, & J \ge 3.
 \end{cases}
\end{equation}
Up to this final time, we have
\[1 + \tau \le C(1 + \varepsilon^{- (2 - \sigma)}) \le C
  \varepsilon^{- (2 - \sigma)}.\]
As a consequence, we can simplify
the estimate \Cref{eq:mjtot_estimate} for final time
$T^\varepsilon$. Under the assumptions in \Cref{thm:mjestimate}, it
holds for $0 \le t \le T^\varepsilon$, $0 \le \tau \le T^\varepsilon/\varepsilon^2$, that
\begin{equation}
  \label{eq:mjeps_estimate}
  \|\partial_t^k \m_j(\cdot, \cdot, t, \tau)\|_{H^{r-j-2k, p}} \le C \varepsilon^{-(2-\sigma)\max(0, j-2)}, \qquad 0 \le 2k \le r-j\,.
\end{equation}

\subsubsection{Norms of approximations.}
We start by estimating the approximations
$\tilde \m_J$ and $\tilde \m_J^\varepsilon$ in different Sobolev norms.
We obtain the following theorem.
\begin{theorem}\label{thm:mJnorms}
For $0\leq t\leq T$,
\begin{align}\label{eq:mJregularity}
\tilde{\m}_J(\cdot,\cdot,t,\cdot;\varepsilon)  \in C(\Real^+; H^{r-J,\infty}(\Omega, \Real^3)),\qquad
\tilde{\m}^\varepsilon_J(\cdot,t)  \in H^{r-J}(\Omega).
\end{align}
Moreover, 
consider $T^\varepsilon$ as given in \cref{eq:Teps2}, then
for any $p\geq 0$ 
\begin{align}\label{eq:mtilde_estimate}
  \|\tilde{\m}_J(\cdot, \cdot,  t, \tau;\varepsilon)\|_{H^{r-J, p}} &\le  C,\qquad
  0\leq t\leq T^\varepsilon,\quad 0\leq \tau\leq \frac{T^\varepsilon}{\varepsilon^2}.
\end{align}
Additionally, for $0\leq q \leq r-J$ and $0\leq q'\leq r-J-2$ 
\begin{align}\label{eq:mtildeeps_estimate}
  \|\tilde{\m}^\varepsilon_J(\cdot,  t)\|_{H^{q}} \le  C\varepsilon^{\min(0,1-q)},\qquad
  \|\tilde{\m}^\varepsilon_J(\cdot,  t)\|_{W^{q',\infty}} \le  C\varepsilon^{\min(0,1-q')},\quad
  0\leq t\leq T^\varepsilon.
\end{align}
All constants denoted $C$ are independent of $\tau$, $t$ and $\varepsilon$,
but depend on $T$.
\end{theorem}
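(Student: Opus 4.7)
The plan is to deduce all three parts of the theorem directly from the expansion $\tilde{\m}_J = \m_0 + \sum_{j=1}^J \varepsilon^j \m_j$, the corrector estimate \cref{eq:mjeps_estimate} derived from \Cref{thm:mjestimate}, and the regularity assumption (A5) for $\m_0$. The central observation is that, under the range of $\sigma$ in \cref{eq:Teps2}, every term $\varepsilon^j \m_j$ in the expansion is bounded uniformly in $\varepsilon$, $t$ and $\tau$ on $[0, T^\varepsilon] \times [0, T^\varepsilon/\varepsilon^2]$.

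For the regularity statement \cref{eq:mJregularity}, I appeal to \cref{eq:mjregularity} in \Cref{thm:mjestimate}, which gives $\partial_t^k \m_j \in C(\Real^+; H^{r-j-2k, \infty})$ for $0 \le 2k \le r - j$, together with (A5) for $\m_0$. Since $r - J \le r - j$ whenever $j \le J$, each summand lies in $C(\Real^+; H^{r-J, \infty})$, and the finite sum $\tilde{\m}_J$ lies in the same space. For $\tilde{\m}_J^\varepsilon(\cdot, t) \in H^{r-J}(\Omega)$, I apply the first bound in \Cref{lemma:multiscaleest} to each corrector.

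For the bound \cref{eq:mtilde_estimate}, the plan is to estimate
\[
\|\tilde{\m}_J\|_{H^{r-J, p}} \le \|\m_0\|_{H^{r-J}} + \sum_{j=1}^J \varepsilon^j \|\m_j\|_{H^{r-J, p}},
\]
using that $\|\m_j\|_{H^{r-J, p}} \le \|\m_j\|_{H^{r-j, p}} \le C \varepsilon^{-(2-\sigma)\max(0, j-2)}$ by \cref{eq:mjeps_estimate}. Writing the $j$-th contribution as $C \varepsilon^{e_j}$ with $e_j := j - (2 - \sigma)\max(0, j - 2)$, one sees $e_j = j \ge 1$ for $j \le 2$. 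For $3 \le j \le J$, the condition $\sigma \ge 1 - 1/(J-2)$ from \cref{eq:Teps2} gives $\sigma(j-2) \ge (j-2) - (j-2)/(J-2) \ge j - 3$, whence $e_j = -j + 4 + \sigma(j-2) \ge 1$. Thus the sum is uniformly bounded and \cref{eq:mtilde_estimate} follows.

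For \cref{eq:mtildeeps_estimate}, I apply \Cref{lemma:multiscaleest} to each corrector term:
\[
\|\m_j(\cdot, \cdot/\varepsilon, t, t/\varepsilon^2)\|_{H^q} \le C \varepsilon^{-q} \|\m_j\|_{H^{q, q+2}}, \qquad \|\m_j(\cdot, \cdot/\varepsilon, t, t/\varepsilon^2)\|_{W^{q', \infty}} \le C \varepsilon^{-q'} \|\m_j\|_{H^{q'+2, q'+2}}.
\]
The Bochner--Sobolev norms on the right are $O(1)$ by the same argument as above, since $q \le r - J \le r - j$ (respectively, $q' + 2 \le r - j$ follows from $q' \le r - J - 2$ and $j \le J$). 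Combined with $\|\m_0\|_{H^q} \le C$ and $\|\m_0\|_{W^{q', \infty}} \le C$ from (A5) together with Sobolev embedding, this yields $\|\tilde{\m}_J^\varepsilon\|_{H^q} \le C(1 + \varepsilon^{1-q})$ and analogously for $W^{q', \infty}$, which collapses to $C\varepsilon^{\min(0, 1-q)}$ since $0 < \varepsilon \le 1$. The only real obstacle is the exponent bookkeeping verifying that \cref{eq:Teps2} is sufficient to make every $e_j$ nonnegative; once this is done, the remaining statements reduce to routine applications of the multiscale embedding.
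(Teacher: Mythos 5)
Your proposal follows essentially the same route as the paper: bound each corrector via the simplified estimate $\|\m_j\|_{H^{r-j,p}} \le C\varepsilon^{-(2-\sigma)\max(0,j-2)}$, verify the exponent $e_j = j - (2-\sigma)\max(0,j-2)\ge 1$ for $j\ge 1$ under \cref{eq:Teps2}, and then transfer to the multiscale norms $H^q$ and $W^{q',\infty}$ through \Cref{lemma:multiscaleest}. Your exponent bookkeeping is correct and matches the paper's.

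One intermediate sentence overstates what you actually proved: you claim that ``the Bochner--Sobolev norms on the right are $O(1)$ by the same argument as above.'' That is false for $j\ge 2$ when $\sigma<2$: what the argument above establishes is $\varepsilon^j\|\m_j\|_{H^{r-j,p}}\le C\varepsilon^{e_j}\le C\varepsilon$, i.e.\ $\|\m_j\|_{H^{q,q+2}}\le \|\m_j\|_{H^{r-j,q+2}} \le C\varepsilon^{1-j}$, which can diverge. The paper correctly records the bound in this sharper form $\le C\varepsilon^{1-j}$ before recombining with the factor $\varepsilon^{j-q}$. Your final conclusion $\|\tilde\m_J^\varepsilon\|_{H^q}\le C(1+\varepsilon^{1-q})$ is nevertheless correct, because your own $e_j\ge 1$ computation gives $\varepsilon^{j-q}\|\m_j\|_{H^{q,q+2}}\le C\varepsilon^{e_j-q}\le C\varepsilon^{1-q}$; you should just replace the ``$O(1)$'' sentence with this weighted form to make the step logically airtight.
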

\begin{proof}
  The simplification \cref{eq:mjeps_estimate} of the estimate in
  \Cref{thm:mjestimate} gives for fixed $t$, $\tau$ as in \cref{eq:mtilde_estimate} and
  $0\leq j\leq J$,
\begin{align*}
   \varepsilon^j\|\m_j\|_{H^{r-j, p}}
  \leq C \varepsilon^{j - (2 - \sigma)\max(0,j-2)} = C
    \begin{cases}
      \varepsilon^j, & 0 \le j \le 2, \\
      \varepsilon^{j - (2-\sigma)(j-2)} & 3 \le j \le J\,,
    \end{cases}
    \le C
   \begin{cases}
      1, & j = 0, \\
      \varepsilon, &j \ge 1\,,
      \end{cases}
\end{align*}
where we used for the second case that
\[j - (2-\sigma)(j-2) = 2 + (j-2)(\sigma-1) \ge 2 - \frac{j-2}{J-2} \ge 2 - \frac{J-2}{J-2} = 1\,.\]
This shows \cref{eq:mtilde_estimate}, as
\begin{align*}
  \|\tilde{\m}_J\|_{H^{r-J, p}}
  &\leq \sum_{j=0}^J \varepsilon^j \|{\m_j}\|_{H^{r-J, p}}\leq \sum_{j=0}^J \varepsilon^j \|{\m_j}\|_{H^{r-j, p}} \le C.
\end{align*}
For the second statement, we use \Cref{lemma:multiscaleest} and the fact that $\m_0$ does not depend on $y$, which yields
\begin{align*}
\|\tilde{\m}^\varepsilon_J(\cdot,  t)\|_{H^{q}}
&\leq
\|\m_0(x, t)\|_{H^{q}} + \sum_{j=1}^J \varepsilon^j \|\m_j(\cdot , \cdot /\varepsilon, t, t/\varepsilon^2)\|_{H^{q}} \\
&\leq
\|\m_0(x, t)\|_{H^{q}} + C \sum_{j=1}^J \varepsilon^{j-q} \|\m_j(\cdot , \cdot, t, t/\varepsilon^2)\|_{H^{q,q+2}}.
\end{align*}
Proceeding similarly to before, we then get for $j \ge 1$,
\[ \|\m_j\|_{H^{q,q+2}}
\le  \|\m_j\|_{H^{r-J,q+2}}
\le  \|\m_j\|_{H^{r-j,q+2}}
\le C \varepsilon^{(2-\sigma)\max(0, j-2)}
\le C \varepsilon^{1-j}.
\]
Therefore,
$$
\|\tilde{\m}^\varepsilon_J(\cdot,  t)\|_{H^{q}}
\leq
C (1 + \varepsilon^{1-q}).
$$
Finally, by \Cref{lemma:multiscaleest},
\[
\begin{split}
\|\tilde{\m}^\varepsilon_J(\cdot,  t)\|_{W^{q',\infty}}
&\le \|\m_0(x, t)\|_{W^{q',\infty}} + C \sum_{j=1}^J \varepsilon^{j-q'}
 \|\m_j(\cdot, \cdot, t, t/\varepsilon^2)\|_{H^{q' + 2, q'+2}}\,,
\end{split}
\]
where
 \[ \|\m_j\|_{H^{q' + 2, q'+2}} \le \|\m_j\|_{H^{r-J, q'+2}}\,,\]
from which \cref{eq:mtildeeps_estimate} follows in the same way as above. This completes the proof.

\end{proof}

\subsubsection{Residual.}
The truncated approximation $\tilde \m_J^\varepsilon$ satisfies the
differential equation \cref{eq:main_prob} for the original
$\m^\varepsilon$ only up to a certain residual
$\boldsymbol \eta^\varepsilon_J$. In the following, we derive an
expression for this residual $\boldsymbol \eta_J^\varepsilon$ that is then used
to obtain a bound for its $\|\cdot\|_{H^q}$-norm.

\begin{theorem}\label{thm:m_pert}
Let 
the residual $\boldsymbol \eta^\varepsilon_J$ be defined as
\begin{align}\label{eq:eta_eq}
  \boldsymbol \eta^\varepsilon_J := \partial_t \tilde \m_J^\varepsilon + \tilde \m_J^\varepsilon \times \L \tilde \m_J^\varepsilon + \alpha \tilde \m_J^\varepsilon \times
  \tilde \m_J^\varepsilon \times \L \tilde \m_J^\varepsilon\,
\end{align}
and suppose $2\leq J\leq r-2$ and $0 \le t \le T^\varepsilon$ with $T^\varepsilon$ as in \cref{eq:Teps2}. 
Then for $0\leq q \leq r-J-2$,
\begin{align*}
  \|{\boldsymbol \eta}^\varepsilon_J(\cdot, t)\|_{H^{q}} &\le  C\varepsilon^{1+(\sigma-1)(J-2)-q},\qquad \|\boldsymbol \eta^\varepsilon_J(\cdot, t)\|_{H^q_\varepsilon} \le C \varepsilon^{1 + (\sigma - 1)(J-2)}.
\end{align*}
The constant  $C$ is independent of $t$ and $\varepsilon$,
but depends on $T$.
\end{theorem}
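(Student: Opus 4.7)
The plan is to substitute the ansatz \cref{eq:main_m_tilde_expanded} directly into the definition \cref{eq:eta_eq} of $\boldsymbol{\eta}_J^\varepsilon$, organise the result as a finite polynomial in $\varepsilon$, and use the corrector hierarchy \cref{eq:higher_order_pde} to cancel every contribution at orders $\varepsilon^{-2}$ through $\varepsilon^{J-2}$. What survives is a finite sum whose leading term is of order $\varepsilon^{J-1}$, which I then bound in $H^q$ via the multiscale estimates of \Cref{sec:utility} together with the corrector bounds of \Cref{thm:mjestimate}.

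Concretely, writing $\tilde{\m}_J^\varepsilon(x,t)=\tilde{\m}_J(x,x/\varepsilon,t,t/\varepsilon^2;\varepsilon)$ and applying the chain rule together with the scale decomposition $\L=\L_0+\varepsilon^{-1}\L_1+\varepsilon^{-2}\L_2$ from \cref{eq:Lm_expansion}, one arrives at
\[
\boldsymbol{\eta}_J^\varepsilon(x,t)=\sum_{k=-2}^{K}\varepsilon^k\,\G_k\bigl(x,x/\varepsilon,t,t/\varepsilon^2\bigr),
\]
where each slow-scale coefficient $\G_k(x,y,t,\tau)$ is polynomial in the correctors $\m_0,\ldots,\m_J$ and organised via the notation of \cref{eq:V_notation}--\cref{eq:S_notation}. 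Regrouping exactly as in \Cref{sec:homogeq_deriv}, the coefficient $\G_k$ with $-2\leq k\leq J-2$ is the left-hand side of \cref{eq:higher_order_m_eq} with $j=k+2$, and thus vanishes identically. Only contributions at $k=J-1,\ldots,K$ remain, corresponding to the ``missing'' correctors $\m_{J+1},\m_{J+2},\ldots$ that are absent from the truncated ansatz.

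To estimate a typical surviving term in $H^q$, I would first use \Cref{lemma:multiscaleest} to convert the fast-scale evaluation into a Bochner--Sobolev norm at the cost of a factor $\varepsilon^{-q}$, then apply \Cref{lemma:prodindex} and \Cref{lemma:m0bound} to reduce norms of products to norms of individual $\partial_t^\ell\m_j$, and finally invoke the simplified corrector bound \cref{eq:mjeps_estimate}, which yields $\|\partial_t^\ell\m_j\|_{H^{r-j-2\ell,p}}\leq C\varepsilon^{-(2-\sigma)\max(0,j-2)}$ at the final time. Tracking the worst contribution, which occurs at $k=J-1$ through terms involving $\m_J$, gives the size $\varepsilon^{J-1-q-(2-\sigma)(J-2)}=\varepsilon^{1+(\sigma-1)(J-2)-q}$ as claimed; the restriction \cref{eq:Teps2} on $\sigma$ ensures that the remaining contributions at $k\geq J$ are no larger. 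The $H^q_\varepsilon$ estimate then follows at once from the $H^j$ bounds for $0\leq j\leq q$ via \Cref{lemma:H_eps}.

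The main obstacle I anticipate is the combinatorial bookkeeping in the cancellation step: one has to verify systematically that, after substituting $\tilde{\m}_J^\varepsilon$ into the double and triple cross products, collecting by powers of $\varepsilon$, and using $\partial_t\m^\varepsilon=\sum_j(\varepsilon^j\partial_t\m_j+\varepsilon^{j-2}\partial_\tau\m_j)$, the coefficient of $\varepsilon^{k}$ for $-2\leq k\leq J-2$ reproduces exactly the left-hand side of \cref{eq:higher_order_m_eq}. A secondary technical point is that the regularity restrictions $J\leq r-2$ and $q\leq r-J-2$ in the statement are imposed precisely so that the Bochner--Sobolev norms $\|\partial_t^\ell\m_j\|_{H^{r-j-2\ell,q+2}}$ entering the estimate are finite by \Cref{thm:mjestimate}.
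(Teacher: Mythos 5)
Your proposal matches the paper's proof almost exactly: expand $\tilde\m_J^\varepsilon$ and $\L\tilde\m_J^\varepsilon$ in powers of $\varepsilon$ using the $\V_j$, $\T_j$, $\S_j$ notation, cancel the coefficients at scales $\varepsilon^{-1}$ through $\varepsilon^{J-2}$ via the corrector hierarchy \cref{eq:higher_order_pde}, collect the truncation remainders into $\boldsymbol\mu_1,\boldsymbol\mu_2,\boldsymbol\mu_3$ and the $\partial_t\m_{J-1},\partial_t\m_J$ terms, and bound everything via \Cref{lemma:multiscaleest}, \Cref{lemma:prodindex} and \cref{eq:mjeps_estimate}. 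Two very small imprecisions, neither of which affects the argument: the cancellation starts at scale $\varepsilon^{-1}$ (the $\varepsilon^{-2}$ coefficient is trivially zero since $\L_2\m_0=0$ and $\partial_\tau\m_0=0$), and what vanishes is the residual of \cref{eq:higher_order_m_eq}, i.e.\ the left-hand side minus the right-hand side, not the left-hand side alone.
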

\begin{proof}
Using the notation given in \cref{eq:V_notation} and \cref{eq:T_notation}, we find along the  same steps as in \Cref{sec:homogenization}
that the expression corresponding to \cref{eq:Lm_expansion} for the
truncated expansion $\tilde \m_J^\varepsilon$ becomes
\begin{align*}
  \L \tilde \m^\varepsilon_J &= \sum_{j=0}^J \varepsilon^{j-2} \L_{2} \m_j
+ \sum_{j=0}^J \varepsilon^{j-1} \L_{1} \m_j
+ \sum_{j=0}^J \varepsilon^{j} \L_{0} \m_j
= \sum_{j=1}^J \varepsilon^{j-2} \V_{j} + \varepsilon^{J-1} \boldsymbol \mu_1 \,,
\end{align*}
where
\[\boldsymbol \mu_1 :=  \L_1 \m_{J} + \L_0 \m_{J-1} + \varepsilon \L_0 \m_J \,.\]
To obtain an expanded expression for the precession term, we then take
the cross product of $\tilde \m^\varepsilon_J$ with the expanded expression for $\L \tilde \m^\varepsilon_J$ which results in
\begin{align}\label{eq:m_eps_precession}
  \tilde \m^\varepsilon_J \times \L \tilde \m^\varepsilon_J
  &= \sum_{j=0}^J \varepsilon^j \m_j \times
    \left(\sum_{k=1}^J \varepsilon^{k-2} \V_{k} + \varepsilon^{J-1} \boldsymbol \mu_1\right)
    =\sum_{j=1}^J \varepsilon^{j-2} \T_{j} + \varepsilon^{J-1} \boldsymbol \eta_1,
\end{align}
where
\begin{align*}
  \boldsymbol \eta_1 :&=
  \boldsymbol \mu_2 +  \tilde\m_J \times \boldsymbol \mu_1
\qquad \text{ and }
\qquad
\boldsymbol \mu_2 := \sum_{j=0}^{J-1} \varepsilon^{j} \sum_{k=j+1}^J \m_{J+1+j-k} \times \V_{k}\,.
\end{align*}
Taking one more cross product by $\tilde \m^\varepsilon$ yields an
expanded form of the damping term,
\begin{align}\label{eq:m_eps_damping}
  \tilde \m^\varepsilon_J \times \tilde \m^\varepsilon_J \times \L \tilde \m^\varepsilon_J
  =  \sum_{j=1}^J \varepsilon^{j-2}  \sum_{k=1}^j \m_{j-k} \times  \T_{k} + \varepsilon^{J-1} \boldsymbol \eta_2
\,,
\end{align}
where
\begin{align*}
  \boldsymbol \eta_2 :&= \boldsymbol \mu_3 +  \tilde\m_J \times \boldsymbol \eta_1
\qquad \text{ and }
\qquad
\boldsymbol \mu_3 := \sum_{j=0}^{J-1} \varepsilon^{j} \sum_{k=j+1}^{J} \m_{J+1+j-k} \times \T_{k} \,.
\end{align*}
Moreover, it holds for the time derivative of $\tilde \m_J^\varepsilon$ that
\if\longversion1
\begin{align}
  \label{eq:dt_m_eps}
\partial_t \tilde \m^\varepsilon_J = \sum_{j=0}^J (\varepsilon^j \partial_t \m_j + \varepsilon^{j-2} \partial_\tau \m_j) =
\sum_{j=0}^J \varepsilon^{j-2} (\partial_t \m_{j-2} + \partial_\tau \m_j)
+ \varepsilon^{J-1} \partial_t \m_{J-1} + \varepsilon^J \partial_t \m_J\,.
\end{align}
\else
\begin{align}
  \label{eq:dt_m_eps}
\partial_t \tilde \m^\varepsilon_J =
\sum_{j=0}^J \varepsilon^{j-2} (\partial_t \m_{j-2} + \partial_\tau \m_j)
+ \varepsilon^{J-1} \partial_t \m_{J-1} + \varepsilon^J \partial_t \m_J\,.
\end{align}
\fi
Putting the expanded expressions as given in
\cref{eq:m_eps_precession}, \cref{eq:m_eps_damping} and
\cref{eq:dt_m_eps} into the definition of $\boldsymbol \eta^\varepsilon$ that is
given by the differential equation, \cref{eq:eta_eq}, yields
together with \cref{eq:higher_order_pde} that
\begin{align}\label{eq:eta}
  \boldsymbol \eta_J^\varepsilon(x, t) = \boldsymbol \eta_J(x, x/\varepsilon, t, t/\varepsilon^2),
  \quad\text{where}\quad \boldsymbol \eta_J=\varepsilon^{J-1} (\partial_t \m_{J-1} + \varepsilon \partial_t \m_J
+  \boldsymbol \eta_1 +  \alpha \boldsymbol \eta_2)\,.
\end{align}
This implies that in
order to get a bound for the $H^q$-norm in space of $\boldsymbol \eta^\varepsilon$
we have to consider both $x$ and $y$ in the expanded form.
By \Cref{lemma:multiscaleest} it holds that
\begin{align}\label{eq:eta_h1_start}
   \|\boldsymbol \eta_J^\varepsilon(\cdot, t)\|_{H^q}
\le \frac{C}{\varepsilon^{q}}\|\boldsymbol \eta_J(\cdot, \cdot, t, t/\varepsilon^2)\|_{H^{q, q+2}}\,.
\end{align}

Using the explicit form of $\boldsymbol \eta_J$ given in \cref{eq:eta}, one can
obtain an upper bound on the norm of $\boldsymbol \eta_J$.
To begin with, let $q' := r - J - 2$, then we have
\begin{align*}
  \|\boldsymbol \eta_J(\cdot,  \cdot, t, \tau)\|_{H^{q',p}} 
&\le C\varepsilon^{J-1} \left(\|\partial_t \m_{J-1}\|_{H^{q',p}} + \varepsilon \|\partial_t \m_J\|_{H^{q',p}} + \|\boldsymbol \eta_1\|_{H^{q',p}}  + \|\boldsymbol \eta_2\|_{H^{q',p}}\right)\,.
\end{align*}
For the first two terms we get from \cref{eq:mjeps_estimate},
 as $J\geq 2$,
\begin{align*}
\varepsilon^{J-1}\|\partial_t \m_{J-1}\|_{H^{q',p}} + \varepsilon^J \|\partial_t \m_J\|_{H^{q',p}}
&\leq C\varepsilon^{J-1+(\sigma-2)\max(0,J-3)}+
C\varepsilon^{J+(\sigma-2)(J-2)}\\
&= C\varepsilon^{1+(\sigma-1)(J-2)}
(\varepsilon^{(2-\sigma)(J-2-\max(0,J-3))}+
\varepsilon)\\
&\leq C\varepsilon^{1+(\sigma-1)(J-2).}
\end{align*}
Note that by the assumptions on $J$ and $\sigma$ the exponent for $\varepsilon$ here is positive.
To get an estimate for the norms of $\boldsymbol \eta_1$ and $\boldsymbol \eta_2$, consider first the
norms of the perturbation terms $\boldsymbol \mu_i$, $i = 1, 2, 3$,
individually. By \cref{eq:mjeps_estimate} and since $J \ge 2$, it holds that
\begin{align*}
  \|\boldsymbol \mu_1\|_{H^{r-J-2,p}}
&\le   C \left(\|\m_{J} \|_{H^{r-J-1, p+1}} + \|\m_{J-1} \|_{H^{r-J, p}} + \varepsilon \| \m_J\|_{H^{r-J, p}} \right) \\
&\le   C \left(\|\m_{J} \|_{H^{r-J, p+1}} + \|\m_{J-1} \|_{H^{r-J+1, p}} + \varepsilon \| \m_J\|_{H^{r-J, p}} \right) \\
&\le C (\varepsilon^{-(2-\sigma) \max(0,J-3)} + (1 + \varepsilon) \varepsilon^{-(2-\sigma) (J-2)})
\le C \varepsilon^{-(2-\sigma)(J-2)},
\end{align*}
and therefore we can bound $\varepsilon^{J-1} \|\boldsymbol \mu_1\|_{H^{r-J-2,p}}$ in the same way as the terms above,
\begin{align*}
  \varepsilon^{J-1}\|\boldsymbol \mu_1\|_{H^{r-J-2,p}}
\leq C\varepsilon^{J-1-(2-\sigma)(J-2)}=
 C\varepsilon^{1+(\sigma-1)(J-2)}.
\end{align*}

Consider now the cross-products $\m_{J+1+j-k} \times \V_{k}$ when
$j+1\leq k\leq J$ and $0\leq j\leq J-1$, which appear in the definition of $\boldsymbol \mu_2$.
By \Cref{lemma:rstvzestimate} we have that
\[\|\V_k\|_{H^{r-k, p}} \le C (1+\tau^{\max(0,k-2)}) \le C \varepsilon^{-(2-\sigma)\max(0, k-2)}\,.\]
We then use \cref{eq:prodest}
in
\Cref{lemma:bilinearest} with $q_0=r-J-1$, $q_1=r-J-1-j+k$ and
$q_2=r-k$ for the cross-product.  This choice is valid since $q_0\leq \min(q_1,q_2)$ and
$$
q_1+q_2 = r-J-1-j+r = q_0 + r-j\geq q_0 + J+2-j\geq q_0+3,
$$
which satisfies the left condition in \cref{eq:qjcond}.
Together with
\cref{eq:mjeps_estimate}, we thus get
\begin{align*}
\|\m_{J+1+j-k} \times \V_{k}\|_{H^{r-J-1,p}}
&\leq
C\|\m_{J+1+j-k}\|_{H^{r-J-1-j+k,p+2}} \|\V_{k}\|_{H^{r-k,p}}\\
&\leq C \varepsilon^{-(2-\sigma)\max(0,J+j-k-1)}
\varepsilon^{-(2-\sigma)\max(0, k-2)} \\
& \leq C \varepsilon^{-(2-\sigma)\max(0,J-2,J+j-3)}. 
\end{align*}
Exploiting the fact that \[-(2-\sigma)\max(0, J-2, J+j-3) = -(2-\sigma) (J-2 + \max(0, j-1)),
  \]
we hence find for the norm of $\boldsymbol \mu_2$ that
\begin{align*}
  \|\boldsymbol \mu_2\|_{H^{r-J-1,p}}
  & \le C \sum_{j=0}^{J-1} \sum_{k={j+1}}^J \varepsilon^j \|\m_{J+1+j-k} \times \V_k\|_{H^{r-J-1,p}}
    \\
& = C \varepsilon^{-(2-\sigma)(J-2)}
  \sum_{j=0}^{J-1}\varepsilon^{j - (2 - \sigma)\max(0, (j-1))}
  \,,
\end{align*}
and therefore obtain
\begin{align*}
  \varepsilon^{J-1}\|\boldsymbol \mu_2\|_{H^{r-J-1,p}}
  &\le C\varepsilon^{1+(\sigma-1)(J-2)}\left(1
  +\sum_{j=1}^{J-1}\varepsilon^{1+(\sigma-1)(j-1)}\right)
  \leq C\varepsilon^{1+(\sigma-1)(J-2)},
\end{align*}
where the last step is valid since, for $J\geq 3$,
$$
1+(\sigma-1)(j-1)\geq
1-\frac{j-1}{J-2}\geq
1-\frac{J-2}{J-2}= 0.
$$
We get the same estimate for  $\boldsymbol \mu_3$ upon considering instead $\m_{J+1+j-k} \times \T_{k}$.
Finally, note that multiplication by $\tilde{\m}_J$ does
not affect the results.
We can therefore use \Cref{lemma:bilinearest}
with the right condition in \cref{eq:qjcond}
together with 
\cref{eq:mtilde_estimate}
in \Cref{thm:mJnorms}, which yields
$$
  \|\tilde\m_J\times \boldsymbol \mu_1\|_{H^{r-J-2,p}}\leq C
  \|\tilde\m_J\|_{H^{r-J,p+2}}
  \|\boldsymbol \mu_1\|_{H^{r-J-2,p}}\leq
  C\|\boldsymbol \mu_1\|_{H^{r-J-2,p}},
$$
and thus
\begin{align*}
  \varepsilon^{J-1}\|\boldsymbol \eta_1\|_{H^{r-J-2, p}} &\le \varepsilon^{J-1}\left(\|\boldsymbol \mu_2\|_{H^{r-J-2, p}} + \|\tilde \m_J \times \boldsymbol\mu_1\|_{H^{r-J-2, p}}\right) \\
  &\le \varepsilon^{J-1}\left(\|\boldsymbol\mu_2\|_{H^{r-J-1, p}} + \|\tilde \m_J \times \boldsymbol\mu_1\|_{H^{r-J-2, p}}\right)
  \le C \varepsilon^{1 + (\sigma-1)(J-2)}.
\end{align*}
For the remaining terms we proceed similarly.

The $\|\cdot\|_{H^q_\varepsilon}$-norm estimate follows immediately
from the $\|\cdot\|_{H^q}$-estimate using \cref{eq:H_eps_bound} in
\Cref{lemma:H_eps}.
\end{proof}

\subsubsection{Length variation.}
While by assumption (A2), $|\m^\varepsilon| \equiv 1$ in space and
constant in time due to the norm preservation property of the
Landau-Lifshitz equation, \cref{eq:norm_pres}, the norm of the
approximation $\tilde \m^\varepsilon_J$ is not constant in time
since it does not satisfy \cref{eq:main_prob} exactly.  We now
consider the length of $\tilde \m_J^\varepsilon$ and obtain
an upper bound for its deviation from one, the length of $\m^\varepsilon$.

\begin{lemma}\label{lemma:m_tilde_length}
  Suppose $2\leq J\leq r-2$ and let $T^\varepsilon$ be defined as in
  \cref{eq:Teps2}.  Then for $0\leq t\leq T^\varepsilon$ and
  $0\leq q \leq r-J-1$,
\begin{align*}
  \| |\tilde \m_J^\varepsilon(\cdot,t)|^2-1\|_{H^q} &\le  C\varepsilon^{3+(\sigma-1)(J-2)-q},\qquad
                                                    \| |\tilde \m_J^\varepsilon(\cdot,t)|^2-1\|_{H^q_\varepsilon} &\le  C\varepsilon^{3+(\sigma-1)(J-2)},
\end{align*}
where the constant  $C$ is independent of $t$ and $\varepsilon$,
but depends on $T$.
\end{lemma}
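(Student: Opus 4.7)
My approach rests on the observation that the asymptotic expansion is constructed to preserve the length identically at every order in $\varepsilon$. Let $\m = \sum_{j\ge 0}\varepsilon^j \m_j$ denote the full formal expansion whose correctors satisfy the hierarchy \cref{eq:higher_order_m_eq} obtained by substituting the expansion into the Landau--Lifshitz equation and equating powers of $\varepsilon$. Taking the scalar product of the full formal PDE with $\m$ and applying the triple-product identities $\m\cdot(\m\times \v)\equiv 0$ and $\m\cdot(\m\times\m\times \v)\equiv 0$ yields $(\partial_t+\varepsilon^{-2}\partial_\tau)|\m|^2 = 0$ as a formal power series. Writing $|\m|^2 - 1 = \sum_{k\ge 1}\varepsilon^k n_k$ with $n_k := \sum_{j+l=k}\m_j\cdot\m_l$, and equating coefficients, I obtain the recursion $\partial_\tau n_k = -\partial_t n_{k-2}$ with initial data $n_k|_{\tau=0}=0$ (since $\m_j|_{\tau=0}=0$ for $j\ge 1$). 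Combined with the bases $n_0\equiv 0$ and $n_1 = 2\m_0\cdot\m_1\equiv 0$ from \cref{eq:m1vorth}, induction gives the pointwise identity $n_k\equiv 0$ for every $k\ge 0$.

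Applied to the truncated expansion this yields
\begin{equation*}
|\tilde\m_J(x,y,t,\tau)|^2 - 1 = \sum_{k=1}^{2J}\varepsilon^k \tilde n_k^{(J)}, \qquad \tilde n_k^{(J)} := \sum_{\substack{j+l=k\\ 0\le j,l\le J}}\m_j\cdot\m_l.
\end{equation*}
For $k\le J$ the constraint $j,l\le J$ is automatic, so $\tilde n_k^{(J)} = n_k \equiv 0$, and only the tail $k\in[J+1,2J]$ contributes; on this range the admissible indices satisfy $j,l\in[k-J,J]$. Using the growth bound $\|\m_j\|_{H^{r-j,p}}\le C(1+\tau^{\max(0,j-2)})$ from \Cref{thm:mjestimate}, the bilinear product estimate in \Cref{lemma:prodindex}, and the multiscale evaluation in \Cref{lemma:multiscaleest} with $\tau = t/\varepsilon^2\le \varepsilon^{\sigma-2}T$, each term obeys
\begin{equation*}
\|\varepsilon^k \tilde n_k^{(J)}(\cdot,\cdot/\varepsilon, t, t/\varepsilon^2)\|_{H^q} \le C\varepsilon^{k-q+(\sigma-2)\gamma(k,J)},
\end{equation*}
with $\gamma(J+1,J)=J-2$ (attained at $j=1,l=J$) and $\gamma(J+m,J)=J+m-4$ for $2\le m\le J$ (attained when both indices are at least $2$). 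The driving $k=J+1$ term gives precisely the desired exponent $3+(\sigma-1)(J-2)-q$, and the elementary check $3-m+\sigma(m-2)\ge 0$ for $2\le m\le J$ under $\sigma\ge 1-1/(J-2)$ from \cref{eq:Teps2} confirms that no higher-$k$ term is larger (equality at $m=J$ explains the origin of the $\sigma$-restriction). Summing over $k$ gives the $H^q$ bound, and the $H^q_\varepsilon$ version follows directly from \cref{eq:H_eps_bound} in \Cref{lemma:H_eps}.

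The principal technical hurdle is the rigorous justification of the identities $n_k\equiv 0$. The induction step itself is straightforward, but one must verify that (i) the corrector equations \cref{eq:higher_order_pde}--\cref{eq:forces} used throughout the paper are genuinely compatible with the order-by-order expansion of the norm-preservation identity, which they are since they arise by direct substitution of the formal series into the PDE and a rearrangement (as in the derivation preceding \cref{eq:forces}), and (ii) the correctors $\m_j$ have enough $t$-regularity to iterate the recursion on $\partial_t n_{k-2}$, which is furnished by \cref{eq:mjregularity} in \Cref{thm:mjestimate}. The rest of the argument is bookkeeping of the growth exponents $\gamma(k,J)$ and the corresponding range of admissible $\sigma$.
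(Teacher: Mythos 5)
Your proposal is correct and reaches the same final expansion $|\tilde\m_J^\varepsilon|^2 - 1 = \varepsilon^{J+1}\sum_{m}\varepsilon^m(\text{tail terms})$ as the paper, then estimates the tail by the same combination of \Cref{thm:mjestimate}, \Cref{lemma:prodindex}, and \Cref{lemma:multiscaleest}; the exponent bookkeeping $\gamma(J+1,J)=J-2$, $\gamma(J+m,J)=J+m-4$, and the monotonicity check under the $\sigma$-restriction of \cref{eq:Teps2} are all right.

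Where you genuinely diverge from the paper is in how the recursion $\partial_\tau n_k = -\partial_t n_{k-2}$ is obtained. The paper works entirely with the truncated object $\tilde\m_J^\varepsilon$: it notes $\partial_t|\tilde\m_J^\varepsilon|^2 = 2\tilde\m_J^\varepsilon\cdot\boldsymbol\eta_J^\varepsilon$, then feeds in the explicit representation of $\boldsymbol\eta_J^\varepsilon$ from the proof of \Cref{thm:m_pert} (which starts at order $\varepsilon^{J-1}$) to conclude that $\partial_t c_j + \partial_\tau c_{j+2}=0$ for $j\le J-2$. You instead take the dot product of the \emph{full formal} expansion with itself and invoke the triple-product identities. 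This is more elementary in that it never needs the explicit form of the residual, but it carries a small burden you left implicit: one must check that $\m\cdot(\m\times\L\m)$ and $\m\cdot(\m\times\m\times\L\m)$ vanish \emph{order by order} in $\varepsilon$, not merely as scalar identities. This does hold — at each order the contribution is $\sum_{j+k=n-l}\m_j\cdot(\m_k\times\v_l)$, which vanishes by the antisymmetry $\u\cdot(\v\times\w)=-\v\cdot(\u\times\w)$ after pairing $(j,k)\leftrightarrow(k,j)$ — but it is a step, and since you are making the formal-series argument rather than the residual one, it is the step that carries the weight. (Also a small slip: $n_0 = |\m_0|^2 = 1$, not $0$; what you actually use, correctly, is $\partial_t n_0 = 0$.) Both routes buy the same result; the paper's is rigorously self-contained once \Cref{thm:m_pert} is in hand, while yours is closer to first principles and avoids referencing the residual expansion altogether.
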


This lemma implies by \cref{eq:H_eps_grad} in \Cref{lemma:H_eps} that
for $0 \le q \le r-J-2$ and $0 \le t \le T^\varepsilon$,
\begin{equation}\label{eq:m_tilde_length_alt}
\|\nabla |\m_J^\varepsilon|^2\|_{H^q_\varepsilon}
  = \|\nabla (|\m_J^\varepsilon|^2 - 1)\|_{H^q_\varepsilon}
  \le \varepsilon^{-1} \| |\m_J^\varepsilon|^2 - 1 \|_{H^{q+1}_\varepsilon}
  \le C \varepsilon^{2 + (\sigma-1)(J-2)}.
\end{equation}
\begin{proof}
We note first that since $\tilde{\m}_J^\varepsilon$ satsifies \cref{eq:eta_eq},
\[\boldsymbol{\eta}_J^\varepsilon \cdot \tilde \m_J^\varepsilon = \partial_t \tilde \m_J^\varepsilon \cdot \tilde \m_J^\varepsilon\,,\]
which together with  \cref{eq:eta} implies that
\begin{align}\label{eq:length_tderiv}
  \partial_t|\tilde{\m}_J^\varepsilon|^2 = 2\tilde{\m}_J^\varepsilon\cdot \boldsymbol{\eta}^\varepsilon_J
  &= \left.2\varepsilon^{J-1}\tilde{\m}_J^\varepsilon\cdot(\partial_t \m_{J-1} + \varepsilon \partial_t \m_J
+ \boldsymbol \eta_1 +  \sigma \boldsymbol\eta_2)\right|_{y=x/\varepsilon,\tau=t/\varepsilon^2} \nonumber \\
&=:\sum_{j=J-1}^{J'}\varepsilon^j d_j(x,x/\varepsilon,t,t/\varepsilon^2),
\end{align}
for some functions $d_j$ and integer $J'$. On the other hand, we can expand
$|\tilde \m_J^\varepsilon|^2$ as
\begin{align}\label{eq:length_expand}
  |\tilde \m_J^\varepsilon(x,t)|^2 = \left|\m_0(x,t) + \sum_{j=1}^J \varepsilon^j \m_j(x,x/\varepsilon,t,t/\varepsilon^2)\right|^2 =: \sum_{j=0}^{2J}
  \varepsilon^j c_j(x,x/\varepsilon,t,t/\varepsilon^2),
\end{align}
where
$$
   c_j = \sum_{k=\max(0,j-J)}^{\min(j, J)} \m_{k} \cdot \m_{j-k}\,.
   $$
   In particular, $c_0=|\m_0|^2\equiv 1$ and
   $c_1 = 2 \m_0 \cdot \m_1 \equiv 0$ due to the orthogonality of $\m_0$
   and $\m_1$ shown in \cref{eq:m1vorth} in \Cref{thm:mjestimate}.
By \cref{eq:length_tderiv}, the full time derivative of the first $J-2$ terms vanishes,
since
$$
   \left(\frac{\partial}{\partial t}+\varepsilon^{-2}\frac{\partial}{\partial \tau}\right)
   \sum_{j=0}^{2J}
  \varepsilon^j c_j = \sum_{j=J-1}^{J'}\varepsilon^j d_j.
$$
As this identity is valid for all $\varepsilon$, it holds that
$$
   \partial_t c_j + \partial_\tau c_{j+2}=0, \qquad j=0,\ldots, J-2.
$$
We claim that this implies that $c_j\equiv 0$ for $j=1,\ldots J$.
For $j = 1$ this is true due to \cref{eq:m1vorth}
in \Cref{thm:mjestimate} as shown above. Assume now that the claim
holds up to $j\leq J-1$. Then $j-1\leq J-2$,  and we thus have 
$$
  \partial_\tau c_{j+1}= -\partial_t c_{j-1} = 0,
$$
which is true also for $j=1$ since $\partial_tc_0=0$ as
$c_0 = |\m_0|^2 \equiv 1$ for all time by \cref{eq:norm_pres}.
Moreover, at time $\tau = 0$, $c_j(x,y,t,0)=0$ for $j\geq 1$ and all
$t\geq 0$, since this is true for the correctors $\m_j$.  Hence,
$c_{j+1}\equiv 0$. By induction we thus obtain
\begin{align*}
  |\tilde \m_J^\varepsilon(x,t)|^2 = 1 + \varepsilon^{J+1}\sum_{j=0}^{J-1}
  \varepsilon^j \tilde{c}_j(x,x/\varepsilon,t,t/\varepsilon^2),\qquad
\tilde{c}_j = c_{j+J+1} = \sum_{k=j+1}^{J} \m_{k} \cdot \m_{j+J+1-k}\,.
\end{align*}
%
Using \Cref{lemma:multiscaleest} it then follows that
\begin{align*}
  \| |\tilde \m_J^\varepsilon(\cdot,t)|^2-1\|_{H^q} \leq \varepsilon^{J+1-q}\sum_{j=0}^{J-1}
  \varepsilon^j \|\tilde{c}_j(\cdot,\cdot,t,t/\varepsilon^2)\|_{H^{q,q+2}}.
\end{align*}
We have left to estimate $\tilde{c}_j$ and note that it is of the same
type as the terms in the sum definining $\boldsymbol \mu_2$ in the proof of \Cref{thm:m_pert}.
Therefore, with the same steps as in that proof, we obtain
$$
\varepsilon^{J-1}\sum_{j=0}^{J-1}
  \varepsilon^j \|\tilde{c}_j(\cdot,\cdot,t,t/\varepsilon^2)\|_{H^{r-J-1,p}}
  \leq C\varepsilon^{1+(\sigma-1)(J-2)}.
$$
This finally gives
$$
\| |\tilde \m_J^\varepsilon(\cdot,t)|^2-1\|_{H^q}\leq
C\varepsilon^{3+(\sigma-1)(J-2)-q},
$$
for $0\leq q\leq r-J-1$ and the corresponding
$\|\cdot\|_{H^q_\varepsilon}$-norm estimate follows by
\cref{eq:H_eps_bound} in \Cref{lemma:H_eps}.
\end{proof}

\bibliographystyle{acm}
\bibliography{paper1}  

\if\longversion1
\appendix


\section{Estimates linear equation}\label{append:lin_eq}


In this section, we consider solutions $\m$ to the inhomogeneous
linear PDE given in
\cref{eq:lin_prob}
repeated here for convenience,
\begin{subequations} \label{eq:lin_prob_app}
\begin{align}
  \partial_\tau \m(x,y,t,\tau) &= \LL \m(x,y,t,\tau) + \F(x,y,t,\tau)\,, \\
  \m(x, y, t, 0) &= \g(x, y, t).
\end{align}
\end{subequations}
The PDE has periodic boundary conditions in $y$ and up to some fixed final time $T > 0$.
The linear operator $\LL$ is defined
as in \cref{eq:main_ll} as
$$  \LL \m_j := - \m_0 \times \L_2 \m_j - \alpha \m_0 \times \m_0 \times \L_2 \m_j, \qquad \text{where} \qquad \L_2 := \bnabla_y \cdot (a(y) \bnabla_y)\,.
$$
It depends on the material coefficient $a$ and on
the solution of the homogenized equation
$\m_0$.
We assume (A1), (A3) and (A5) hold.

Since $\LL$ has a non-trivial null space and $\alpha>0$
this is a degenerate parabolic equation in $(y,\tau)$.
It can also been viewed as a mixed parabolic hyperbolic system
where the hyperbolic part has zero advection.
As $\m_0$ is independent of $(y,\tau)$,
 standard theory
 ensures the existence of a unique smooth solution
 for all time
$$
 \m(x,\cdot,t,\cdot) \in C^1(\Real^+; H^{\infty}(Y)) ,
$$
 to \cref{eq:lin_prob}, if
$$
\F(x,\cdot,t,\cdot) \in C(\Real^+; H^\infty(Y)),\qquad
\g(x,\cdot,t) \in H^{\infty}(Y).
$$
See e.g. \cite[Chapter 6]{kreisslorenz}.

In this appendix we prove \Cref{thm:inhom_lin}
where we
obtain energy estimates
for $\m$ and its derivatives with respect to $x$, $y$
and $t$. The energy method can easily be used to show
that these derivatives grow at most exponentially fast in $\tau$.
However, since $\tau$ represents that fast scale, we need sharper bounds
on the growth.
The energy method must therefore be applied with more care.
Below we prove energy estimates that show the precise growth rate
in $\tau$.

As in \Cref{sec:lin_eq} we use the projections $\PP$ and $\QQ$ to
split the solution into a part that gets damped away with time
($\PP\m$) and a part that is invariant ($\QQ\m$).
We write again the definitions of the projections
\begin{align}\label{eq:MAdef}
  \M(x, t) :=  \m_0 \m_0^T\,, \qquad &\qquad  \AA \m := \int_Y \m(x, y, t, \tau) dy \,,\\
  \label{eq:PQdef}
  \PP\m :=
 (\I - \M) (\I - \AA) \m\,
\qquad & \qquad 
  \QQ \m := \M \m + (\I-\M) \AA \m \,.
\end{align}
It is easy to check that $\AA^2 = \AA$, the average of the average is equal to the average,
and $\M^2 = \m_0 \m_0^T \m_0 \m_0^T = \m_0 |\m_0|^2 \m_0^T = \M$, from which it directly follows that
$\PP^2 = \PP$ and $\QQ^2 = \QQ$.


To prove the theorem, we proceed as follows: in
\Cref{lemma:inhom_norm_estimate} we show that the first statement in \Cref{thm:inhom_lin}
holds for $p = q = 0$ when $\m$ is independent of $t$. In
\Cref{lemma:dy_norm_estimate} we extend this to general $p$ and in \Cref{lemma:dx_norm_estimate}
to general $q$. We then complete the proof by also taking the $t$-dependence of $\m$ into account.

\subsection{Projections.}
%
We first take a closer look at the properties of the projections $\PP$
and $\QQ$ as defined by \cref{eq:PQdef} and \cref{eq:MAdef}. We note that they depend
on $(x,t)$ but we mostly suppress this dependence in the notation.

We start with a lemma showing
that $\M$  belongs to the same space as $\m_0$, and that $\AA$ is bounded on $H^{q,p}$.

\begin{lemma}\label{lemma:projectionsApp}
If (A5) holds, then
\begin{equation}\label{eq:Mregularity}
  \partial_t^k\M\in C(0,T; H^{r-2k}(\Omega)), \qquad 0\leq 2k\leq r.
\end{equation}
The averaging operator $\AA$ is bounded on $H^{q,p}$ for all $q,p \ge 0$,
    \begin{equation}\label{eq:Abounded}
      ||\AA\v||_{H^{q, p}}\leq
      ||\v||_{H^{q, 0}}\leq
      ||\v||_{H^{q, p}},\qquad \forall \v\in H^{q,p}.
    \end{equation}
\end{lemma}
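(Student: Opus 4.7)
The two statements can be handled separately. For the time regularity of $\M = \m_0 \m_0^T$, my plan is to apply the Leibniz rule to obtain $\partial_t^k \M = \sum_{\ell = 0}^k \binom{k}{\ell} (\partial_t^\ell \m_0)(\partial_t^{k-\ell} \m_0)^T$, and then bound each summand in $H^{r-2k}(\Omega)$ by a bilinear Sobolev estimate. Specifically, I would apply \cref{eq:prodest} from \Cref{lemma:bilinearest} with $q_0 = r-2k$, $q_1 = r-2\ell$, $q_2 = r-2(k-\ell)$, and $p=0$, viewing the factors as $y$-independent Bochner-Sobolev functions so that the $H^{q_j,p}$-norms reduce to ordinary $H^{q_j}(\Omega)$-norms. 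The required conditions in \cref{eq:qjcond} are easily checked: $q_0 \le \min(q_1,q_2)$ since $0 \le \ell \le k$, and $q_1 + q_2 = q_0 + r \ge q_0 + 5$ by the hypothesis $r \ge 5$ in (A5). Continuity of $\partial_t^k \M$ in $t$ with values in $H^{r-2k}$ then follows from the continuity of each factor given in (A5) together with the (bi)continuity of the multiplication map guaranteed by the same bilinear bound.

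For the boundedness of $\AA$, the key observation is that $\AA\v$ is independent of $y$, whence $\partial_y^\gamma \AA\v \equiv 0$ for every $|\gamma| \ge 1$, and consequently $\|\AA\v\|_{H^{q,p}} = \|\AA\v\|_{H^{q,0}}$. The remaining inequality $\|\AA\v\|_{H^{q,0}} \le \|\v\|_{H^{q,0}}$ I would derive by interchanging differentiation and integration, $\partial_x^\alpha \AA\v = \AA\partial_x^\alpha \v$, and then invoking Jensen's inequality on $Y$ (recall $|Y|=1$) to get $|\AA\partial_x^\alpha \v(x)|^2 \le \int_Y |\partial_x^\alpha \v(x,y)|^2 \, dy$. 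Integrating in $x$ and summing over $|\alpha| \le q$ yields the first inequality; the trailing inequality $\|\v\|_{H^{q,0}} \le \|\v\|_{H^{q,p}}$ is immediate from the definition of the Bochner-Sobolev norm (adding non-negative terms to the sum).

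Neither step looks like a real obstacle. The only point requiring any care is the verification of the index condition \cref{eq:qjcond} for the bilinear estimate, but this is a comfortable inequality thanks to the assumption $r \ge 5$ in (A5); the averaging bound is then essentially just Jensen's inequality on the unit cell.
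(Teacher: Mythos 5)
Your proposal is correct and follows essentially the same route as the paper's proof: the Leibniz expansion of $\partial_t^k(\m_0\m_0^T)$ combined with the bilinear estimate \cref{eq:prodest} using exactly the index choices $q_0=r-2k$, $q_1=r-2\ell$, $q_2=r-2(k-\ell)$, and the observation that $\AA\v$ is $y$-independent so the $H^{q,p}$-norm collapses to $H^{q,0}$ followed by Jensen/Cauchy--Schwarz on the unit cell. The only minor difference is that you explicitly flag the continuity-in-$t$ argument via bicontinuity of multiplication, which the paper leaves implicit.
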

\begin{proof}
Since $\partial_t^k\m_0\in C(0,T; H^{r-2k}(\Omega))$
with $r\geq 5$ by (A5) 
it follows from \Cref{lemma:bilinearest}
with $q_0 = r - 2k, q_1 = r - 2l \ge q_0$ and $q_2 = r - 2k + 2l \ge q_0$ which is valid by \cref{eq:qjcond} since $q_1 + q_2 = r + q_0 \ge 5$,
 that
\begin{align*}
   ||\partial_t^k\M(\cdot,t)||_{H^{r-2k}}&\leq C
   \sum_{\ell=0}^k
   ||\partial_t^\ell\m_0(\cdot,t)||_{H^{r-2\ell}}
   ||\partial_t^{k-\ell}\m_0(\cdot,t)||_{H^{r-2k+2\ell}}\leq C\,.
\end{align*}
This shows \cref{eq:Mregularity}.
Next, we get \cref{{eq:Abounded}} since
\begin{align*}
||\AA\v||^2_{H^{q,p}}
&= \sum_{|\beta|\leq q,|\gamma|\leq p}\int_{\Omega}\int_Y \left|\partial_x^\beta
\partial_y^\gamma \int_Y \v(x,y)dy\right|^2 dxdy
= \sum_{|\beta|\leq q}\int_{\Omega}\left|\int_Y \partial_x^\beta\v(x,y)dy\right|^2 dx
\\
&\leq  \sum_{|\beta|\leq q}\int_{\Omega}\int_Y\left|\partial_x^\beta\v(x,y)\right|^2 dxdy=\|\v||^2_{H^{q,0}}.
\end{align*}
\end{proof}

Using this lemma we can now prove \Cref{lemma:projections1} and the
boundedness of the projections in $H^{q,p}$.  We use
\cref{eq:Abounded} and \Cref{lemma:bilinearest} with
$q_0 = q - 2k, q_1 = r-2k+2\ell \ge q_0, q_2 = q - 2\ell \ge q_0$
for $\ell \le k$, which again is valid by \cref{eq:qjcond} since
$q_1 + q_2 = q_0 + r$. This gives
\begin{align*}
||\partial_t^k\PP\v||_{H^{q-2k, p}}&=
   ||(\I-\AA) \partial_t^k (\I-\M)\v||_{H^{q-2k,p}}\leq C
   ||\partial_t^k (\I-\M)\v||_{H^{q-2k,p}}\\
   &\leq
   ||\partial_t^k\v||_{H^{q-2k,p}}
   + C\sum_{\ell=0}^k
   ||\partial_t^{k-\ell}\M(\cdot,t)||_{H^{r-2k+2\ell}}
   ||\partial_t^{\ell}\v(\cdot,t)||_{H^{q-2\ell,p}} \\
   &\leq
   C\sum_{\ell=0}^k
   ||\partial_t^{\ell}\v(\cdot,t)||_{H^{q-2\ell,p}}.
\end{align*}
The $\QQ$ part is given directly as $\QQ=\I-\PP$.
\Cref{lemma:projections1} is proved.

Further properties of the projections $\PP$ and $\QQ$, in particular
regarding commutations with derivatives, are given in the next
lemma.

\begin{lemma}\label{lemma:projections2}
  Let $\AA$ be the averaging operator defined by \cref{eq:MAdef} and $\PP$ and $\QQ$ be given by
\cref{eq:PQdef}. On the domain $H^1(Y)$, they have the following properties:
\begin{enumerate}
\item
The derivative of the average and the average of the derivative are zero,
\begin{align}\label{eq:dy_aa_zero}
  \partial_y \AA = \AA \partial_y = 0\,,
\end{align}
and
\begin{align}\label{eq:PQA}
  \PP \AA = \AA \PP = 0, \qquad \QQ \AA = \AA \QQ = \AA\,.
\end{align}
\item $\PP$ and $\QQ$ commute with $\partial_y$,
   \begin{align}\label{eq:pp_qq_commute}
     \partial_y \PP = \PP \partial_y\,, \qquad \partial_y \QQ = \QQ \partial_y \,,
   \end{align}
   and consequently for $\m\in H^2(Y)$,
   \begin{align}\label{eq:pp_ll}
     \PP \LL \m &= \LL \PP \m = \LL \m, &\quad
     \PP \L_2 \m &= \L_2 \PP \m, \\
     \QQ \LL \m &= \LL \QQ \m = \boldsymbol 0,&\quad
     \QQ \L_2 \m &= \L_2 \QQ \m \,.
   \end{align}
 \item Applying one of the projections $\PP$ and $\QQ$ to the
   $x$-derivative of the other results in an expression involving
   only the $x$-derivative of $\M$ and $\PP$,
   \begin{align*}
      \QQ \partial_x \PP = - (\partial_x \M) \PP  \qquad \text{and} \qquad
      \PP \partial_x \QQ = \PP (\partial_x \M)\,.
    \end{align*}
\end{enumerate}
\end{lemma}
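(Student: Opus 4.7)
The plan is to exploit three basic facts repeatedly: (i) $\M(x,t)$ and therefore $\partial_x\M$ are independent of $y$ and $\tau$, (ii) $\AA^2=\AA$ and $\AA$ commutes with $\partial_x$ and with multiplication by any $y$-independent quantity (in particular with $\M$), and (iii) $\m_0^{\,T}\m_0\equiv 1$, so that $\M\m_0=\m_0$ and the columns of $\m_0\times(\cdot)$ lie in the kernel of $\m_0^{\,T}$, i.e.\ $\M(\m_0\times\v)=0$ for every $\v$.

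For part (1), $\partial_y\AA=0$ is immediate because $\AA\v$ is $y$-independent, and $\AA\partial_y=0$ follows by $y$-periodicity of $\v$. Writing $\PP=(\I-\M)(\I-\AA)$ and pushing $\AA$ through $\M$ gives $\AA\PP=(\I-\M)(\AA-\AA^2)=0$ and likewise $\PP\AA=(\I-\M)(\AA-\AA^2)=0$; the identities for $\QQ=\I-\PP$ then drop out.

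For part (2), commutation of $\partial_y$ with $\PP$ and $\QQ$ uses $\partial_y\M=0$ together with $[\partial_y,\AA]=0$ from part~(1). For $\PP\LL\m=\LL\m$ I will argue that $\AA\LL\m=0$, because $\L_2\m$ is a $y$-divergence and so $\AA\L_2\m=0$, after which $\m_0$ pulls out of the cross products; and $\M\LL\m=0$ because $\LL\m$ is built from $\m_0\times(\cdot)$. The converse $\LL\PP\m=\LL\m$ rests on $\L_2\M=\M\L_2$ (since $\M$ is $y$-independent) and $\L_2\AA\m=0$ (since $\AA\m$ is $y$-independent), giving $\L_2\PP\m=(\I-\M)\L_2\m$; then $\m_0\times(\I-\M)\L_2\m=\m_0\times\L_2\m$ by (iii). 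The same bookkeeping yields $\PP\L_2=\L_2\PP$, and the $\QQ$-statements follow from $\QQ=\I-\PP$.

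Part (3) is the technical core. Differentiating $\PP\v=(\I-\M)(\I-\AA)\v$ by the product rule and using $[\partial_x,\AA]=0$ gives
\begin{equation*}
\partial_x\PP\v=\PP\partial_x\v-(\partial_x\M)(\I-\AA)\v .
\end{equation*}
Applying $\QQ$ kills $\QQ\PP\partial_x\v$, leaving $-\QQ(\partial_x\M)(\I-\AA)\v$. Because $\partial_x\M$ is $y$-independent it commutes with $\AA$, so $\AA[(\partial_x\M)(\I-\AA)\v]=(\partial_x\M)(\AA-\AA^2)\v=0$; therefore $\QQ(\partial_x\M)(\I-\AA)\v=\M(\partial_x\M)(\I-\AA)\v$. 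The cornerstone identity is then
\begin{equation*}
\M(\partial_x\M)=(\partial_x\M)(\I-\M),
\end{equation*}
which I obtain by differentiating $\M^2=\M$: $(\partial_x\M)\M+\M(\partial_x\M)=\partial_x\M$. Substituting, $\QQ\partial_x\PP\v=-(\partial_x\M)(\I-\M)(\I-\AA)\v=-(\partial_x\M)\PP\v$. For the second identity, applying $\PP$ to $\partial_x=\partial_x\PP+\partial_x\QQ$ and subtracting gives $\PP\partial_x\QQ\v=\PP(\partial_x\M)(\I-\AA)\v$; then $\PP(\partial_x\M)\AA\v=0$ because $(\partial_x\M)\AA\v$ is $y$-independent and so is annihilated by $(\I-\AA)$, yielding $\PP\partial_x\QQ\v=\PP(\partial_x\M)\v$.

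The only real obstacle is spotting the algebraic identity $\M(\partial_x\M)=(\partial_x\M)(\I-\M)$; everything else is a disciplined application of the fact that $\M$ lives on the slow scale only, combined with the orthogonality $\m_0\cdot(\m_0\times\v)=0$ and the periodicity that kills $y$-averages of $y$-divergences.
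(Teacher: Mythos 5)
Your proof is correct and takes essentially the same route as the paper's: the same exploitation of the $y$-independence of $\M$, the same use of $\AA^2=\AA$ and $\AA\partial_y=\partial_y\AA=0$, and the same key identity $\M(\partial_x\M)=(\partial_x\M)(\I-\M)$ obtained by differentiating $\M^2=\M$. The only cosmetic difference is in Part 2, where you organize the argument around $\AA\LL\m=0$ and $\M\LL\m=0$ directly, whereas the paper manipulates the precession and damping terms one at a time, but the underlying facts invoked are identical.
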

\begin{proof}
We show the points in turn:
\begin{enumerate}
\item  To begin with, note that when $\m \in H^1(Y)$ it is $Y$-periodic in $y$,
and it follows that $\AA \partial_y \m = 0$.
 Moreover,
$\partial_y \AA \m = 0$ as well,
as $\AA \m$ is constant with respect to $y$.
Furthermore,
\begin{align*}
  \AA \PP \m &= (\I - \M) \AA (\I - \AA) \m =
  (\I - \M)(\I - \AA) \AA \m =
  \PP \AA \m \\ &=  (\I - \M)(\AA - \AA^2) \m = 0\,,
\end{align*}
since $(\I - \M)$ is independent of $y$. Similarly,
\begin{align*}
  \QQ \AA \m = \M \AA \m + (\I - \M) \AA^2 \m = \AA \m
\end{align*}
and $\AA \QQ \m = \AA \m$ follows in the same way.

\item  To prove the second claim, we use the fact that $\M$ is independent of
$y$. It therefore follows by \cref{eq:dy_aa_zero} that $\PP$
commutes with derivatives in $y$ and with $\L_2$,
\begin{align*}
  \PP \partial_y \m 
 &= (\I - \M) \partial_y \m - (\I - \M) \AA \partial_y \m
  =   (\I - \M) \partial_y\m \\&= \partial_y(\I - \M) (\I - \AA) \m = \partial_y \PP \m,
\end{align*}
and
\begin{align*}
\PP\L_2 \m &= (\I-\M)\L_2 \m
-(\I-\M)\AA\L_2 \m
= (\I-\M)\L_2 \m \\&=
(\I-\M) \L_2 (\I - \AA) \m =
  \L_2 \PP \m.
\end{align*}
The same holds for $\QQ$ since $\QQ=\I-\PP$.
Moreover, for any vector $\u \in \Real^3$, the definition of $\M$ implies that
\[\m_0 \times \M \u = 0, \qquad \M (\m_0 \times \u) = 0\,. \]
This property combined with the commutation above
entails that
\begin{align*}
  \m_0 \times \L_2 \PP \m =
  \m_0 \times \PP\L_2 \m =
  \m_0 \times (\I-\M) (\I - \AA) \L_2 \m = \m_0 \times \L_2 \m,
\end{align*}
and
\begin{align*}
 \PP( \m_0 \times \L_2\m) =
(\I-\M)  \m_0 \times  (\I - \AA) \L_2 \m = \m_0 \times \L_2 \m.
\end{align*}
A similar argument for the damping term implies that $\PP$ and
$\LL$ commute as stated in \cref{eq:pp_ll}.  As $\QQ = \I - \PP$, it
then follows directly that $\QQ \LL \m = \LL \QQ \m = 0$.

\item To show the last statement, note that
\begin{align}\label{eq:M_x_der}
  \M (\partial_x \M) = \partial_x \M^2 - (\partial_x \M) \M = \partial_x \M (\I - \M)\,.
\end{align}
As $\AA$ is independent of $x$, \cref{eq:M_x_der} yields together with
\cref{eq:PQA} and
$\QQ \PP \m = 0$, that
\begin{align*}
  \QQ \partial_x (\PP \m) &= 
 \QQ \left[\PP \partial_x \m - (\partial_x \M)(\I - \AA) \m\right]
 = - \M (\partial_x \M)(\I - \AA) \m
 = - (\partial_x \M) \PP \m
\end{align*}
and, since $\PP = \PP (\I-\AA)$ by \cref{eq:PQA},
\begin{align*}
  \PP \partial_x (\QQ \m) &= 
\PP [\QQ \partial_x \m + (\partial_x\QQ) \m]  = \PP (\partial_x \M)(\I - \AA) \m
  = \PP (\partial_x \M) \m\,.
\end{align*}
\end{enumerate}
\end{proof}

\subsection{Estimates for $\QQ$ part.}
%
Suppose $\m$ satsifies \cref{eq:lin_prob_app}.
By \cref{eq:pp_ll} in \Cref{lemma:projections2}
we then have for $\QQ\m$,
$$
   \partial_\tau (\QQ\m) = \QQ\LL\m + \QQ\F =\QQ\F, \qquad \QQ\m(x,y,t,0) =\QQ\g(x,y,t).
$$
Hence,
\[
\QQ \m(x,y, t,\tau) = \QQ \m(x,y, t,0) + \int_0^\tau \QQ \F(x,y, t,s) ds
= \QQ \g(x,y,t) + \int_0^\tau \QQ \F(x,y, t,s) ds \,.
\]
Therefore, \cref{eq:inhom_linQtime} follows directly.

\subsection{Estimates for $\PP$ part: $y$-derivatives.}
%
In this section we consider $\PP\m$ assuming that $\F$ and $\g$ only
depend on the fast variables.  We start by proving $L^2$-norm energy
estimates for $\PP\m(y, \tau)$ and its gradient in this setting.
Those results can subsequently be used to prove estimates for higher
derivatives. The proof of the lemma involves the following two
variations of the Poincar\'e-inequality.  Let $\u_p$ be an arbitrary, $Y$-periodic
function in $H^p(Y;\Real^3)$. Then the we have
\begin{equation}\label{eq:poincare}
  \|\u_1 - \bar  \u_1 \|_{L^2} \le C_P \|\bnabla \u_1 \|_{L^2},\qquad
     \|\bnabla \u_2\|_{L^2} \le \frac{C_P}{a_\mathrm{min}} \|\L_2 \u_2\|_{L^2}\,,
\end{equation}
where $\bar \u_1$ is the average of $\u_1$ over $Y$,
$a_\mathrm{min}=\inf_{y\in Y}a(y)$ and $C_P$ only depends on
$Y$. The left statement in \cref{eq:poincare} is the standard
Poincar\'e inequality. The right one follows since due to periodicity,
\begin{align*}
  \| \bnabla \u_2\|_{L^2}^2
  &\le \frac{1}{a_\mathrm{min}} \int_Y a \bnabla \u_2 : \bnabla \u_2 dx
    = - \frac{1}{a_\mathrm{min}} \int_Y \u_2 \cdot \L_2 \u_2 dx \\
  &= -  \frac{1}{a_\mathrm{min}} \int_Y (\u_2 - \bar \u_2) \cdot \L_2 \u_2 dx
    \le \frac{1}{a_\mathrm{min}} \|\u_2 - \bar \u_2\|_{L^2} \|\L_2 \u_2\|_{L^2}\,,
\end{align*}
which, upon application of the left Poincar\'e inequality, yields the right statement.

\begin{lemma}\label{lemma:inhom_norm_estimate}
  Assume that $\m\in C^1(\Real^+; H^{\infty}(Y))$ is a solution to
  \begin{subequations} 
  \begin{align}
    \partial_\tau \m(y,\tau) &=  \LL \m(y, \tau) + \F(y,\tau)\,, \\
    \m(y, 0) &= \g(y) \,,
  \end{align}
  \end{subequations}
where  $\F \in C(\Real^+; H^\infty(Y))$ and
$\g \in H^{\infty}(Y)$.
Then it holds for some constant $\gamma > 0$ that
  \begin{align}\label{eq:m_inhom_est0}
    \|\PP\m(\cdot, \tau)\|_{L^2} \le e^{-\gamma\tau} \|\PP\g\|_{L^2}
    + \int_0^\tau  e^{-\gamma(\tau - s)}\|\PP \F(\cdot, s)\|_{L^2}ds\,.
  \end{align}
Moreover,  there is a constant $C$ and $\tilde\gamma\geq \gamma$ such that
  \begin{align}\label{eq:m_inhom_est_deriv}
 \| \bnabla_y \PP \m(\cdot, \tau)\|_{L^2} \le
 C\left(e^{-\tilde\gamma \tau} \| \bnabla_y  \PP\g\|_{L^2}
    + \int_0^\tau e^{-\tilde\gamma(\tau - s)}\| \bnabla_y  \PP\F(\cdot, s)\|_{L^2} ds
    \right)\,.
 \end{align}
The constants $C$, $\gamma$ and $\tilde\gamma$ are independent of $\tau$, $x$, $t$, $\F$ and $\g$.
\end{lemma}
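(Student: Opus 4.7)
The plan is to apply the energy method to the projected PDE
\[\partial_\tau \PP\m = \LL \PP \m + \PP \F,\]
which holds thanks to the commutation properties established in \Cref{lemma:projections2}, and then convert the resulting quadratic ODE inequalities into the stated linear form via a square-root trick.

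For \cref{eq:m_inhom_est0} I would take the $L^2$ inner product of the projected equation with $\PP\m$ and integrate over $Y$. The key algebraic identity to verify is
\[\int_Y \PP\m \cdot \LL \PP\m\, dy = -\alpha \int_Y a|\bnabla_y \PP\m|^2\, dy.\]
The precession contribution $-\int_Y \PP\m \cdot (\m_0 \times \L_2 \PP\m)\, dy$ vanishes after integration by parts (since $\m_0$ is independent of $y$) by means of the identity \cref{eq:first_tpi_gradient}. For the damping contribution I would use the triple product identity \cref{eq:second_triple_product_identity} to rewrite $\m_0 \times \m_0 \times \L_2\PP\m = (\m_0 \cdot \L_2\PP\m)\m_0 - \L_2\PP\m$; pairing with $\PP\m$ and invoking the pointwise orthogonality $\PP\m \cdot \m_0 \equiv 0$ built into the projection $\PP$ collapses the integral to $-\int_Y\PP\m \cdot \L_2\PP\m\,dy = \int_Y a|\bnabla_y\PP\m|^2\,dy$. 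Since $\AA\PP\m = 0$ by \cref{eq:PQA}, the standard Poincar\'e inequality \cref{eq:poincare} provides $\int_Y a|\bnabla_y\PP\m|^2\,dy \geq (2\gamma/\alpha)\|\PP\m\|_{L^2}^2$ for a suitable $\gamma > 0$, yielding
\[\tfrac12 \partial_\tau \|\PP\m\|_{L^2}^2 \leq -2\gamma \|\PP\m\|_{L^2}^2 + \|\PP\m\|_{L^2}\|\PP\F\|_{L^2}.\]
Dividing through by $\|\PP\m\|_{L^2}$ (where non-zero, with a standard regularization elsewhere) produces the linear inequality $\partial_\tau \|\PP\m\|_{L^2} \leq -\gamma\|\PP\m\|_{L^2} + \|\PP\F\|_{L^2}$, and Duhamel's formula immediately gives \cref{eq:m_inhom_est0}.

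For \cref{eq:m_inhom_est_deriv} I would introduce the weighted energy $E(\tau) := \int_Y a|\bnabla_y \PP\m|^2\,dy$ and pair the projected PDE with $-\L_2\PP\m$. The computation of $\int_Y \L_2\PP\m \cdot \LL\PP\m\,dy$ follows the same pattern: the precession term vanishes pointwise since the scalar triple product of $\L_2\PP\m$ with $\m_0$ and itself is zero, and the damping term reduces to $-\alpha\|\L_2\PP\m\|_{L^2}^2$ once one observes that $\m_0 \cdot \L_2\PP\m = \L_2(\m_0 \cdot \PP\m) = 0$ (again using that $\m_0$ is constant in $y$ and $\PP\m \perp \m_0$). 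Integrating the source contribution by parts and applying Cauchy--Schwarz gives
\[\tfrac12 \partial_\tau E \leq -\alpha \|\L_2\PP\m\|_{L^2}^2 + \sqrt{E(\tau)}\,\|\sqrt{a}\,\bnabla_y \PP\F\|_{L^2}.\]
The second Poincar\'e inequality in \cref{eq:poincare} (applicable since $\AA\PP\m = 0$) yields $\|\L_2\PP\m\|_{L^2}^2 \geq c E(\tau)$ for a constant $c > 0$, so that $\partial_\tau \sqrt{E} \leq -\tilde\gamma \sqrt{E} + C\|\bnabla_y\PP\F\|_{L^2}$, and Duhamel together with the equivalence $a_{\min}\|\bnabla_y\PP\m\|_{L^2}^2 \leq E \leq a_{\max}\|\bnabla_y\PP\m\|_{L^2}^2$ delivers \cref{eq:m_inhom_est_deriv}.

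The main obstacle is the careful verification of the two algebraic identities for $\int_Y \PP\m \cdot \LL\PP\m\,dy$ and $\int_Y \L_2\PP\m \cdot \LL\PP\m\,dy$ --- in particular the observation that the precession pieces either integrate or vanish pointwise to zero, and that the damping pieces collapse to the clean dissipations $-\alpha\int_Y a|\bnabla_y\PP\m|^2\,dy$ and $-\alpha\|\L_2\PP\m\|_{L^2}^2$ respectively. Both rely in an essential way on the orthogonality $\PP\m \perp \m_0$ encoded in the projection and on the fact that $\m_0$ is independent of $y$, so it is this interplay between the projection $\PP$ and the degenerate structure of $\LL$ that makes the energy method yield sharp exponential decay rather than only exponential growth.
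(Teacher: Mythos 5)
Your proposal is correct and follows essentially the same path as the paper: the same algebraic identities (scalar triple product for the precession piece, vector triple product and the orthogonality $\PP\m\perp\m_0$ for the damping piece), the same two Poincar\'e inequalities from \cref{eq:poincare}, and the same observation that $\m_0\cdot\L_2\PP\m=\L_2(\m_0\cdot\PP\m)=0$ because $\m_0$ is $y$-independent. The only structural difference is that the paper first establishes exponential decay for the homogeneous equation (defining a solution operator $S^\tau$) and then applies Duhamel's principle at the PDE level, whereas you keep the forcing term throughout and derive a scalar differential inequality for $\|\PP\m\|_{L^2}$ (resp.\ $\sqrt{E}$) by dividing by the norm; both routes yield the sharp constant in the $L^2$ bound and the same $\tilde\gamma$ in the gradient bound.
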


\begin{proof}
  Suppose first that $\F\equiv 0$.  From \cref{eq:pp_ll} in
  \Cref{lemma:projections2} it follows that $\m_\perp := \PP \m$
  then satisfies
 \begin{equation}\label{eq:m_perp1}
   \begin{split}
    \partial_\tau \m_\perp(y, \tau)  &= \LL \m_\perp(y, \tau) , \\
    \m_\perp(y, 0) &= \PP \g(y) \,.
   \end{split}
\end{equation}
 Multiplication of \cref{eq:m_perp1} by $\m$ and integration over $Y$ yields,
  due to the triple product identites \cref{eq:first_triple_product_identity} and
  \cref{eq:second_triple_product_identity} and $\m_0$ being independent of $y$,
  \begin{align*}
    \int_Y \m_\perp \cdot \partial_\tau \m_\perp dy
    &=  \m_0 \cdot \int_Y \m_\perp \times \L_2 \m_\perp dy - \alpha \int \L_2(\m_0 \cdot \m_\perp) (\m_0 \cdot  \m_\perp) dy \\ &\qquad+ \alpha  \int  \m_\perp \cdot \L_2 \m_\perp dy\,.
  \end{align*}
The first two terms here become zero, due to the integral identity \cref{eq:int_m_lm_0} and since
$\m_\perp$ is orthogonal to $\m_0$,  $\m_\perp \cdot \m_0 = 0$.
Using integration by parts and the Poincar\'e inequality \cref{eq:poincare} then gives
\begin{align*}
  \frac{1}{2} \partial_\tau \|\m_\perp\|_{L^2(Y)}^2
  &=  - \alpha  \int a \nabla_y \m :  \nabla_y \m dy \le - \alpha a_\mathrm{min}\|\nabla_y \m_\perp\|_{L^2(Y)}^2
    \le
    -\gamma \|\m_\perp\|^2_{L^2(Y)},
\end{align*}
where $\gamma = \alpha a_\mathrm{min}/C_P^2$. Here we also used the fact
that the average of $\m_\perp$ is zero.
Via the Gr\"onwall inequality 
we then get
  \begin{align}\label{eq:Stau1}
    \|S^\tau \PP\g\|_{L^2(Y)}^2= \|\m_\perp(\cdot, \tau)\|^2_{L^2(Y)} \le e^{-2\gamma\tau} \|\m_\perp(\cdot, 0)\|^2_{L^2(Y)}  =  e^{-2\gamma\tau} \|\PP \g\|^2_{L^2(Y)}\,,
  \end{align}
  where $S^\tau$ is the solution operator of \cref{eq:m_perp1}.

To show the second statement, \cref{eq:m_inhom_est_deriv}, we can differentiate \cref{eq:m_perp1} and get
 \begin{equation*}
    \begin{split}
      \partial_\tau \bnabla_y \m_\perp(y, \tau) &= \bnabla_y \LL \m(y, \tau),
 \\
      \bnabla_y \m_\perp(y, 0) &= \bnabla_y \PP\g(y) \,.
    \end{split}
 \end{equation*}
We multiply this by
$a \bnabla_y \m_\perp$ and integrate over $Y$. Using integration by
parts as in \cref{eq:int_m_lm_0} as well as   \cref{eq:first_triple_product_identity} and
  \cref{eq:second_triple_product_identity}, then yields
 \begin{align*}
   \frac{1}{2} \partial_\tau  \|a^{1/2} \bnabla_y \m_\perp\|^2_{L^2}
&= \int_Y \L_2 \m_\perp \cdot  (\m_0 \times \L_2 \m_\perp) dy +
\alpha \int_Y \L_2 \m_\perp  \cdot (\m_0 \times \m_0 \times \L_2 \m_\perp) dy \\
&= \int_Y \m_0 \cdot  (\L_2 \m_\perp \times \L_2 \m_\perp) dy +
\alpha \int_Y (\m_0 \cdot \L_2 \m_\perp)^2 - |\L_2 \m_\perp|^2 dy \\&= - \alpha \|\L_2 \m_\perp\|^2_{L^2(Y)}\,,
  \end{align*}
 due to the orthogonality of $\m_\perp$ and $\m_0$.
 We can now use the second version of the Poincar\'e inequality in \cref{eq:poincare}
 to show that
   \begin{align*}
    \frac12 \partial_\tau \|a^{1/2} \bnabla_y \m_\perp\|^2 \le -\frac{\alpha a_{\min}^2}{C_P^2} \|\bnabla_y \m_\perp\|^2 \le
-\frac{\alpha a_{\min}^2}{C_P^2 a_\mathrm{max}} \|a^{1/2} \bnabla_y \m_\perp\|^2\,
  \end{align*}
  and obtain by Grönwall's inequality,
   \begin{align*}
    \|a^{1/2} \bnabla_y \m_\perp\|^2 \le  e^{-2\tilde\gamma \tau} \|a^{1/2} \PP \bnabla_y \g\|^2\,, \qquad \tilde\gamma =  \frac{a_{\min}}{a_\mathrm{max}}\gamma.
\end{align*}
The resulting estimate for the gradient of $\m_\perp$ is then
  \begin{align}\label{eq:Stau2}
\|\bnabla_y S^\tau \PP\g\|_{L^2} &= \|\bnabla_y \m_\perp(\cdot, \tau)\|_{L^2} \le \sqrt{\frac{a_\mathrm{max}}{a_\mathrm{min}}} e^{-\tilde\gamma \tau} \|\bnabla_y \PP \g\|_{L^2}\,.
  \end{align}
  The estimates \cref{eq:m_inhom_est0} and
  \cref{eq:m_inhom_est_deriv} now follow from \cref{eq:Stau1},
  \cref{eq:Stau2} and Duhamel's principle
\[\m(y, \tau) = S^\tau \g + \int_0^\tau S^{\tau-s} \F(y, s) ds.\]
This concludes the proof.
\end{proof}

We now continue to show the estimates for an arbitrary number of
$y$-derivatives of $\m$, from which the following $H^p$-norm
estimate follows.

\begin{lemma}\label{lemma:dy_norm_estimate}
  Assume that $\m\in C^1(\Real^+; H^{\infty}(Y))$ is a solution to
  \begin{subequations} \label{eq:m_inhom_y}
  \begin{align}
    \partial_\tau \m(y,\tau) &=  \LL \m(y, \tau) + \F(y,\tau)\,, \\
    \m(y, 0) &= \g(y) \,,
  \end{align}
  \end{subequations}
where  $\F \in C(\Real^+; H^\infty(Y))$ and
$\g \in H^{\infty}(Y)$.
Then, for each integer $p\geq 0$,
 there are constants $C$ and $\gamma>0$, independent of $\tau$, $x$, $t$, $\F$ and $\g$,
 such that
  \begin{align}\label{eq:m_inhom_est}
    \|\PP\m(\cdot, \tau)\|_{H^p} \le
    C\left(e^{-\gamma\tau} \|\PP\g\|_{H^p}
    + \int_0^\tau  e^{-\gamma(\tau - s)}\|\PP \F(\cdot, s)\|_{H^p} ds\,\right).
  \end{align}
\end{lemma}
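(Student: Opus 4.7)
The strategy is to reduce the $H^p$-bound to the $L^2$-bound already established in \Cref{lemma:inhom_norm_estimate} by differentiating the equation with respect to $y$. The crucial point is that since $\m_0$ does not depend on the fast variable $y$, the operator $\LL$ commutes with $\partial_y$, and by \cref{eq:pp_qq_commute} in \Cref{lemma:projections2} so does the projection $\PP$. This means higher-$y$-derivatives of $\m$ satisfy precisely the same type of equation as $\m$ itself, so the $L^2$-machinery already developed can be reused verbatim.

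I would argue as follows. Fix a multi-index $\beta$ with $|\beta|\le p$. Differentiating \cref{eq:m_inhom_y} in $y$ yields
\begin{align*}
  \partial_\tau (\partial_y^\beta \m) &= \LL (\partial_y^\beta \m) + \partial_y^\beta \F, \\
  \partial_y^\beta \m(y,0) &= \partial_y^\beta \g(y).
\end{align*}
Since $\F\in C(\Real^+;H^\infty(Y))$ and $\g\in H^\infty(Y)$, the derivatives $\partial_y^\beta \F$ and $\partial_y^\beta \g$ satisfy the same regularity hypotheses and \Cref{lemma:inhom_norm_estimate} applies to $\partial_y^\beta \m$, yielding
\begin{align*}
  \|\PP \partial_y^\beta \m(\cdot,\tau)\|_{L^2} \le e^{-\gamma\tau}\|\PP \partial_y^\beta \g\|_{L^2} + \int_0^\tau e^{-\gamma(\tau-s)} \|\PP \partial_y^\beta \F(\cdot,s)\|_{L^2}\, ds.
\end{align*}
Using $\PP \partial_y^\beta = \partial_y^\beta \PP$ on the left-hand side, summing over $|\beta|\le p$, and invoking the triangle inequality together with the definition of the $H^p$-norm recovers \cref{eq:m_inhom_est} with the same $\gamma$ as in the base case.

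I do not foresee significant obstacles beyond verifying that the hypotheses of the base case remain valid for the differentiated problem. The pointwise orthogonality $\partial_y^\beta\PP\m \cdot \m_0 = 0$ is preserved because $\PP\m \cdot \m_0 = 0$ and $\m_0$ is $y$-independent, and the zero-average property of $\partial_y^\beta \PP\m$, which is needed for the Poincar\'e inequality underpinning the proof of \Cref{lemma:inhom_norm_estimate}, holds automatically for $|\beta|\ge 1$ by $y$-periodicity and for $|\beta|=0$ by the construction of $\PP$. Consequently the energy argument of the base case transfers directly, and no new dissipation rate needs to be computed.
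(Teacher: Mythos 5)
Your proposal rests on the claim that $\LL$ commutes with $\partial_y^\beta$, arguing that this holds because $\m_0$ is $y$-independent. That reasoning addresses the cross-product factors in $\LL$, but it overlooks the operator $\L_2 = \nabla_y\cdot(a(y)\nabla_y)$ sitting inside $\LL$: the coefficient $a(y)$ depends on $y$, so $\partial_y^\beta \L_2 \m \neq \L_2 \partial_y^\beta \m$ in general, and consequently $\partial_y^\beta \LL \neq \LL\,\partial_y^\beta$. (Note that \Cref{lemma:projections2} only establishes that $\PP$ and $\QQ$ commute with $\partial_y$, not that $\LL$ does.) Therefore the differentiated equation you write down for $\partial_y^\beta\m$ is incorrect; the true equation carries a commutator $[\partial_y^\beta,\LL]\m$, which involves derivatives of $a$ and derivatives of $\m$ up to order $|\beta|+1$. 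This term is not small and would have to be treated as additional forcing, at which point the "verbatim" reuse of the $L^2$ energy estimate breaks down: the forcing now depends on norms of $\m$ one order higher than what you are trying to control, so the argument does not close without a more careful integration-by-parts and induction structure. Your closing assertion that no new obstacles arise is therefore not justified.

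The paper avoids this precisely by conjugating with powers of $\L_2$ instead of raw $y$-derivatives. Setting $\u = \L_2^k\m$, one does have $\L_2^k\,\LL = \LL\,\L_2^k$, because $\L_2^k$ commutes with multiplication and cross products by the $y$-constant vector $\m_0$, and trivially with $\L_2$ itself. Hence $\u$ satisfies the same PDE with forcing $\L_2^k\F$ and data $\L_2^k\g$, and \Cref{lemma:inhom_norm_estimate} applies. The price is that one no longer controls $\|\partial_y^\beta\PP\m\|_{L^2}$ directly; instead one controls $\|\L_2^k\PP\m\|_{L^2}$ (and its gradient for odd $p$), and a separate elliptic-regularity step, \cref{eq:elliptic_reg_a}, is needed to convert those quantities back into the full $H^p$-norm. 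That elliptic-regularity step is entirely absent from your proposal, and it is not optional: without it, even after fixing the commutation, the bound would be stated in the wrong norm.
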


\begin{proof}
  Suppose $p=2k+\ell$ with $\ell\in \{0,1\}$ and let
  $\u := \L_2^k \m$.  Then $\u$ satisfies a differential equation
  with the same structure as in \cref{eq:m_inhom_y},
  \begin{subequations} \label{eq:m_inhom_L}
    \begin{align}
    \partial_\tau \u &= \LL \u  + \L_2^k \F, \\
    \u(y, 0) &= \L_2^k \g(y) \,.
  \end{align}
  \end{subequations}
  One therefore obtains from \Cref{lemma:inhom_norm_estimate} that
 \begin{align}\label{eq:m_inhom_est_L}
\|\PP\L_2^k\m(\cdot, \tau)\|_{L^2}=\|\PP\u(\cdot, \tau)\|_{L^2} \le e^{-\gamma\tau} \|\PP\L_2^k\g\|_{L^2}
    + \int_0^\tau  e^{-\gamma(\tau - s)}\|\PP \L_2^k\F(\cdot, s)\|_{L^2}ds\,.
  \end{align}
By \cref{eq:pp_ll} in \Cref{lemma:projections2} we know that
$\PP$ and $\L_2^k$ commute, which together with (A2), $a \in C^\infty$, entails that
  there is a constant $C$ such that for any $\v\in H^{2k}(Y)$,
  \begin{align*}
    \|\PP \L_2^k \v\|_{L^2} = \|\L_2^k \PP \v\|_{L^2} \le C \|\PP \v\|_{H^{2k}}\,.
  \end{align*}
  Hence, the right-hand side in \cref{eq:m_inhom_est_L} is bounded
  by the right-hand side in \cref{eq:m_inhom_est}.  By elliptic
  regularity, it holds for an arbitrary function $\u_p$ in
  $H^p(Y;\Real^3)$ that
\begin{equation}\label{eq:elliptic_reg_a}
  \|\u_{2p}\|_{H^{2p}} \le C \sum_{j=0}^p \|\mathcal{L}_2^j \u_{2p}\|_{L^2}\,,
\qquad  \|\u_{2p+1}\|_{H^{2p+1}} \le C \Big(\|\mathcal{L}_2^{p}\u_{2p+1}\|_{H^1} + \sum_{j=0}^{p-1} \|\mathcal{L}_2^j \u_{2p+1}\|_{L^2}\Big)\,,
\end{equation}
where $C$ are constants independent of the functions $\u_{2p}$ and $\u_{2p+1}$.

For even $p$ (when $\ell=0$), it then follows from the elliptic regularity
\cref{eq:elliptic_reg_a}, that
  \begin{align*}
    \|\PP\m(\cdot, \tau)\|_{H^{2k}} \le C\sum_{j=0}^k \|\L_2^j \PP\m(\cdot,\tau)\|_{L^2},
  \end{align*}
  and consequently, we obtain \cref{eq:m_inhom_est}.  For odd $p$,
  when $\ell=1$, we use \cref{eq:m_inhom_est_deriv} in
  \Cref{lemma:inhom_norm_estimate} applied to \cref{eq:m_inhom_L},
  which yields
    \begin{align*}
      \| \bnabla_y \PP\L_2^k\m(\cdot, \tau)\|_{L^2} &=\| \bnabla_y\PP\u(\cdot, \tau)\|_{L^2} \\
   &\le
   C\left(e^{-\tilde\gamma \tau} \|\bnabla_y  \PP \L_2^k\g\|_{L^2}
    + \int_0^\tau e^{-\tilde\gamma(\tau - s)}\|\bnabla_y  \PP \L_2^k\F(\cdot, s)\|_{L^2} ds
\right)\,.
 \end{align*}
This gives the extra $H^1$-norm
 needed to use the second variant of elliptic regularity
in \cref{eq:elliptic_reg_a}. Combined with the fact that
$\|\PP \bnabla \L_2^k \v\|_{L^2} \le C \|\PP \v\|_{H^{2k+1}}$
for $\v\in H^{2k+1}(Y)$ we obtain \cref{eq:m_inhom_est} also for odd $p$.
\end{proof}


\subsection{Estimate for $\PP$ part: $x$-derivatives.}
%
%
%
To obtain an estimate for the norms of $x$-derivatives of $\m$
satsfying \cref{eq:lin_prob_app}, we again aim to split the function we
want to estimate. In the process, the order in which partial
derivatives are taken matters. To handle this, let
$\beta = [\beta_1, \beta_2, \ldots, \beta_n]$ be an
{\it ordered} multi-index with $n$ elements. Each of these elements
$\beta_j$, $j = 1, ..., n$, specifies one of the coordinate
directions. The length of any ordered multi-index is the number of
elements it contains, $|\beta| = n$.  We call $\eta$ an ordered
subset of $\beta$, denoted $\eta \subset \beta$, if it contains any
selection of elements in $\beta$ where the order of elements is not
changed compared to their order in $\beta$. Each $\eta$ is again an
ordered multi-index.

Moreover, we want to be able to extend a given ordered multi-index
with additional coordinate directions. This is denoted as follows:
if $\beta$ is as above and
we add an additional coordinate direction $k$, then we write
\[\bar\beta = [\beta; k] := [\beta_1, \beta_2, ..., \beta_n, k],\qquad
|\bar\beta|=n+1.
\]
When using the ordered multi-index for denoting partial derivatives we define
for $\f\in H^{|\beta|}(\Omega)$,
\begin{align*}
  \partial_x^{\beta} \f := \partial_{x_{\beta_n}}\partial_{x_{\beta_{n-1}}} \cdots\ \partial_{x_{\beta_1}} \f.
\end{align*}
We also introduce the operator $D$, where the partial
derivatives are interlaced with the projection $\PP$,
$$
D^\beta \f :=
\PP \partial_{x_{\beta_n}} \PP \partial_{x_{\beta_{n-1}}} \cdots\ \PP \partial_{x_{\beta_1}}\PP \f, \qquad D^0 \f:= \PP\f.
$$
This operator is useful since if $\m$ satisfies \cref{eq:lin_prob_app} then
 $D^\beta \m$
satisfies the same PDE where $\F$ and $\g$
are replaced by $D^\beta \F$ and $D^\beta \g$. We can then
proceed to estimate $D^\beta \m$
using \Cref{lemma:dy_norm_estimate}. We get the following lemma.
\begin{lemma}\label{lemma:dx_norm_estimate}
  Let $\beta$ be an ordered multi-index.
 Suppose $D^\beta\F(x,\cdot,t,\cdot)\in C(\Real^+; H^\infty(Y))$
 and $D^\beta\g(x,\cdot,t)\in H^\infty(Y)$.
 Then, for each integer $p\geq 0$,
 there are constants $C$ and $\gamma>0$, independent of $\tau$, $x$, $t$, $\F$ and $\g$,
 such that
  \begin{align*}
     \| D^\beta \m(x, \cdot, t, \tau)\|_{H^p(Y)}
     &\le
     C \bigg(e^{-\gamma \tau} \|D^\beta\g(x, \cdot, t)\|_{H^p(Y)} \\&\qquad \quad \left. + \int_0^\tau e^{-\gamma (\tau - s)}\|D^\beta \F(x, \cdot, t, s)\|_{H^p(Y)} ds \right) \,.
  \end{align*}
\end{lemma}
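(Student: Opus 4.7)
Plan: The strategy is induction on the length $|\beta|$ of the ordered multi-index. At each step I would derive the PDE that $D^\beta\m$ satisfies at fixed $(x,t)$ as a function of $(y,\tau)$, and then apply \Cref{lemma:dy_norm_estimate} to that PDE.

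For the base case $|\beta|=0$, note that $\PP$ depends only on $(x,t)$, so it commutes trivially with $\partial_\tau$, and by \cref{eq:pp_ll} in \Cref{lemma:projections2} it commutes with $\LL$. Applying $\PP$ to \cref{eq:lin_prob_app} therefore yields
\[
\partial_\tau(\PP\m)=\LL(\PP\m)+\PP\F,\qquad (\PP\m)\big|_{\tau=0}=\PP\g,
\]
which is exactly of the type treated in \Cref{lemma:dy_norm_estimate}; the stated bound with $D^0=\PP$ follows at each fixed $(x,t)$.

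For the inductive step, write $\beta=[\tilde\beta;k]$ and set $u:=D^{\tilde\beta}\m$. The induction hypothesis furnishes an analogous equation $\partial_\tau u=\LL u+\G_{\tilde\beta}$ for $u$ at fixed $(x,t)$, with initial datum $D^{\tilde\beta}\g$. Applying $\PP\partial_{x_k}$ to this equation, and using $\PP\LL=\LL\PP$ together with the commutation of $\partial_{x_k}$ with $\partial_\tau$, yields
\[
\partial_\tau D^\beta\m=\LL D^\beta\m+\PP\partial_{x_k}\G_{\tilde\beta}+\PP[\partial_{x_k},\LL]u,\qquad D^\beta\m\big|_{\tau=0}=D^\beta\g.
\]
A direct triple-product calculation using $\m_0\cdot\partial_{x_k}\m_0=0$ shows that $[\partial_{x_k},\LL]u$ is a linear combination of $\partial_{x_k}\m_0\times\L_2 u$ and $(\m_0\cdot\L_2 u)\partial_{x_k}\m_0$, whose coefficients are uniformly bounded in $L^\infty(\Omega)$ by assumption (A5). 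Since $D^\beta\m$ lies in the range of $\PP$, \Cref{lemma:dy_norm_estimate} applies to this PDE with effective forcing $D^\beta\F+\PP[\partial_{x_k},\LL]u$ (after substituting the induction expression for $\G_{\tilde\beta}$) and initial data $D^\beta\g$.

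The main obstacle is closing the induction so that the final bound depends only on $\|D^\beta\F\|_{H^p(Y)}$ and $\|D^\beta\g\|_{H^p(Y)}$, even though the commutator $\PP[\partial_{x_k},\LL]u$ produces contributions controlled by $\|D^{\tilde\beta}\m\|_{H^{p+2}(Y)}$ rather than $H^p(Y)$. This forces iterating the induction at successively higher $y$-regularity and absorbing lower-order $D^\eta$-quantities ($\eta\subsetneq\beta$) into the top-order ones. The absorption works because the assumed $H^\infty(Y)$-regularity of $\F$ and $\g$ allows arbitrarily many $y$-derivatives, and because the exponentially decaying kernel $e^{-\gamma(\tau-s)}$ in the integrated forcing keeps the constants uniform in $\tau$; the combinatorial bookkeeping parallels the proof of \Cref{lemma:inhom_norm_estimate}, but requires extra care because each new $\partial_{x_k}$ produces another commutator through the $x$-dependence of $\m_0$ inside $\LL$.
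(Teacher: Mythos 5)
Your base case $|\beta|=0$ is correct and matches the paper. The inductive step, however, misses the single algebraic fact on which the whole lemma rests: the commutator you identify, $\PP[\partial_{x_k},\LL]u$, is \emph{identically zero} when $u$ lies in the range of $\PP$. You correctly compute that $[\partial_{x_k},\LL]u$ consists of terms involving $\partial_{x_k}\m_0\times\L_2 u$, $\partial_{x_k}\m_0\times\m_0\times\L_2 u$ and $\m_0\times\partial_{x_k}\m_0\times\L_2 u$, but you do not use the structure that kills them: since $u=D^{\tilde\beta}\m$ is in the range of $\PP$ and $\L_2$ commutes with $\PP$, $\L_2 u$ is also orthogonal to $\m_0$; and $\partial_{x_k}\m_0\perp\m_0$ because $|\m_0|\equiv 1$. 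Two vectors orthogonal to the unit vector $\m_0$ have a cross product parallel to $\m_0$, i.e.\ $\partial_{x_k}\m_0\times\L_2 u=\kappa\m_0$, so $\PP$ annihilates it (since $\PP(f\m_0)=(f-\bar f)(\I-\M)\m_0=0$), the double cross product $\m_0\times\partial_{x_k}\m_0\times\L_2 u=\kappa\m_0\times\m_0=0$, and by the triple product identity $\partial_{x_k}\m_0\times\m_0\times\L_2 u=(\partial_{x_k}\m_0\cdot\L_2 u)\m_0$, which $\PP$ also annihilates. Hence $\w_\beta=D^\beta\m$ satisfies \emph{exactly} $\partial_\tau\w_\beta=\LL\w_\beta+D^\beta\F$ with data $D^\beta\g$, and one applies \Cref{lemma:dy_norm_estimate} directly with no absorption, no loss of $y$-regularity, and no nested induction on $p$.

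Beyond being more complicated than necessary, your proposed ``absorb via higher $y$-regularity'' route would fail to produce the bound actually claimed. If the commutator were genuinely present, plugging the induction hypothesis for $\|D^{\tilde\beta}\m(\cdot,s)\|_{H^{p+2}(Y)}$ into the Duhamel integral would leave you with the quantities $\|D^{\tilde\beta}\g\|_{H^{p+2}}$ and $\|D^{\tilde\beta}\F\|_{H^{p+2}}$ on the right, which are lower-order $D$-derivatives at higher $y$-regularity --- not the $\|D^\beta\g\|_{H^p}$ and $\|D^\beta\F\|_{H^p}$ demanded by the lemma. There is no identity relating these. Moreover the double Duhamel convolution $\int_0^\tau e^{-\gamma(\tau-s)}\int_0^s e^{-\gamma(s-\sigma)}\cdots\,d\sigma\,ds$ produces a factor of $\tau$ that cannot be removed without weakening the decay rate at every inductive step, so the constants would not be uniform in $|\beta|$. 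The vanishing of the commutator is therefore not a shortcut but the essential mechanism that makes the exact statement true.
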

\begin{proof}
We first show that $\w_\beta(x, y, t, \tau) :=D^\beta\m(x, y, t, \tau)$ satisfies
  \begin{subequations}\label{eq:w_beta_LL}
\begin{align}
      \partial_\tau \w_\beta(x, y, t, \tau) &= \LL \w_\beta(x, y, t, \tau) + D^\beta \F(x, y, t, \tau), \\
      \w_\beta(x,y, t, 0) &= D^\beta \g(x, y, t),
\end{align}
  \end{subequations}
given that $\m$ satisfies \cref{eq:lin_prob_app}. We use induction and
consider first the case
$\beta = 0$. Then based on \Cref{lemma:projections2}, we have
\[
\partial_\tau \w_0 =
 \partial_\tau \PP \m = \LL \PP\m + \PP \F = \LL \w_0 + \PP \F \,.
\]
In general, assume that \cref{eq:w_beta_LL} holds for some $\beta$ and consider $\bar\beta := [\beta; k]$. As
$\w_{\bar\beta} = \PP \partial_{x_k} \w_\beta$, it satisfies
\begin{align*}
  \partial_\tau \w_{\bar\beta}
  &= \PP \partial_{x_k} \partial_\tau  \w_\beta
    = \PP \partial_{x_k} (\LL \w_\beta + D^\beta \F) \\
  &= \LL \PP\partial_{x_k} \w_\beta - \PP\left(\partial_{x_k} \m_0 \times \L_2 \w_\beta\right)  \\ & \qquad + \alpha \PP\left( \partial_{x_k} \m_0 \times \m_0 \times \L_2 \w_\beta
    + \m_0 \times \partial_{x_k} \m_0 \times \L_2 \w_\beta \right)  + D^{\bar\beta} \F,
\end{align*}
as $\LL$ and $\PP$ commute, as shown in \Cref{lemma:projections2}.
 Since $\m_0 \perp \partial_{x_k} \m_0$
for any $k$ given that $|\m_0| \equiv 1$, and by the definition of $\PP$,
$\m_0 \perp \L_2 \w_\beta$, we can conclude that $\partial_{x_k} \m_0 \times \L_2 \w_\beta = \kappa \m_0$
for some $\kappa \in \Real$ and thus
$\PP (\partial_{x_k} \m_0 \times \L_2 \w_\beta) = \kappa \PP \m_0 =0$.
Moreover,
\begin{align*}
  \m_0 \times \partial_{x_k} \m_0 \times \L_2 \w_\beta = \kappa \m_0 \times \m_0 = 0 \,,
\end{align*}
and
by the triple product identity \cref{eq:second_triple_product_identity},
\begin{align*}
  \PP (\partial_{x_k} \m_0  \times \m_0 \times  \L_2 \w_\beta) =
  \PP (\partial_{x_k} \m_0 \cdot \L_2 \w_\beta) \m_0 - \PP(\m_0 \cdot \partial_{x_k} \m_0) \L_2 \w_\beta = 0\,.
\end{align*}
This proves the claim that $\w_\beta$ satisfies \cref{eq:w_beta_LL} by induction.

Since $\w_\beta$ satisfies \cref{eq:w_beta_LL} and $\QQ \w_\beta = 0$,
it follows from \Cref{lemma:dy_norm_estimate} that
\begin{align*}
  \|\w_\beta\|_{H^p(Y)}
 &\le C \left(e^{-\gamma \tau} \|D^\beta\g\|_{H^p(Y)} + \int_0^\tau e^{-\gamma (\tau - s)}\|D^\beta \F\|_{H^p(Y)} ds \right).
\end{align*}
This concludes the proof.
\end{proof}

For the final estimate of $\m$
we  also need to exploit the fact that the usual
Sobolev norms of $\PP\m$ are equivalent to
Sobolev norms based on $D$, as follows.

\begin{lemma}\label{lemma:m_dx_sum}
  Assume \cref{eq:m0regularity} and let $0\leq q \leq r$.
  Then for all $\f\in H^q(\Omega;\Real^3)$ and $0\leq t \leq T$,
  $$
     C_0 ||\PP \f||^2_{H^{q}(\Omega)}\leq
     \sum_{|\beta|\leq q}||D^\beta \f||^2_{L^2(\Omega)}
     \leq
     C_1 ||\PP \f||^2_{H^{q}(\Omega)},
  $$
  where $C_0$ and $C_1$ are independent of $t$.
  \end{lemma}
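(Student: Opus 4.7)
The plan is to prove the two-sided bound by establishing a triangular change-of-basis formula between the families $\{\partial^\beta \PP \f\}_{|\beta|\leq q}$ and $\{D^\beta \f\}_{|\beta|\leq q}$, where the coefficients are products of partial derivatives of $\M$. As a preliminary step, I would derive a commutator identity: since $\PP = (\I-\M)(\I-\AA)$ and the averaging operator $\AA$ commutes with $\partial_{x_k}$ (it only integrates in $y$), direct differentiation gives $\partial_{x_k}(\PP \v) = \PP \partial_{x_k}\v - (\partial_{x_k}\M)(\I-\AA)\v$. Specializing to $\v = D^\eta \f$, which satisfies $\AA D^\eta \f = 0$ by \cref{eq:PQA}, this simplifies to the clean recursion $\partial_{x_k} D^\eta \f = D^{[\eta;k]}\f - (\partial_{x_k}\M)\, D^\eta \f$.

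Next, by induction on $|\beta|$ using this recursion, I would establish the expansions
\begin{equation*}
\partial^\beta \PP \f = D^\beta \f + \sum_{\eta \subsetneq \beta} c_{\eta,\beta}(x,t)\, D^\eta \f, \qquad D^\beta \f = \partial^\beta \PP \f + \sum_{\eta \subsetneq \beta} \tilde c_{\eta,\beta}(x,t)\, \partial^\eta \PP \f,
\end{equation*}
where each $c_{\eta,\beta}$ and $\tilde c_{\eta,\beta}$ is a sum of products of partial derivatives $\partial^\sigma \M$ with total derivative order equal to $|\beta|-|\eta|$. The second expansion follows from the first by inverting the triangular linear relation, which has the identity on the diagonal.

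The main analytic step is to show that each product $c_{\eta,\beta}(\cdot,t)\, D^\eta\f$ (respectively $\tilde c_{\eta,\beta}(\cdot,t)\, \partial^\eta \PP\f$) is bounded in $L^2(\Omega)$ by $\|\PP\f\|_{H^{|\beta|}}$ (respectively by the sum of $L^2$-norms of $D^\gamma\f$, $|\gamma|\leq|\beta|$), uniformly in $t\in[0,T]$. By \cref{eq:Mregularity} in \Cref{lemma:projectionsApp}, $\M(\cdot,t)\in H^r(\Omega)$ with norm uniform in $t\in[0,T]$. Combining the bilinear Sobolev estimates \cref{eq:interpol_ineq} and \cref{eq:interpol2} from \Cref{lemma:bilinearest} with the Sobolev embedding $H^2(\Omega)\hookrightarrow L^\infty(\Omega)$ valid for $n\leq 3$ then yields the desired uniform $L^2$-bounds. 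Summing the first expansion over $|\beta|\leq q$ gives the upper bound $\sum_{|\beta|\leq q}\|\partial^\beta \PP\f\|^2_{L^2}\leq C_1'\sum_{|\beta|\leq q}\|D^\beta\f\|^2_{L^2}$, and summing the second expansion gives the converse; rearranging produces the stated two-sided equivalence with $C_0,C_1$ independent of $t$.

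The main obstacle will be the borderline cases $|\beta|-|\eta|\in\{r-1,r\}$, where a single factor $\partial^\sigma \M$ appearing in $c_{\eta,\beta}$ has $|\sigma|$ close to $r$ and hence need not lie in $L^\infty$. In those cases, however, $|\eta|$ is necessarily small, so one compensates by placing $D^\eta \f$ in $L^\infty$ via Sobolev embedding applied to $\PP\f\in H^q$, keeping the low-regularity factor $\partial^\sigma \M$ in $L^2$. Distributing the regularity carefully between the two factors using \cref{eq:interpol_ineq} in \Cref{lemma:bilinearest} is the delicate technical point that makes the bound work uniformly all the way up to $q=r$.
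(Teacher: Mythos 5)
Your proposal follows the same core strategy as the paper: derive the triangular expansion $\partial^\beta\PP\f = D^\beta\f + \sum_{\eta\subsetneq\beta}c_{\eta,\beta}D^\eta\f$ from the commutator identity $\QQ\partial_x\PP = -(\partial_x\M)\PP$ (Lemma B.2), track the Sobolev regularity $c_{\eta,\beta}\in H^{r-|\beta|+|\eta|}$, and close via bilinear estimates. Two remarks, one of which is a genuine (but repairable) issue.

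First, for the inequality $\sum\|D^\beta\f\|^2_{L^2}\lesssim\|\PP\f\|^2_{H^q}$ you propose to invert the triangular system. This works in principle, but the paper's route is much shorter: by the boundedness of $\PP$ on $H^{q,p}$ (Lemma 7.2 / eq. (7.8)), one has immediately $\|D^\eta\f\|_{H^{q'-|\eta|}}\leq C\|\PP\f\|_{H^{q'}}$ by iterating $\|\PP\partial_{x_j}\PP\v\|_{H^{q'-1}}\leq C\|\PP\v\|_{H^{q'}}$, with no need for the inverse expansion at all. Your explicit inversion also forces you to control products of the $c_{\eta,\beta}$'s in Sobolev norm, which is extra work and needs an additional bilinear estimate.

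Second — and this is the actual gap — your stated bound for the left inequality, $\|c_{\eta,\beta}D^\eta\f\|_{L^2}\leq C\|\PP\f\|_{H^{|\beta|}}$, is too weak: substituting it into the expansion and summing over $|\beta|\leq q$ gives $\|\PP\f\|^2_{H^q}\leq C\sum\|D^\beta\f\|^2 + C\|\PP\f\|^2_{H^q}$, which is vacuous unless $C<1$. You need the strictly sharper bound $\|c_{\eta,\beta}D^\eta\f\|_{L^2}\leq C\|\PP\f\|_{H^{|\beta|-1}}$, obtained by choosing the regularity split $(q_0,q_1,q_2)=(0,\ r-|\beta|+|\eta|,\ |\beta|-|\eta|-1)$ in the bilinear Lemma 5.3 and then using $\|D^\eta\f\|_{H^{|\beta|-|\eta|-1}}\leq C\|\PP\f\|_{H^{|\beta|-1}}$. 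This yields $\|\PP\f\|^2_{H^q}\leq C\sum\|D^\beta\f\|^2 + C\|\PP\f\|^2_{H^{q-1}}$, and one closes the argument by induction on $q$ — a step your write-up omits. Your paragraph on "borderline cases" shows you are aware that regularity must be distributed carefully, but the induction that resolves the apparent circularity needs to be made explicit.
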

\begin{proof}
In the first step we prove that
for
 each ordered multi-index $\beta$ with $|\beta|\leq q\leq r$
  there are $\c_{\eta, \beta}\in C(0,T; H^{r-|\beta|+|\eta|}(\Omega))$
such that
\begin{align}\label{eq:dx_w_sum}
    \partial_x^{\beta} \PP\f = D^\beta \f +
     \sum_{\substack{\eta \subset \beta \\ |\eta| < |\beta|}}
    \c_{\eta, \beta} D^\eta \f.
\end{align}
We again use induction.
The statement is trivially true for $\beta=0$.
Assume now that
\cref{eq:dx_w_sum} holds for all $\beta$ with $0\leq |\beta|<q$ and that the corresponding coefficient functions $\c_{\eta, \beta} \in C(0, T; H^{r - |\beta| + |\eta|}(\Omega))$.
Let $|\beta|=q'<q$ and consider the ordered multi-index $\bar\beta=[\beta;j]$.
From \cref{eq:dx_w_sum} we then get, using \Cref{lemma:projections2}, that
\begin{align*}
  \partial_x^{\bar\beta} \PP\f
  &= \partial_{x_j}\left(
  D^\beta \f +      \sum_{\substack{\eta \subset \beta \\ |\eta| < |\beta|}}
    \c_{\eta, \beta} D^\eta \f\right)
  = \PP\partial_{x_j}
  D^\beta \f +
  \QQ\partial_{x_j}
  D^\beta \f
   +      \sum_{\substack{\eta \subset \beta \\ |\eta| < |\beta|}}
    (\partial_{x_j}\c_{\eta, \beta}) D^\eta \f
    \\
&\hspace{15 mm}   + \sum_{\substack{\eta \subset \beta \\ |\eta| < |\beta|}}
    \c_{\eta, \beta} \PP\partial_{x_j}D^\eta \f
   + \sum_{\substack{\eta \subset \beta \\ |\eta| < |\beta|}}
    \c_{\eta, \beta} \QQ\partial_{x_j}D^\eta \f\\
  &=
  D^{\bar\beta} \f -  (\partial_{x_j} \M)
  D^\beta \f
   + \sum_{\substack{\eta \subset \beta \\ |\eta| < |\beta|}}
    (\partial_{x_j}\c_{\eta, \beta}) D^\eta \f\\
&\hspace{15 mm}    + \sum_{\substack{\eta \subset \beta \\ |\eta| < |\beta|}}
    \c_{\eta, \beta} D^{[\eta;j]} \f
   - \sum_{\substack{\eta \subset \beta \\ |\eta| < |\beta|}}
    \c_{\eta, \beta} (\partial_{x_j} \M)D^\eta \f.
\end{align*}
We note that all terms to the right of $D^{\bar\beta} \f$
involve ordered multi-indices of length at most $|\beta|$,
which are subsets of $\bar\beta$.
Moreover, the order of their elements is the same as
their order in $\bar\beta$. Thus, $\partial_x^{\bar \beta} \PP \f$ satisfies \cref{eq:dx_w_sum}
for some coefficient functions $\c_{\eta, \bar \beta}$.

Finally, we need to show that the coefficient functions
belong to $C(0,T; H^{r-|\bar\beta|+|\eta|}(\Omega))$. This is the case since
\begin{align*}
\partial_{x_j} \M &\in C(0,T; H^{r-1}(\Omega))=C(0,T; H^{r-|\bar\beta|+|\beta|}(\Omega)),\\
\partial_{x_j}\c_{\eta, \beta}
&\in C(0,T; H^{r-|\beta|+|\eta|-1}(\Omega))=C(0,T; H^{r-|\bar\beta|+|\eta|}(\Omega)),\\
\c_{\eta, \beta}
&\in C(0,T; H^{r-|\beta|+|\eta|}(\Omega))=C(0,T; H^{r-|\bar\beta|+|[\eta;j]|}(\Omega)).
\end{align*}
For the last term we use
\Cref{lemma:bilinearest}, with $q_0 = r - |\bar \beta| + |\eta|$, $q_1 = r - |\beta| + |\eta|$  and $q_2 = r - 1$. This is a valid choice
since $|\beta| < |\bar \beta|$, which implies that
$q_0\leq\min(q_1,q_2)$ and
 $q_1+q_2=q_0+|\bar\beta|-|\beta|+r-1\geq q_0+r$.
We get
$$
||\c_{\eta, \beta}(\cdot,t) \partial_{x_j} \M(\cdot,t)||_{H^{r-|\bar\beta|+|\eta|}}
\leq C
||\c_{\eta, \beta} (\cdot,t)||_{H^{r-|\beta|+|\eta|}}
||\partial_{x_j}\M(\cdot,t)||_{H^{r-1}} \leq C.
$$
This proves the first claim.

We next note that the boundedness of $\PP$ in \cref{eq:PQtbounded} implies that
for $\eta=[j]$ and $q'\leq q$,
$$
  ||D^\eta\f||^2_{H^{q'-1}}=
  ||\PP\partial_{x_j}\PP\f||_{H^{q'-1}}\leq
  C||\partial_{x_j}\PP\f||_{H^{q'-1}}\leq
  C||\PP\f||^2_{H^{q'}}.
$$
By induction it follows that for general $|\eta|\leq q'\leq q$,
\begin{equation}\label{eq:Dest}
||D^\eta\f||_{H^{q'-|\eta|}}\leq C ||\PP\f ||_{H^{q'}}.
\end{equation}
This shows the right inequality in the lemma upon taking $q'=|\eta|$.

Furthermore, from \cref{eq:dx_w_sum} and \cref{eq:Dest} we get
\begin{align*}
||\PP \f||^2_{H^{q}}
&=\sum_{|\beta|\leq q} ||\partial_x^{\beta} \PP\f||^2_{L^2}
\leq C\sum_{|\beta|\leq q} \left(||D^\beta \f||^2_{L^2} +
     \sum_{\substack{\eta \subset \beta \\ |\eta| < |\beta|}}
    ||\c_{\eta, \beta} D^\eta \f||^2_{L^2}
    \right)
    \\
    &\leq C
    \sum_{|\beta|\leq q} ||D^\beta \f||^2_{L^2}+
    C\sum_{|\beta|\leq q}
     \sum_{\substack{\eta \subset \beta \\ |\eta| < |\beta|}}
    ||\c_{\eta, \beta}||^2_{H^{r-|\beta|+|\eta|}}
    ||D^\eta \f||^2_{H^{|\beta|-|\eta|-1}}
    \\
    &\leq C
    \sum_{|\beta|\leq q} ||D^\beta \f||^2_{L^2}+
    C
     \sum_{|\beta|\leq q}
    ||\PP \f||^2_{H^{|\beta|-1}}\leq
        C\sum_{|\beta|\leq q} ||D^\beta \f||^2_{L^2}
+C||\PP \f||^2_{H^{q-1}}.
\end{align*}
To bound the $L^2$-norm of $\c_{\eta, \beta} D^\eta \f$
we used \Cref{lemma:bilinearest} with $q_0 = 0$,
$q_1=r-|\beta|+|\eta| \ge 0$ and $q_2=|\beta|-|\eta|-1 \ge 0$, which is valid
since $q_1 + q_2 = r - 1 \ge q_0+4$.
The left inequality in the lemma now follows by induction.
\end{proof}

We have left to put everything together to show \Cref{thm:inhom_lin}
for $k = 0$, which is the estimate
 \begin{align}\label{eq:inhom_linP}
   \| \PP\m(\cdot, \cdot, t, \tau)\|_{H^{q, p}} &\le C \left(e^{- \gamma \tau} \|\PP \g(\cdot, \cdot, t)\|_{H^{q, p}}
+ \int_0^\tau e^{-\gamma(\tau - s)} \|\PP \F(\cdot, \cdot, t, s)\|_{H^{q, p}}ds \right).
\end{align}
First we
note that as a consequence of \Cref{lemma:m_dx_sum}, it holds for
$\f\in H^{q,p}$, when $0\leq q\leq r$ and $p\geq 0$, that
\begin{align}\label{eq:PDest}
C_0 ||\PP\f||_{H^{q,p}}^2
\leq \sum_{|\beta|\leq q} ||D^\beta\f||_{H^{0,p}}^2
\leq C_1 ||\PP\f||_{H^{q,p}}^2.
\end{align}
This is true since
\begin{align*}
C_0||\PP\f||_{H^{q,p}}^2 &=
C_0\sum_{|\alpha|\leq p}\int_Y ||\PP\partial_y^\alpha\f(\cdot, y)||_{H^{q}}^2dy
\leq
\sum_{|\alpha|\leq p}
\sum_{|\beta|\leq q} \int_Y ||D^\beta\PP\partial_y^\alpha \f(\cdot, y)||_{L^2}^2 dy \\&=
\sum_{|\beta|\leq q} ||D^\beta\f||_{H^{0,p}}^2,
\end{align*}
where we used the fact that $D\partial_y^\alpha = \partial_y^\alpha D$.
The second inequality follows in the same way.
Then by \Cref{{lemma:dx_norm_estimate}},
\begin{align*}
\lefteqn{||\PP\m(\cdot,\cdot,t,\tau)||_{H^{q,p}}^2
\leq C
\sum_{|\beta|\leq q} ||D^\beta\m(\cdot,\cdot,t,\tau)||_{H^{0,p}}^2 = C
\sum_{|\beta|\leq q}\int_\Omega ||D^\beta\m(x,\cdot,t,\tau)||_{H^{p}}^2dx}
\hskip 2 mm &  \\
     &\le C
     \sum_{|\beta|\leq q}\int_\Omega
     e^{-2\gamma \tau} \|D^\beta\g(x, \cdot, t)\|^2_{H^p} +
          C\sum_{|\beta|\leq q}\int_\Omega
\left(\int_0^\tau e^{-\gamma (\tau - s)}\|D^\beta \F(x, \cdot, t, s)\|_{H^p} ds \right)^2
     dx.
  \end{align*}
  For the first term we have from \cref{eq:PDest},
\begin{align*}
\sum_{|\beta|\leq q}\int_\Omega
     e^{-2\gamma \tau} \|D^\beta\g(x, \cdot, t)\|^2_{H^p}dx
     &=
     e^{-2\gamma \tau}
          \sum_{|\beta|\leq q}
 \|D^\beta\g(\cdot, \cdot, t)\|^2_{H^{0,p}}\leq C     e^{-2\gamma \tau}
 \|\PP\g(\cdot, \cdot, t)\|^2_{H^{q,p}}.
  \end{align*}
  For the second term we use the general estimates
  $$
  \sum_n ||f_n||_{L^1}^2 \leq
    \left\|\left(\sum_n |f_n(x)|^2 \right)^{1/2}\right\|_{L^1}^2,\qquad
        \int ||f(x,\cdot)||_{L^1}^2dx \leq
    \left( \int ||f(\cdot,y)||_{L^2}dy\right)^2,
  $$
  which, again together with \cref{eq:PDest}, give
\begin{align*}
\lefteqn{
\sum_{|\beta|\leq q}
\int_\Omega
\left(\int_0^\tau e^{-\gamma (\tau - s)}\|D^\beta \F(x, \cdot, t, s)\|_{H^p} ds \right)^2dx}\hskip 15 mm & \\
     &\leq
\int_\Omega\left(\int_0^\tau e^{-\gamma (\tau - s)}
\left(\sum_{|\beta|\leq q}
\|D^\beta \F(x, \cdot, t, s)\|^2_{H^p}\right)^{1/2} ds \right)^2dx\\
     &\leq
\left(\int_0^\tau e^{-\gamma (\tau - s)}
\left(\sum_{|\beta|\leq q}
\|D^\beta \F(\cdot, \cdot, t, s)\|^2_{H^{0,p}}\right)^{1/2} ds \right)^2
\\
      &\leq C
\left(\int_0^\tau e^{-\gamma (\tau - s)}
\|\PP\F(\cdot, \cdot, t, s)\|^2_{H^{q,p}} ds \right)^2.
  \end{align*}
This finally shows \cref{eq:inhom_linP}.

\subsection{Estimate for $\PP$ part: time derivative.}
In this final part of the proof of \Cref{thm:inhom_lin}, an estimate
for $\|\partial_t^k \PP\m(\cdot, \cdot, t, \tau)\|_{H^{q, p}}$ is
derived. We proceed in a similar way as in the previous
section and define
$$
  D_t^k = (\PP\partial_t)^k \PP, \qquad k \ge 0,
$$
and let $\w_k = D_t^k\m$.
Then
\begin{align*}
  \partial_\tau \w_k(x, y, t, \tau) &= \LL \w_k(x, y, t, \tau) + D_t^k \F(x, y, t, \tau)\,, \\
  \w_k(x,y,t, 0) &= D_t^k \g(x,y,t)\,.
\end{align*}
This can be shown
by an argument similar to the one in the proof of \Cref{lemma:dx_norm_estimate}.
Since $\QQ \w = 0$,
\Cref{eq:inhom_linP} then gives
%
 \begin{align}\label{eq:num}
     \| \w_k(\cdot, \cdot, t, \tau)\|_{H^{q, p}}  &\le C \left(e^{- \gamma \tau}
     \|D^k_t \g(\cdot, \cdot)\|_{H^{q, p}} + \int_0^\tau e^{-\gamma(\tau - s)} \|D_t^k \F(\cdot, \cdot, t, \tau)\|_{H^{q, p}} ds\right).
 \end{align}
We also need the following lemma,
which corresponds to \Cref{lemma:m_dx_sum} in the previous section.

\begin{lemma}\label{lemma:m_dt_sum}
  Assume \cref{eq:m0regularity} and
  suppose $\partial_t^\ell\f \in H^{q-2\ell}(\Omega;\Real^3)$
  for $0\leq 2\ell\leq 2k\leq q\leq r$ and $p\geq 0$. Then
  $$
     C_0\sum_{\ell=0}^k ||\partial_t^\ell\PP \f||_{H^{q-2\ell,p}}\leq
     \sum_{\ell=0}^k||D^\ell_t \f||_{H^{q-2\ell,p}}
     \leq
     C_1\sum_{\ell=0}^k ||\partial_t^\ell\PP \f||_{H^{q-2\ell,p}},
     \qquad t\in[0,T],
  $$
  where $C_0$ and $C_1$ are independent of $t$.
  \end{lemma}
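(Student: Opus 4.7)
The plan is to mirror the strategy of \Cref{lemma:m_dx_sum}, replacing spatial derivatives by time derivatives and keeping in mind that each $\partial_t$ costs two orders of spatial regularity (per (A5)), whereas the $x$-version costs only one. Concretely, I will first establish a commutation-type identity that lets $\partial_t$ pass through $\PP$ up to a low-order correction, then expand $\partial_t^\ell \PP \f$ inductively as $D_t^\ell \f$ plus a remainder supported on lower-order $D_t^{\ell'} \f$ with smooth multiplicative coefficients, and finally use the bilinear estimate \Cref{lemma:bilinearest} to compare the two sums.

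First I would show the analog of the third item in \Cref{lemma:projections2}, namely
\[
    \QQ\,\partial_t \PP = -(\partial_t \M)\,\PP,
\]
by repeating the computation in \cref{eq:M_x_der} with $\partial_{x_k}$ replaced by $\partial_t$, using that $\M^2=\M$ implies $\M\,\partial_t\M = (\partial_t\M)(\I-\M)$ and that $\AA$ commutes with $\partial_t$. Given this, for any $g$ in the range of $\PP$ (so $g=\PP g$) we have $\partial_t g = \PP\,\partial_t g + \QQ\,\partial_t g = D_t g - (\partial_t \M)\,g$. Applying this identity to $g=D_t^{\ell'}\f$ and iterating, I would prove by induction on $\ell$ the analog of \cref{eq:dx_w_sum}:
\[
    \partial_t^{\ell}\PP\f = D_t^{\ell}\f + \sum_{\ell'=0}^{\ell-1} \c_{\ell',\ell}\, D_t^{\ell'}\f,
\]
where each $\c_{\ell',\ell}$ is a polynomial in time derivatives of $\M$ of total order $\ell-\ell'$. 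Simultaneously, I would track that
\[
    \partial_t^{j}\c_{\ell',\ell} \in C\bigl(0,T;\, H^{\,r - 2(\ell-\ell') - 2j}(\Omega)\bigr),\qquad 0\leq 2j\leq r-2(\ell-\ell'),
\]
which follows inductively from \Cref{lemma:projectionsApp} combined with \Cref{lemma:bilinearest} applied to the products $(\partial_t\M)\cdot\c_{\ell'',\ell}$ that appear in the recursion.

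With the expansion in hand, the right inequality is obtained as follows. The analog of \cref{eq:Dest}, namely $\|D_t^{\ell}\f\|_{H^{q-2\ell,p}}\le C\|\partial_t^{\ell}\PP\f\|_{H^{q-2\ell,p}}$, is a direct consequence of inverting the triangular system above and of the boundedness of $\PP$ from \cref{eq:PQtbounded} with $k=0$, and it yields the upper bound by summation over $\ell$. For the left inequality, from the expansion
\[
    \|\partial_t^{\ell}\PP\f\|_{H^{q-2\ell,p}} \leq \|D_t^{\ell}\f\|_{H^{q-2\ell,p}} + \sum_{\ell'=0}^{\ell-1} \|\c_{\ell',\ell}\, D_t^{\ell'}\f\|_{H^{q-2\ell,p}},
\]
I would bound each product by \Cref{lemma:bilinearest} with $q_0=q-2\ell$, $q_1=r-2(\ell-\ell')$, $q_2=q-2\ell'$, which satisfies $q_0\leq\min(q_1,q_2)$ because $q\leq r$ and $\ell'\leq \ell$, and $q_1+q_2=r+q-2\ell\geq r\geq 5$, so the left condition in \cref{eq:qjcond} holds. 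This gives
\[
   \|\c_{\ell',\ell}\, D_t^{\ell'}\f\|_{H^{q-2\ell,p}} \leq C\|\c_{\ell',\ell}\|_{H^{r-2(\ell-\ell'),p+2}}\|D_t^{\ell'}\f\|_{H^{q-2\ell',p}} \leq C\|D_t^{\ell'}\f\|_{H^{q-2\ell',p}},
\]
and summing over $\ell$ completes the estimate.

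The main obstacle will be the bookkeeping for the coefficient functions $\c_{\ell',\ell}$: one has to verify that the inductive step, in which $\c_{\ell',\ell+1}$ is built from $\partial_t\c_{\ell',\ell}$ and products $(\partial_t\M)\c_{\ell'',\ell}$, preserves exactly the claimed regularity $H^{r-2(\ell+1-\ell')-2j}$ under $\partial_t^j$. This is where one must repeatedly invoke \Cref{lemma:bilinearest} (or \Cref{lemma:prodindex} applied with $\u_m=\partial_t^{i}\M$) with care on the index budget, exactly as was done for $\partial_{x_j}\M$ in the proof of \Cref{lemma:m_dx_sum}; the routine part is that the spatial Sobolev arithmetic works out identically once one accounts for the $-2$ per time derivative instead of $-1$ per spatial derivative.
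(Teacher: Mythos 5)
Your proposal follows essentially the same route as the paper's proof: establish the commutation identity $\QQ\partial_t\PP=-(\partial_t\M)\PP$ (the $\partial_t$-analog of the paper's \Cref{lemma:projections2}, item 3, which goes through because $\M^2=\M$ gives $\M\partial_t\M=(\partial_t\M)(\I-\M)$ and $\AA$ commutes with $\partial_t$), expand $\partial_t^\ell\PP\f=D_t^\ell\f+\sum_{\ell'<\ell}\c_{\ell',\ell}D_t^{\ell'}\f$ by induction while tracking the regularity of the $\c_{\ell',\ell}$ via \Cref{lemma:projectionsApp} and \Cref{lemma:bilinearest}, and then compare the two sums. Your left-inequality argument (bound $\|\c_{\ell',\ell}D_t^{\ell'}\f\|_{H^{q-2\ell,p}}$ with $q_0=q-2\ell$, $q_1=r-2(\ell-\ell')$, $q_2=q-2\ell'$, then sum over $\ell$) is exactly what the paper does with \cref{eq:dt_w_sum}.

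The one genuine error is the claimed ``analog of \cref{eq:Dest},'' namely the single-term bound $\|D_t^{\ell}\f\|_{H^{q-2\ell,p}}\le C\|\partial_t^{\ell}\PP\f\|_{H^{q-2\ell,p}}$. This is false for $\ell\ge 2$: since $\PP$ depends on $t$ through $\M=\m_0\m_0^T$, each $\PP\partial_t$ factor in $D_t^\ell$ produces commutator terms with $\partial_t\M$ that couple in \emph{lower}-order $\partial_t^j\PP\f$, so the correct statement is the paper's \cref{eq:DtPt}, with a sum $\sum_{j=0}^\ell\|\partial_t^j\PP\f\|_{H^{q-2j,p}}$ on the right. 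The spatial case \cref{eq:Dest} genuinely is a single-term bound because the extra spatial derivatives are simply absorbed into the higher index $H^{q'}$; time derivatives, by contrast, are not encoded in the Bochner--Sobolev index and must be tracked explicitly. That said, this is a misstatement rather than a broken argument: inverting the triangular system as you propose actually produces exactly the sum form (each inversion step brings in another product $\c_{\ell',\ell}D_t^{\ell'}\f$ controlled by the bilinear estimate and the inductive hypothesis), and the sum form is what is used after summing over $\ell$, so the final inequality still comes out. The paper sidesteps the inversion and proves \cref{eq:DtPt} directly by induction via $D_t^{\ell+1}\f=D_t^\ell(D_t\f)$ and repeated application of \cref{eq:PQtbounded}, which keeps the coefficient bookkeeping a little lighter.
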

\begin{proof}
In the first step we prove that
for
 each $k$ with $0\leq q\leq r-2k$
  there are functions $\c_{\ell, k}$ satisfying
  $$
\partial_t^s\c_{\ell, k}\in C(0,T; H^{r-2k+2\ell-2s}(\Omega)),\qquad r-2k+2\ell-2s\geq 0,
  $$
for which
\begin{align}\label{eq:dt_w_sum}
    \partial_t^{k} \PP\f = D^k_t \f +
     \sum_{\ell=0}^{k-1}
    \c_{\ell, k} D_t^\ell \f.
\end{align}
We use induction.
The statement is trivially true for $k=0$.
Assume that
\cref{eq:dt_w_sum} holds upto $k$
and that $2(k+1) \leq r$.
From \cref{eq:dt_w_sum} we then get
\begin{align*}
  \partial_t^{k+1} \PP\f
  &= \partial_{t}\left(
  D_t^k \f +          \sum_{\ell=0}^{k-1}
    \c_{\ell, k} D_t^\ell \f\right)\\
  &= \PP\partial_{t}
  D^k_t \f +
  \QQ\partial_{t}
  D^k_t \f
   +      \sum_{\ell=0}^{k-1}
    (\partial_{t}\c_{\ell, k}) D^\ell_t \f
   + \sum_{\ell=0}^{k-1}
    \c_{\ell, k} \PP\partial_{t}D^\ell_t \f
   + \sum_{\ell=0}^{k-1}
    \c_{\ell, k} \QQ\partial_{t}D^\ell_t \f\\
  &=
  D^{k+1}_t \f -  (\partial_{t} \M)
  D^k_t \f
   +\sum_{\ell=0}^{k-1}
    (\partial_{t}\c_{\ell, k}) D^\ell_t \f
   + \sum_{\ell=0}^{k-1}
    \c_{\ell, k} D^{\ell+1} \f
   - \sum_{\ell=0}^{k-1}
    \c_{\ell, k} (\partial_{t} \M)D^\ell_t \f.
\end{align*}
For the coefficient functions
we have by \Cref{lemma:projectionsApp}
\begin{align*}
\partial_{t}^{s+1} \M &\in C(0,T; H^{r-2-2s}(\Omega))=C(0,T; H^{r-2(k+1)+2k-2s}(\Omega)),\\
\partial_{t}^{s+1}\c_{\ell, k}
&\in C(0,T; H^{r-2k+2\ell-2(s+1)}(\Omega))=C(0,T; H^{r-2(k+1)+2\ell-2s}(\Omega)),\\
\partial_{t}^{s}\c_{\ell, k}
&\in C(0,T; H^{r-2k+2\ell-2s}(\Omega))=C(0,T; H^{r-2(k+1)+2(\ell+1)-2s}(\Omega)),
\end{align*}
and by \Cref{lemma:bilinearest},
\begin{align*}
||\partial_t^s(\c_{\ell, k}(\cdot,t) \partial_{t} \M(\cdot,t))||_{H^{q_0}}
&\leq C\sum_{\eta=0}^s
\left\|[\partial_t^{\eta}\c_{\ell, k}(\cdot,t)] [\partial^{s-\eta+1}_{t} \M(\cdot,t)]
\right\|_{H^{q_0}}\\
&\leq C\sum_{\eta=0}^s
\left\|\partial_t^{\eta}\c_{\ell, k}(\cdot,t)\right\|_{H^{q_{1,\eta}}}
\left\|\partial^{s-\eta+1}_{t} \M(\cdot,t)
\right\|_{H^{q_{2,\eta}}},
\end{align*}
where
$$
q_0 = r-2(k+1)+2\ell-2s,\qquad
 q_{1,\eta}=r-2k+2\ell-2\eta, \qquad q_{2,\eta}=r-2s+2\eta-2.
$$
To check that \Cref{lemma:bilinearest} can indeed be used we note that
$$
q_0= q_{1,\eta}-2(s-\eta+1)=q_{2,\eta}-2(\eta+k-\ell)
\leq \min(q_{1,\eta},q_{2,\eta}),
\quad \text{and} \quad
q_1 + q_2 = q_0 + r\,.
$$
This proves  \cref{eq:dt_w_sum}.
We next show that  for $0\leq \ell\leq k$,
\begin{align}\label{eq:DtPt}
||D_t^\ell \f||_{H^{q-2\ell,p}}
&\leq
C\sum_{j=0}^{\ell} ||\partial_t^{j} \PP\f||_{H^{q-2j,p}}.
\end{align}
This is true for $\ell=0$, and if true for $\ell<k$, then by
\cref{eq:PQtbounded},
\begin{align*}
||D_t^{\ell+1} \f||_{H^{q-2-2\ell,p}}
&\leq
C\sum_{j=0}^{\ell} ||\partial_t^{j} D_t\f||_{H^{q-2-2j,p}}
=
C\sum_{j=0}^{\ell}  ||\partial_t^{j} \PP \partial_t \PP\f||_{H^{q-2-2j,p}}
\\
&\leq
C\sum_{j=0}^{\ell} \sum_{s=0}^{j}  ||\partial_t^{s+1} \PP\f||_{H^{q-2-2s,p}}
\leq
C\sum_{j=0}^{\ell+1} ||\partial_t^{j} \PP\f||_{H^{q-2j,p}}.
\end{align*}
Hence, \cref{eq:DtPt} follows by induction. This shows the right inequality
in the lemma upon summing over $\ell$.
Next, \cref{eq:DtPt} and \cref{eq:dt_w_sum}
give us the following estimate
for $0\leq 2k\leq q$,
\begin{align*}
    ||\partial_t^{k} \PP\f||_{H^{q-2k,p}} &\leq  ||D^k_t\f||_{H^{q-2k,p}}  +
     \sum_{\ell=0}^{k-1}
    ||\c_{\ell, k}||_{H^{r-2k+2\ell}} ||D_t^\ell \f||_{H^{q-2\ell,p}}
    \\
    &\leq ||D^k_t\f||_{H^{q-2k,p}}  +
     \sum_{\ell=0}^{k-1}
    ||D_t^\ell \f||_{H^{q-2\ell,p}} \\ &\leq
    ||D^k_t\f||_{H^{q-2k,p}}  +
    C\sum_{\ell=0}^{k-1} ||\partial_t^{\ell} \PP\f||_{H^{q-2\ell,p}},
\end{align*}
where \Cref{lemma:bilinearest} was used again, this time with
$q_0 = q - 2k$, $q_1 = r - 2k + 2l \ge q_0$, $q_2 = q - 2l \ge q_0$
for $l < k$, and $q_1+q_2=r+q_0$. The left inequality in the lemma
follows by induction.
\end{proof}

Two applications of \Cref{lemma:m_dt_sum} and \cref{eq:num}
now lead to
 \begin{align*}
 \|\partial_t^k\PP\m(\cdot, \cdot, t, \tau)\|_{H^{q-2k, p}}&\leq
C
\sum_{\ell=0}^{k}
  \| \w_\ell\|_{H^{q-2\ell, p}}  \\
  &\le C
  \sum_{\ell=0}^{k}
   \left(e^{- \gamma \tau}
     \|D^\ell_t \g\|_{H^{q-2\ell, p}} + \int_0^\tau e^{-\gamma(\tau - s)} \|D_t^\ell \F\|_{H^{q-2\ell, p}} ds\right)\\
  &\le C
  \sum_{\ell=0}^{k}
   \left(e^{- \gamma \tau}
     \|\partial_t^\ell\PP\g\|_{H^{q-2\ell, p}} + \int_0^\tau e^{-\gamma(\tau - s)} \|\partial_t^\ell\PP\F\|_{H^{q-2\ell, p}} ds\right).
 \end{align*}
\Cref{thm:inhom_lin} is proved.
\section{Energy estimates nonlinear equation}

In this appendix, we derive energy estimates for the Landau-Lifshitz
equation with a material coefficient. We consider first a
matrix-valued, constant coefficient, which is relevant for the
homogenized equation \cref{eq:main_hom}, and then the case of a
highly oscillatory, scalar coefficient, as in
\cref{eq:main_prob}. These estimates are generalizations of an
estimate in \cite{melcher} and can be seen as a step on the way to
prove existence of solutions to the Landau-Lifshitz equation with
material coefficient.

\subsection{Constant matrix-valued coefficient.} \label{appendix1}
Here we derive a priori estimates for the solution of the Landau--Lifshitz Gilbert
equation when
$$
\H(\m) = {\mathcal L}\m, \qquad Lu =
\nabla\cdot(\A\nabla u), \qquad \L \m =
\begin{bmatrix}
  L m^{(1)}, L m^{(2)}, L m^{(3)}
\end{bmatrix}^T,
$$
with $\A$ being a constant, symmetric positive definite matrix.
To obtain the estimates we assume that an $L^\infty$ bound on the
gradient $\bnabla\m$ is given. 

To simplify notation in this section, we define a norm on
$\Real^{3 \times n}$, weighted by the coefficient matrix
$\A \in \Real^{n \times n}$. For a matrix-valued function $\B \in \Real^{3 \times n}$, let
 \begin{equation*}
   |\B|^2_A := \B : (\B \A)\,,
 \end{equation*}
 and the corresponding $L^2$-norm on $\Omega \to \Real^{3 \times n}$ is given by
\begin{align*}
  \|\B\|^2_{0,A} := \int_\Omega |\B(x)|^2_A dx = \int_\Omega \B : (\B \A) dx\,.
\end{align*}
We furthermore consider several identities that are useful in the
following proof.  First, note that since the cross product of a
vector by itself is zero, the cross product of a vector $\v$ and
$\L \v$ can be rewritten as
\begin{equation}
    \label{eq:identity1_abis}
    \v \times {\mathcal L}\v  = \nabla \cdot (\v \times (\nabla \v \A))\,.
  \end{equation}
  For the special case of a vector-function $\v$ with constant
  length throughout the domain, $|\v(x)| \equiv \const$, the dot
  product between $\v$ and its gradient is zero,
  $\v \cdot \bnabla \v = \boldsymbol 0$.  Then it follows from the
  vector triple produce identity
  \cref{eq:second_triple_product_identity} that
\begin{equation}
    \label{eq:identity2_abis}
    - \v \times \v \times {\mathcal L}\v =
    |\v|^2{\mathcal L}\v + (\bnabla \v: \bnabla \A) \v\, .
  \end{equation}
  Moreover, \cref{eq:first_triple_product_identity} implies that
\begin{equation}
    \label{eq:orthobis}
    \B : (\v\times\B)\A = \boldsymbol 0.
  \end{equation}

We can then prove the following energy estimate.
\begin{theorem}\label{thm:nonlinear_matrix}
Suppose $\m \in \mathcal{C}^0((0, T); H^q(\Real^n))$ is a solution of
\begin{equation}
  \label{eq:llg_damping}
  \partial_t \m = - \m \times {\mathcal L} \m - \alpha \m \times \m \times {\mathcal L} \m\,,
\end{equation}
where $0<\alpha\leq 1$ and $L=\nabla\cdot(\A\nabla)$ with a constant,
symmetric positive definite, matrix $\A$.
Given any integer $2 \le \sigma \le q$ and $|\m(x,0)|\equiv 1$,
there exists a constant $c > 0$ such that
\begin{equation}
  \label{eq:der_bound}
  \|\bnabla \m(\cdot,t)\|^2_{H^{\sigma-1}} \le e^{C(t)} \|\bnabla \m(0,\cdot)\|_{H^{\sigma-1}}^2\,,\qquad
  C(t) = c \int_0^t
  \left(\alpha + \frac1\alpha\|\bnabla \m(s,\cdot)\|_{L^\infty}^2\right) ds.
\end{equation}
The constant $c$  is independent of $\m(x,0)$, $\alpha$ and $t$, but
depends on $\sigma$ and $\A$.

\end{theorem}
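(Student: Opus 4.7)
The plan is to run a standard but carefully balanced energy argument on the dissipative reformulation of the equation, differentiating the equation up to order $\sigma-1$, testing against the appropriate power of $\L$, and closing with Gr\"onwall. The decisive point is to absorb the commutator remainders from the precession term into the good dissipation with constants arranged so the Gr\"onwall exponent has exactly the form $\alpha + \alpha^{-1}\|\bnabla\m\|_{L^\infty}^2$.

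First I would establish norm preservation: taking the dot product of \cref{eq:llg_damping} with $\m$ and using the orthogonality $\m\cdot(\m\times\cdot)=0$ gives $\partial_t|\m|^2=0$, so $|\m|\equiv 1$. Consequently $\m\cdot\bnabla\m=\boldsymbol 0$, and identity \cref{eq:identity2_abis} lets us rewrite \cref{eq:llg_damping} in the equivalent \textit{dissipative} form
\[
\partial_t\m \;=\; -\m\times\L\m \;+\; \alpha\L\m \;+\; \alpha\,|\bnabla\m|_A^{\,2}\,\m,
\]
which exposes a genuine second-order parabolic piece with diffusion matrix $\A$.

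Next, for the base case $\sigma=2$, I would multiply this reformulated equation by $-\L\m$ and integrate. The left-hand side becomes $\tfrac12\partial_t\|\bnabla\m\|_{0,A}^2$ after integration by parts; the precession term integrates against $\L\m$ to zero by the triple-product orthogonality \cref{eq:orthobis}; the dissipation gives $-\alpha\|\L\m\|_{L^2}^2$; and the nonlinear remainder collapses, via $\m\cdot\L\m=L(|\m|^2/2)-|\bnabla\m|_A^2=-|\bnabla\m|_A^2$, to $-\alpha\int|\bnabla\m|_A^4\,dx$, which is bounded by $C\alpha\|\bnabla\m\|_{L^\infty}^2\|\bnabla\m\|_{0,A}^2$. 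Gr\"onwall then yields \cref{eq:der_bound} for $\sigma=2$.

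For general $\sigma\ge 3$ I would proceed by induction, applying $\L^k$ (and one extra $\nabla$ when $\sigma$ is even) to the equation and testing against $\L^{k+1}\m$ (respectively $-\nabla\L^{k+1}\m$ with the weighted pairing $|\cdot|_A$). Elliptic regularity for the constant coefficient operator $\L$ controls $\|\bnabla\m\|_{H^{\sigma-1}}^2$ in terms of the resulting weighted $L^2$-norms. Differentiating the precession and the cubic damping terms produces, after the highest-order piece cancels by orthogonality, commutator remainders of the schematic form $\partial^{\beta_1}\m\cdot\partial^{\beta_2}\m\cdot\partial^{\beta_3}\L\m$ with $|\beta_1|+|\beta_2|+|\beta_3|\le\sigma-1$. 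Each such term is handled by the bilinear Moser/Kato--Ponce estimate (essentially \cref{eq:interpol_ineq} in \Cref{lemma:bilinearest}), giving bounds of the form $C\|\bnabla\m\|_{L^\infty}\|\bnabla\m\|_{H^{\sigma-1}}\|\L^{k+1}\m\|_{L^2}$. Applying Young's inequality with weight $\alpha$ splits this into $\tfrac{\alpha}{2}\|\L^{k+1}\m\|_{L^2}^2+\tfrac{C}{\alpha}\|\bnabla\m\|_{L^\infty}^2\|\bnabla\m\|_{H^{\sigma-1}}^2$; the first half is absorbed by the good dissipation and the second half is exactly the Gr\"onwall driver we want. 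The purely $\alpha$-linear contributions come from the quartic damping remainder $\alpha|\bnabla\m|_A^2\m$ after $\sigma-1$ differentiations, which yields $C\alpha\|\bnabla\m\|_{H^{\sigma-1}}^2$ by repeated application of \cref{eq:interpol2} and the unit-length bound on $\m$.

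The main obstacle will be the bookkeeping in the last step: collecting all commutator pieces so that the top-order derivative of $\m$ is always paired with a factor carrying the $L^\infty$ norm (never two high-order factors against one another), and then splitting each such term between the dissipation and the Gr\"onwall exponent with the precise weights $\alpha$ and $\alpha^{-1}$. Because $\A$ is constant, no derivatives fall on the coefficient and there is no variable-coefficient commutator between $\L$ and differentiation to worry about; the difficulty is purely algebraic in the nonlinearity. Once the inequality
\[
\frac{d}{dt}\|\bnabla\m\|_{H^{\sigma-1}}^2 \;\le\; c\bigl(\alpha+\tfrac{1}{\alpha}\|\bnabla\m\|_{L^\infty}^2\bigr)\|\bnabla\m\|_{H^{\sigma-1}}^2
\]
is secured, Gr\"onwall's lemma gives \cref{eq:der_bound} with the stated $C(t)$.
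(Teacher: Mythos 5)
Your plan is correct in outline and uses the same two key mechanisms as the paper --- the top-order orthogonality cancellation \cref{eq:orthobis} and the bilinear interpolation estimate \cref{eq:interpol_ineq} to control commutator remainders by $\|\bnabla\m\|_{L^\infty}\|\bnabla\m\|_{H^{\sigma-1}}$ --- but the energy you work with is genuinely different. You apply powers $\L^k$ (with a parity-dependent extra gradient) to the \emph{reformulated} dissipative equation and test against $\L^{k+1}\m$, then use elliptic regularity to recover the $H^{\sigma-1}$ norm of the gradient. The paper instead applies raw partial derivatives $\partial^\beta$ for each $1\le|\beta|\le\sigma$ to the \emph{original} equation, tests against $\partial^\beta\m$, and simply sums: since $\sum_{|\beta|\le\sigma-1}\|\partial^\beta\bnabla\m\|_{L^2}^2$ is already the $H^{\sigma-1}$ norm, no elliptic regularity is needed. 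The $\L^k$-based framework is in fact what the paper uses in the variable-coefficient Appendix B.2, where it is required in order to track $\varepsilon$-dependencies; with constant $\A$ there is no commutator between $\L$ and $\partial_x$, which is exactly what makes the direct $\partial^\beta$ route simple. So your route is workable but structurally heavier than it needs to be here.

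Two small inaccuracies worth noting. First, in the base case you have a sign error: since $\m\cdot\L\m = -|\bnabla\m|_A^2$ (using $L|\m|^2=0$), testing the term $\alpha|\bnabla\m|_A^2\m$ against $-\L\m$ gives $+\alpha\int|\bnabla\m|_A^4\,dx$, not minus; this is a genuine bad-sign term, though your magnitude bound $C\alpha\|\bnabla\m\|_{L^\infty}^2\|\bnabla\m\|_{0,A}^2$ is still what you need, so the argument survives. Second, what you call the "base case $\sigma=2$" actually delivers a Gr\"onwall bound on $\|\bnabla\m\|_{0,A}^2\simeq\|\bnabla\m\|_{L^2}^2$, i.e.\ the $\sigma=1$ quantity; the theorem's $\sigma=2$ statement concerns $\|\bnabla\m\|_{H^1}$ and therefore needs control of second derivatives (i.e.\ a $\|\L\m\|_{L^2}$-level estimate) which your base computation does not yet provide. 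This is an off-by-one in indexing, not a conceptual flaw --- the $k=0$ iterate of your inductive machinery covers it --- but you should align the base case with the first value $\sigma=2$ admitted by the theorem.
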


\begin{proof}
   For any given multi-index $1 \le |\beta| \le \sigma$, let
  \begin{equation}
    \label{eq:highest_der_only}
    \partial^\beta (\m \times \bnabla \m) = \m \times \partial^\beta \bnabla \m + \mathbf{R}\,,
  \end{equation}
  where $\mathbf{R}$ is given by
\[\mathbf{R} =
\sum_{0 < \nu \le \beta}
\binom{\beta}{\nu}  \partial^\nu \m \times \partial^{\beta - \nu} \bnabla \m .
\]
Writing $\nu=\nu_0+\nu_1$ with $|\nu_0|=1$ and using \cref{eq:interpol_ineq}
as in \cite{melcher}, we
obtain the following bound,
\begin{align}
  \label{eq:bound1}
  \|\mathbf R\|_{L^2} &\le c
  \sum_{\stackrel{0\leq \nu_1 \le \beta-\nu_0}{|\nu_0|=1}}
             \|\partial^{\nu_0} \m\|_{L^\infty}\,
             \|\bnabla \m\|_{H^{|\beta|-1}}
+ \|\bnabla \m\|_{L^\infty} \|\partial^{\nu_0}\m\|_{H^{|\beta|-1}} \nonumber \\
&\leq
  c \|\bnabla \m\|_{L^\infty} \|\bnabla \m\|_{H^{\sigma - 1}}\,.
\end{align}


When applying $\partial^\beta$ to \cref{eq:llg_damping} and multiplying by $\partial^\beta \m$,
one gets
\[ \partial^\beta \m \cdot \partial_t \partial^\beta \m =
  - \partial^\beta \m \cdot \partial^\beta (\m \times {\mathcal L}\m +
  \alpha \m \times \m \times {\mathcal L}\m) \,.
\]
  Integration over $\Omega$ and application of the vector identities
  \cref{eq:identity1_abis} and \cref{eq:identity2_abis} then gives
\begin{align*}
   \frac{1}{2} \partial_t \|\partial^\beta \m\|^2_{L^2}
   &= \int_\Omega \partial^\beta \bnabla \m : \partial^\beta (\m \times
         \bnabla \m \A) dx
         - \alpha \int_\Omega \partial^\beta \bnabla \m :
         \partial^\beta  \bnabla \m \A dx  \\ &\qquad+ \alpha \int_\Omega \partial^\beta \m \cdot
         \partial^\beta ( |\bnabla \m|_A^2 \m) dx \,.
\end{align*}
Together with \cref{eq:highest_der_only}
and the fact that $\m \times (\bnabla \m \A) =(\m \times \bnabla \m) \A$,
we obtain due to the orthogonality \cref{eq:orthobis} that
 \begin{align*}
  \frac{1}{2} \partial_t \|\partial^\beta \m\|_{L^2}^2
  &= - \alpha \|\partial^\beta \bnabla \m\|_{0,A}^2 + \int_\Omega
       \partial^\beta \bnabla \m : (\m \times \partial^\beta \bnabla \m +
        \mathbf R )\A dx
        \\& \qquad+ \alpha \int_\Omega \partial^\beta \m \cdot
          \partial^\beta(|\bnabla \m|_A^2 \m) dx \\
  & = - \alpha \|\partial^\beta \bnabla \m\|_{0,A}^2 + \int_\Omega
        \partial^\beta \bnabla \m : \mathbf RA dx
        +\alpha \int_\Omega \partial^\beta \m \cdot
        \partial^\beta(|\bnabla \m|_A^2 \m) dx.
\end{align*}
Application of  Cauchy-Schwarz then yields
\begin{align}\label{eq:after_CS}
   \frac{1}{2} \partial_t \|\partial^\beta \m\|_{L^2}^2 + \alpha \|
   \partial^\beta \bnabla \m\|_{0,A}^2
     &\le   \|\partial^\beta \bnabla \m \|_{L^2} \|\mathbf R \|_{L^2} +
   \alpha \|\partial^\beta \m\|_{L^2} \|\partial^\beta
      (|\bnabla \m|_A^2 \m)\|_{L^2}\,.
\end{align}


An estimate for $\|\partial^\beta (\m |\bnabla \m|_A^2)\|_{L^2}$ can
be achieved similarly to the estimate for $\|\mathbf{R}\|_{L^2}$.
We rewrite the derivative of the product as a sum,
  \begin{align*}
    \partial^\beta(\m |\bnabla \m|_A^2 ) = \m \,\partial^\beta(|\bnabla \m|_A^2) +
    \sum_{0< \nu \le \beta} \binom{\beta}{\nu} \partial^\nu \m  \, \partial^{\beta - \nu} |\bnabla \m|_A^2\,.
  \end{align*}
  Since $|\m| \equiv 1$, the $L^2$-norm of the first term here can
  be bounded in terms of $\|\, |\bnabla \m|_A^2\, \|_{H^\sigma}$,
  and the norm of second term can be estimated in the same way as
  $\mathbf{R}$, which results in a bound in terms of
  $\|\,|\bnabla \m|_A^2\,\|_{H^{\sigma-1}}$.  To estimate the norm
  of $|\bnabla\m|_A^2$ that appears in these two bounds we can use
  \cref{eq:interpol2} to obtain
\begin{equation}\label{eq:nabla_m_square_bound}
  \|\,|\bnabla \m|_A^2\,\|_{H^{\ell}} \le
   C \|\bnabla \m\|_{L^\infty}
     \|\bnabla \m\|_{H^{\ell}}, \qquad 0 \le \ell \le q-1.
\end{equation}
Hence we can bound
  \begin{equation}\label{eq:bound2}
    \begin{split}
    \|\partial^\beta(\m|\bnabla \m|_A^2 )\|_{L^2} &\le
C \left(\|\m\|_{L^\infty}  \|\, |\bnabla \m|_A^2\, \|_{H^\sigma} + \|\bnabla\m\|_{L^\infty}\|\,|\bnabla \m|_A^2\,\|_{H^{\sigma-1}}\right) \\
    &\le C\left( \|\bnabla \m\|_{L^\infty} \|\bnabla \m \|_{H^\sigma} + \|\bnabla \m\|^2_{L^\infty} \|\bnabla \m\|_{H^{\sigma -1}}\right)\,.
    \end{split}
  \end{equation}
Applying the bounds \cref{eq:bound1} and \cref{eq:bound2} to the right-hand side of \cref{eq:after_CS}  results in
\begin{align*}
\frac{1}{2} \partial_t \|\partial^\beta \m\|_{L^2}^2 &+ \alpha \|\partial^\beta \bnabla \m\|_{0,A}^2 \\
&\le
C\left( \|\bnabla \m\|_{L^\infty} \|\bnabla \m\|_{H^{\sigma - 1}}\|\bnabla \m\|_{H^\sigma} +
\alpha \|\bnabla \m \|^2_{L^\infty}  \|\bnabla \m\|^2_{H^{\sigma -1}}\right),
\end{align*}
for $1 \le |\beta| \le \sigma$.
 In order to obtain an estimate for $\partial_t \|\bnabla \m\|^2_{L^2}$, we now sum over all the
multi-indices $1 \le |\beta| \le \sigma$
as well as the zeroth order term $\|\bnabla\m\|_{0,A}^2$,
which yields
\begin{align*}
\frac{1}{2} \partial_t \|\bnabla\m\|_{H^{\sigma-1}}^2 &+ A_\mathrm{ min}\alpha \|\bnabla \m\|_{H^\sigma}^2 \\
&\leq \sum_{|\beta|=1}^\sigma
\left(\frac{1}{2} \partial_t \|\partial^\beta\m\|_{L^2}^2 +
\alpha\|\partial^\beta \bnabla \m\|_{0,A}^2\right)
+\alpha\|\bnabla \m\|_{0,A}^2
\\
&\leq
c_0 \|\bnabla \m\|_{L^\infty} \|\bnabla \m\|_{H^{\sigma - 1}}\|\bnabla \m\|_{H^\sigma} +
c_1\alpha \left(\|\bnabla \m \|^2_{L^\infty}+1\right)  \|\bnabla \m\|^2_{H^{\sigma -1}}.
\end{align*}
Here $A_\mathrm{ min}$ is the lower bound for $\A$, satisfying
$\x^T\A\x\geq A_\mathrm{ min} |\x|^2$ for all $\x\in\Real^n$.
We next apply Young's inequality, for any $\gamma>0$
\[
   \|\bnabla \m\|_{L^\infty} \|\bnabla \m\|_{H^{\sigma - 1}}
   \|\bnabla \m\|_{H^\sigma} \le
   \frac{1}{2\gamma} \|\bnabla \m\|^2_{L^\infty}
    \|\bnabla \m\|_{H^{\sigma - 1}}^2 +
   \frac{\gamma}{2} \|\bnabla \m\|^2_{H^\sigma}\,,
\]
and, upon choosing $\gamma=A_\mathrm{ min}\alpha/c_0$, we get
\begin{align*}
\frac{1}{2} \partial_t \|\bnabla \m\|_{H^{\sigma - 1}}^2 + \frac{1}{2} A_\mathrm{ min}\alpha \|\bnabla \m\|_{H^\sigma}^2
& \le c \left(\alpha+\frac1\alpha \|\bnabla \m\|^2_{L^\infty} \right)
 \|\bnabla \m\|^2_{H^{\sigma -1}}.
\end{align*}
The statement \cref{eq:der_bound} then follows from
 Gr\"onwall's inequality. 
\end{proof}

\subsection{Variable scalar coefficient.}\label{appendix2}
Given $L = \bnabla \cdot (a^\varepsilon \bnabla)$, where
$a^\varepsilon = a(x/\varepsilon)$ is a scalar, variable coefficient
satisfying the assumption (A1),
one can proceed in a
similar way as in the previous section to obtain an energy estimate.
This setup corresponds to \cref{eq:main_prob} and we can therefore
use the lemmas derived in \Cref{sec:utility}. 

\begin{theorem}\label{lemma:l2_m_eps_scalar}
  Consider $\m \in C^1(0, T; H^q(\Omega))$ satisfying \cref{eq:main_prob} under the assumptions $(A1)-(A3)$.
Then it holds that 
\begin{align}
    \|\bnabla \m(\cdot, t)\|_{L^2} \le C  \|\bnabla \m(\cdot, 0)\|_{L^2}, \qquad 0 \le t \le T.
\end{align}
If there is a constant $M$ independent of $\varepsilon$ such that
$\|\bnabla \m(\cdot, t)\|_\infty \le M$ for $0 \le t \le T$, then
it is moreover true that
\begin{align}\label{eq:Lk_bound}
  \|\m(\cdot, t)\|_{H^q} \le C \left(\|\m(\cdot, 0)\|_{L^2} + \frac{1}{\varepsilon^{q-1}} \|\bnabla \m(\cdot, 0)\|_{H^{q-1}_\varepsilon}\right) \le C \left(1 + \varepsilon^{1-q}\right),
\end{align}
where the constant $C$ is independent of $\varepsilon$ and $t$ but depends on $T$ and $M$.
\end{theorem}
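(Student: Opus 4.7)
The first estimate is a direct energy argument: test the Landau--Lifshitz equation against $\L\m$ and integrate. The precession term contributes nothing by orthogonality, $(\m\times\L\m)\cdot\L\m=0$. For the damping term, the identity $\m\times\m\times\L\m=(\m\cdot\L\m)\m - |\m|^2\L\m$ combined with $|\m|\equiv 1$ (from (A2)) and the resulting $\m\cdot\L\m = \tfrac{1}{2}L|\m|^2 - a^\varepsilon|\bnabla\m|^2 = -a^\varepsilon|\bnabla\m|^2$ yield
\begin{align*}
-\tfrac{1}{2}\partial_t\|\sqrt{a^\varepsilon}\bnabla\m\|^2_{L^2} \;=\; \alpha\int_\Omega\bigl(|\L\m|^2 - (\m\cdot\L\m)^2\bigr)\,dx \;\ge\; 0,
\end{align*}
by Cauchy--Schwarz. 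Hence $\|\sqrt{a^\varepsilon}\bnabla\m(\cdot,t)\|_{L^2}$ is non-increasing, and (A1) converts this into $\|\bnabla\m(\cdot,t)\|_{L^2}\le C\|\bnabla\m(\cdot,0)\|_{L^2}$.

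For the second estimate I would proceed by induction on $q$. The base $q\le 1$ follows from the first estimate together with $\|\m\|_{L^2}\le C$ coming from $|\m|\equiv 1$. For $q\ge 2$, write $q=2p+\ell$ with $\ell\in\{0,1\}$ and apply the multiscale elliptic regularity \cref{eq:elliptic_reg} of \Cref{lemma:ms_elliptic_regularity}. The $\|\m\|_{L^2}$ term is $O(1)$, and $\varepsilon^{-(q-1)}\|\bnabla\m\|_{H^{q-2}_\varepsilon}$ is $O(\varepsilon^{1-q})$ since the inductive hypothesis at level $q'=q-1$ is equivalent to $\|\bnabla\m\|_{H^{q-2}_\varepsilon}$ being uniformly bounded. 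The task reduces to proving the missing top-order piece, namely $\|\L^p\m\|_{L^2}\le C\varepsilon^{1-q}$ (or $\|\bnabla\L^p\m\|_{L^2}$ at the same rate in the odd case).

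This I would obtain by an energy estimate on $\|\sqrt{a^\varepsilon}\bnabla\L^k\m\|^2_{L^2}$ with $k=p$. Self-adjointness of $L$ gives $\tfrac{1}{2}\partial_t\|\sqrt{a^\varepsilon}\bnabla\L^k\m\|^2_{L^2} = -\int \L^{k+1}\m\cdot\L^k\partial_t\m\,dx$. Substituting the LLG equation, I would invoke \Cref{lemma:nonlinear_utility3} with $\w=\m$ to split $\L^k(\m\times\L\m)=\m\times\L^{k+1}\m + \mathbf{R}_{k,\m}$; the top-order piece is orthogonal to $\L^{k+1}\m$ and drops out. For the damping term, the identity $\m\times\m\times\L\m=-\L\m - a^\varepsilon|\bnabla\m|^2\m$ (again from $|\m|\equiv 1$) furnishes the coercive contribution $\alpha\|\L^{k+1}\m\|^2_{L^2}$ on the left and leaves only the residual $-\alpha\int\L^{k+1}\m\cdot\L^k(a^\varepsilon|\bnabla\m|^2\m)\,dx$ on the right. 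After applying Young's inequality to absorb half of the coercive term, Grönwall in $t$ combined with the bound on $\|\sqrt{a^\varepsilon}\bnabla\L^k\m(\cdot,0)\|_{L^2}$ from the initial data closes the induction.

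The hard part will be estimating $\L^k(a^\varepsilon|\bnabla\m|^2\m)$ in $L^2$ at the correct rate without picking up spurious negative powers of $\varepsilon$. My plan is to apply \Cref{lemma:nonlinear_utility1} to convert the $k$-fold operator $L^k$ into the factor $\varepsilon^{-(2k-1)}$ multiplying $\|\bnabla(a^\varepsilon|\bnabla\m|^2\m)\|_{H^{2k-1}_\varepsilon}$, and then exploit \Cref{lemma:prod_infty_bound} with the $L^\infty$ controls on $a^\varepsilon$, on $|\bnabla\m|^2$ (via the hypothesis $\|\bnabla\m\|_{L^\infty}\le M$), and on $\m$ (via $|\m|\equiv 1$) to reduce everything to $\|\bnabla\m\|_{H^{2k}_\varepsilon}$, which is controlled by the inductive hypothesis. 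A short bookkeeping on the $\varepsilon$-exponents matches the target $\|\m\|_{H^q}\le C\varepsilon^{1-q}$; the uniform $L^\infty$ gradient bound (A4) is essential for the product estimates to yield constants independent of $\varepsilon$.
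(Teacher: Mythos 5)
Your proposal is correct and follows essentially the same route as the paper's proof. For the $L^2$ gradient bound, both arguments test the equation against $\L\m$, use orthogonality to kill the precession term, exploit $|\m|\equiv 1$ to reduce the damping term, and conclude that $\|\sqrt{a^\varepsilon}\bnabla\m\|_{L^2}$ is non-increasing. For \cref{eq:Lk_bound}, both proofs run the same induction: energy estimates on $\|\sqrt{a^\varepsilon}\bnabla\L^k\m\|_{L^2}$ and $\|\L^{k+1}\m\|_{L^2}$, the decomposition $\L^k(\m\times\L\m)=\m\times\L^{k+1}\m+\R_{k}$ from \Cref{lemma:nonlinear_utility3} with the top-order piece dropping by orthogonality, the identity $-\m\times\m\times\L\m=\L\m+a^\varepsilon|\bnabla\m|^2\m$ to extract the coercive $\alpha\|\L^{k+1}\m\|^2$ term, the bounds from \Cref{lemma:nonlinear_utility1} and \Cref{lemma:prod_infty_bound} on $\L^k(a^\varepsilon\m|\bnabla\m|^2)$ using the $L^\infty$ gradient bound, Young plus Gr\"onwall, and finally multiscale elliptic regularity (\Cref{lemma:ms_elliptic_regularity}) to close the induction; the only organizational difference is that the paper carries the induction on $\|\bnabla\m\|_{H^j}$ via a gradient variant of the elliptic estimate and passes to $\|\m\|_{H^q}$ at the very end, whereas you apply \cref{eq:elliptic_reg} directly to $\m$, which is an equivalent bookkeeping choice.
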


\begin{proof}
  To prove this lemma, we first show by induction that for $0 \le j \le q-1$ and $0 \le t \le T$,
  \begin{align}\label{eq:energy1}
    \|\bnabla \m(\cdot, t)\|_{H^j} \le C \frac{1}{\varepsilon^{j}} \|\bnabla \m(\cdot, 0)\|_{H^j_\varepsilon},
  \end{align}
  which by the definition of $\|\cdot\|_{H^j_\varepsilon}$ entails that
  \begin{align}\label{eq:energy2}
    \|\bnabla \m(\cdot, t)\|_{H^j_\varepsilon} \le C \|\bnabla \m(\cdot, 0)\|_{H^j_\varepsilon}.
  \end{align}
  To begin with the proof, consider the $L^2$-norm of $\bnabla
  \m$. By the identity \cref{eq:second_triple_product_identity} it
  holds that
\begin{align*}
  \frac{1}{2} \partial_t \|\sqrt{a^\varepsilon} \bnabla \m\|^2
&= \int_\Omega a^\varepsilon \bnabla \m : \bnabla \partial_t \m dx
= - \int_\Omega \L \m \cdot \partial_t \m dx \\
&=   \int_\Omega \L \m \cdot (\m \times \L \m + \alpha \m \times \m \times \L \m)
=   \alpha \int_\Omega \L \m \cdot (\m \times \m \times \L \m) \\
&=   \alpha \int_\Omega |\L \m \cdot \m|^2 - |\L \m|^2 dx \le 0,
\end{align*}
as $|\L \m \cdot \m|^2 \le |\L \m|^2 |\m|^2 = |\L \m|^2$. Due to the
boundedness of $a(x)$, (A1), this entails
\begin{align}
  \|\bnabla \m(\cdot, t)\|^2_{L^2} \le C \|\bnabla \m(\cdot, 0)\|^2_{L^2}\,,
\end{align}
which shows \cref{eq:energy1} for $q = 0$.
Next, we proceed similarly as in \Cref{sec:nonlinear} and consider
$\|\sqrt{a^\varepsilon} \bnabla \L^k \m\|_{L^2}$ and $\|\L^{k} \m\|_{L^2}$ for
$k \ge 1$. Using the identity
\[-\m \times \m \times \L\m = \L \m + a^\varepsilon |\bnabla \m|^2,\]
which corresponds to \cref{eq:identity2_abis},
and applying integration by parts, we get
\begin{align*}
  \frac{1}{2} \partial_t \|\sqrt{a^\varepsilon} \bnabla \L^k \m\|_{L^2}^2
  &= - \int_\Omega a^\varepsilon \bnabla \L^k \m : \bnabla \L^k (\m
    \times \L \m - \alpha \L \m + \alpha a^\varepsilon \m |\bnabla \m|^2) dx \\
  &= \int_\Omega \L^{k+1} \m \cdot \L^k (\m \times \L \m + \alpha a^\varepsilon \m |\bnabla \m|^2) dx
    - \alpha \|\L^{k+1} \m\|_{L^2}^2
\end{align*}
and similarly,
\begin{align*}
  \frac{1}{2} \partial_t \|\L^{k+1} \m\|_{L^2}^2
  &= - \int_\Omega \L^{k+1} \m \cdot \L^{k+1} (\m \times \L \m - \alpha \L \m + \alpha a^\varepsilon\m|\bnabla \m|^2) dx \\
  &= \int_\Omega a^\varepsilon \bnabla \L^{k+1} \m : \bnabla \L^k (\m \times \L \m + \alpha a^\varepsilon \m |\bnabla \m|^2) dx \\
   &\qquad \qquad
    - \alpha \|\sqrt{a^\varepsilon} \bnabla \L^{k+1} \m\|_{L^2}^2\,.
\end{align*}
We then use \Cref{lemma:nonlinear_utility3} to rewrite
\[\L^k (\m \times \L \m) = \m \times \L^{k+1} \m + \R_k,\]
where the highest order terms cancel in the integral due to
orthogonality. Application of Cauchy-Schwarz and Young's inequality thus gives
that for any $\gamma > 0$,
\begin{align*}
  \int_\Omega \L^{k+1} \m \cdot \L^k (\m \times \L \m) dx
  &
    \le \frac{\gamma}{2} \|\L^{k+1} \m\|^2_{L^2} + \frac{1}{2\gamma} \|\mathbf{R}_k\|_{L^2}^2
  \\
  \int_\Omega a^\varepsilon \bnabla \L^{k+1} \m : \bnabla \L^k (\m \times \L \m) dx
  &
    \le \frac{\gamma}{2}\|\sqrt{a^\varepsilon}  \bnabla \L^{k+1} \m\|_{L^2}^2 \\& \qquad+ \frac{C}{2\gamma} (\|\bnabla \m\|_{L^\infty}^2\|\L^{k+1} \m\|^2_{L^2} +  \|\bnabla \R_k\|_{L^2}^2).
\end{align*}
Similarly, we obtain that
\begin{align*}
  \int_\Omega \L^{k+1} \m \cdot \L^k (a^\varepsilon \m |\bnabla \m|^2) dx &\le \frac{\gamma}{2} \|\L^{k+1} \m\|^2_{L^2} + \frac{1}{2\gamma} \|\L^k (a^\varepsilon \m |\bnabla \m|^2)\|_{L^2}^2, \\
  \int_\Omega a^\varepsilon \bnabla \L^{k+1} \m : \bnabla \L^k (a^\varepsilon \m |\bnabla \m|^2) dx &\le \frac{\gamma}{2}\|\sqrt{a^\varepsilon}  \bnabla \L^{k+1} \m\|_{L^2}^2 + \frac{C}{2\gamma} \|\bnabla \L^k (a^\varepsilon \m |\bnabla \m|^2)\|_{L^2}^2.
\end{align*}
Choosing $\gamma$ sufficiently small and making use of the assumed bound on $\|\bnabla \m\|_{L^\infty}$, it thus follows that
\begin{subequations} \label{eq:B15}
\begin{align}
  \partial_t \|\sqrt{a^\varepsilon} \bnabla \L^k \m\|_{L^2}^2 &\le \frac{C}{2\gamma} \left(\|\mathbf{R}_k\|_{L^2}^2 + \|\L^k (a^\varepsilon \m |\bnabla \m|^2)\|_{L^2}^2\right), \\
  \partial_t \|\L^{k+1} \m\|_{L^2}^2 &\le \frac{C}{2\gamma} \left(M^2\|\L^{k+1} \m\|_{L^2}^2 +  \|\bnabla \mathbf{R}_k\|_{L^2}^2 + \|\bnabla \L^k (a^\varepsilon \m |\bnabla \m|^2)\|_{L^2}^2\right).
\end{align}
\end{subequations}
By \cref{eq:nabla_m_square_bound},
we obtain using \Cref{lemma:nonlinear_utility1} and \Cref{lemma:prod_infty_bound}, that
\[
\|\L^k (a^\varepsilon \m |\bnabla \m|^2) \|_{L^2} 
   \le C \frac{1}{\varepsilon^{2k}} \|\m|\bnabla \m|^2\|_{H^{2k}_\varepsilon}
 \le C  \frac{1}{\varepsilon^{2k}} \|\bnabla \m\|_{L^\infty} \|\bnabla \m\|_{H^{2k}_\varepsilon}.
    \]
    According to \Cref{lemma:nonlinear_utility3}, the $L^2$-norm of
    $\R_k$ can be bounded in the same way.

    Next, we consider slight variations of the elliptic regularity
    estimates \cref{eq:elliptic_reg} and
    \cref{eq:nonlinear_utility4} in
    \Cref{lemma:ms_elliptic_regularity}. For $\ell \in \{0, 1\}$ and $0 < 2k+\ell \le q-1$, it
    holds that
    \begin{align}\label{eq:elliptic_reg_grad}
      \|\bnabla \m\|_{H^{2k+\ell}} \le C \left( \frac{1}{\varepsilon^{2k + \ell}} \|\bnabla \m\|_{H^{2k+\ell -1}_\varepsilon} +
      \begin{cases}
         \|\bnabla \L^k \m\|_{L^2}, & \ell = 0, \\
         \|\L^{k+1} \m\|_{L^2}, & \ell = 1,
      \end{cases}
     \right),
    \end{align}
    which in turn implies that
    \begin{align*}
      \|\bnabla \m\|_{H^{2k+\ell}_\varepsilon} \le C \left( \|\bnabla \m\|_{H^{2k+\ell -1}_\varepsilon} +
      \begin{cases}
        \varepsilon^{2k} \|\sqrt{a^\varepsilon} \bnabla \L^k \m\|_{L^2}, & \ell = 0,\\
        \varepsilon^{2k+1} \|\L^{k+1} \m\|_{L^2}, & \ell = 1
      \end{cases}
     \right).
    \end{align*}
    This can be proved proceeding in the same way as for \cref{eq:elliptic_reg} and \cref{eq:nonlinear_utility4}.
Consequently, the right-hand side in \cref{eq:B15} can be bounded such that we get
\begin{align*}
 \partial_t \|\sqrt{a^\varepsilon} \bnabla \L^k \m\|^2_{L^2}
  &\le \frac{C}{\gamma \varepsilon^{2(2k)}} \|\bnabla \m\|_{L^\infty}^2 \|\bnabla \m\|_{H^{2k}_\varepsilon}^2 \\
  &\le \frac{CM^2}{\gamma}  \left(  \|\sqrt{a^\varepsilon} \bnabla \L^j \m\|_{L^2}^2  + \frac{1}{\varepsilon^{2(2k)}}\|\bnabla \m\|_{H^{2k-1}_\varepsilon}^2 \right)
\end{align*}
and similarly,
\begin{align*}
  \partial_t \|\L^{k+1} \m\|^2_{L^2}
  &\le  \frac{C}{\gamma} \|\bnabla \m\|_{L^\infty}^2 \left(\|\L^{k+1}\m\|_{L^2} +  \frac{1}{\varepsilon^{2k+1}}  \|\bnabla \m\|_{H^{2k+1}}\right)^2 \\
  &\le \frac{C M^2}{\gamma}\left( \|\L^{k+1} \m\|_{L^2}^2  + \frac{1}{\varepsilon^{2(2k+1)}}\|\bnabla \m\|_{H^{2k}_\varepsilon}^2
\right).
\end{align*}

Assume now that \cref{eq:energy1} holds for $0 \le j \le 2k$, which
we already showed for $k = 0$. Then it follows from the above estimates  that
\begin{align*}
\partial_t \|\L^{k+1} \m (\cdot, t)\|^2_{L^2}
  &\le \frac{C M^2}{\gamma} \left(\|\L^{k+1} \m (\cdot, t)\|_{L^2}^2 + \frac{1}{\varepsilon^{2(2k+1)}} \|\bnabla \m (\cdot, 0)\|^2_{H^{2k}_\varepsilon} \right).
  \end{align*}
%
%
  and we obtain using Gr\"onwall's inequality 
and \Cref{lemma:nonlinear_utility1}  that
\begin{align*}
  \|\L^{k+1} \m(\cdot, t)\|^2_{L^2}
  &\le e^{C (M^2/\gamma) t} \left(\|\L^{k+1} \m(\cdot, 0)\|_{L^2}^2 +
    C t \frac{1}{\varepsilon^{2(2k+1)}}  \|\bnabla \m (\cdot, 0)\|^2_{H^{2k}_\varepsilon} \right) \\
  &\le C \frac{1}{\varepsilon^{2(2k+1)}}  \|\bnabla \m (\cdot, 0)\|^2_{H^{2k+1}_\varepsilon}.
\end{align*}
Due to elliptic regularity as given in \cref{eq:elliptic_reg_grad} and \cref{eq:energy2}, we then get
\begin{align*}
  \|\bnabla \m(\cdot, t)\|_{H^{2k+1}} &\le C \left(\frac{1}{\varepsilon^{2k+1}}\|\bnabla \m(\cdot, t)\|_{H^{2k}_\varepsilon} + \|\L^{k+1} \m(\cdot, t)\|_{L^2}\right) \\
                                      &\le  \frac{C}{\varepsilon^{2k+1}}  \|\bnabla \m (\cdot, 0)\|_{H^{2k+1}_\varepsilon}.
\end{align*}
Moreover, it holds similarly that
  \begin{align*}
 \partial_t \|\sqrt{a^\varepsilon} \bnabla \L^{k+1} \m(\cdot, t)\|^2_{L^2}
  &\le C\frac{M^2}{ \gamma}\left(\|\sqrt{a^\varepsilon} \bnabla \L^{k+1} \m(\cdot, t)\|_{L^2}^2  + \frac{1}{\varepsilon^{2(2k+2)}} \|\bnabla \m(\cdot, 0)\|_{H^{2k+1}}^2\right),
\end{align*}
from which it follows in the same way as above that
\begin{align*}
  \|\bnabla \m(\cdot, t)\|_{H^{2k+2}} &\le C \left( \frac{1}{\varepsilon^{2k+2}} \|\bnabla \m(\cdot, t)\|_{H^{2k+1}_\varepsilon} +  \|\sqrt{a^\varepsilon} \bnabla \L^{k+1} \m(\cdot, t)\|_{L^2} \right) \\
                                      &\le C \frac{1}{\varepsilon^{2k+2}} \|\bnabla \m (\cdot, 0)\|_{H^{2k+2}_\varepsilon},
\end{align*}
which proves the claim \cref{eq:energy1} by induction. To complete
the proof of \Cref{lemma:l2_m_eps_scalar}, note that
\begin{align*}
  \|\m\|_{H^q} &\le C  \sum_{|\sigma| = 0}^q \|\partial^\sigma \m\|_{L^2}
  \le C \left( \|\m\|_{L^2} + \sum_{|\sigma| = 0}^{q-1} \|\partial^\sigma \bnabla \m\|_{L^2}\right) \\
  &\le C \left( \|\m\|_{L^2} + \|\bnabla \m\|_{H^{q-1}}\right).
\end{align*}
Hence, \cref{eq:Lk_bound} follows from \cref{eq:energy1} and the
fact that $\|\m(\cdot, t)\|_{L^2} = \|\m(\cdot, 0)\|_{L^2}$ due to
the norm preservation property of the Landau-Lifshitz equation as
shown in (A2).

\end{proof}

\fi

\end{document}